\numberwithin{equation}{section}
\newtheorem{theorem}{Theorem}[section]
\newtheorem{lemma}[theorem]{Lemma}
\newtheorem{corollary}[theorem]{Corollary}
\newtheorem{definition}[theorem]{Definition}
\newtheorem{exm}[theorem]{Example}
\newtheorem{xproof}{{\it Proof. }} %
\newtheorem{xrem}{Remark.}
\newenvironment{proof}{\begin{xproof}\em}{\end{xproof}} %
\newenvironment{remark*}{\begin{xrem}\em}{\end{xrem}}
\def\qedhere{\hspace{0.3cm}{\rule{1ex}{2ex}}} %
\newcommand\Max{\operatorname{Max}}
\newcommand\clsets{\operatorname{\mathfrak C}}
\newcommand\Loc{\textit{Loc}}
\newcommand\LLoc{\textit{LinLoc}}
\newcommand\LinCoLoc{\textit{LinCoLoc}}
\newcommand\Top{\textit{Top}}
\newcommand\opp{\mathrm{op}}
\newcommand\ident{\mathrm{id}}
\newcommand\CC{\mathbb{C}}
\newcommand\supp{\operatorname{supp}}
\newcommand\Sub{\operatorname{Sub}}
\newcommand\linspan[1]{\left\langle #1\right\rangle}
\newcommand\QVBun{\textit{QVBun}}
\newcommand\PsiQVBun{\ensuremath{\varPsi}\textit{QVBun}}
\newcommand\PsiLLoc{\ensuremath{\varPsi}\textit{LinLoc}}
\newcommand\PsiLinCoLoc{\ensuremath{\varPsi}\textit{LinCoLoc}}
\newcommand\AQVBun{\textit{AQVBun}}
\newcommand\ALLoc{\textit{ALinLoc}}
\newcommand\ALinCoLoc{\textit{ALinCoLoc}}
\newcommand\image{\operatorname{Im}}
\newcommand\interior{\operatorname{int}}
\newcommand\sob{\operatorname{sob}}
\newcommand\spat{\operatorname{spat}}
\newcommand\kfrak{\mathfrak k}
\newcommand\Iota{\mathfrak I}
\newcommand\CHECK[1]{\overset{\scriptscriptstyle\vee}{#1}}
\newcommand\RE{\operatorname{Re}}
\newcommand\Tt{\mathcal T}
\newcommand\hide[1]{}
\begin{document}

\title{The codual quotient vector bundle}
\author{Jo\~ao Paulo Santos}
\maketitle

\begin{abstract}
  Given a quotient vector bundle $\mathcal A$ over $X$ with kernel map
  $\kappa\colon X\to\Max A$ we study the codual bundle with fiber at each
  point $x\in X$ isomorphic to the dual of $\kappa(x)$. Applying
  the adjunction
  between quotient vector bundles and linearized locales to the
  codual bundle leads to a new adjunction between the category of all
  quotient vector bundles and a category whose objects are locales $L$
  equipped with a contravariant morphism $L\to\Max A$. 
\end{abstract}

\section{Introduction}

Let $A$ be a topological vector space, 
which we will always assume is locally convex.
Let $X$ be a topological space. For each $x\in X$ choose a subspace $\kappa(x)\subset A$
and let $E=\coprod_{x\in X}A/\kappa(x)$, topologized as a quotient of $A\times X$. We say
the triple $\mathcal A=(X,A,\kappa)$
is a \emph{quotient vector bundle} \cite[section 3]{ReSa17} if the quotient map $A\times X\to E$ is open.
If we let $\Sub A$ denote the collection of all the subspaces of $A$ 
topologized with the lower Vietoris topology \cites{Vie22,NoSh96},
all quotient vector bundles can be obtained as pullbacks of the universal quotient vector bundle
$\mathcal A_U=(\Sub A,A,\ident)$ over $\Sub A$.
In \cite{ReSa16} the notion of 
spectral quotient vector bundle was introduced, together with
an adjunction $\boldsymbol\Omega\dashv\boldsymbol\Sigma$
between the category of spectral quotient vector bundles and a certain category
of linearized locales, whose objects are locales $L$ equipped with an inf-lattice homomorphism
$L\to\Sub A$ whose restriction to the spectrum $\Sigma L$ is continuous.
This adjunction extends the usual adjunction between topological spaces and locales.
It turns out, however, that the universal bundle $\mathcal A_U$ is not spectral so in general there is no universal spectral vector bundle.

In this paper we introduce the category
of \emph{linearized colocales} whose objects can be thought of as locales equipped with a
sup-lattice homomorphism $L^\opp\to\Sub A$ whose restriction to the spectrum is continuous 
(see section~\ref{sec:deflincoloc}). Then we define
an adunction $\boldsymbol\clsets\dashv\boldsymbol\Iota$
between the category of all quotient vector bundles with Hausdorff fibers and the category
of linearized colocales (Theorem~\ref{T5.10}). 
The adjunction $\boldsymbol\clsets\dashv\boldsymbol\Iota$ also extends the adjunction between topological spaces and locales and
the unit of the adjunction, $\mathcal A\to\boldsymbol\Iota\boldsymbol\clsets\mathcal A$ coincides, when $\mathcal A$ is spectral, with
the soberification map $\sob_{\mathcal A}\colon\mathcal A\to\boldsymbol\Sigma\boldsymbol\Omega\mathcal A$
defined in \cite[section~5.16]{ReSa16} under a natural isomorphism $\boldsymbol\Iota\boldsymbol\clsets\mathcal A\cong\boldsymbol\Sigma\boldsymbol\Omega\mathcal A$ (Theorem~\ref{T4.12}).

We can interpret the adjunction $\boldsymbol\clsets\dashv\boldsymbol\Iota$
in terms of a bundle over $X$ with fiber at each point $x\in X$ isomorphic to the dual space of $\kappa(x)$:
Given a quotient vector bundle $\mathcal A=(X,A,\kappa)$,
let $A^\vee$ be the dual space of continuous linear functionals and for each $x\in X$ let $\kappa^\vee(x)\subset A^\vee$ 
denote the annihilator of $\kappa(x)$.
Then $A^\vee/\kappa^\vee(x)\cong\bigl(\kappa(x)\bigr)^\vee$.
We call $E^\vee=\coprod_{x\in X}A^\vee/\kappa^\vee(x)$ the codual bundle, topologized as a quotient of $A^\vee\times X$
and we write $\mathcal A^\vee=(X,A^\vee,\kappa^\vee)$. To each linearized locale $\mathfrak A$ we associate a codual linearized colocale
$\mathfrak A^\vee$, to each linearized colocale we associate a codual linearized locale and
we show that $\boldsymbol\Omega(\mathcal A^\vee)\cong(\boldsymbol\clsets\mathcal A)^\vee$ and, if $\mathcal A$ is spectral, 
$\bigl(\boldsymbol\Omega\mathcal A\bigr)^\vee\cong\boldsymbol\clsets\bigl(\mathcal A^\vee\bigr)$ (Lemma~\ref{L6.9}).

In general the codual bundle $\mathcal A^\vee$ is not a quotient vector bundle so
we introduce the notion of pseudo quotient vector bundle by dropping the condition that the map $A\times X\to E$ be open. We
also introduce
the notions of pseudo linearized locale and colocale by dropping the continuity condition on the homomorphisms $L\to\Sub A$.
The adjunctions $\boldsymbol\Omega\dashv\boldsymbol\Sigma$ and $\boldsymbol\clsets\dashv\boldsymbol\Iota$ can be easily extended
to this setting.

The collection of closed subspaces $\Max A\subset\Sub A$.
is to a large extent independent of the topology on $A$. 
In particular, it is completely determined by the dual space $A^\vee$ of continuous linear functionals.
This leads us to the notion of algebraic quotient vector bundles, defined in terms of a pair of vector spaces
$(A,A^\vee)$ together with a non-degenerate bilinear form $\langle,\rangle\colon A\times A^\vee\to\mathbb C$. 
The category of algebraic quotient vector bundles is equivalent to the category of pseudo quotient vector bundles
and algebraic quotient vector bundles are more natural when we study the duality $\mathcal A\to\mathcal A^\vee$.
It turns out the assignements $\mathcal A\mapsto\mathcal A^\vee$ and $\mathfrak A\mapsto\mathfrak A^\vee$ don't preserve morphisms,
so we introduce the notion of covariant morphisms (as opposed to the morphisms introduced in \cite{ReSa16}, which are contravariant)
and show that the adjunction $\boldsymbol\Omega\dashv\boldsymbol\Sigma$ also holds for covariant morphisms (Theorem~\ref{T5.15}).
We define duality isomorphisms between the categories of quotient vector bundles with contavariant morphisms
and with covariant morphisms (Lemma~\ref{L6.10}) and show that, under these duality functors, we have natural isomorphisms between
the functors $\boldsymbol\Omega$ and $\boldsymbol\clsets$ (Theorem~\ref{T6.12}).
The same construction is carried out for linearized locales and linearized colocales.

In the remainder of the paper we analyze when is $\mathcal A^\vee$ a quotient vector bundle,
and we study the relationship between the properties of $\mathcal A$
and those of $\mathcal A^\vee$.
In particular 
we show that, under mild conditions, if $\mathcal A$ is a Hausdorff quotient vector bundle then $\mathcal A^\vee$ is also a Hausdorff quotient vector bundle
(Corollary~\ref{coro:Fell<=>Fell}) and if $A$ is a Banach space and $\mathcal A^\vee$ is a quotient vector bundle, then both
$\mathcal A$ and $\mathcal A^\vee$ are Banach bundles (Corollary~\ref{C7.16}).

\subsubsection{Organization of the paper}

In section~\ref{sec:2} we recall some basic definitions and facts.
In section~\ref{sec:3} we define pseudo quotient vector bundles, pseudo linearized locales and pseudo linearized colocales and extend
the adjunctions in \cite{ReSa16} to this setting.
In section~\ref{sec:4} we introduce the notion of linearized colocale, we define
the functors $\boldsymbol\clsets$ and $\boldsymbol\Iota$ 
on objects and prove some of their properties. We end the section by studying the universal bundle $\mathcal A_U$.
In section~\ref{sec:5} we introduce covariant morphisms, define the category of linearized colocales and prove the adjunctions
$\boldsymbol\Omega\dashv\boldsymbol\Sigma$ and $\boldsymbol\clsets\dashv\boldsymbol\Iota$.
In section~\ref{sec:6} we introduce the notion of algebraic quotient vector bundles and define duality isomorphisms between the covariant and
contravariant categories.
In section~\ref{sec:7} we investigate when is the codual bundle $\mathcal A^\vee$ a quotient vector bundle.

\section{Preliminaries}\label{sec:2}

To fix notation, we gather here some terminology and some simple facts which we'll be using throughout the paper.
More details can be found in \cites{Joh86,JoTi84,ReSa16}.

\subsection{Complete lattices and adjunctions}

A complete lattice $L$ is a partially ordered set in which every nonempty subset $S\subset L$ has a join (supremum)
which we denote by $\bigvee S$. It follows that every nonempty subset $S\subset L$ has also a meet (infimum), denoted by $\bigwedge S$.
We write $1=\bigvee L$ and $0=\bigwedge L$.
Given complete lattices $M$ and $L$, we say a function $f\colon M\to L$ is
a \emph{sup-lattice homomorphism} if it preserves all joins, that is, if
$f\bigl(\bigvee_\alpha b_\alpha\bigr)=\bigvee_\alpha f(b_\alpha)$
for any family $\{b_\alpha\}$ in $L$; similarly, we say $f$ is an  \emph{inf-lattice homomorphism} if it preserves all meets.

Given monotone functions $f\colon M\to L$ and $g\colon L\to M$, we say that $g$ is right adjoint to $f$ (equivalently, that $f$ is left adjoint to $g$) if
$f(x)\leq y\Leftrightarrow x\leq g(y)$ for all $x\in M$, $y\in L$. This condition is equivalent to the condition that, for all $x\in M$, $y\in L$
we have $f(g(y))\leq y$ and $x\leq g(f(x))$.
The existence of the right adjoint $g$ is equivalent to $f$ being a
sup-lattice homomorphism. Similarly,
$g$ is an inf-lattice homomorphism if and only if $g$ has a left adjoint $f$.

The following examples will play a central role in our work:
\begin{enumerate}
\item Given a vector space $A$, let $\Sub A$ denote the complete lattice of
  subspaces of $A$ under inclusion. The join of a family $\{V_\alpha\}$ in $\Sub A$ is the linear span of the union: $\bigvee_\alpha V_\alpha=\linspan{\,\bigcup_\alpha V_\alpha}$,
  and the meet is $\bigwedge_\alpha V_\alpha=\bigcap_\alpha V_\alpha$.
  A linear map $f\colon A\to B$ induces a sup-lattice homomorphism $\Sub f\colon\Sub A\to\Sub B$
  defined by $(\Sub f)(V)=f(V)$ whose right adjoint is the inverse image map $f^{-1}$.
\item Given a topological vector space $A$,
  let $\Max A\subset\Sub A$ denote the complete lattice of closed subspaces of $A$. The meets are the intersections and
  $\bigvee_\alpha V_\alpha=\overline{\linspan{\bigcup_\alpha V_\alpha}}$. A continuous linear map $f\colon A\to B$ induces a
  sup-lattice homomorphism $\Max f\colon\Max A\to\Max B$ defined by $(\Max f)(V)=\overline{f(V)}$ whose right adjoint is the inverse image map $f^{-1}$.
\item Given a topological space $X$, the collection of open sets $\Omega X$ is a complete lattice under inclusion. The join of a family is the union
  and the meet is the interior of the intersection. A map $f\colon X\to Y$ induces a
  sup-lattice homomorphism
  $f^{-1}\colon \Omega Y\to\Omega X$ whose right adjoint
  $f_!\colon\Omega X\to\Omega Y$
  is the map $f_!(U)=Y\setminus\overline{f(X\setminus U)}$.
\item Given a topological space $X$, the collection of closed sets $\clsets X$ is a complete lattice under inclusion. The meets are the
  intersections and the joins are the closure of the unions.
  A map $f\colon X\to Y$ induces an inf-lattice homomorphism $f^{-1}\colon \clsets Y\to\clsets X$ whose left adjoint
  $f_!^\complement\colon\clsets X\to\clsets Y$ is the map 
  \begin{equation}\label{eq:f!c}f_!^\complement(C)=\overline{f(C)}\end{equation}.
\item Given a lattice $L$ denote by $L^\opp$ the lattice with the same points as $L$ but with the opposite order relation.
  Any sup-lattice homomorphism $f\colon M\to L$ can be seen as 
  an inf-lattice homomorphism $f\colon M^\opp\to L^\opp$.
\end{enumerate}

\subsection{Locales}

A \emph{locale} is a complete lattice $L$ such that
$a\wedge\bigl(\bigvee_{\!\alpha} b_\alpha\bigr)=\bigvee_{\!\alpha}(a\wedge b_\alpha)$ for any $a\in L$ and any family $\{b_\alpha\}$ in $L$.
The main example of a locale is the topology of a topological space. Given locales $M$ and $L$,
a sup-lattice homomorphism $f\colon M\to L$ is said to be a \emph{homomorphism of locales} if it also preserves finite meets: $f(a\wedge b)=f(a)\wedge f(b)$ and $f(1_M)=1_L$.
We represent by \Loc\ the category of locales whose morphisms we now describe:
an arrow $f\colon L\to M$ in \Loc, called a \emph{map of locales},
is a homomorphism of locales in the opposite direction: $f^*\colon M\to L$,
which we call the \emph{inverse image homomorphism} of the map $f$.
We represent by $f_*\colon L\to M$ the right adjoint of $f^*$, and call it the \emph{direct image homomorphism} of the map $f$.

The assignement $X\mapsto\Omega X$ extends to a
functor $\Omega\colon\Top\to\Loc$: given a map $f\colon X\to Y$
the inverse image homomorphism of $\Omega f$ is
$(\Omega f)^*=f^{-1}\colon\Omega Y\to\Omega X$. There is also a functor
$\Sigma\colon\Loc\to\Top$
which we now describe. Given a locale $L$,
let $\Sigma L$ be the set of prime elements in $L$,
that is, the elements $p\neq 1$ such that, for all $a,b\in L$,
\[
a\wedge b\leq p\ \Rightarrow a\leq p\text{ or }b\leq p\,.
\]
Given $a\in L$ let
\[
U_a=\bigl\{p\in\Sigma L:a\not\leq p\bigr\}\,.
\]
Then the collection $\Omega\Sigma L=\{U_a\}_{a\in L}$ is a topology on $\Sigma L$, making $\Sigma L$ into a topological space.
Given a map of locales $f\colon L\to M$, let $f_*\colon L\to M$ be its
direct image homomorphism.
Then $f_*(\Sigma L)\subset\Sigma M$ and the map
$\Sigma f\colon\Sigma L\to\Sigma M$ equals
the restriction of $f_*$ to $\Sigma L$.
The assignement $a\mapsto U_a$  is a homomorphism of locales defining a map of locales
$\spat_L\colon\Omega\Sigma L\to L$ called the \emph{spatialization}, with $\spat_L^*(a)=U_a$.
A locale is said to be \emph{spatial} if spatialization is an isomorphism.

Given a topological space $X$, there is a continuous map
$\sob_X\colon X\to\Sigma\Omega X$ called the \emph{soberification} of $X$,
defined by $\sob_X(x)=X\setminus\overline{\{x\}}$. We say $X$ is a \emph{sober}
space if $\sob_X$ is a homeomorphism.
The functor $\Sigma$ is right adjoint to the functor $\Omega$ and this adjunction restricts to an equivalence between the full subcategories
of spatial locales and sober spaces.

\subsection{The lower Vietoris topology}

Let $A$ be a topological vector space.
For each open set $U\subset A$ let $\widetilde U=\bigl\{V\in\Sub A:V\cap U\neq\emptyset\bigr\}$.
The lower Vietoris topology \cites{Vie22,NoSh96} on $\Sub A$ is the topology generated by the collection
$\{\widetilde U\}_{U\in\Omega A}$. Open sets in the lower Vietoris topology are upward closed, that is,
if $\mathcal U\subset\Sub A$ is open and $V\in\mathcal U$,
then $W\in\mathcal U$ for any $W\in\Sub A$ such that $V\subset W$.
Given $V_0,V_1\in\Sub A$ with $V_0\subset V_1$ let
\begin{equation}\label{eq:mathcalU}
\mathcal U_{V_0,V_1}=\{W\in\Sub A:V_0\not\subset W\text{ or }W\not\subset V_1\}
\end{equation}

\begin{lemma}
The set $\mathcal U_{V_0,V_1}$ is open if and only if $V_0=0$ and $V_1\in\Max A$.
\end{lemma}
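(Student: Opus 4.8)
The plan is to pass to complements. The complement of $\mathcal U_{V_0,V_1}$ in $\Sub A$ is exactly the order interval $[V_0,V_1]=\{W\in\Sub A:V_0\subset W\subset V_1\}$, so $\mathcal U_{V_0,V_1}$ is open precisely when this interval is closed in the lower Vietoris topology. For the ``if'' direction I would simply observe that when $V_0=0$ the interval reduces to $\{W:W\subset V_1\}$, and when moreover $V_1$ is closed the identity $\{W:W\not\subset V_1\}=\widetilde{A\setminus V_1}$ holds, since $W\not\subset V_1$ means exactly that $W$ meets the open set $A\setminus V_1$. Thus $\mathcal U_{0,V_1}=\widetilde{A\setminus V_1}$ is a subbasic open set.

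For the ``only if'' direction, assume $[V_0,V_1]$ is closed. First I would extract $V_0=0$ from the fact, recalled above, that open sets in the lower Vietoris topology are upward closed, so that closed sets are downward closed. Since $V_1\in[V_0,V_1]$ and $0\subset V_1$, downward closure forces $0\in[V_0,V_1]$, i.e.\ $V_0\subset 0$, whence $V_0=0$.

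It remains to show $V_1\in\Max A$, and this is the step I expect to be the main obstacle. Arguing by contradiction, suppose $V_1$ is not closed and pick $a\in\overline{V_1}\setminus V_1$; set $W=\linspan{V_1\cup\{a\}}$. Then $W\not\subset V_1$, so $W\notin[0,V_1]$, and I would derive a contradiction by showing $W\in\overline{[0,V_1]}$. Given a basic neighborhood $\bigcap_{i=1}^n\widetilde{U_i}$ of $W$ (so $W\cap U_i\neq\emptyset$ for each $i$), choose $w_i\in W\cap U_i$ and write $w_i=v_i+\lambda_i a$ with $v_i\in V_1$ and $\lambda_i$ a scalar. The decisive point is to replace $a$ by a \emph{single} element of $V_1$ that works for all $i$ simultaneously: each map $v\mapsto v_i+\lambda_i v$ is continuous and sends $a$ into the open set $U_i$, so there is a neighborhood $N$ of $a$ (a finite intersection of such) with $v_i+\lambda_i v\in U_i$ for every $i$ whenever $v\in N$; since $a\in\overline{V_1}$, such a $v$ can be found in $V_1\cap N$. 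The subspace $W'=\linspan{\{v_i+\lambda_i v:i=1,\dots,n\}}$ then lies in $V_1$, hence in $[0,V_1]$, and meets every $U_i$, so $W'\in\bigcap_{i=1}^n\widetilde{U_i}$. This shows every neighborhood of $W$ meets $[0,V_1]$, contradicting closedness and completing the proof.
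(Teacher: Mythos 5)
Your proof is correct, and much of it coincides with the paper's: the ``if'' direction via the identification $\mathcal U_{0,V_1}=\widetilde{A\setminus V_1}$, and the extraction of $V_0=0$ from upward-closedness of lower-Vietoris open sets (you phrase it dually, as downward-closedness of the closed interval $[V_0,V_1]$). Where you genuinely diverge is the step $V_1\in\Max A$, which the paper dispatches with one observation: a subspace and its closure have exactly the same neighbourhoods in the lower Vietoris topology, because an open $U\subset A$ meets $V$ if and only if it meets $\overline V$; hence the putative open set $\mathcal U_{V_0,V_1}$, which contains $\overline{V_1}$, would have to contain $V_1$, a contradiction. Your route---taking $W=\linspan{V_1\cup\{a\}}$ for a single $a\in\overline{V_1}\setminus V_1$ and showing that every basic neighbourhood $\bigcap_{i}\widetilde{U_i}$ of $W$ meets $[0,V_1]$ via continuity of the affine maps $v\mapsto v_i+\lambda_i v$---is valid, but it does more work than necessary: since $\overline{V_1}$ is a subspace containing $a$, your $W$ sits inside $\overline{V_1}$, so each open $U_i$ meeting $W$ already meets $V_1$; picking $u_i\in V_1\cap U_i$ and setting $W'=\linspan{u_1,\dots,u_n}$ finishes the argument with no need to replace $a$ by a common element $v\in V_1\cap N$. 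In effect your computation re-derives, in this special case, the paper's general fact that the lower Vietoris topology cannot distinguish $V_1$ from $\overline{V_1}$ (the same fact behind the paper's formula $\sob(V)=\mathcal U_{0,\overline V}$ for the soberification map); the paper's version is shorter and reusable elsewhere, while yours is more self-contained, relying only on the subbasis and on continuity of the vector-space operations.
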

\begin{proof}
  If $V_0\neq0$ then $0\in\mathcal U_{V_0,V_1}$ and $0\subset V_0$ but
  $V_0\notin\mathcal U_{V_0,V_1}$ so $\mathcal U_{V_0,V_1}$ is not upward closed so it is not open.
  If $V_1\notin\Max A$ then $\overline V_1\in\mathcal U_{V_0,V_1}$ but
  $V_1\notin\mathcal U_{V_0,V_1}$
which shows $\mathcal U_{V_0,V_1}$ is not open since $V_1$ and $\overline V_1$ have the same neighbourhoods. Conversely,
if $V_0=0$ and $V_1\in\Max A$ then $U=A\setminus V_1$ is open in $A$ so
$\mathcal U_{0,V_1}=\widetilde U$ is open in the lower Vietoris topology.\qedhere
\end{proof}

Let $\sob\colon\Sub A\to\Sigma\Omega\Sub A$ be the soberification map.
Since every prime open set is a union of subbasic open sets and since for any family of open sets $\{U_\alpha\}$ we have
$\widetilde{\bigcup U_\alpha}=\bigcup\widetilde U_\alpha$, every prime open set is of the form $\widetilde U$ for some open set $U\subset A$.
So we must have
\begin{equation}\label{eq:soblV(V)=U_0,V}
  \sob(V)=\widetilde{A\setminus\overline V}=\mathcal U_{0,\overline V}
\end{equation}
It was shown in \cite[Theorem 6.16]{ReSa16} that,
if $A$ is locally convex, the space $\Max A\subset\Sub A$ with the subspace topology is a sober space. We will represent the soberification map
by $\sob_{\Max A}\colon\Max A\xrightarrow{\cong}\Sigma\Omega\Max A$.

\subsection{Quotient vector bundles}

A \emph{linear bundle} over a topological space $X$ is a collection of vector spaces parametrized by $X$, $\{E_x\}_{x\in X}$, together
with a topology on $E=\coprod_{x\in X}E_x$ such that the projection $\pi\colon E\to X$,
sum, scalar multiplication and the zero section  are continuous. 
A contravariant morphism of linear bundles $(E\to X)\to(F\to Y)$
is a continuous map $f_\flat\colon X\to Y$ and, for each $x\in X$, a linear map $\phi_x\colon F_{f_\flat(x)}\to E_x$
such that the map $\phi\colon F\times_YX\to E$ induced by the collection $\{\phi_x\}$ is continuous.

A \emph{quotient vector bundle} 
is a triple $\mathcal A=(X,A,\kappa)$ where $X$ is a topological space,
$A$ is a topological vector space and $\kappa\colon X\to\Sub A$ is a map which is continuous relative to the
lower Vietoris topology on $\Sub A$. We call $\kappa$ the \emph{kernel map}.
Given a quotient vector bundle $\mathcal A=(X,A,\kappa)$ let $E$ be the quotient of $A\times X$ by the equivalence
relation which identifies $(a,x)\sim(b,x)$ whenever $a-b\in\kappa(x)$. 
It was shown in \cite[Theorem~5.3]{ReSa17} that the continuity of $\kappa$ is equivalent to the
condition that the quotient map $q\colon A\times X\to E$ be open. The
induced map $\pi\colon E\to X$ defines a linear bundle over $X$
with fiber $E_x$ at each point $x\in X$ isomorphic to $A/\kappa(x)$.
We call $E$ the \emph{associated linear bundle} of the quotient vector bundle $(X,A,\kappa)$.

A (contravariant) morphism of quotient vector bundles $f\colon(X,A,\kappa_A)\to(Y,B,\kappa_B)$
is a pair $f=(f_\flat,f^\sharp)$ where $f_\flat\colon X\to Y$ is a continuous map and
$f^\sharp \colon B\to A$ is a continuous linear map, obeying, for all $b\in B$ and $x\in X$, the condition:
\begin{equation}\label{eq:morQVBun}
b\in \kappa_B\bigl(f_\flat(x)\bigr)\ \Rightarrow\ f^\sharp (b)\in \kappa_A(x)\,.
\end{equation}
This condition implies $f^\sharp$ induces a linear map between the fibers $B/\kappa_B\bigl(f_\flat(x)\bigr)$ and $A/\kappa_A(x)$,
and hence a morphism of the associated linear bundles.
We say $f$ is a strict morphism if we have an equivalence:
\begin{equation}\label{eq:strictmorQVBun}
b\in \kappa_B\bigl(f_\flat(x)\bigr)\ \Leftrightarrow\ f^\sharp (b)\in \kappa_A(x)\,.
\end{equation}
We represent the category of quotient vector bundles by $\QVBun^*$.

\subsubsection{Bundles with Hausdorff fibers}

We say a quotient vector bundle $(X,A,\kappa)$ has Hausdorff fibers if the fibers $E_x$ of the associated linear bundle $E\to X$
are Hausdorff spaces. This happens if and only if for any $x\in X$ the set $\{0\}\subset E_x$ is closed, so $(X,A,\kappa)$
has Hausdorff fibers if and only if the kernel map $\kappa$ factors
through $\Max A\subset\Sub A$. 
We represent the category of quotient vector bundles with Hausdorff fibers by $\QVBun^{1*}$.

\subsubsection{The Fell topology}

For each compact set $K\subset A$ let $\CHECK K=\{V\in\Max A:V\cap K=\emptyset\}$. The \emph{Fell topology} \cite{NoSh96}
is the coarsest topology on $\Max A$ containing both the lower Vietoris topology and all the sets $\CHECK K$.
Given a topological space $X$, we say a map $\kappa\colon X\to\Max A$ is \emph{Fell continuous}
if it is continuous when $\Max A$ is given the Fell topology.
Let $(X,A,\kappa)$ be a quotient vector bundle.
It was shown in \cite[Theorem~5.7]{ReSa17} that, if $X$ and $A$ are both first countable and Hausdorff, then
the kernel map $\kappa$ is Fell continuous if and only if the associated  linear bundle $E$ is Hausdorff.

\subsubsection{Normed quotient vector bundles}

We say a quotient vector bundle $\mathcal A=(X,A,\kappa)$ is \emph{normed} if
$A$ is a normed vector space. The norm on $A$ induces a norm on the associated linear bundle $E$
defined by $\|q(a,x)\|=\text{dist}(a,\kappa(x))$. If this norm is continuous we say the normed quotient vector bundle $\mathcal A$ is \emph{continuous}.
Let $A$ be a normed vector space. For each $a\in A$ and $r>0$ let $\mathbf U_r(a)=\{V\in\Max A:d(a,V)>r\}$.
The \emph{closed balls topology} is the coarsest topology on $\Max A$ containing both the lower Vietoris topology and the sets $\mathbf U_r(a)$.
Let $\mathcal A=(X,A,\kappa)$ be a normed quotient vector bundle.
It was shown in \cite[Theorem~5.12]{ReSa17} that the map $\kappa\colon X\to\Max A$ is continuous with respect to the closed balls topology if and only if
$\mathcal A$ is continuous.  If $A$ is Banach and $X$ is first countable, it was also shown that the associated bundle $E$
is a Banach bundle if and only if $\kappa$ is continuous with respect to the closed balls topology.

\subsection{Spectral quotient vector bundles}\label{sec:spectralQVB}

\subsubsection{The open support property}

Let $(X,A,\kappa)$ be a quotient vector bundle.
Each $a\in A$ gives rise to a section $\widehat a$
of the associated linear bundle $E$, namely:
$\widehat a(x)=a+\kappa(x)\in E_x$. Clearly $\widehat a(x)\neq 0\Leftrightarrow a\notin\kappa(x)$.
The open support of $\widehat a$ is the open set
\begin{equation}\label{eq:defopensupp}
\supp^\circ\widehat a=\text{int}\{x\in X:a\notin\kappa(x)\}\,.
\end{equation}
Given a subset $C\subset A$ represent by $\linspan C$ be the linear span of $C$
and for each $a\in A$ let
\begin{equation}\label{eq:defnofchecka}
\CHECK a=\{V\in\Sub A:a\notin V\}=\mathcal U_{\linspan a,1}
\end{equation}
(see equation~\eqref{eq:mathcalU}).
We say the quotient vector bundle $\mathcal A=(X,A,\kappa)$ has the \emph{open support property} if for all $a\in A$
the set $\kappa^{-1}(\CHECK a)=\{x\in X:a\notin\kappa(x)\}$ is an open set, in which case $\supp^\circ\widehat a=\kappa^{-1}(\CHECK a)$.
Quotient vector bundles over $X$ with the open support property are classified by kernel maps
$\kappa\colon X\to\Sub A$ continuous with respect to the open support topology,
which is the coarsest topology containing both the lower Vietoris topology and all the sets $\CHECK a$ (see \cite[Lemma~6.2]{ReSa16}).

\begin{lemma}\label{lemma:bigcupCHECKa}
For any set $C\subset A$ we have
\[
\bigcup_{a\in C}\CHECK a=\mathcal U_{\linspan C,1}
\]
\end{lemma}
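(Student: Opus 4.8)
The plan is to unwind both sides into a single membership condition on an arbitrary subspace $V\in\Sub A$ and to observe that the two conditions coincide. First I would simplify the right-hand side. By definition $\mathcal U_{\linspan C,1}=\{W\in\Sub A:\linspan C\not\subset W\text{ or }W\not\subset 1\}$; but $1=A$ is the top element of $\Sub A$, so $W\subset 1$ holds for every $W$ and the second disjunct is never satisfied. Hence $\mathcal U_{\linspan C,1}=\{W\in\Sub A:\linspan C\not\subset W\}$.

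Next I would rewrite the left-hand side. Using $\CHECK a=\{V\in\Sub A:a\notin V\}$, a subspace $V$ lies in $\bigcup_{a\in C}\CHECK a$ precisely when there is some $a\in C$ with $a\notin V$, that is, precisely when $C\not\subset V$.

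It then remains to check that, for a \emph{subspace} $V$, one has $C\subset V\Leftrightarrow\linspan C\subset V$, and therefore $C\not\subset V\Leftrightarrow\linspan C\not\subset V$, so the two descriptions agree. This is the only substantive point, and it is exactly where the hypothesis that $V$ is a linear subspace (rather than an arbitrary subset of $A$) is used: the implication $\linspan C\subset V\Rightarrow C\subset V$ is immediate from $C\subset\linspan C$, while the converse holds because $V$, being closed under linear combinations, contains every linear combination of elements of $C$ as soon as it contains $C$ itself.

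I do not expect a genuine obstacle here; the content lies entirely in the two definitional simplifications together with the elementary fact about spans. The one place to be slightly careful is the degenerate disjunct $W\not\subset 1$ in $\mathcal U_{\linspan C,1}$, which must be discarded explicitly, and the boundary case $C=\emptyset$, where $\linspan C=0$ and both sides reduce to $\{V:0\not\subset V\}=\emptyset$, consistent with the empty union.
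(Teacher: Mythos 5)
Your proof is correct and follows essentially the same route as the paper's: both unwind the membership conditions into the chain $V\in\mathcal U_{\linspan C,1}\Leftrightarrow\linspan C\not\subset V\Leftrightarrow C\not\subset V\Leftrightarrow\exists_{a\in C}:a\notin V\Leftrightarrow V\in\bigcup_{a\in C}\CHECK a$, with the key step being that a subspace contains $C$ if and only if it contains $\linspan C$. Your extra remarks on discarding the vacuous disjunct $W\not\subset 1$ and on the case $C=\emptyset$ are fine but add nothing beyond what the paper's one-line argument already implicitly handles.
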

\begin{proof}
  For any $C\subset A$ we have:
  \[
  V\in\mathcal U_{\linspan C,1}\ \Leftrightarrow\ \linspan C\not\subset V\ \Leftrightarrow\ C\not\subset V\ \Leftrightarrow\
  \exists_{a\in C}:a\notin V\ \Leftrightarrow\ V\in\bigcup_{a\in C}\CHECK a\,.\qedhere
  \]
\end{proof}

\begin{lemma}\label{lemmaOpenSupportiff}
  A quotient vector bundle $\mathcal A=(X,A,\kappa)$ has the open support property
  if and only if $\sigma(V)=\kappa^{-1}(\mathcal U_{V,1})$ for any $V\in\Sub A$.
\end{lemma}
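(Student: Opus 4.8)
The plan is to reduce the condition ``$\sigma(V)=\kappa^{-1}(\mathcal U_{V,1})$ for all $V\in\Sub A$'' to the single-vector condition defining the open support property, using Lemma~\ref{lemma:bigcupCHECKa} as the bridge between the two. Recall that $\sigma(V)$ is, by definition, the open support of the subspace $V$, namely $\sigma(V)=\interior\kappa^{-1}(\mathcal U_{V,1})$. In particular $\sigma(V)$ is always open and $\sigma(V)\subset\kappa^{-1}(\mathcal U_{V,1})$, with equality holding precisely when $\kappa^{-1}(\mathcal U_{V,1})$ is itself open. Thus the asserted identity is equivalent to the openness of $\kappa^{-1}(\mathcal U_{V,1})$, and the whole lemma amounts to showing that the open support property holds if and only if $\kappa^{-1}(\mathcal U_{V,1})$ is open for every $V\in\Sub A$.

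For the ``only if'' direction I would fix an arbitrary $V\in\Sub A$ and observe that, since $V$ is a subspace, $\linspan V=V$, so Lemma~\ref{lemma:bigcupCHECKa} with $C=V$ gives $\mathcal U_{V,1}=\bigcup_{a\in V}\CHECK a$. Taking preimages, $\kappa^{-1}(\mathcal U_{V,1})=\bigcup_{a\in V}\kappa^{-1}(\CHECK a)$. Under the open support property each set $\kappa^{-1}(\CHECK a)$ is open, hence so is the union, and therefore $\sigma(V)=\interior\kappa^{-1}(\mathcal U_{V,1})=\kappa^{-1}(\mathcal U_{V,1})$.

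For the ``if'' direction I would simply specialize to the one-dimensional subspaces: for $a\in A$ take $V=\linspan a$, so that $\mathcal U_{\linspan a,1}=\CHECK a$ by equation~\eqref{eq:defnofchecka}. The hypothesis then yields $\kappa^{-1}(\CHECK a)=\kappa^{-1}(\mathcal U_{\linspan a,1})=\sigma(\linspan a)$, which is open by the definition of $\sigma$. As $a\in A$ was arbitrary, this is exactly the statement that $\mathcal A$ has the open support property.

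The argument is short and I do not expect a genuine obstacle; the only point that must be handled carefully is the bookkeeping identity $\mathcal U_{V,1}=\bigcup_{a\in V}\CHECK a$ for a subspace $V$, that is, the fact that testing the inclusion $V\subset\kappa(x)$ against all vectors $a\in V$ is the same as testing it against $V$ as a whole. Everything else reduces to the elementary observation that a set equals its own interior exactly when it is open, which is precisely what the equality with $\sigma(V)$ encodes.
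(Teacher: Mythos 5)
Your opening claim---that ``by definition'' $\sigma(V)=\interior\kappa^{-1}(\mathcal U_{V,1})$---is not the paper's definition of the support map, and it is false in general; this is where the proof breaks. By equations~\eqref{eq:defnsupportrestriction} and~\eqref{eq:defopensupp}, $\sigma(V)=\bigcup_{a\in V}\supp^\circ\widehat a=\bigcup_{a\in V}\interior\kappa^{-1}(\CHECK a)$ is a \emph{union of interiors}, whereas what you wrote is the \emph{interior of the union} $\interior\bigl(\bigcup_{a\in V}\kappa^{-1}(\CHECK a)\bigr)$. Only the inclusion $\sigma(V)\subset\interior\kappa^{-1}(\mathcal U_{V,1})$ holds unconditionally, and it can be strict for a genuine quotient vector bundle. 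For instance, take $X=\RR$, $A=\CC^2$, set $\kappa(x)=\{0\}$ outside the intervals $I_n=\bigl(\tfrac1{n+1},\tfrac1n\bigr)$, and on each $I_n$ let $\kappa$ trace a continuous surjection of $I_n$ onto the set of lines of $\CC^2$ (a copy of the sphere inside $\Sub\CC^2$); one checks this $\kappa$ is lower Vietoris continuous, so $(X,A,\kappa)$ is a quotient vector bundle. For $V=\CC^2$ we get $\kappa^{-1}(\mathcal U_{V,1})=\RR$, which is open, yet every neighbourhood of $0$ contains a whole $I_n$ and hence, for every $a\neq0$, a point $x$ with $a\in\kappa(x)$; thus $0\notin\supp^\circ\widehat a$ for all $a$, so $0\notin\sigma(V)$ and $\sigma(V)\subsetneq\interior\kappa^{-1}(\mathcal U_{V,1})$.

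The damage is localized but real. Your ``if'' direction survives, because it only uses that $\sigma(\linspan a)$ is open, which the true definition also provides (a union of open sets is open). But your ``only if'' direction fails at its final step, where openness of $\kappa^{-1}(\mathcal U_{V,1})$ is converted into $\sigma(V)=\interior\kappa^{-1}(\mathcal U_{V,1})=\kappa^{-1}(\mathcal U_{V,1})$: the first equality is exactly the unproven (and generally false) identity, and establishing it under the open support property is the same task as the implication you are proving, so as written the step begs the question. The repair is short and is the paper's own argument: under the open support property each $\kappa^{-1}(\CHECK a)$ is open, hence $\supp^\circ\widehat a=\interior\kappa^{-1}(\CHECK a)=\kappa^{-1}(\CHECK a)$, and then $\sigma(V)=\bigcup_{a\in V}\kappa^{-1}(\CHECK a)=\kappa^{-1}\bigl(\bigcup_{a\in V}\CHECK a\bigr)=\kappa^{-1}(\mathcal U_{V,1})$ by Lemma~\ref{lemma:bigcupCHECKa}. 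In short: keep your use of Lemma~\ref{lemma:bigcupCHECKa}, but the interior must be taken \emph{inside} the union, term by term, where the open support property makes it the identity---not outside the union, where nothing makes it the identity.
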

\begin{proof}
  If $\sigma(V)=\kappa^{-1}(\mathcal U_{V,1})$ for any $V\in\Sub A$ then the set
  \[
  \kappa^{-1}(\CHECK a)=\bigl\{x\in X:a\notin\kappa(x)\bigr\}=\kappa^{-1}\bigl(\mathcal U_{\linspan a,1}\bigr)
  =\sigma\bigl(\linspan a\bigr) %
  \]
  is an open set so $\mathcal A$ has the open support property. Conversely, if $\mathcal A$ has the open support property then
  $\supp^\circ\widehat a=\kappa^{-1}(\CHECK a)$ so from Lemma~\ref{lemma:bigcupCHECKa} we get:
  \[
  \sigma(V)=\bigcup_{a\in V}\supp^\circ\widehat a=\bigcup_{a\in V}\kappa^{-1}(\CHECK a)=\kappa^{-1}\Bigl(\bigcup_{a\in V}\CHECK a\Bigr)
  =\kappa^{-1}(\mathcal U_{V,1})\,.\qedhere
  \]
\end{proof}

\subsubsection{The support and restriction maps}

In \cite[section~5.4]{ReSa16} to each quotient vector bundle $\mathcal A=(X,A,\kappa)$  was associated an adjoint pair $\sigma\dashv\gamma$:
the \emph{support map} $\sigma\colon\Sub A\to \Omega X$ and its right adjoint, the \emph{restriction map} $\gamma\colon\Omega X\to\Sub A$, defined by
\begin{equation}\label{eq:defnsupportrestriction}
  \sigma(V)=\bigcup_{a\in V}\supp^\circ\widehat a\quad\text{and}\quad
  \gamma(U)=\text{span}\bigl\{a\in A:\supp^\circ\widehat a\subset U\bigr\}\,.
\end{equation}
In particular, for any $a\in A$ we
have $\sigma\bigl(\linspan a\bigr)=\supp^\circ\widehat a$. 
The \emph{spectral kernel} of $\mathcal A$ is the restriction of $\gamma$ to $\Sigma\Omega X$, which we represent by $\kfrak$.
We say the quotient vector bundle $(X,A,\kappa)$ is \emph{spectral} if it has
the open support property and its spectral kernel is continuous.
We represent the category of spectral quotient vector bundles by $\QVBun^*_\Sigma$.

\subsection{Linearized locales}\label{sec:prelimlinloc}

A linearized locale \cite[section~5.1]{ReSa16} is a triple $(L,A,\gamma)$ where $L$ is a locale, $A$ is a topological vector space and $\gamma\colon L\to\Sub A$
is an inf-lattice homomorphism whose restriction to $\Sigma L$ is continuous. Given linearized locales $\mathfrak A=(L_A,A,\gamma_A)$ and
$\mathfrak B=(L_B,B,\gamma_B)$, 
a (contravariant) morphism $\mathfrak A\to\mathfrak B$ is a pair $(\underline f,\overline f)$ where $\underline f\colon L_A\to L_B$
is a map of locales and $\overline f\colon B\to A$ is a continuous linear map satisfying the relation
\begin{equation}\label{eq:morLinLoc}
\gamma_B\circ\underline f_*\leq\overline f^{-1}\circ\gamma_A\,.
\end{equation}
We represent the category of linearized locales by $\LLoc^*$.

From a linearized locale
$\mathfrak A=(L,A,\gamma)$ we obtain a spectral quotient vector bundle $\mathbf\Sigma\mathfrak A=(\Sigma L,A,\kfrak)$, where
$\kfrak$ is the restriction of $\gamma$ to $\Sigma L$, and from a spectral quotient vector bundle $\mathcal A=(X,A,\kappa)$
we obtain a linearized locale $\mathbf\Omega\mathcal A=(\Omega X,A,\gamma)$, where $\gamma$ is the restriction map.
These assignements extend to adjoint functors $\mathbf\Omega$ and $\mathbf\Sigma$
between the category of spectral quotient vector bundles and the category of linearized locales \cite[Theorems~5.15 and~5.19]{ReSa16}.

\section{Pseudo quotient vector bundles}\label{sec:3}

A \emph{pseudo quotient vector bundle} is a triple $(X,A,\kappa)$ where $X$ is a topological space, 
$A$ is a locally convex topological vector space
and $\kappa\colon X\to\Sub A$ is any map (which we don't require to be continuous). A morphism of pseudo quotient vector bundles
$(X,A,\kappa_A)\to(Y,B,\kappa_B)$ is a pair $(f_\flat,f^\sharp)$ where $f_\flat\colon X\to Y$ is continuous and $f^\sharp\colon B\to A$
is a linear map (not necessarily continuous) obeying condition~\eqref{eq:morQVBun}. 
We say the morphism is strict if it satisfies condition~\eqref{eq:strictmorQVBun}.
We represent by $\PsiQVBun^*$ the category of pseudo quotient vector bundles.

\begin{lemma}\label{QVBunfisostrict}
	A morphism of pseudo quotient vector bundles $(f_\flat,f^\sharp)\colon(X,A,\kappa_A)\to(Y,B,\kappa_B)$ 
	is an isomorphism if and only if it is strict,
	$f_\flat$ is a homeomorphism and $f^\sharp$ is a vector space isomorphism.
\end{lemma}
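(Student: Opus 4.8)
The plan is to prove both directions of the equivalence, treating the "only if" direction first since it requires unwinding the definition of isomorphism in $\PsiQVBun^*$, and then the "if" direction as a construction of the inverse morphism.

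For the forward direction, suppose $(f_\flat,f^\sharp)$ is an isomorphism, so there is an inverse morphism $(g_\flat,g^\sharp)\colon(Y,B,\kappa_B)\to(X,A,\kappa_A)$ with both composites equal to the identity. By the definition of morphism composition in $\PsiQVBun^*$, the composite $(f_\flat,f^\sharp)\circ(g_\flat,g^\sharp)$ has first component $f_\flat\circ g_\flat$ and the identity on $(Y,B,\kappa_B)$ has first component $\ident_Y$; comparing first components of both composites gives $f_\flat\circ g_\flat=\ident_Y$ and $g_\flat\circ f_\flat=\ident_X$, so $f_\flat$ is a homeomorphism with inverse $g_\flat$. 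Similarly, comparing second components (noting that the linear parts compose contravariantly, so the second component of the composite is $g^\sharp\circ f^\sharp$ or $f^\sharp\circ g^\sharp$ depending on direction) yields $f^\sharp\circ g^\sharp=\ident_B$ and $g^\sharp\circ f^\sharp=\ident_A$, making $f^\sharp$ a vector space isomorphism with inverse $g^\sharp$. The key step here is verifying strictness: I must show the implication \eqref{eq:morQVBun} holds in both directions. One direction is just the morphism condition for $(f_\flat,f^\sharp)$. For the reverse implication, I would apply the morphism condition \eqref{eq:morQVBun} for $(g_\flat,g^\sharp)$ at the point $y=f_\flat(x)$: if $f^\sharp(b)\in\kappa_A(x)$, then applying $g^\sharp$ and using $g^\sharp\circ f^\sharp=\ident_B$ together with $g_\flat\circ f_\flat=\ident_X$ recovers $b\in\kappa_B(f_\flat(x))$, giving exactly \eqref{eq:strictmorQVBun}.

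For the converse, assume $(f_\flat,f^\sharp)$ is strict with $f_\flat$ a homeomorphism and $f^\sharp$ a vector space isomorphism. The plan is to exhibit $(g_\flat,g^\sharp)=(f_\flat^{-1},(f^\sharp)^{-1})$ as an inverse morphism. First I would check that this pair is actually a morphism of pseudo quotient vector bundles: $g_\flat=f_\flat^{-1}$ is continuous because $f_\flat$ is a homeomorphism, and $g^\sharp=(f^\sharp)^{-1}$ is linear. The morphism condition \eqref{eq:morQVBun} for $(g_\flat,g^\sharp)$ reads: for $a\in A$ and $y\in Y$, if $a\in\kappa_A(g_\flat(y))=\kappa_A(f_\flat^{-1}(y))$ then $g^\sharp(a)\in\kappa_B(y)$. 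Setting $x=f_\flat^{-1}(y)$ so that $y=f_\flat(x)$, and writing $a=f^\sharp(b)$ for the unique $b=g^\sharp(a)$, this becomes exactly the reverse implication of the strict condition \eqref{eq:strictmorQVBun} for $(f_\flat,f^\sharp)$, which holds by hypothesis. Thus $(g_\flat,g^\sharp)$ is a legitimate morphism, and since its components are the set-theoretic inverses of those of $(f_\flat,f^\sharp)$, the two composites are the identity morphisms on either side.

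The main obstacle, modest as it is, lies in keeping the variance straight: because morphisms in $\PsiQVBun^*$ are contravariant in the linear part (the arrow $f^\sharp\colon B\to A$ points opposite to $f_\flat\colon X\to Y$), the bookkeeping of which composite gives $f^\sharp\circ g^\sharp$ versus $g^\sharp\circ f^\sharp$ must be done carefully, and the substitution $x=f_\flat^{-1}(y)$, $b=(f^\sharp)^{-1}(a)$ must be applied at the correct base point so that the strictness hypothesis is invoked in precisely the right form. Everything else reduces to the observation that a bijective structure-preserving map whose inverse also preserves the relevant structure is an isomorphism, and that strictness is exactly the condition guaranteeing the inverse respects the kernel constraint \eqref{eq:morQVBun}.
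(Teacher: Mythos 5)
Your proof is correct and takes essentially the same route as the paper: both arguments reduce the statement to the observation that $(f_\flat,f^\sharp)$ is invertible exactly when $f_\flat$ and $f^\sharp$ are invertible and the pair $\bigl(f_\flat^{-1},(f^\sharp)^{-1}\bigr)$ is itself a morphism, and then identify that last condition with the reverse implication in \eqref{eq:strictmorQVBun}. The only blemish is a cosmetic label swap: the identities should read $f^\sharp\circ g^\sharp=\ident_A$ and $g^\sharp\circ f^\sharp=\ident_B$ (you use the correct form later in the argument).
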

\begin{proof}
	A morphism of pseudo quotient vector bundles $f=(f_\flat,f^\sharp)$ 
	is an isomorphism if and only if $f_\flat$ and $f^\sharp$ are isomorphisms, and
	the pair $\bigl((f_\flat)^{-1},(f^\sharp)^{-1}\bigr)$ is a morphism of quotient vector bundles, in which case
	$f^{-1}=\bigl((f_\flat)^{-1},(f^\sharp)^{-1}\bigr)$. 
	The condition that $\bigl((f_\flat)^{-1},(f^\sharp)^{-1}\bigr)$ be a morphism of quotient vector bundles:
	\[
	a\in \kappa_A\bigl(f_\flat^{-1}(y)\bigr)\ \Rightarrow\ (f^\sharp)^{-1} (a)\in \kappa_B(y)\quad(y\in Y,\ a\in A)\,,
	\]
	is equivalent to
	\[
	f^\sharp (b)\in \kappa_A(x)\ \Rightarrow\ b\in \kappa_B\bigl(f_\flat(x)\bigr)\quad(x\in X,\ b\in B)
	\]
	which is equivalent to $(f_\flat,f^\sharp)$ being a strict morphism.\qedhere
\end{proof}

From a pseudo quotient vector bundle $\mathcal A=(X,A,\kappa)$ we can obtain a quotient
vector bundle $\mathcal A^{\mathrm {triv}}$ by replacing the topology of $A$ with the trivial (indiscreet) topology.

\begin{lemma}
  The assignments $\mathcal A\to\mathcal A^{\mathrm{triv}}$ 
  and $(f_\flat,f^\sharp)\to(f_\flat,f^\sharp)$ define a 
  fully faithful functor $\PsiQVBun^*\to\QVBun^*$ which is left adjoint to the inclusion functor $\QVBun^*\to\PsiQVBun^*$.
  These functors define an equivalence between the category $\PsiQVBun^*$ and the full subcategory of $\QVBun^*$ whose objects are the triples $(X,A,\kappa)$
  where $A$ has the trivial topology.
\end{lemma}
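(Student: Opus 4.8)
The crux of the argument is a single observation about how the lower Vietoris topology degenerates when $A$ carries the indiscrete topology; once that is in place everything else is formal bookkeeping. First I would record that $A_{\mathrm{triv}}$, the vector space $A$ equipped with the indiscrete topology, is a locally convex topological vector space: addition and scalar multiplication are continuous because the only open sets are $\emptyset$ and $A$, and the unique neighbourhood $A$ of $0$ is convex. Next comes the key point. The subbasic lower Vietoris sets are $\widetilde{\emptyset}=\emptyset$ and $\widetilde{A}=\{V\in\Sub A:V\cap A\neq\emptyset\}=\Sub A$ (every subspace contains $0$, so meets $A$), hence the lower Vietoris topology on $\Sub A$ associated with $A_{\mathrm{triv}}$ is the indiscrete topology. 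Consequently \emph{every} map $\kappa\colon X\to\Sub A$ is continuous, so $\mathcal A^{\mathrm{triv}}=(X,A_{\mathrm{triv}},\kappa)$ is a genuine quotient vector bundle.

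Functoriality is then immediate: given a morphism $(f_\flat,f^\sharp)$ of pseudo quotient vector bundles, $f_\flat$ is unchanged and $f^\sharp\colon B_{\mathrm{triv}}\to A_{\mathrm{triv}}$ is automatically continuous (any map into an indiscrete space is), while condition~\eqref{eq:morQVBun} is literally unchanged; since the assignment does nothing to the underlying maps, identities and composites are preserved. For fully faithfulness I would observe that a $\QVBun^*$-morphism $\mathcal A^{\mathrm{triv}}\to\mathcal B^{\mathrm{triv}}$ is a pair $(f_\flat,f^\sharp)$ with $f_\flat$ continuous and $f^\sharp\colon B\to A$ linear (continuity into $A_{\mathrm{triv}}$ being vacuous) satisfying~\eqref{eq:morQVBun} --- which is exactly a $\PsiQVBun^*$-morphism $\mathcal A\to\mathcal B$. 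Thus the map on hom-sets is the identity, hence a bijection.

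For the adjunction $T\dashv I$, with $T\colon\PsiQVBun^*\to\QVBun^*$ sending $\mathcal A\mapsto\mathcal A^{\mathrm{triv}}$ and $I$ the inclusion (a quotient vector bundle regarded as a pseudo bundle by forgetting the continuity of $\kappa$ and of $f^\sharp$), I would exhibit the natural bijection directly. For $\mathcal A=(X,A,\kappa_A)$ in $\PsiQVBun^*$ and $\mathcal B=(Y,B,\kappa_B)$ in $\QVBun^*$, unwinding both sides shows that $\operatorname{Hom}_{\QVBun^*}(T\mathcal A,\mathcal B)$ and $\operatorname{Hom}_{\PsiQVBun^*}(\mathcal A,I\mathcal B)$ are the \emph{same} set: in each case a pair $(f_\flat,f^\sharp)$ with $f_\flat\colon X\to Y$ continuous, $f^\sharp\colon B\to A$ linear, and~\eqref{eq:morQVBun} holding, the only apparent difference being continuity of $f^\sharp$, which is free on the left (target $A_{\mathrm{triv}}$) and not demanded on the right. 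Taking this common identification as the adjunction isomorphism makes naturality and the triangle identities automatic. The unit $\eta_{\mathcal A}\colon\mathcal A\to IT\mathcal A$ is then $(\ident_X,\ident_A)$ with $\ident_A\colon A_{\mathrm{triv}}\to A$; it is strict with $f_\flat,f^\sharp$ bijective, hence an isomorphism by Lemma~\ref{QVBunfisostrict}, as one expects of a fully faithful left adjoint.

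Finally, for the equivalence I would note that $T$ lands, on the nose, in the full subcategory $\mathcal C\subset\QVBun^*$ of triples $(X,A,\kappa)$ with $A$ indiscrete, and that every object of $\mathcal C$ equals $T$ applied to itself viewed as a pseudo bundle, so the corestriction $T'\colon\PsiQVBun^*\to\mathcal C$ is surjective on objects. Since $\mathcal C$ is full in $\QVBun^*$, its hom-sets agree with those of $\QVBun^*$, so the fully faithfulness of $T$ descends to $T'$; a fully faithful functor that is (here literally) surjective on objects is an equivalence, with quasi-inverse the restriction of $I$ to $\mathcal C$. The only step requiring genuine care --- the ``obstacle,'' such as it is --- is the verification that the lower Vietoris topology collapses to the indiscrete one (together with $A_{\mathrm{triv}}$ being an admissible locally convex space); after that the statement is entirely formal.
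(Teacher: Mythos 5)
Your proof is correct and follows essentially the same route as the paper, whose entire argument is the observation that any linear map into $A^{\mathrm{triv}}$ is continuous (giving the adjunction and full faithfulness) together with the natural isomorphism $\mathcal A\cong\mathcal A^{\mathrm{triv}}$ in $\PsiQVBun^*$ given by identity maps (giving the equivalence, which you phrase equivalently as full faithfulness plus surjectivity on objects). The one thing you add beyond the paper's proof is the explicit check that the lower Vietoris topology on $\Sub A$ collapses to the indiscrete one, so that $\mathcal A^{\mathrm{triv}}$ is indeed a quotient vector bundle --- a well-definedness point the paper asserts without proof just before the lemma, and worth making explicit.
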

\begin{proof}
  The adjunction holds since, if $A^{\mathrm{triv}}$ has the trivial topology, any map $B\to A^{\mathrm{triv}}$ is continuous.
  To prove the equivalence of the categories observe that for any $\mathcal A=(X,A,\kappa)\in\PsiQVBun^*$ 
  we have a natural isomorphism $\mathcal A\cong\mathcal A^{\mathrm{triv}}$
  in $\PsiQVBun^*$ defined by the identity maps $X\to X$ and $A\to A$.\qedhere
\end{proof}

We say a pseudo quotient vector bundle $\mathcal A=(X,A,\kappa)$ is spectral if the quotient vector bundle $\mathcal A^{\mathrm{triv}}$ is spectral.
This is equivalent to saying that $\mathcal A^{\mathrm{triv}}$ has the open support property.
We represent the category of spectral pseudo quotient vector bundles by $\PsiQVBun^*_\Sigma$.
We define the support and restriction maps of $\mathcal A$ as the support and restriction maps
of the bundle $\mathcal A^{\mathrm{triv}}$ in equation~\eqref{eq:defnsupportrestriction}
by identifying $\Sub A$ with $\Sub A^{\mathrm{triv}}$.

\begin{lemma}\label{lemmaOpenSupportiffpseudo}
	A pseudo quotient vector bundle $\mathcal A=(X,A,\kappa)$ is spectral
	if and only if $\sigma(V)=\kappa^{-1}(\mathcal U_{V,1})$ for any $V\in\Sub A$.
\end{lemma}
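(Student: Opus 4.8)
The plan is to reduce the statement to Lemma~\ref{lemmaOpenSupportiff} through the trivialization construction $\mathcal A\mapsto\mathcal A^{\mathrm{triv}}$. By definition $\mathcal A$ is spectral exactly when the genuine quotient vector bundle $\mathcal A^{\mathrm{triv}}$ is spectral, and, as recorded just before the statement, this is in turn equivalent to $\mathcal A^{\mathrm{triv}}$ having the open support property. (The continuity of the spectral kernel is automatic here because the lower Vietoris topology on $\Sub A^{\mathrm{triv}}$ is trivial: with the indiscrete topology on $A$ the only open sets are $\emptyset$ and $A$, and since every subspace contains $0$ we get $\widetilde\emptyset=\emptyset$ and $\widetilde A=\Sub A$.) Thus the whole content of the lemma is already carried by Lemma~\ref{lemmaOpenSupportiff}, applied to the quotient vector bundle $\mathcal A^{\mathrm{triv}}$, and it only remains to check that the displayed condition is literally the same for $\mathcal A$ and for $\mathcal A^{\mathrm{triv}}$.

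First I would record that each of the three ingredients of the condition $\sigma(V)=\kappa^{-1}(\mathcal U_{V,1})$ is insensitive to the topology on $A$. The lattice $\Sub A$ and $\Sub A^{\mathrm{triv}}$ coincide, since the subspaces of a vector space do not see the topology, so $\mathcal U_{V,1}=\{W\in\Sub A:V\not\subset W\}$ is unchanged; the kernel map $\kappa$ is identical for the two bundles, hence so is $\kappa^{-1}(\mathcal U_{V,1})$. For the support map the identity is immediate: the support and restriction maps of $\mathcal A$ were \emph{defined} to be those of $\mathcal A^{\mathrm{triv}}$ under the identification $\Sub A=\Sub A^{\mathrm{triv}}$, and this definition is legitimate precisely because $\supp^\circ\widehat a=\interior\{x\in X:a\notin\kappa(x)\}$ in~\eqref{eq:defnsupportrestriction} refers only to the topology of $X$ and to $\kappa$, not to the topology of $A$.

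With these identifications in hand the argument closes by chaining equivalences: the equation $\sigma(V)=\kappa^{-1}(\mathcal U_{V,1})$ holds for all $V\in\Sub A$ for $\mathcal A$ if and only if it holds for all $V\in\Sub A^{\mathrm{triv}}$ for $\mathcal A^{\mathrm{triv}}$; by Lemma~\ref{lemmaOpenSupportiff} this is equivalent to $\mathcal A^{\mathrm{triv}}$ enjoying the open support property; and that, by definition, is equivalent to $\mathcal A$ being spectral. I expect no real obstacle: the only point requiring a sentence of care is the verification that trivializing the topology of $A$ leaves $\sigma$, $\kappa^{-1}(\mathcal U_{V,1})$, and the open support property untouched, and this is immediate from the definitions.
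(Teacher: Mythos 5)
Your proof is correct and takes essentially the same route as the paper: the paper's own proof is the one-line remark that the statement ``follows immediately from Lemma~\ref{lemmaOpenSupportiff}'' applied to $\mathcal A^{\mathrm{triv}}$, relying on exactly the definitional identifications you spell out (spectrality of a pseudo bundle defined via $\mathcal A^{\mathrm{triv}}$, and the support map of $\mathcal A$ defined as that of $\mathcal A^{\mathrm{triv}}$ under $\Sub A=\Sub A^{\mathrm{triv}}$). Your parenthetical check that the lower Vietoris topology on $\Sub A$ is indiscrete when $A$ carries the trivial topology is a correct justification of the paper's unproved assertion that, for pseudo bundles, spectrality reduces to the open support property alone.
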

\begin{proof}
	It follows immediately from Lemma~\ref{lemmaOpenSupportiff}.\qedhere
\end{proof}

\subsection{Pseudo linearized locales and adjunction}

A pseudo linearized locale is a triple $(L,A,\gamma)$ where $L$ is a locale, 
$A$ is a locally convex topological vector space
and $\gamma\colon L\to\Sub A$ is an inf-lattice homomorphism. 
To a pseudo linearized locale $\mathfrak A=(L,A,\gamma)$ we associate a linearized locale $\mathfrak A^{\mathrm{triv}}$ 
obtained by giving $A$ the trivial topology. A morphism of pseudo linearized locales $\mathfrak A\to\mathfrak B$
is a morphism $\mathfrak A^{\mathrm{triv}}\to\mathfrak B^{\mathrm{triv}}$ in $\LLoc^*$.
We let $\PsiLLoc^*$ denote the category of pseudo linearized locales.

Consider the adjunction $\boldsymbol\Omega\dashv\boldsymbol\Sigma$ from 
section~\ref{sec:prelimlinloc}. 

\begin{theorem}
The adjunction $\boldsymbol\Omega\dashv\boldsymbol\Sigma$ extends to an adjunction between the categories
$\PsiQVBun^*_\Sigma$ and $\PsiLLoc^*$.
\end{theorem}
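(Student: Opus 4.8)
The plan is to reduce the statement to the original adjunction $\boldsymbol\Omega\dashv\boldsymbol\Sigma$ of \cite[Theorems~5.15 and~5.19]{ReSa16} by restricting it along the trivialisation functors. The guiding observation is that when $A$ carries the trivial topology the lower Vietoris topology on $\Sub A$ is indiscrete (its only open sets are $\emptyset$ and $\Sub A$), so every map into $\Sub A$ and every inf-lattice homomorphism $L\to\Sub A$ automatically satisfies the continuity requirement. Consequently the trivialisation functors embed $\PsiQVBun^*_\Sigma$ and $\PsiLLoc^*$ as the full subcategories of $\QVBun^*_\Sigma$ and $\LLoc^*$ consisting of those objects whose vector space carries the trivial topology: the equivalence for $\PsiQVBun^*$ was recorded in the lemma above (and restricts to the spectral objects, since a pseudo bundle is spectral exactly when its trivialisation has the open support property), while the case of $\PsiLLoc^*$ is immediate, its morphisms being by definition the morphisms of the trivialisations.

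First I would define the functors on the pseudo categories in the evident way, by $\boldsymbol\Omega(X,A,\kappa)=(\Omega X,A,\gamma)$ with $\gamma$ the restriction map, and $\boldsymbol\Sigma(L,A,\gamma)=(\Sigma L,A,\kfrak)$ with $\kfrak$ the spectral kernel, acting on morphisms exactly as in $\LLoc^*$ and $\QVBun^*_\Sigma$ through the trivialisation. These are well defined: $\gamma$ is an inf-lattice homomorphism by the adjunction $\sigma\dashv\gamma$, so $\boldsymbol\Omega(X,A,\kappa)$ is a pseudo linearized locale, and $\boldsymbol\Sigma(L,A,\gamma)^{\mathrm{triv}}=\boldsymbol\Sigma(L,A^{\mathrm{triv}},\gamma)$ lies in $\QVBun^*_\Sigma$, hence has the open support property, so $\boldsymbol\Sigma(L,A,\gamma)$ is a spectral pseudo quotient vector bundle by Lemma~\ref{lemmaOpenSupportiffpseudo}.

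The crucial point is that $\boldsymbol\Omega$ and $\boldsymbol\Sigma$ leave the topological vector space $A$ untouched, so under the trivialisation they coincide with the original functors restricted to the full subcategories of trivial-topology objects, and these subcategories are preserved by both functors. Therefore the natural hom-set bijection of the original adjunction restricts: for $\mathcal A\in\PsiQVBun^*_\Sigma$ and $\mathfrak B\in\PsiLLoc^*$ the bijection
\[
\LLoc^*\bigl(\boldsymbol\Omega(\mathcal A^{\mathrm{triv}}),\mathfrak B^{\mathrm{triv}}\bigr)\;\cong\;\QVBun^*_\Sigma\bigl(\mathcal A^{\mathrm{triv}},\boldsymbol\Sigma(\mathfrak B^{\mathrm{triv}})\bigr)
\]
is between objects all lying in the respective full subcategories, and by full faithfulness of the trivialisation its two sides coincide with the pseudo hom-sets. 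This bijection, natural in both arguments, is precisely the asserted adjunction on $\PsiQVBun^*_\Sigma$ and $\PsiLLoc^*$.

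Since the whole argument amounts to restricting an adjunction along full and faithful embeddings that are stable under the two functors, there is no genuine obstacle beyond bookkeeping; the only step requiring care is checking that the trivialisation really identifies the pseudo categories with the trivial-topology full subcategories. In particular one must confirm that the spectrality condition for pseudo bundles matches the (automatically satisfied) continuity of the spectral kernel in the trivial-topology case, which is exactly the content of the discussion preceding Lemma~\ref{lemmaOpenSupportiffpseudo}.
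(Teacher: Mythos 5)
Your proposal is correct and follows essentially the same route as the paper: both reduce the statement to the adjunction of \cite[Theorems~5.15 and~5.19]{ReSa16} by passing through the trivialisation functors, using that $\boldsymbol\Omega$ and $\boldsymbol\Sigma$ leave the vector space untouched (so that $\boldsymbol\Omega(\mathcal A^{\mathrm{triv}})=(\boldsymbol\Omega\mathcal A)^{\mathrm{triv}}$ and $\boldsymbol\Sigma(\mathfrak A^{\mathrm{triv}})=(\boldsymbol\Sigma\mathfrak A)^{\mathrm{triv}}$) and then transferring the hom-set bijection across these equivalences. Your explicit observation that the lower Vietoris topology on $\Sub A$ is indiscrete when $A$ has the trivial topology is a useful clarification of why all continuity conditions become vacuous, but it does not change the structure of the argument.
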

\begin{proof}
The functors $\boldsymbol\Omega$ and $\boldsymbol\Sigma$ extend to functors
$\PsiQVBun^*\to\PsiLLoc^*$ and $\PsiLLoc^*\to\PsiQVBun^*$ respectively
in the obvious way: given a pseudo quotient vector bundle $(X,A,\kappa)$, we let
$\boldsymbol\Omega(X,A,\kappa)=(\Omega X,A,\gamma)$
and given a pseudo linearized locale $(L,A,\gamma)$ we let $\Sigma(L,A,\gamma)=(\Sigma L,A,\kfrak)$.
To define the functor on morphisms just recall that
a morphism $\mathcal A\to\mathcal B$ is a morphism $\mathcal A^{\text{triv}}\to\mathcal B^{\text{triv}}$
in $\QVBun^*$ and similarly for linearized locales.

	We represent by $\QVBun^*_{\mathrm{triv},\Sigma}$ and $\LLoc^*_{\mathrm{triv}}$ respectively the images of
	$\PsiQVBun^*_\Sigma$ and $\PsiLLoc^*$ under the functors
	$\mathcal A\mapsto\mathcal A^{\mathrm{triv}}$ and $\mathfrak A\mapsto\mathfrak A^{\mathrm{triv}}$.
The adjunction $\boldsymbol\Omega\dashv\boldsymbol\Sigma$ between the categories $\QVBun^*_\Sigma$ and $\LLoc^*$
restricts to an adjunction between $\QVBun^*_{\mathrm{triv},\Sigma}$ and $\LLoc^*_{\mathrm{triv}}$. 
Noting that $\boldsymbol\Omega(\mathcal A^{\mathrm{triv}})=(\boldsymbol\Omega\mathcal A)^{\mathrm{triv}}$
and $\boldsymbol\Sigma(\mathfrak A^{\mathrm{triv}})=(\boldsymbol\Sigma\mathfrak A)^{\mathrm{triv}}$
we obtain the commutative diagrams
\[
\xymatrix{
	\PsiQVBun^*_\Sigma              \ar@<1ex>[r]^-{\boldsymbol\Omega} \ar[d]^{\text{triv}}_{\cong} &
	\PsiLLoc_*                      \ar@<1ex>[l]^-{\Sigma} \ar[d]^{\text{triv}}_{\cong} \\
	\QVBun^*_{\mathrm{triv},\Sigma} \ar@<1ex>[r]^-{\boldsymbol\Omega}\ar@{}[r]|-{\perp} &
	\LLoc^*_{\mathrm{triv}}         \ar@<1ex>[l]^-{\Sigma} }
\]
which completes the proof.\qedhere
\end{proof}

\subsection{Coarse topologies on \boldmath $\Sub A$}

\subsubsection{The coarse open support (COS) topology}

Recall the definition of $\mathcal U_{V_0,V_1}\subset\Sub A$ in equation~\eqref{eq:mathcalU}.
We define the \emph{coarse open support topology}
on $\Sub A$ as the coarsest topology for which all sets $\CHECK a=\mathcal U_{{\linspan{a}},1}$ are open sets.
This coincides with the open support topology in \cite{ReSa16} when $A$ has the trivial topology.
Recall the definition of $\mathcal U_{V_0,V_1}\subset\Sub A$ in equation~\eqref{eq:mathcalU}.

\begin{lemma}\label{sigma=kappa-1circsobopensupp}\label{opensupporttopology}
  The space $\Sub A$ with the COS topology is a sober space
  and its soberification map is given by $\sob(V)=\mathcal U_{V,1}$.
\end{lemma}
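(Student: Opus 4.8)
The plan is to prove both assertions simultaneously by showing that the soberification map $\sob\colon\Sub A\to\Sigma\Omega\Sub A$ is a bijection carrying $V$ to $\mathcal U_{V,1}$. Since $\sob$ is always continuous and satisfies $\sob^{-1}(U_a)=a$ on the basic opens $U_a$ of $\Sigma\Omega\Sub A$, a bijective $\sob$ is automatically a homeomorphism, which is exactly the assertion of sobriety. So once bijectivity and the formula are established, nothing further is needed.

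First I would pin down the specialization structure of the COS topology. Each subbasic open $\CHECK a=\{W:a\notin W\}$ is downward closed under inclusion, since $a\notin W$ and $W'\subseteq W$ force $a\notin W'$; hence every open set is downward closed and every closed set is upward closed. Testing closure membership on the subbasis then gives $\overline{\{V\}}=\{W\in\Sub A:V\subseteq W\}$: indeed $W\in\overline{\{V\}}$ iff every subbasic open containing $W$ contains $V$, i.e. iff $a\notin W\Rightarrow a\notin V$ for all $a$, i.e. iff $V\subseteq W$. This immediately yields the formula, since
\[
\sob(V)=\Sub A\setminus\overline{\{V\}}=\{W:V\not\subseteq W\}=\mathcal U_{V,1},
\]
where $\mathcal U_{V,1}$ is open by Lemma~\ref{lemma:bigcupCHECKa} (it equals $\bigcup_{a\in V}\CHECK a$). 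Injectivity of $\sob$, i.e. the $T_0$ property, is then immediate: the complement $\{W:V\subseteq W\}$ of $\mathcal U_{V,1}$ has least element $V$, so $V$ is recovered from $\sob(V)$.

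The substantive step is surjectivity: every prime open $P$ must be shown to equal $\sob(V)$ for some $V$. The idea is that although the sets $\CHECK a$ form only a subbasis (their finite intersections are genuinely needed to generate the topology), primeness forces $P$ to be a union of subbasic opens. Concretely, given $W\in P$, choose a basic open $\bigcap_{i=1}^n\CHECK{a_i}\subseteq P$ containing $W$; the finite-meet form of primeness of $P$ then yields an index $i$ with $\CHECK{a_i}\subseteq P$, and since $W$ lies in the whole intersection we get $W\in\CHECK{a_i}\subseteq P$. Hence $P=\bigcup\{\CHECK a:\CHECK a\subseteq P\}$, which by Lemma~\ref{lemma:bigcupCHECKa} equals $\mathcal U_{\linspan C,1}$ for $C=\{a:\CHECK a\subseteq P\}$, so $P=\mathcal U_{\linspan C,1}=\sob(\linspan C)$.

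I expect the main obstacle to be exactly this reduction of a prime open to a union of subbasic opens: because the subbasis is not a basis one cannot argue directly, and the crux is that primeness collapses each finite intersection $\bigcap_i\CHECK{a_i}\subseteq P$ to a single $\CHECK{a_i}\subseteq P$, which is what makes Lemma~\ref{lemma:bigcupCHECKa} applicable. The only remaining points to check are degenerate, e.g. $P=\emptyset$, which corresponds to $V=0$ (so $\emptyset=\mathcal U_{0,1}=\sob(0)$, consistent with $0$ lying in every nonempty open and hence being the generic point of the whole space). Combining injectivity and surjectivity shows $\sob$ is a bijection, hence a homeomorphism, so $\Sub A$ with the COS topology is sober with soberification map $V\mapsto\mathcal U_{V,1}$.
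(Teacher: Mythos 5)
Your proof is correct and follows essentially the same route as the paper's: establish the formula $\sob(V)=\mathcal U_{V,1}$ via the specialization order, get injectivity from the $T_0$ property, and get surjectivity by writing a prime open set as a union of the subbasic sets $\CHECK a$ and invoking Lemma~\ref{lemma:bigcupCHECKa}. The only difference is that you make explicit the step the paper merely asserts ("any prime open set is the union of subbasic open sets"), namely that primeness collapses each finite intersection $\bigcap_i\CHECK a_i\subseteq P$ to a single $\CHECK a_i\subseteq P$.
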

\begin{proof}
  We first prove that $\sob(V)=\mathcal U_{V,1}$. From Lemma~\ref{lemma:bigcupCHECKa} the set $\mathcal U_{V,1}=\bigcup_{a\in V}\CHECK a$
  is open and $V\notin\mathcal U_{V,1}$
  so we must have $\mathcal U_{V,1}\subset\sob(V)$. Conversely, if $W\notin\mathcal U_{V,1}$ then
  $V\subset W$. It follows that any neighbourhood of $W$ is also a neighbourhood of $V$ so $W\notin\sob(V)$.

  Now we show that the map $\sob$ is an isomorphism.
  Any prime open set is the union of subbasic open sets so from Lemma~\ref{lemma:bigcupCHECKa}
  it must be of the form $\mathcal U_{V,1}=\sob(V)$ for some $V\in\Sub A$.
  This shows surjectivity. To show injectivity we just need to check that
  the space $\Sub A$ is $T_0$. If $V\neq W$ we may assume without loss of generality that there is some $a\in V\setminus W$.
  Then $W\in\CHECK a$ but $V\notin\CHECK a$. So $\sob$ is also injective, which completes the proof.\qedhere
\end{proof}

\begin{corollary}\label{cor:sigma=kappa-1circsobopensupp}
  Suppose $\Sub A$ has the coarse open support topology,
  let $\sob$ be the soberification map and let $i_\Sigma\colon\Sigma\Omega\Sub A\to\Omega\Sub A$ be the inclusion.
  Then a pseudo quotient vector bundle $(X,A,\kappa)$
  has the open support property if and only if its support map $\sigma\colon\Sub A\to\Omega X$ is given by
  $\sigma=\kappa^{-1}\circ i_\Sigma\circ\sob$.
\end{corollary}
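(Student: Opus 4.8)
The plan is to reduce the statement to Lemma~\ref{lemmaOpenSupportiffpseudo}, the entire content being the explicit identification of the composite $\kappa^{-1}\circ i_\Sigma\circ\sob$ with the assignment $V\mapsto\kappa^{-1}(\mathcal U_{V,1})$. Once that identification is made, the corollary is a pointwise reformulation of the lemma.

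First I would unwind the three maps in turn. By Lemma~\ref{opensupporttopology} the soberification map of $\Sub A$ equipped with the COS topology is $\sob(V)=\mathcal U_{V,1}$, viewed as a point of $\Sigma\Omega\Sub A$, that is, as a prime element of the locale $\Omega\Sub A$. The inclusion $i_\Sigma\colon\Sigma\Omega\Sub A\to\Omega\Sub A$ carries each prime element back to the corresponding open set, so $(i_\Sigma\circ\sob)(V)=\mathcal U_{V,1}\in\Omega\Sub A$. Applying the inverse-image function $\kappa^{-1}$ then gives $(\kappa^{-1}\circ i_\Sigma\circ\sob)(V)=\kappa^{-1}(\mathcal U_{V,1})=\{x\in X:V\not\subset\kappa(x)\}$ for every $V\in\Sub A$. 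With this in hand, the equation $\sigma=\kappa^{-1}\circ i_\Sigma\circ\sob$ becomes, read pointwise, exactly the condition $\sigma(V)=\kappa^{-1}(\mathcal U_{V,1})$ for all $V\in\Sub A$. By Lemma~\ref{lemmaOpenSupportiffpseudo} this condition is equivalent to $\mathcal A$ being spectral; since spectrality of a pseudo quotient vector bundle is by definition the open support property of $\mathcal A^{\mathrm{triv}}$, and that property depends only on $\kappa$ and the topology of $X$ (not on the topology of $A$), it coincides with $\mathcal A$ itself having the open support property, and the corollary follows.

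The one point I expect to need care is the codomain of the composite. The support map $\sigma$ is $\Omega X$-valued by construction, whereas a priori $\kappa^{-1}\circ i_\Sigma\circ\sob$ is only $\wp(X)$-valued, so the asserted equality already encodes the openness of every set $\kappa^{-1}(\mathcal U_{V,1})$. I would therefore read the equation as one of functions into $\wp(X)$ and let Lemma~\ref{lemmaOpenSupportiffpseudo} supply this openness in the forward direction and consume it in the converse. Beyond this bookkeeping no genuine obstacle arises, the result being essentially a pointfree restatement of Lemma~\ref{lemmaOpenSupportiffpseudo}.
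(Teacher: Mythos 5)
Your proof is correct and follows exactly the paper's route: the paper's own proof simply cites Lemma~\ref{lemmaOpenSupportiffpseudo} together with Lemma~\ref{opensupporttopology}, and your argument is the explicit unwinding of that combination, identifying $\kappa^{-1}\circ i_\Sigma\circ\sob$ with $V\mapsto\kappa^{-1}(\mathcal U_{V,1})$ and then invoking the lemma. Your added care about the codomain ($\wp(X)$ versus $\Omega X$) and about spectrality versus the open support property being independent of the topology on $A$ is sound bookkeeping that the paper leaves implicit.
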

\begin{proof}
  It follows immediately from Lemmas~\ref{lemmaOpenSupportiffpseudo} and~\ref{opensupporttopology}.\qedhere
\end{proof}

\subsubsection{The dual COS topology on \boldmath $\Max A$}

We call dual COS topology to the topology
$\mathcal T$ on $\Max A$ generated by the sets $\mathcal U_{0,V}$, with $V\in \Max A$.

\begin{lemma}\label{le:F0,Vinflatticehom}
  For each $V\in\Sub A$ let
  \[
  \mathcal F_{0,V}=\Sub A\setminus\mathcal U_{0,V}=\{W\in \Sub A:W\subset V\}
  \]
  Then for any family $\{V_\alpha\}$ in $\Sub A$ we have
  \begin{equation}
    \mathcal F_{0,\bigcap_\alpha V_\alpha}=\bigcap_\alpha\mathcal F_{0,V_\alpha}\,.
  \end{equation}
\end{lemma}
\begin{proof}
  It is enough to observe that for any $W\in\Sub A$, we have
  $W\subset\bigcap_\alpha V_\alpha$ if and only if $W\subset V_\alpha$ for any $\alpha$.\qedhere
\end{proof}

\begin{lemma}\label{lemma:sob=sob}
  The space $\Max A$ with the topology $\mathcal T$ is sober, with soberification map given by $\sob(V)=\mathcal U_{0,V}$.
\end{lemma}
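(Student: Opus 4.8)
The plan is to follow the same two-step pattern as the proof of Lemma~\ref{opensupporttopology}: first identify the closure of a point (equivalently, verify the formula $\sob(V)=\mathcal U_{0,V}$), and then show that $\sob\colon\Max A\to\Sigma\Omega\Max A$ is a bijection. Since $\sob$ is always a continuous map whose inverse image sends each basic open set back to the open set it came from, bijectivity will upgrade to a homeomorphism exactly as in Lemma~\ref{opensupporttopology}, giving soberness.

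First I would compute, for $V\in\Max A$, the closure $\overline{\{V\}}$ in the topology $\mathcal T$ and show it equals $\mathcal F_{0,V}\cap\Max A=\{W\in\Max A:W\subset V\}$; equivalently $\sob(V)=\Max A\setminus\overline{\{V\}}=\mathcal U_{0,V}$. For the inclusion $\mathcal U_{0,V}\subset\sob(V)$, note that $\mathcal U_{0,V}$ is open and $V\notin\mathcal U_{0,V}$ (because $V\subset V$), so $\mathcal U_{0,V}$ is an open neighbourhood of each of its points that misses $V$. For the reverse inclusion, if $W\subset V$ then any subbasic neighbourhood $\mathcal U_{0,V'}$ of $W$ satisfies $W\not\subset V'$, whence $V\not\subset V'$ (otherwise $W\subset V\subset V'$); thus $V\in\mathcal U_{0,V'}$, so every neighbourhood of $W$ contains $V$ and $W\in\overline{\{V\}}$. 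In particular the specialization order of $(\Max A,\mathcal T)$ is inclusion.

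Next I would prove bijectivity. Injectivity is the $T_0$ property: if $V\neq W$ in $\Max A$ then, say, $V\not\subset W$, so $\mathcal U_{0,W}$ is an open set containing $V$ but not $W$. Surjectivity amounts to showing every prime open set $P$ has the form $\mathcal U_{0,V'}=\sob(V')$. Writing $\mathcal F=\Max A\setminus P$, the primeness of $P$ says exactly that $\mathcal F$ is a nonempty irreducible closed set. Since $\mathcal T$ has the sets $\mathcal U_{0,V}$ as a subbasis, $\mathcal F$ is an intersection $\bigcap_j B_j$ of basic closed sets, each of the form $B_j=\bigcup_{i\in I_j}\bigl(\mathcal F_{0,V_{j,i}}\cap\Max A\bigr)$ with $I_j$ finite. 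Irreducibility of $\mathcal F$ forces $\mathcal F\subset\mathcal F_{0,V_j}\cap\Max A$ for some $V_j\in\{V_{j,i}\}_{i\in I_j}$, for each $j$. Setting $V'=\bigcap_j V_j\in\Max A$, Lemma~\ref{le:F0,Vinflatticehom} gives $\bigcap_j\mathcal F_{0,V_j}=\mathcal F_{0,V'}$, so $\mathcal F\subset\mathcal F_{0,V'}\cap\Max A$; conversely $\mathcal F_{0,V'}\subset\mathcal F_{0,V_j}\subset B_j$ for every $j$ (as $V'\subset V_j$), hence $\mathcal F_{0,V'}\cap\Max A\subset\bigcap_jB_j=\mathcal F$. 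Thus $\mathcal F=\mathcal F_{0,V'}\cap\Max A=\overline{\{V'\}}$ and $P=\sob(V')$.

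The main obstacle is the surjectivity step. The difficulty is that, unlike the COS topology (where Lemma~\ref{lemma:bigcupCHECKa} collapses arbitrary \emph{unions} of subbasic opens, making every prime open visibly subbasic), here it is arbitrary \emph{intersections} of the subbasic closed sets $\mathcal F_{0,V}$ that collapse (Lemma~\ref{le:F0,Vinflatticehom}), while the basic closed sets are only \emph{finite unions} of such $\mathcal F_{0,V}$. The argument therefore hinges on first using irreducibility to replace each finite union $B_j$ by a single $\mathcal F_{0,V_j}$, and only then collapsing the resulting intersection. I would also note the degenerate case $P=\emptyset$ (where the empty meet yields $V'=A$, recovering $\mathcal F=\Max A=\overline{\{A\}}$) and confirm that $V'=\bigcap_j V_j$ lies in $\Max A$, being an intersection of closed subspaces.
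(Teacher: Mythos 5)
Your proof is correct and follows essentially the same route as the paper's: both classify the prime open sets as exactly the sets $\mathcal U_{0,V}$ using Lemma~\ref{le:F0,Vinflatticehom}, identify $\sob(V)=\mathcal U_{0,V}$, and obtain injectivity from the $T_0$ property. The only difference is one of detail: where the paper simply asserts that every prime open set is a union of subbasic open sets, you justify the dual statement carefully, using irreducibility of the complementary closed set to replace each finite union of subbasic closed sets by a single one before collapsing the intersection — a worthwhile filling-in of the step the paper leaves terse.
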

\begin{proof}
	First we observe that any prime open set is of the form $\mathcal U_{0,V}$ for some $V\in\Max A$.
	This follows since any prime open set is the union of subbasic open sets so by Lemma~\ref{le:F0,Vinflatticehom}
	it must be of the form $\bigcup_\alpha\mathcal U_{0,V_\alpha}=\mathcal U_{0,\bigcap_\alpha V_\alpha}$.
	Now $\sob(V)=\Max A\setminus\overline{\{V\}}=\mathcal U_{0,W}$ for some
	$W\in \Max A$ so clearly we must have $\sob(V)=\mathcal U_{0,V}$. 
	To complete the proof we just need to observe that the map $\sob(V)$ is clearly injective.\qedhere
\end{proof}

In particular, the soberification map of the topology $\mathcal T$ coincides with the soberification map of the lower Vietoris topology (equation~\eqref{eq:soblV(V)=U_0,V}) so we will represent both of them by $\sob_{\Max A}$.

\subsection{Bundles with Hausdorff fibers}

We say a pseudo quotient vector bundle has Hausdorff fibers if its kernel map factors through $\Max A$.
We represent by $\PsiQVBun^{1*}$ the subcategory of $\PsiQVBun^*$
whose objects are the pseudo quotient vector bundles with Hausdorff fibers
and whose morphisms $(X,A,\kappa_A)\to(Y,B,\kappa_B)$ are the pairs $(f_\flat,f^\sharp)$ such that $(f^\sharp)^{-1}(V)\in\Max B$ for every $V\in\Max A$.
We write $\PsiQVBun_\Sigma^{1*}$ for the full subcategory of $\PsiQVBun^{1*}$ whose objects are the
spectral pseudo quotient vector bundles.

\begin{lemma}
The adjunction $\boldsymbol\Omega\dashv\boldsymbol\Sigma$ restricts to an adjunction between 
$\PsiQVBun_\Sigma^{1*}$ and $\PsiLLoc^{1*}$.
\end{lemma}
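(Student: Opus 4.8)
The plan is to verify the three conditions under which an adjunction restricts to a pair of (not necessarily full) subcategories: both functors must preserve objects, both must preserve the restricted classes of morphisms, and the components of the unit and counit at objects of the subcategories must themselves be morphisms of the subcategories. Two of these are formal. On arrows both $\boldsymbol\Omega$ and $\boldsymbol\Sigma$ leave the linear part unchanged, sending $(f_\flat,f^\sharp)$ and $(\underline f,\overline f)$ to arrows whose linear part is again $f^\sharp$ (respectively $\overline f$); since the condition singling out the morphisms of $\PsiQVBun^{1*}$ and of $\PsiLLoc^{1*}$, namely $(f^\sharp)^{-1}(V)\in\Max B$ for every $V\in\Max A$, bears only on this linear part, it transfers verbatim. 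Similarly the unit $\mathcal A\to\boldsymbol\Sigma\boldsymbol\Omega\mathcal A$ and the counit $\boldsymbol\Omega\boldsymbol\Sigma\mathfrak A\to\mathfrak A$ have identity linear part, and $\ident_A^{-1}(V)=V$, so their components automatically lie in the subcategories.

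The substantive point is that $\boldsymbol\Omega$ preserves objects, i.e.\ that the restriction map of a spectral bundle with Hausdorff fibers again factors through $\Max A$. First I would record a pointwise formula for $\gamma$. For $\mathcal A=(X,A,\kappa)$ with the open support property we have $\supp^\circ\widehat a=\{x\in X:a\notin\kappa(x)\}$, and combining the identity $\sigma(\linspan a)=\supp^\circ\widehat a$ with the adjunction $\sigma\dashv\gamma$ gives
\[
a\in\gamma(U)\iff\linspan a\subset\gamma(U)\iff\sigma(\linspan a)\subset U\iff\supp^\circ\widehat a\subset U.
\]
Reading the rightmost condition pointwise, $a\in\gamma(U)$ exactly when $a\in\kappa(x)$ for every $x\in X\setminus U$, that is,
\[
\gamma(U)=\bigcap_{x\in X\setminus U}\kappa(x).
\]

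If $\mathcal A\in\PsiQVBun_\Sigma^{1*}$ then each $\kappa(x)$ is a closed subspace, so the intersection above is closed and $\gamma(U)\in\Max A$ for every $U\in\Omega X$. Hence $\boldsymbol\Omega\mathcal A=(\Omega X,A,\gamma)$ has its homomorphism $\gamma$ factoring through $\Max A$, which is the object condition for $\PsiLLoc^{1*}$, so $\boldsymbol\Omega\mathcal A\in\PsiLLoc^{1*}$. The reverse preservation is immediate: by the unrestricted adjunction $\boldsymbol\Sigma(L,A,\gamma)=(\Sigma L,A,\kfrak)$ is already spectral, and since $\kfrak=\gamma|_{\Sigma L}$ factors through $\Max A$ whenever $\gamma$ does, it has Hausdorff fibers and so lands in $\PsiQVBun_\Sigma^{1*}$. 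Assembling object preservation, morphism preservation, and the restriction of unit and counit then yields the desired adjunction.

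I expect the only genuine obstacle to be the closed-valuedness of $\gamma$ established above; the intersection formula is precisely what reduces it to the hypothesis that each fibre $\kappa(x)$ is closed, and once it is in hand the remaining verifications are purely formal consequences of the functors acting trivially on the linear part.
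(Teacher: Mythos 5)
Your proof is correct and follows essentially the same skeleton as the paper's, which merely asserts in one sentence that the functors $\boldsymbol\Omega$ and $\boldsymbol\Sigma$ restrict to the subcategories and that the unit and counit remain morphisms there. Your pointwise formula $\gamma(U)=\bigcap_{x\in X\setminus U}\kappa(x)$ correctly supplies the object-level verification (closedness of $\gamma(U)$ when the fibers $\kappa(x)$ are closed) that the paper leaves implicit, and your remaining checks on morphisms and on the unit and counit coincide with the paper's observations.
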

\begin{proof}
	We just need to observe that 
	the functors $\boldsymbol\Omega$ and $\boldsymbol\Sigma$ restrict to functors between the categories
	$\PsiQVBun^{1*}$ and $\PsiLLoc^{1*}$ and the unit and counit maps $\mathcal A\to\boldsymbol{\Sigma\Omega}\mathcal A$
	and $\boldsymbol{\Omega\Sigma}\mathfrak A\to\mathfrak A$ are morphisms in $\PsiQVBun^{1*}$ and $\PsiLLoc^{1*}$ respectively.\qedhere
\end{proof}

We say a pseudo quotient vector bundle $(X,A,\kappa)$ is cospectral if the map $\kappa\colon X\to\Max A$ is continuous
when $\Max A$ is given the dual COS topology $\Tt$.
We represent by $\PsiQVBun_{\mathcal T}^{1*}$ the full subcategory of
$\PsiQVBun^{1*}$ whose objects are the cospectral pseudo quotient vector bundles.

\begin{lemma}\label{lemma:k(y)subsetk(x)}
  Let $\mathcal A=(X,A,\kappa)\in\PsiQVBun^{1*}_{\mathcal T}$ and let $x\in X$. Then for any $y\in\overline{\{x\}}$ we have $\kappa(y)\subset\kappa(x)$.
\end{lemma}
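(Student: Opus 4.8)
The plan is to exploit the continuity of the kernel map $\kappa\colon X\to(\Max A,\Tt)$ together with the explicit description of the closures of points in the dual COS topology supplied by Lemma~\ref{lemma:sob=sob}. Recall that $\mathcal A$ lies in $\PsiQVBun^{1*}_{\mathcal T}$, so $\kappa$ factors through $\Max A$ and is continuous into $(\Max A,\Tt)$; in particular both $\kappa(x)$ and $\kappa(y)$ are genuine elements of $\Max A$, which is what will let me use a generator of $\Tt$ built from $\kappa(x)$.

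First I would recall, from the proof of Lemma~\ref{lemma:sob=sob}, that in $(\Max A,\Tt)$ one has $\sob(V)=\Max A\setminus\overline{\{V\}}=\mathcal U_{0,V}$, so that $\overline{\{V\}}=\{W\in\Max A:W\subset V\}$ and the specialization order of $\Tt$ is exactly reverse inclusion. The statement then reduces to the standard fact that a continuous map sends the closure of a point into the closure of its image: from $y\in\overline{\{x\}}$ we obtain $\kappa(y)\in\kappa\bigl(\overline{\{x\}}\bigr)\subset\overline{\{\kappa(x)\}}$, and unwinding the description of $\overline{\{\kappa(x)\}}$ yields $\kappa(y)\subset\kappa(x)$.

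I would, however, prefer to spell this out as a short direct contradiction, which makes the role of the dual COS topology transparent and avoids invoking the specialization order as a black box. Suppose $\kappa(y)\not\subset\kappa(x)$. Since $\kappa(x)\in\Max A$, the set $\mathcal U_{0,\kappa(x)}=\{W\in\Max A:W\not\subset\kappa(x)\}$ is one of the generators of $\Tt$, hence open. By assumption $\kappa(y)\in\mathcal U_{0,\kappa(x)}$, while $\kappa(x)\notin\mathcal U_{0,\kappa(x)}$ because $\kappa(x)\subset\kappa(x)$. Consequently $\kappa^{-1}\bigl(\mathcal U_{0,\kappa(x)}\bigr)$ is an open neighbourhood of $y$ that does not contain $x$, contradicting $y\in\overline{\{x\}}$; therefore $\kappa(y)\subset\kappa(x)$.

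The argument is short, so I do not expect a serious obstacle; the only point requiring care is that the separating open set $\mathcal U_{0,\kappa(x)}$ must be one of the subbasic sets generating $\Tt$. This is precisely where the hypothesis that $\mathcal A$ has Hausdorff fibers enters: it guarantees $\kappa(x)\in\Max A$, so that $\mathcal U_{0,\kappa(x)}$ is a legitimate generator of the dual COS topology. Without that hypothesis $\kappa(x)$ need not lie in $\Max A$ and the construction would break down.
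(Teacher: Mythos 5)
Your proposal is correct and follows essentially the same route as the paper: the paper's proof is exactly your second paragraph, namely $\kappa(y)\in\kappa\bigl(\overline{\{x\}}\bigr)\subset\overline{\{\kappa(x)\}}$ by continuity into $(\Max A,\mathcal T)$, combined with the description $\overline{\{\kappa(x)\}}=\{W\in\Max A:W\subset\kappa(x)\}$ from Lemma~\ref{lemma:sob=sob}. Your contradiction variant using the subbasic open set $\mathcal U_{0,\kappa(x)}$ is just the same argument unwound at the level of generators, and your closing remark about where the Hausdorff-fibers hypothesis enters is accurate.
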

\begin{proof}
  Suppose $\Max A$ has the dual COS topology. 
  If $y\in\overline{\{x\}}$ then $\kappa(y)\in\kappa\bigl(\overline{\{x\}}\bigr)\subset\overline{\{\kappa(x)\}}$
  and by Lemma~\ref{lemma:sob=sob}, $\overline{\{\kappa(x)\}}=\{W\in\Max A:W\subset\kappa(x)\}$ so $\kappa(y)\subset\kappa(x)$.\qedhere
\end{proof}

\section{Quotient vector bundles via colocales}\label{sec:4}

\subsection{Linearized colocales}\label{sec:deflincoloc}

\subsubsection{Colocales}

We say a lattice $L^\complement$ is a \emph{colocale} if $(L^\complement)^\opp$ is a locale. 
A map of colocales $f\colon L^\complement\to M^\complement$ is a map of locales $f\colon (L^\complement)^\opp\to (M^\complement)^\opp$, that is, an inf-lattice homomorphism
in the opposite direcion $f^*\colon M^\complement\to L^\complement$ which also preserves finite joins. As
in the case of locales, we will refer to $f^*$ as the inverse image homomorphism
and to its left adjoint $f_*\colon L^\complement\to M^\complement$ as the direct image homomorphism.
The main example of a colocale is the collection
$\clsets X$ of closed sets on a topological space $X$,
which we identify with $(\Omega X)^\opp$
through the isomorphism $\complement\colon \clsets X\xrightarrow\cong(\Omega X)^\opp$ defined by $\complement(C)=X\setminus C$.

Given a colocale $L^\complement$,
let $\Iota L^\complement$ be the set of elements $c\in L^\complement$ such that
\[
c\leq a\vee b\ \Rightarrow\ c\leq a\text{ or }c\leq b\,.
\]
If $\clsets X$ is the collection of closed subsets of $X$ then $\Iota\clsets X$ is the collection of irreducible closed sets.
Notice that as sets we have $\Iota L^\complement=\Sigma (L^\complement)^\opp$.
We define the topology of $\Iota L^\complement$ by setting $\Omega\Iota L^\complement=\Omega\Sigma (L^\complement)^\opp$. 
If for each $a\in L^\complement$ we let
\[
C_a=\{c\in \Iota L^\complement:c\leq a\}\,,
\]
then $\{C_a\}_{a\in L^\complement}$ is the collection of closed sets in $\Iota L^\complement$.

The adjunction between topological spaces and locales has its counterpart for colocales. In particular,
given a colocale $L^\complement$, to the spatialization map
$\spat_{(L^\complement)^\opp}\colon \Omega\Sigma (L^\complement)^\opp\to (L^\complement)^\opp$
there corresponds
the map of colocales:
\begin{equation}\label{eq:spatc}
  \spat_{L^\complement}^\complement\colon\clsets\Iota {L^\complement}\to {L^\complement}
\end{equation}
whose inverse image homomorphism is given by:
\begin{equation}\label{eq:spatc*=Ca}
  (\spat_{L^\complement}^\complement)^*(a)=C_a\,.
\end{equation}
Now consider the lattice $\Max A$ with the lower Vietoris topology.
Let $\sob_{\Max A}^\complement=\complement\circ\sob_{\Max A}\colon \Max A\to\Iota\clsets\Max A$.
Then, from equation~\eqref{eq:soblV(V)=U_0,V} we get
\begin{equation}\label{eq:soblVC(V)=F_0,V}
\sob_{\Max A}^\complement(V)=\mathcal F_{0,V}\,.
\end{equation}

\begin{lemma}\label{sobmeetsjoins}\label{sigmaUisinflatticehom}\label{isigmasuplatticehom}
Let $A$ be a topological vector space. The inclusion map $i_\Iota\colon\Iota\clsets\Max A\to\clsets\Max A$ 
and the map $i_\Iota\circ\sob_{\Max A}^\complement\colon\Max A\to\clsets\Max A$ are inf-lattice homomorphisms.
\end{lemma}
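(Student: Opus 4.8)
The plan is to reduce both assertions to Lemma~\ref{le:F0,Vinflatticehom}, which already records the decisive identity $\mathcal F_{0,\bigcap_\alpha V_\alpha}=\bigcap_\alpha\mathcal F_{0,V_\alpha}$. The only preparatory work is to pin down the complete-lattice structures in play: in $\Max A$ the meet of a family is its intersection, in $\clsets\Max A$ the meet is likewise the intersection, and by equation~\eqref{eq:soblVC(V)=F_0,V} the map $\sob_{\Max A}^\complement$ sends $V$ to $\mathcal F_{0,V}=\{W\in\Max A:W\subset V\}=\overline{\{V\}}$.

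First I would treat the composite $i_\Iota\circ\sob_{\Max A}^\complement\colon\Max A\to\clsets\Max A$, which by the above is simply $V\mapsto\mathcal F_{0,V}$. For a family $\{V_\alpha\}$ in $\Max A$ the meet is $\bigcap_\alpha V_\alpha$, so Lemma~\ref{le:F0,Vinflatticehom} gives
\[
(i_\Iota\circ\sob_{\Max A}^\complement)\Bigl(\bigwedge_\alpha V_\alpha\Bigr)=\mathcal F_{0,\bigcap_\alpha V_\alpha}=\bigcap_\alpha\mathcal F_{0,V_\alpha}=\bigwedge_\alpha(i_\Iota\circ\sob_{\Max A}^\complement)(V_\alpha),
\]
the last meet being the intersection in $\clsets\Max A$. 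Since the lemma covers arbitrary families (the empty one giving $\mathcal F_{0,A}=\Max A$, the top of both lattices), the composite preserves all meets, which is the claim.

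For the inclusion $i_\Iota$ I must first fix the lattice structure on $\Iota\clsets\Max A$. As $A$ is locally convex, $\Max A$ is sober by \cite[Theorem~6.16]{ReSa16}, so its irreducible closed sets are exactly the point-closures $\overline{\{V\}}=\mathcal F_{0,V}$; hence $V\mapsto\mathcal F_{0,V}$ is a bijection $\Max A\to\Iota\clsets\Max A$, and since $\mathcal F_{0,V}\subseteq\mathcal F_{0,W}$ iff $V\subset W$ it is an order isomorphism onto $\Iota\clsets\Max A$ ordered by inclusion. Lemma~\ref{le:F0,Vinflatticehom} then shows that the intersection in $\clsets\Max A$ of any family $\{\mathcal F_{0,V_\alpha}\}$ equals $\mathcal F_{0,\bigcap_\alpha V_\alpha}$, which again lies in $\Iota\clsets\Max A$. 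Thus $\Iota\clsets\Max A$ is closed under the meets of $\clsets\Max A$, so it is a complete lattice whose meets are exactly those intersections, and this is precisely the assertion that $i_\Iota$ preserves meets.

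The only real obstacle is this bookkeeping about lattice structures: one must verify that the meets of $\Iota\clsets\Max A$ are inherited verbatim from $\clsets\Max A$ rather than being defined only abstractly through the bijection with $\Max A$. Once Lemma~\ref{le:F0,Vinflatticehom} supplies closure under intersection, both homomorphism properties are immediate, and I expect no further computation.
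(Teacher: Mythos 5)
Your proof is correct and takes essentially the same route as the paper's: both parts reduce to Lemma~\ref{le:F0,Vinflatticehom} together with the identity $\sob_{\Max A}^\complement(V)=\mathcal F_{0,V}$ and the soberness of $\Max A$ (which, under the paper's standing local-convexity assumption, gives that $\sob_{\Max A}^\complement$ is an order isomorphism onto $\Iota\clsets\Max A$). Your explicit check that the meets of $\Iota\clsets\Max A$ are the intersections inherited from $\clsets\Max A$ is precisely the content the paper compresses into the remark that $\sob_{\Max A}^\complement$ is a monotone homeomorphism.
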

\begin{proof}
From Lemma~\ref{le:F0,Vinflatticehom} and the identity
$\sob_{\Max A}^\complement(V)=\mathcal F_{0,V}$ we see that the map 
$i_\Iota\circ\sob_{\Max A}^\complement\colon\Max A\to\clsets\Max A$ is an inf-lattice homomorphism.
Since $\Max A$ is sober, $\sob_{\Max A}^\complement$ is a monotone homeomorphism so 
the map $i_\Iota\colon\Iota\clsets\Max A\to\clsets\Max A$ is also an inf-lattice homomorphism.\qedhere
\end{proof}

\subsubsection{Linearized colocales}

A \emph{pseudo linearized colocale} is a triple $\mathfrak A^\complement=(L^\complement,A,\gamma^\complement)$ where
$L^\complement$ is a colocale, $A$ is a locally convex topological vector space 
and $\gamma^\complement\colon L^\complement\to \Max A$ is a sup-lattice homomorphism.
Denote by $\kfrak^\complement$ the restriction of $\gamma^\complement$ to $\Iota L^\complement$
and by $\sigma^\complement\colon\Max A\to L^\complement$ the inf-lattice homomorphism
right adjoint to $\sigma^\complement$.

\begin{lemma}\label{lemma:kfrak-1U0,Vopen}
  The map $\kfrak^\complement\colon \Iota L^\complement\to\Max A$ is continuous
  if $\Max A$ has the dual COS topology.
\end{lemma}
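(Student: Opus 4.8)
The plan is to verify continuity directly on a subbasis. Since the dual COS topology $\mathcal T$ on $\Max A$ is generated by the sets $\mathcal U_{0,V}$ with $V\in\Max A$, it suffices to show that $(\kfrak^\complement)^{-1}(\mathcal U_{0,V})$ is open in $\Iota L^\complement$ for every such $V$.

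First I would unwind the two topologies. On the target side, recall that $0\subset W$ for every $W\in\Max A$, so $\mathcal U_{0,V}\cap\Max A=\{W\in\Max A:W\not\subset V\}$; thus $\kfrak^\complement(c)\in\mathcal U_{0,V}$ holds exactly when $\gamma^\complement(c)\not\subset V$. On the source side, the topology of $\Iota L^\complement$ is defined by $\Omega\Iota L^\complement=\Omega\Sigma(L^\complement)^\opp$ and its closed sets are precisely the sets $C_a=\{c\in\Iota L^\complement:c\leq a\}$ for $a\in L^\complement$; hence the open sets are exactly the complements $\Iota L^\complement\setminus C_a=\{c\in\Iota L^\complement:c\not\leq a\}$.

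The key step is then to bridge these two descriptions using the adjunction $\gamma^\complement\dashv\sigma^\complement$. Because $\gamma^\complement$ is a sup-lattice homomorphism, its right adjoint $\sigma^\complement\colon\Max A\to L^\complement$ satisfies $\gamma^\complement(c)\subset V\Leftrightarrow c\leq\sigma^\complement(V)$, and negating both sides of this biconditional gives $\gamma^\complement(c)\not\subset V\Leftrightarrow c\not\leq\sigma^\complement(V)$. Taking $a=\sigma^\complement(V)\in L^\complement$, I obtain
\[
(\kfrak^\complement)^{-1}(\mathcal U_{0,V})=\{c\in\Iota L^\complement:c\not\leq\sigma^\complement(V)\}=\Iota L^\complement\setminus C_{\sigma^\complement(V)},
\]
which is open by the description of the topology above. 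Since the preimage of every subbasic open set is open, $\kfrak^\complement$ is continuous.

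The computation itself is short; the only real care needed is in the bookkeeping of the order reversal, that is, applying the adjunction $\gamma^\complement\dashv\sigma^\complement$ in the correct direction (a \emph{right} adjoint, since $\gamma^\complement$ preserves joins) and reading off the opens of $\Iota L^\complement=\Sigma(L^\complement)^\opp$ correctly as the complements of the sets $C_a$. I expect this identification of the opens of $\Iota L^\complement$ to be the step most prone to an order-direction error, but no substantive difficulty arises beyond it.
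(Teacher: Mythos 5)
Your proof is correct and is essentially the paper's own argument: both reduce to the subbasis $\{\mathcal U_{0,V}\}_{V\in\Max A}$, apply the adjunction $\gamma^\complement\dashv\sigma^\complement$ to get $\gamma^\complement(c)\subset V\Leftrightarrow c\leq\sigma^\complement(V)$, and identify the resulting set with $C_{\sigma^\complement(V)}$ (the paper phrases this via $(\spat_{L^\complement}^\complement)^{*}$ and works with the closed sets $\mathcal F_{0,V}$, you with their open complements — the same computation up to complementation).
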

\begin{proof}
The dual COS topology is generated by the collection $\{\mathcal U_{0,V}\}_{V\in\Max A}$.
Given $V\in \Max A$ we have
\begin{align*}
  (\kfrak^\complement)^{-1}(\mathcal F_{0,V})&=\bigl\{a\in\Iota L^\complement:\gamma^\complement(a)\in\mathcal F_{0,V}\bigr\}\\
  &=\bigl\{a\in\Iota L^\complement:\gamma^\complement(a)\subset V\bigr\}\\
  &=\bigl\{a\in\Iota L^\complement:a\leq\sigma^\complement(V)\bigr\}\\
  &=(\spat_{L^\complement}^\complement)^{*}\bigl(\sigma^\complement(V)\bigr)
\end{align*}
so $(\kfrak^\complement)^{-1}(\mathcal F_{0,V})$ is closed for any $V\in \Max A$ which shows that $\kfrak^\complement$ is continuous.\qedhere
\end{proof}

By Lemma~\ref{lemma:kfrak-1U0,Vopen},
a pseudo linearized colocale $\mathfrak A^\complement=( L^\complement,A,\gamma^\complement)$ gives rise
to a pseudo quotient vector bundle 
$\boldsymbol\Iota\mathfrak A^\complement\in\PsiQVBun^{1*}_{\Tt}$, namely
$\boldsymbol\Iota\mathfrak A^\complement=(\Iota L^\complement,A,\kfrak^\complement)$. 

We call $\mathfrak A^\complement$
a \emph{linearized colocale} if the map $\kfrak^\complement$ is continuous
when $\Max A$ is given the lower Vietoris topology. In that case
$\boldsymbol\Iota\mathfrak A^\complement$ is a quotient vector bundle.

\subsection{Cosupport and corestriction maps}

Let $\mathcal A=(X,A,\kappa)\in\PsiQVBun^{1*}_{\mathcal T}$.
We call the map $\sigma^\complement\colon \Max A\to\clsets X$ defined by
\begin{equation}\label{eq:defcosupport}
  \sigma^\complement(V)=\kappa^{-1}(\mathcal F_{0,V})
\end{equation}
the \emph{cosupport map} of $\mathcal A$ (compare with Lemma~\ref{lemmaOpenSupportiff}). %
Using the notation of Lemma~\ref{sigmaUisinflatticehom} and equation~\eqref{eq:soblVC(V)=F_0,V} we have:
\begin{equation}\label{eq:defcosupportforMaxA}
  \sigma^\complement=\kappa^{-1}\circ i_\Iota\circ\sob_{\Max A}^\complement
\end{equation}
(compare with Corollary~\ref{cor:sigma=kappa-1circsobopensupp}).

Since the map $\kappa^{-1}$ is an inf-lattice homomorphism, 
Lemma~\ref{le:F0,Vinflatticehom} immediately implies:

\begin{lemma}\label{sigmameetsjoins}
The map $\sigma^\complement$ is an inf-lattice homomorphism.
\end{lemma}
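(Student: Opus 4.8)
The plan is to prove that $\sigma^\complement$ preserves arbitrary meets, which is precisely the assertion that it is an inf-lattice homomorphism. Since meets in $\Max A$ are intersections (second example of section~\ref{sec:2}) and meets in $\clsets X$ are also intersections (fourth example), what I must verify concretely is that $\sigma^\complement\bigl(\bigcap_\alpha V_\alpha\bigr)=\bigcap_\alpha\sigma^\complement(V_\alpha)$ for every family $\{V_\alpha\}$ in $\Max A$.

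First I would assemble the two ingredients that are already available. By Lemma~\ref{le:F0,Vinflatticehom} the assignment $V\mapsto\mathcal F_{0,V}$ sends $\bigcap_\alpha V_\alpha$ to $\bigcap_\alpha\mathcal F_{0,V_\alpha}$, so it preserves meets. Secondly, because $\mathcal A\in\PsiQVBun^{1*}_{\mathcal T}$ the kernel map $\kappa\colon X\to\Max A$ is continuous for the dual COS topology $\mathcal T$, and each $\mathcal F_{0,V}=\Max A\setminus\mathcal U_{0,V}$ is closed in that topology; hence $\kappa^{-1}\colon\clsets\Max A\to\clsets X$ is defined on these closed sets and, being the inverse-image map on closed sets, is an inf-lattice homomorphism (fourth example of section~\ref{sec:2}).

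With these in hand the lemma follows by composition. Using the defining formula $\sigma^\complement(V)=\kappa^{-1}(\mathcal F_{0,V})$ I would record the single chain
\[
\sigma^\complement\Bigl(\bigcap_\alpha V_\alpha\Bigr)=\kappa^{-1}\bigl(\mathcal F_{0,\bigcap_\alpha V_\alpha}\bigr)=\kappa^{-1}\Bigl(\bigcap_\alpha\mathcal F_{0,V_\alpha}\Bigr)=\bigcap_\alpha\kappa^{-1}(\mathcal F_{0,V_\alpha})=\bigcap_\alpha\sigma^\complement(V_\alpha),
\]
where the second equality is Lemma~\ref{le:F0,Vinflatticehom} and the third is the fact that $\kappa^{-1}$ preserves intersections. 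Equivalently, one can argue abstractly from the factorization $\sigma^\complement=\kappa^{-1}\circ i_\Iota\circ\sob_{\Max A}^\complement$ of equation~\eqref{eq:defcosupportforMaxA}: by Lemma~\ref{sigmaUisinflatticehom} the map $i_\Iota\circ\sob_{\Max A}^\complement$ is an inf-lattice homomorphism, $\kappa^{-1}$ is one as well, and a composite of inf-lattice homomorphisms is again an inf-lattice homomorphism.

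I do not anticipate any genuine obstacle: the result is essentially a one-line consequence of Lemma~\ref{le:F0,Vinflatticehom} together with the general fact that the inverse image under a continuous map preserves intersections of closed sets. The only point requiring a moment's care is to justify that $\kappa^{-1}$ is legitimately applied to the sets $\mathcal F_{0,V}$, i.e.\ that these are closed for the topology $\mathcal T$ with respect to which $\kappa$ is continuous; this is immediate from the definition of the dual COS topology as the topology generated by the $\mathcal U_{0,V}$.
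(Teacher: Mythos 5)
Your proof is correct and follows essentially the same route as the paper: the paper's argument is exactly that $\sigma^\complement=\kappa^{-1}\circ(V\mapsto\mathcal F_{0,V})$, with Lemma~\ref{le:F0,Vinflatticehom} giving $\mathcal F_{0,\bigcap_\alpha V_\alpha}=\bigcap_\alpha\mathcal F_{0,V_\alpha}$ and $\kappa^{-1}$ preserving intersections. Your additional check that the sets $\mathcal F_{0,V}$ are closed in the dual COS topology, and the alternative factorization via equation~\eqref{eq:defcosupportforMaxA}, are both sound but add nothing beyond the paper's one-line composition argument.
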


\begin{definition}\label{def:corestriction}
  Given a pseudo quotient vector bundle $\mathcal A=(X,A,\kappa)\in\PsiQVBun^{1*}_{\mathcal T}$
  we call \emph{corestriction map} of $\mathcal A$ to the map $\gamma^\complement\colon\clsets X\to\Max A$ left adjoint to $\sigma^\complement$.
  We represent the restriction of $\gamma^\complement$ to $\Iota\clsets X$ by $\kfrak^\complement$
  and we represent by $\boldsymbol\clsets\mathcal A$ the
  pseudo linearized colocale $\boldsymbol\clsets\mathcal A=(\Iota\clsets X,A,\gamma^\complement)$.
\end{definition}

\begin{lemma}\label{lemmagamma=bigvee}
Given a pseudo quotient vector bundle $(X,A,\kappa)\in \PsiQVBun^{1*}_{\mathcal T}$, for any closed set $C\in\clsets X$ we have:
\[
\gamma^\complement(C) %
=\bigvee_{x\in C}\kappa(x)
\]
\end{lemma}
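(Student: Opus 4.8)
The plan is to read off $\gamma^\complement(C)$ directly from the adjunction $\gamma^\complement\dashv\sigma^\complement$ of Definition~\ref{def:corestriction} together with the explicit description of the cosupport map in equation~\eqref{eq:defcosupport}. Since $\sigma^\complement\colon\Max A\to\clsets X$ is an inf-lattice homomorphism (Lemma~\ref{sigmameetsjoins}), its left adjoint $\gamma^\complement$ is given by the standard left-adjoint formula, which follows from the characterization of adjoints recalled in the Preliminaries:
\[
\gamma^\complement(C)=\bigwedge\{V\in\Max A:C\subset\sigma^\complement(V)\}.
\]
Thus the entire argument reduces to identifying this meet.

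First I would unfold the membership condition. By equation~\eqref{eq:defcosupport} and the description of $\mathcal F_{0,V}$ in Lemma~\ref{le:F0,Vinflatticehom}, we have $\sigma^\complement(V)=\kappa^{-1}(\mathcal F_{0,V})=\{x\in X:\kappa(x)\subset V\}$, so the inclusion $C\subset\sigma^\complement(V)$ is equivalent to $\kappa(x)\subset V$ for every $x\in C$, that is, to $\bigcup_{x\in C}\kappa(x)\subset V$.

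Next I would translate this set-theoretic containment into a lattice inequality in $\Max A$. Because each test subspace $V$ is closed, $\bigcup_{x\in C}\kappa(x)\subset V$ holds if and only if $\overline{\linspan{\bigcup_{x\in C}\kappa(x)}}\subset V$; but by the description of joins in $\Max A$ recalled in the Preliminaries this closed linear span is exactly $\bigvee_{x\in C}\kappa(x)$. Hence the index set $\{V\in\Max A:C\subset\sigma^\complement(V)\}$ coincides with the principal up-set $\{V\in\Max A:\bigvee_{x\in C}\kappa(x)\subset V\}$, whose least element is $\bigvee_{x\in C}\kappa(x)$ itself. Taking the meet in the displayed formula then yields $\gamma^\complement(C)=\bigvee_{x\in C}\kappa(x)$, as required.

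I do not expect any genuine obstacle, since the content is entirely formal. The one point that requires care is keeping the adjunction correctly oriented: $\gamma^\complement$ is the \emph{left} adjoint, the order on both $\clsets X$ and $\Max A$ is inclusion, and so the left-adjoint formula is a meet over an up-set. The only substantive observation is that replacing $\bigcup_{x\in C}\kappa(x)$ by its closed linear span is harmless precisely because the subspaces $V$ against which we test are closed; once this is noted the identity follows at once.
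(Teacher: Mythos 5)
Your proposal is correct and is essentially the paper's own argument: both hinge on the same chain of equivalences $C\subset\sigma^\complement(V)\Leftrightarrow\forall_{x\in C}\,\kappa(x)\subset V\Leftrightarrow\bigvee_{x\in C}\kappa(x)\subset V$, the only difference being that you finish by evaluating the standard meet formula for the left adjoint (observing the index set is a principal up-set), whereas the paper concludes by uniqueness of adjoints, which is the same fact in different packaging.
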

\begin{proof}
Since $\sigma^\complement(V)=\kappa^{-1}(\mathcal F_{0,V})$, we have
\[
C\subset\sigma^\complement(V)\Leftrightarrow \kappa(C)\subset\mathcal F_{0,V}\Leftrightarrow \underset{x\in C}{\forall}\kappa(x)\subset V
\Leftrightarrow \bigvee_{x\in C}\kappa(x)\subset V
\]
which shows that the assignement $C\mapsto \bigvee_{x\in C}\kappa(x)$ defines a map left adjoint to $\sigma^\complement$,
finishing the proof.\qedhere
\end{proof}

\begin{lemma}\label{kfrakcircsob=kappa}
  Let $(X,A,\kappa)\in\PsiQVBun^{1*}_{\mathcal T}$. Then $\kappa=\kfrak^\complement\circ\sob_X^\complement$.
\end{lemma}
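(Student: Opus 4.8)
The plan is to verify the identity pointwise: I would fix an arbitrary $x\in X$ and show that the two elements $\kappa(x)$ and $\bigl(\kfrak^\complement\circ\sob_X^\complement\bigr)(x)$ of $\Max A$ coincide. The first step is to unwind the costoberification map. Exactly as for $\Max A$, one has $\sob_X^\complement=\complement\circ\sob_X$, and since $\sob_X(x)=X\setminus\overline{\{x\}}$, complementation gives
\[
\sob_X^\complement(x)=\overline{\{x\}}\,.
\]
This is an irreducible closed set, hence a genuine point of $\Iota\clsets X$, so that $\kfrak^\complement$ may be applied to it.

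Next I would compute $\kfrak^\complement\bigl(\overline{\{x\}}\bigr)$. By Definition~\ref{def:corestriction} the map $\kfrak^\complement$ is the restriction of the corestriction map $\gamma^\complement$ to $\Iota\clsets X$, so Lemma~\ref{lemmagamma=bigvee} applies with $C=\overline{\{x\}}$ and yields
\[
\bigl(\kfrak^\complement\circ\sob_X^\complement\bigr)(x)=\gamma^\complement\bigl(\overline{\{x\}}\bigr)=\bigvee_{y\in\overline{\{x\}}}\kappa(y)\,.
\]
It then remains to identify this join with $\kappa(x)$.

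For the inclusion $\bigvee_{y\in\overline{\{x\}}}\kappa(y)\subset\kappa(x)$ I would invoke Lemma~\ref{lemma:k(y)subsetk(x)}, which is precisely where the hypothesis $\mathcal A\in\PsiQVBun^{1*}_{\mathcal T}$ enters: it gives $\kappa(y)\subset\kappa(x)$ for every $y\in\overline{\{x\}}$. Since $\kappa(x)\in\Max A$ is a closed subspace containing every $\kappa(y)$, and the join in $\Max A$ is the smallest closed subspace containing the family, the whole join lies in $\kappa(x)$. The reverse inclusion is immediate because $x\in\overline{\{x\}}$, so $\kappa(x)$ is itself one of the joined terms. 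Combining the two inclusions gives $\bigvee_{y\in\overline{\{x\}}}\kappa(y)=\kappa(x)$, whence $\kappa=\kfrak^\complement\circ\sob_X^\complement$.

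The argument is routine once the two lemmas are available; the only point demanding a little care is the identification $\sob_X^\complement(x)=\overline{\{x\}}$, and the observation that the cospectrality assumption is used solely through Lemma~\ref{lemma:k(y)subsetk(x)}. I do not anticipate any serious obstacle.
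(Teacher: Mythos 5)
Your proof is correct and follows exactly the route of the paper's own argument: identify $\sob_X^\complement(x)=\overline{\{x\}}$, apply Lemma~\ref{lemmagamma=bigvee} to write the value as $\bigvee_{y\in\overline{\{x\}}}\kappa(y)$, and collapse the join to $\kappa(x)$ via Lemma~\ref{lemma:k(y)subsetk(x)} together with $x\in\overline{\{x\}}$. The only difference is that you spell out the two inclusions that the paper leaves implicit, which is fine.
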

\begin{proof}
  It follows from Lemmas~\ref{lemmagamma=bigvee} and~\ref{lemma:k(y)subsetk(x)}:
  \[
  \kfrak^\complement\circ\sob_X^\complement(x)=\bigvee_{y\in\overline{\{x\}}}\kappa(y)=\kappa(x)\,.\qedhere
  \]
\end{proof}

 Recall (equation~\eqref{eq:f!c}) that a continuous map $f\colon X\to Y$ induces a map
 $f_!^\complement\colon\Iota\clsets X\to\Iota\clsets Y$.
 
\begin{lemma}\label{kfrakcircsob=kappa2}
  Let $\mathcal A=(X,A,\kappa)$ be a quotient vector bundle with Hausdorff fibers.
  Give $\Max A$ the lower Vietoris topology. Then the following diagram is commutative:
\[
\xymatrix{
\Max A\ar[rr]^-{\sob_{\Max A}^\complement}&&
\Iota\clsets\Max A\\
X\ar[u]^-{\kappa}\ar[rr]_-{\sob_X^\complement}&&
\Iota\clsets X\ar[u]_-{\kappa_!^\complement}\ar[ull]^-{\kfrak^\complement}}
\]
\end{lemma}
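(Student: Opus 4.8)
The plan is to verify the diagram triangle by triangle and then assemble the whole. First I would check that $\mathcal A$ actually lies in $\PsiQVBun^{1*}_{\mathcal T}$, so that $\kfrak^\complement$, the cosupport and corestriction maps of Definition~\ref{def:corestriction} are defined. Having Hausdorff fibers means $\kappa$ factors through $\Max A$; and since each $\mathcal U_{0,V}$ is open in the lower Vietoris topology (the Lemma following \eqref{eq:mathcalU}), the dual COS topology $\mathcal T$ is coarser than the lower Vietoris topology, so the lower-Vietoris-continuous map $\kappa$ is automatically $\mathcal T$-continuous, i.e.\ $\mathcal A$ is cospectral. The lower-left triangle $\kfrak^\complement\circ\sob_X^\complement=\kappa$ is then exactly Lemma~\ref{kfrakcircsob=kappa}, so the real work is the upper-right triangle
\[
\sob_{\Max A}^\complement\circ\kfrak^\complement=\kappa_!^\complement\,.
\]

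For this I would unwind all three maps on a fixed $C\in\Iota\clsets X$. By Lemma~\ref{lemmagamma=bigvee}, $\kfrak^\complement(C)=\bigvee_{x\in C}\kappa(x)=:V$; by \eqref{eq:soblVC(V)=F_0,V}, $\sob_{\Max A}^\complement(V)=\overline{\{V\}}=\mathcal F_{0,V}=\{W\in\Max A:W\subset V\}$; and by \eqref{eq:f!c}, $\kappa_!^\complement(C)=\overline{\kappa(C)}$, closure taken in $\Max A$ with the lower Vietoris topology. Thus the triangle reduces to the single identity $\overline{\kappa(C)}=\mathcal F_{0,V}$.

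The heart of the proof is this identity, which I would establish by a sandwiching argument using soberness. The inclusion $\overline{\kappa(C)}\subset\mathcal F_{0,V}$ is immediate, since each $\kappa(x)\subset V$ gives $\kappa(C)\subset\mathcal F_{0,V}=\overline{\{V\}}$, a closed set. For the reverse inclusion I would exploit irreducibility: as $C$ is irreducible and $\kappa$ continuous, $\overline{\kappa(C)}$ is irreducible closed, and because $\Max A$ with the lower Vietoris topology is sober (\cite[Theorem 6.16]{ReSa16}, cf.\ Lemma~\ref{lemma:sob=sob}) it has a unique generic point $V'$, so by \eqref{eq:soblVC(V)=F_0,V} we may write $\overline{\kappa(C)}=\overline{\{V'\}}=\mathcal F_{0,V'}$. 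Then $\kappa(C)\subset\mathcal F_{0,V'}$ forces $\kappa(x)\subset V'$ for every $x\in C$, whence $V=\bigvee_{x\in C}\kappa(x)\subset V'$; combined with $V'\subset V$ read off from the first inclusion, this yields $V=V'$ and hence $\overline{\kappa(C)}=\mathcal F_{0,V}$, proving the triangle.

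With both triangles in hand the entire diagram commutes: the outer square follows by precomposing the upper-right triangle with $\sob_X^\complement$ and substituting the lower-left one, giving $\sob_{\Max A}^\complement\circ\kappa=\kappa_!^\complement\circ\sob_X^\complement$. I expect the reverse inclusion $\mathcal F_{0,V}\subset\overline{\kappa(C)}$ to be the main obstacle: a direct attempt to show $V\in\overline{\kappa(C)}$ stalls because a vector of $V$ inside a prescribed open set is in general only a \emph{linear combination} of vectors drawn from several different fibers $\kappa(x)$ and need not meet any single fiber. Passing through soberness of $\Max A$ together with the irreducibility of $C$ is precisely what sidesteps this difficulty.
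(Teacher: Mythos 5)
Your proof is correct, and it establishes the crucial upper-right triangle by a genuinely different mechanism than the paper. Both arguments reduce the statement to that triangle via Lemma~\ref{kfrakcircsob=kappa}, and both ultimately rest on soberness of $\Max A$ in the lower Vietoris topology (\cite[Theorem~6.16]{ReSa16}); but where you fix an irreducible closed set $C$, compute $\kfrak^\complement(C)=\bigvee_{x\in C}\kappa(x)=V$ via Lemma~\ref{lemmagamma=bigvee}, and prove the set identity $\overline{\kappa(C)}=\mathcal F_{0,V}$ by a generic-point sandwich argument, the paper instead takes left adjoints throughout the cosupport identity $\sigma^\complement=\kappa^{-1}\circ i_\Iota\circ\sob_{\Max A}^\complement$ of equation~\eqref{eq:defcosupportforMaxA}, obtaining $\gamma^\complement=\bigl(\sob_{\Max A}^\complement\bigr)^{-1}\circ r_\Iota\circ \kappa_!^\complement$, and then restricts to $\Iota\clsets X$. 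The paper's route is shorter and avoids element-chasing, but it silently uses the fact that $\kappa_!^\complement$ carries $\Iota\clsets X$ into $\Iota\clsets\Max A$; your route makes exactly that topological input explicit (the closure of a continuous image of an irreducible set is irreducible, and in a sober space such a set is the closure of its unique generic point), so it is more self-contained, at the cost of being longer. Your preliminary observation that lower Vietoris continuity of $\kappa$ implies $\mathcal T$-continuity, so that $\mathcal A$ lies in $\PsiQVBun^{1*}_{\mathcal T}$ and Lemma~\ref{kfrakcircsob=kappa} and Definition~\ref{def:corestriction} apply at all, is a genuine prerequisite that the paper leaves implicit; it is good that you checked it.
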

\begin{proof}
  By Lemma~\ref{kfrakcircsob=kappa}
  we only need to check the commutativity of the upper triangle. 
  By \cite[Theorem~6.16]{ReSa16}, the map $\sob_{\Max A}^\complement$ is a homeomorphism.
  Let $r_\Iota$ be the left adjoint of the inclusion
  $i_\Iota\colon\Iota\clsets\Max A\to\clsets\Max A$.
  Then $r_\Iota$
  is the identity on $\Iota\clsets\Max A$.
  Taking left adjoints of the identity $\sigma^\complement=k^{-1}\circ i_\Iota\circ\sob_{\Max A}^\complement$  we get
  $\gamma^\complement=\bigl(\sob_{\Max A}^\complement\bigr)^{-1}\circ r_\Iota\circ k_!^\complement$.
  Restricting to $\Iota\clsets X$, from
  $k_!^\complement(\Iota\clsets X)\subset \Iota\clsets\Max A$
  we get $\kfrak^\complement=\bigl(\sob_{\Max A}^\complement\bigr)^{-1}\circ k_!^\complement$
  which shows the commutativity of the upper triangle.\qedhere
\end{proof}

\begin{remark*}
	Giving $\Max A$ the dual COS topology, Lemma~\ref{kfrakcircsob=kappa2} also holds for pseudo quotient vector bundles. In particular
	the map $\kappa_!^\complement$ is the same when $\Max A$ is given either the lower Vietoris topology or the dual COS topology.
\end{remark*}

\begin{corollary}\label{kfrakiscontinuous}
  Let $\mathcal A=(X,A,\kappa)$ be a quotient vector bundle with Hausdorff fibers.
  Then the map $\kfrak^\complement\colon I\clsets X\to\Max A$ is continuous so
  $\boldsymbol{\clsets}\mathcal A$ is a linearized colocale.
\end{corollary}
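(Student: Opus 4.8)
The plan is to read off continuity directly from the commutative diagram established in Lemma~\ref{kfrakcircsob=kappa2}. The proof of that lemma already produces the factorization $\kfrak^\complement=\bigl(\sob_{\Max A}^\complement\bigr)^{-1}\circ\kappa_!^\complement$ (this is exactly the commutativity of the upper triangle, obtained there by taking left adjoints of $\sigma^\complement=\kappa^{-1}\circ i_\Iota\circ\sob_{\Max A}^\complement$), so it suffices to check that each of the two factors is continuous and then invoke the definition of a linearized colocale.

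First I would recall that, since $A$ is locally convex, the space $\Max A$ with the lower Vietoris topology is sober by \cite[Theorem~6.16]{ReSa16}; hence $\sob_{\Max A}^\complement=\complement\circ\sob_{\Max A}$ is a homeomorphism and its inverse $\bigl(\sob_{\Max A}^\complement\bigr)^{-1}$ is continuous. Next I would argue that $\kappa_!^\complement\colon\Iota\clsets X\to\Iota\clsets\Max A$ is continuous: because $\mathcal A$ is a quotient vector bundle with Hausdorff fibers, its kernel map is continuous as a map $\kappa\colon X\to\Max A$ for the lower Vietoris topology, so it induces a morphism of colocales $\clsets X\to\clsets\Max A$ whose direct image homomorphism, restricted to the irreducible closed sets, is precisely the map $\kappa_!^\complement$ (the colocale counterpart of the fact that a map of locales $f$ yields a continuous $\Sigma f$; see equation~\eqref{eq:f!c} and the discussion preceding Lemma~\ref{kfrakcircsob=kappa2}).

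Composing the two continuous maps then shows $\kfrak^\complement$ is continuous when $\Max A$ carries the lower Vietoris topology, which is exactly the condition in the definition of a linearized colocale; hence $\boldsymbol\clsets\mathcal A=(\Iota\clsets X,A,\gamma^\complement)$ is a linearized colocale. I expect the only genuinely delicate point to be the continuity of $\kappa_!^\complement$: one must be careful that it is supplied for free by functoriality of the $\Iota$ construction (equivalently, by applying $\Sigma$ to the opposite locale map induced by $\kappa$) and that it really is the restriction to $\Iota\clsets X$ of the direct image homomorphism $f_!^\complement$ of equation~\eqref{eq:f!c}, rather than requiring a separate verification; the continuity of $\bigl(\sob_{\Max A}^\complement\bigr)^{-1}$ and the final composition are then immediate.
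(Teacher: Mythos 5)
Your proof is correct and follows essentially the same route as the paper's: the paper likewise invokes the factorization $\kfrak^\complement=\bigl(\sob_{\Max A}^\complement\bigr)^{-1}\circ\kappa_!^\complement$ from Lemma~\ref{kfrakcircsob=kappa2}, the fact that $\sob_{\Max A}^\complement$ is a homeomorphism, and the continuity of $\kappa_!^\complement$. The only difference is that you spell out why $\kappa_!^\complement$ is continuous (via functoriality, i.e.\ identifying it with $\Sigma\Omega\kappa$ under the complement homeomorphism), a point the paper leaves implicit.
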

\begin{proof}
  Since $\sob_{\Max A}^\complement$ is a homeomorphism and $\kfrak^\complement=\bigl(\sob_{\Max A}^\complement\bigr)^{-1}\circ \kappa_!^\complement$,
  the result follows from the continuity of $\kappa_!^\complement$.\qedhere
\end{proof}

\subsection{Soberification}

Let $\mathcal A=(X,A,\kappa)$ be a spectral quotient vector bundle. 
Since $\mathcal A$ has the open support property then we have the identity:
\begin{equation}\label{eq:kfrakcircsob=kappa}
\kfrak\circ\sob_X=\kappa\,.
\end{equation}
We have a soberification map $\sob_{\mathcal A}\colon\mathcal A\to\boldsymbol{\Sigma\Omega}\mathcal A$
defined by $\sob_{\mathcal A}=(\sob_X,\ident)$ \cite[section~5.4]{ReSa16}.

\begin{lemma}\label{kappaconstantonC}
	If $\mathcal A=(X,A,\kappa)$ is a spectral quotient vector bundle and $C\subset X$ is an irreducible closed set, then the kernel map $\kappa$ is constant on $C$, equal to $\kfrak(X\setminus C)$.
\end{lemma}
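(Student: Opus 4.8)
The plan is to prove the sharper statement that $\kappa(x)=\kfrak(X\setminus C)$ for \emph{every} $x\in C$, from which constancy on $C$ is immediate. Since $C$ is irreducible and closed, its complement $X\setminus C$ is a prime element of $\Omega X$, hence a point of $\Sigma\Omega X$, so that $\kfrak(X\setminus C)=\gamma(X\setminus C)$ is defined. The starting point is the identity $\kfrak\circ\sob_X=\kappa$ of equation~\eqref{eq:kfrakcircsob=kappa}, which rewrites $\kappa(x)=\gamma\bigl(\sob_X(x)\bigr)=\gamma\bigl(X\setminus\overline{\{x\}}\bigr)$ for each $x\in X$. Everything then reduces to comparing $\gamma(X\setminus\overline{\{x\}})$ with $\gamma(X\setminus C)$.

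First I would establish the easy inclusion $\kfrak(X\setminus C)\subset\kappa(x)$. Because $x\in C$ and $C$ is closed we have $\overline{\{x\}}\subset C$, hence $X\setminus C\subset X\setminus\overline{\{x\}}$. Since $\gamma$ is the right adjoint of the support map $\sigma$ it is monotone, so $\gamma(X\setminus C)\subset\gamma(X\setminus\overline{\{x\}})=\kappa(x)$. This step uses only the open support property, via the description $\gamma(U)=\bigcap_{z\notin U}\kappa(z)$ that follows from $\supp^\circ\widehat a=\kappa^{-1}(\CHECK a)$; in particular it identifies $\kfrak(X\setminus C)$ with $\bigcap_{z\in C}\kappa(z)$.

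The reverse inclusion $\kappa(x)\subset\kfrak(X\setminus C)$ is where the continuity of the spectral kernel is needed. The inclusion $X\setminus C\subset X\setminus\overline{\{x\}}$ says precisely that $\sob_X(x)$ lies in the closure of the point $X\setminus C$ inside $\Sigma\Omega X$; applying the continuous map $\kfrak$ forces $\kappa(x)=\kfrak(\sob_X(x))$ to lie in the lower Vietoris closure of $\kfrak(X\setminus C)$, i.e. $\kappa(x)\subset\overline{\kfrak(X\setminus C)}$. To upgrade this to an honest inclusion I would use that $\kfrak(X\setminus C)=\bigcap_{z\in C}\kappa(z)$ is an intersection of closed subspaces and is therefore itself closed, so that its closure equals itself. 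Combining the two inclusions gives $\kappa(x)=\kfrak(X\setminus C)$ for all $x\in C$, which is the assertion.

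I expect the upper inclusion to be the main obstacle: monotonicity of $\gamma$ pins $\kappa(x)$ only from below, and it is the specialization-preserving behaviour of the \emph{continuous} spectral kernel — together with the fact that the relevant fibre subspaces are closed — that supplies the matching upper bound. The routine points to verify carefully are that $X\setminus C$ is genuinely prime (so that $C$ irreducible closed corresponds to a point of $\Sigma\Omega X$) and that the lower Vietoris specialization order satisfies $W\in\overline{\{V\}}\Leftrightarrow W\subset\overline V$, which is exactly what converts continuity of $\kfrak$ into the inclusion $\kappa(x)\subset\overline{\kfrak(X\setminus C)}$.
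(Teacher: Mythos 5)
Your overall reduction coincides with the paper's: both proofs write $\kappa(x)=\kfrak\bigl(\sob_X(x)\bigr)$ using equation~\eqref{eq:kfrakcircsob=kappa} and then compare $\kfrak$ at the two primes $X\setminus C\subset\sob_X(x)$. But at that point the paper simply quotes Theorem~5.13 of \cite{ReSa16} --- ``if $\kfrak$ is continuous and $P\subset Q$ in $\Sigma\Omega X$ then $\kfrak(P)=\kfrak(Q)$'' --- whereas you attempt to prove this constancy property from scratch, via the sandwich $\kfrak(X\setminus C)\subset\kappa(x)\subset\overline{\kfrak(X\setminus C)}$. Your two bounds are correct: the lower one from monotonicity of the right adjoint $\gamma$ together with the identification $\kfrak(X\setminus C)=\bigcap_{z\in C}\kappa(z)$ coming from the open support property, the upper one from the specialization computations in $\Sigma\Omega X$ and in the lower Vietoris topology ($W\in\overline{\{V\}}\Leftrightarrow W\subset\overline V$), both of which you state accurately. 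The gap is the step that closes the sandwich: you declare $\bigcap_{z\in C}\kappa(z)$ closed because it is ``an intersection of closed subspaces''. The lemma is stated for an arbitrary spectral quotient vector bundle, whose kernel map takes values in $\Sub A$, not in $\Max A$; closedness of the subspaces $\kappa(z)$ is precisely the ``Hausdorff fibers'' condition, which is not among the hypotheses. If some $\kappa(z)$ is, say, a dense proper subspace, your argument only pins $\kappa(x)$ between $\kfrak(X\setminus C)$ and its (strictly larger) closure, and nothing excludes the intermediate possibilities.

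This gap cannot be repaired inside your method: no open set of the lower Vietoris (or open support) topology contains $A$ without containing every dense subspace, so continuity of $\kfrak$ genuinely yields nothing beyond the closure bound. Indeed, over the Sierpi\'nski space one can take $\kappa$ to send the generic point to a dense proper subspace $V\subset A$ and the closed point to $A$; all the stated hypotheses (lower Vietoris continuity of $\kappa$, the open support property, continuity of $\kfrak$) and all your intermediate conclusions hold, yet $\kappa$ is not constant on the closure of the generic point. So the closedness input is doing essential work, and the generality issue is absorbed, in the paper, into the precise hypotheses of the cited Theorem~5.13 of \cite{ReSa16}. What you have is therefore a correct, self-contained proof of the lemma for spectral bundles with Hausdorff fibers --- which is in fact the only setting in which the lemma is applied later (Corollary~\ref{cor:kfrak=kfrak} feeds Theorem~\ref{T4.12}, where $\boldsymbol\clsets\mathcal A$ is defined only for bundles with Hausdorff fibers) --- but it is not a proof of the statement in the generality in which it is phrased; a blind proof should either add that hypothesis explicitly or find an argument for the constancy property that does not pass through closures.
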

\begin{proof}
	It was shown in \cite[Theorem~5.13]{ReSa16} that, if $\kfrak$ is continuous and $P,Q\in\Sigma\Omega X$ are such that $P\subset Q$ then $\kfrak(P)=\kfrak(Q)$.
	Let $x\in C$ and let $P=X\setminus C\in\Sigma\Omega X$. Then $P\subset\sob_X(x)$ so, by equation~\eqref{eq:kfrakcircsob=kappa},
	$\kappa(x)=\kfrak\bigl(\sob_X(x)\bigr)=\kfrak(P)$ as claimed.\qedhere
\end{proof}

\begin{corollary}\label{cor:kfrak=kfrak}
	Let $\mathcal A=(X,A,\kappa)$ be a spectral quotient vector bundle. Then
	$\kfrak^\complement(C)=\kfrak(X\setminus C)$ for every irreducible closed set $C\subset X$.
\end{corollary}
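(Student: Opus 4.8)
The plan is to unfold the left-hand side $\kfrak^\complement(C)$ into a join of kernel subspaces and then collapse that join using the fact that $\kappa$ is constant on an irreducible closed set. By Definition~\ref{def:corestriction} the map $\kfrak^\complement$ is the restriction of the corestriction map $\gamma^\complement$ to $\Iota\clsets X$, so for an irreducible closed set $C\subset X$ I would start from $\kfrak^\complement(C)=\gamma^\complement(C)$. For this to make sense $\mathcal A$ must lie in $\PsiQVBun^{1*}_{\mathcal T}$; this holds in the present setting because a bundle with Hausdorff fibers is automatically cospectral (the dual COS topology is coarser than the lower Vietoris topology, since $\mathcal U_{0,V}=\widetilde{A\setminus V}$ is lower Vietoris open for $V\in\Max A$). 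Lemma~\ref{lemmagamma=bigvee} then applies and gives $\gamma^\complement(C)=\bigvee_{x\in C}\kappa(x)$, the join taken in $\Max A$.

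Next I would invoke Lemma~\ref{kappaconstantonC}: because $\mathcal A$ is spectral and $C$ is irreducible closed, $\kappa$ takes the constant value $\kfrak(X\setminus C)$ on all of $C$. Substituting this into the join and using that an irreducible closed set is nonempty (irreducible closed sets correspond under complementation to prime open sets $P\neq X$, whence $C=X\setminus P\neq\emptyset$), the join of a nonempty constant family of closed subspaces collapses, since $\overline{\langle V_0\rangle}=V_0$ for a closed subspace $V_0$. Thus $\bigvee_{x\in C}\kappa(x)=\bigvee_{x\in C}\kfrak(X\setminus C)=\kfrak(X\setminus C)$, and chaining the equalities yields $\kfrak^\complement(C)=\kfrak(X\setminus C)$.

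The argument is essentially a two-line combination of Lemmas~\ref{lemmagamma=bigvee} and~\ref{kappaconstantonC}, so I do not expect any real obstacle. The only points that need care are verifying that the corestriction map is well defined on $\mathcal A$ (which requires the Hausdorff-fiber hypothesis implicit in this subsection) and, crucially, that $C$ is nonempty so that the constant join does not degenerate to the zero subspace $\bigvee\emptyset=0$, which would give the wrong value.
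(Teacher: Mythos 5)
Your proof is correct and is essentially the paper's own proof: the paper obtains $\kfrak^\complement(C)=\bigvee_{x\in C}\kappa(x)=\kfrak(X\setminus C)$ by exactly the same combination of Lemmas~\ref{lemmagamma=bigvee} and~\ref{kappaconstantonC}. The two extra checks you make explicit---that Hausdorff fibers imply cospectrality (so the corestriction $\gamma^\complement$, and hence $\kfrak^\complement$, is defined), and that an irreducible closed set is nonempty (so the constant join does not degenerate to $0$)---are left implicit in the paper but are both valid and worth stating.
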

\begin{proof}
	Given any irreducible closed set $C\in\Iota\clsets X$,
	by Lemmas~\ref{lemmagamma=bigvee} and~\ref{kappaconstantonC}
	we have:
	\[
	\kfrak^\complement(C)=\bigvee_{x\in C}\kappa(x)=\kfrak(X\setminus C)\,.\qedhere
	\]
\end{proof}

\begin{lemma}\label{lemma:sobAisstrictmor}
	Let $\mathcal A=(X,A,\kappa)\in\PsiQVBun^{1*}_\Tt$.
	Then the pair $\sob_{\mathcal A}^\complement=(\sob_X^\complement,\ident)$
	is a strict morphism of pseudo quotient vector bundles $\mathcal A\to\boldsymbol\Iota\boldsymbol{\clsets}\mathcal A$
	which is an isomorphism if and only if $X$ is sober.
\end{lemma}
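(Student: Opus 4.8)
The plan is to verify the strictness condition~\eqref{eq:strictmorQVBun} for the pair $(\sob_X^\complement,\ident)$ directly, and then to read off the isomorphism statement from the general criterion in Lemma~\ref{QVBunfisostrict}. First I would recall that $\sob_X^\complement=\complement\circ\sob_X$ carries $x$ to $\overline{\{x\}}$, which is an irreducible closed set and hence a point of $\Iota\clsets X$. Under the colocale isomorphism $\complement\colon\clsets X\xrightarrow{\cong}(\Omega X)^\opp$ together with the defining identification $\Omega\Iota\clsets X=\Omega\Sigma(\clsets X)^\opp$, which $\complement$ carries to $\Omega\Sigma\Omega X$, the map $\sob_X^\complement$ corresponds to the ordinary soberification $\sob_X\colon X\to\Sigma\Omega X$; in particular $\sob_X^\complement$ is continuous, and it is a homeomorphism exactly when $\sob_X$ is.

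Next I would identify the target bundle as $\boldsymbol\Iota\boldsymbol\clsets\mathcal A=(\Iota\clsets X,A,\kfrak^\complement)$ and write out condition~\eqref{eq:strictmorQVBun} for the pair $(\sob_X^\complement,\ident)$. Because $f^\sharp=\ident$, the required equivalence $b\in\kfrak^\complement(\sob_X^\complement(x))\Leftrightarrow b\in\kappa(x)$ holds for all $b$ precisely when $\kfrak^\complement(\sob_X^\complement(x))=\kappa(x)$, and this is exactly the content of Lemma~\ref{kfrakcircsob=kappa}. This single identity shows both that the pair is a morphism of pseudo quotient vector bundles (strictness implies the weaker condition~\eqref{eq:morQVBun}) and that it is strict; the linearity of $\ident$ is immediate, and in the pseudo setting no continuity of $f^\sharp$ is required.

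For the final equivalence I would invoke Lemma~\ref{QVBunfisostrict}, according to which a morphism is an isomorphism if and only if it is strict, $f_\flat$ is a homeomorphism, and $f^\sharp$ is a vector space isomorphism. Strictness is already established and $f^\sharp=\ident$ is trivially a vector space isomorphism, so the morphism is an isomorphism if and only if $f_\flat=\sob_X^\complement$ is a homeomorphism. By the correspondence of the first paragraph this is equivalent to $\sob_X$ being a homeomorphism, that is, to $X$ being sober.

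I do not anticipate a genuine obstacle: essentially all the substance is packaged into Lemma~\ref{kfrakcircsob=kappa} (which in turn rests on Lemmas~\ref{lemmagamma=bigvee} and~\ref{lemma:k(y)subsetk(x)}) and the isomorphism criterion of Lemma~\ref{QVBunfisostrict}. The only point that deserves care, rather than proof, is the continuity and homeomorphism behaviour of $\sob_X^\complement$, which should be deduced from the definition $\Omega\Iota L^\complement=\Omega\Sigma(L^\complement)^\opp$ and the identification of $\Iota\clsets X$ with $\Sigma\Omega X$; once that is settled, the remaining assertions are formal consequences of the cited lemmas.
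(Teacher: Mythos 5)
Your proposal is correct and follows essentially the same route as the paper: strictness is read off from the identity $\kappa=\kfrak^\complement\circ\sob_X^\complement$ of Lemma~\ref{kfrakcircsob=kappa}, and the isomorphism criterion of Lemma~\ref{QVBunfisostrict} (with $f^\sharp=\ident$) reduces everything to $\sob_X^\complement$ being a homeomorphism, i.e.\ to $X$ being sober. The only difference is that you spell out the continuity of $\sob_X^\complement$ and its identification with $\sob_X$ under $\complement$, which the paper leaves implicit.
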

\begin{proof}
	Write $\boldsymbol\Iota\boldsymbol{\clsets}\mathcal A=(\Iota\clsets X,A,\kfrak^\complement)$.
	From the identity $\kappa=\kfrak^\complement\circ\sob_X^\complement$ (Lemma~\ref{kfrakcircsob=kappa}) we get that,
	for any $x\in X$ and $a\in A$:
	\[
	a\in \kfrak^\complement\bigl(\sob_X(x)\bigr)\ \Leftrightarrow\ a\in\kappa(x)
	\]
	so $\sob_{\mathcal A}^\complement$ is a strict morphism of pseudo quotient vector bundles, which,
	by Lemma~\ref{QVBunfisostrict},
	is an isomorphism if and only if $\sob_X^\complement$ is a homeomorphism.\qedhere
\end{proof}

\begin{theorem}\label{T4.12}
	If $\mathcal A$ is a spectral pseudo quotient vector bundle then the map
	$(\complement,\ident)\colon\boldsymbol\Iota\boldsymbol{\clsets}\mathcal A\to\boldsymbol\Sigma\boldsymbol\Omega\mathcal A$
	is an isomorphism of pseudo quotient vector bundles and the following diagram commutes:
	\[
	\xymatrix{
		\boldsymbol\Iota\boldsymbol{\clsets}\mathcal A\ar[rr]^-{(\complement,\ident)}&&
		\boldsymbol\Sigma\boldsymbol\Omega\mathcal A \\
		&\mathcal A\ar[lu]^-{\sob^\complement_{\mathcal A}}\ar[ru]_-{\sob_{\mathcal A}}}
	\]
\end{theorem}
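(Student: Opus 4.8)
The plan is to exhibit $(\complement,\ident)$ as a strict morphism of pseudo quotient vector bundles whose base component is a homeomorphism and whose fibre component is a vector space isomorphism, so that Lemma~\ref{QVBunfisostrict} immediately promotes it to an isomorphism. First I would unwind the two objects: by construction $\boldsymbol\Iota\boldsymbol\clsets\mathcal A=(\Iota\clsets X,A,\kfrak^\complement)$ and $\boldsymbol\Sigma\boldsymbol\Omega\mathcal A=(\Sigma\Omega X,A,\kfrak)$, so both share the fibre space $A$ and the fibre component $\ident$ is trivially a vector space isomorphism. For the base component, recall that $\Iota\clsets X$ is topologized by $\Omega\Iota\clsets X=\Omega\Sigma(\clsets X)^\opp$ and that $\complement\colon\clsets X\to(\Omega X)^\opp$ is an isomorphism of lattices; hence $\complement$ exhibits $(\clsets X)^\opp$ and $\Omega X$ as isomorphic locales, and $\Sigma$ sends this to the homeomorphism $\complement\colon\Iota\clsets X=\Sigma(\clsets X)^\opp\to\Sigma\Omega X$, $C\mapsto X\setminus C$. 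This settles the topological and linear requirements of Lemma~\ref{QVBunfisostrict}.

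The crux is strictness, that is, the identity $\kfrak^\complement(C)=\kfrak(\complement(C))$ for every $C\in\Iota\clsets X$. Since $\complement(C)=X\setminus C$, this is exactly Corollary~\ref{cor:kfrak=kfrak}, which asserts $\kfrak^\complement(C)=\kfrak(X\setminus C)$ on irreducible closed sets. Consequently $a\in\kfrak(\complement(C))\Leftrightarrow a\in\kfrak^\complement(C)$ for all $a\in A$, so $(\complement,\ident)$ is a strict morphism, and Lemma~\ref{QVBunfisostrict} gives that it is an isomorphism of pseudo quotient vector bundles.

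For the triangle I would compute the composite directly. By Lemma~\ref{lemma:sobAisstrictmor}, $\sob^\complement_{\mathcal A}=(\sob^\complement_X,\ident)$, and since $\sob^\complement_X=\complement\circ\sob_X$ where the inner $\complement\colon\Sigma\Omega X\to\Iota\clsets X$ is the inverse of the outer $\complement\colon\Iota\clsets X\to\Sigma\Omega X$, composing bases covariantly and fibres contravariantly yields $(\complement,\ident)\circ\sob^\complement_{\mathcal A}=(\complement\circ\sob^\complement_X,\ident)=(\sob_X,\ident)=\sob_{\mathcal A}$, which is the claimed commutativity. I expect the only genuine subtlety to be a bookkeeping one: Corollary~\ref{cor:kfrak=kfrak} is phrased for spectral quotient vector bundles, so I would first record that it, together with the constancy of the kernel on irreducible closed sets underlying it, transfers verbatim to the spectral pseudo setting through the trivialization $\mathcal A\mapsto\mathcal A^{\mathrm{triv}}$, under which $\Sub A$, the support map, the restriction map $\kfrak$ and the corestriction map $\kfrak^\complement$ are all unchanged. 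Once that is noted, no analytic work remains, since all the substance is already carried by Corollary~\ref{cor:kfrak=kfrak} and Lemma~\ref{QVBunfisostrict}.
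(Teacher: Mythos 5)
Your proposal is correct and follows essentially the same route as the paper's own proof: invoke Lemma~\ref{QVBunfisostrict}, note that $\complement\colon\Iota\clsets X\to\Sigma\Omega X$ is a homeomorphism, and obtain strictness from Corollary~\ref{cor:kfrak=kfrak}, with the triangle checked by direct composition. The only differences are that you spell out details the paper leaves implicit --- the explicit computation $(\complement,\ident)\circ\sob^\complement_{\mathcal A}=(\complement\circ\sob^\complement_X,\ident)=\sob_{\mathcal A}$, and the transfer of Corollary~\ref{cor:kfrak=kfrak} to the pseudo setting via $\mathcal A\mapsto\mathcal A^{\mathrm{triv}}$ --- both of which are correct and harmless additions.
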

\begin{proof}
	To show that the pair $(\complement,\ident)$ is an isomorphism we apply Lemma~\ref{QVBunfisostrict}.
	The map $\complement\colon\Iota\clsets X\to\Sigma\Omega X$ is a homeomorphism and
	given any irreducible closed set $C\in\Iota\clsets X$, 
	by Lemma~\ref{cor:kfrak=kfrak} we have
	$\kfrak^\complement(C)=\kfrak\bigl(\complement(C)\bigr)$
	so the pair $(\complement,\ident)$ is a strict morphism of quotient vector bundles.
	The commutativity of the diagram is immediate.\qedhere
\end{proof}

\subsection{The universal example}

Suppose $\Max A$ has the lower Vietoris topology.
The identity map $\ident\colon\Max A\to \Max A$ induces a quotient vector bundle $\mathcal A_U=(\Max A,A,\ident)$ which we call
the universal quotient vector bundle \cite[section~5]{ReSa17}. Denote by $\boldsymbol\clsets\mathcal A_U=(\clsets\Max A,A,\gamma_U^\complement)$
the associated linearized colocale.

\begin{lemma}\label{gamma=gammaUcirck!}
  Let $\mathcal A=(X,A,\kappa)$ be a pseudo
  quotient vector bundle. Then its corestriction map satisfies
  $\gamma^\complement=\gamma_U^\complement\circ k_!^\complement$.
\end{lemma}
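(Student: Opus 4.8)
The plan is to prove the identity $\gamma^\complement = \gamma_U^\complement \circ \kappa_!^\complement$ by taking adjoints and reducing to a statement about the support maps rather than the corestriction maps directly. Both $\gamma^\complement$ and $\gamma_U^\complement$ are defined as left adjoints (of $\sigma^\complement$ and $\sigma_U^\complement$ respectively, see Lemma~\ref{sigmameetsjoins} and Definition~\ref{def:corestriction}), and $\kappa_!^\complement\colon\clsets X\to\clsets\Max A$ is the left adjoint of $\kappa^{-1}\colon\clsets\Max A\to\clsets X$ (equation~\eqref{eq:f!c}). Since a composite of left adjoints is the left adjoint of the composite of the right adjoints in the reverse order, proving $\gamma^\complement=\gamma_U^\complement\circ\kappa_!^\complement$ is equivalent to proving the corresponding identity for right adjoints, namely $\sigma^\complement=\kappa^{-1}\circ\sigma_U^\complement$.

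First I would unwind the definition of the universal cosupport map. By equation~\eqref{eq:defcosupportforMaxA} applied to the universal bundle $\mathcal A_U=(\Max A,A,\ident)$, whose kernel map is the identity, we have $\sigma_U^\complement=\ident^{-1}\circ i_\Iota\circ\sob_{\Max A}^\complement=i_\Iota\circ\sob_{\Max A}^\complement$, where I am using that $\sob^\complement_{\Max A}(V)=\mathcal F_{0,V}$ (equation~\eqref{eq:soblVC(V)=F_0,V}); more concretely $\sigma_U^\complement(V)=\mathcal F_{0,V}=\{W\in\Max A:W\subset V\}$. For a general bundle $\mathcal A=(X,A,\kappa)\in\PsiQVBun^{1*}_{\mathcal T}$, equation~\eqref{eq:defcosupport} gives $\sigma^\complement(V)=\kappa^{-1}(\mathcal F_{0,V})$.

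The computation then closes immediately: $\kappa^{-1}\bigl(\sigma_U^\complement(V)\bigr)=\kappa^{-1}(\mathcal F_{0,V})=\sigma^\complement(V)$, which is exactly $\sigma^\complement=\kappa^{-1}\circ\sigma_U^\complement$. Taking left adjoints of both sides of this equality of inf-lattice homomorphisms yields the desired $\gamma^\complement=\gamma_U^\complement\circ\kappa_!^\complement$, since left adjoints are unique and $(\kappa^{-1}\circ\sigma_U^\complement)$ has left adjoint $\gamma_U^\complement\circ\kappa_!^\complement$.

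The main obstacle I anticipate is not the core identity, which is essentially formal once the right-adjoint reformulation is in place, but rather the bookkeeping around well-definedness: I must confirm that $\kappa_!^\complement$ indeed lands in $\clsets\Max A$ and is genuinely the left adjoint of $\kappa^{-1}$ at the level of closed sets, and that the adjunction $\sigma^\complement\dashv\gamma^\complement$ is stated with the variances I am assuming (recall $\sigma^\complement$ is an inf-lattice homomorphism by Lemma~\ref{sigmameetsjoins}, so it has a left adjoint, matching Definition~\ref{def:corestriction}). One subtlety worth checking is that the identity $\sigma_U^\complement=i_\Iota\circ\sob^\complement_{\Max A}$ uses $\ident^{-1}=\ident$, which is unproblematic, but the statement holds for the \emph{pseudo} bundle $\mathcal A$ with $\Max A$ carrying the dual COS topology, so I should verify that equation~\eqref{eq:defcosupport} is the operative definition in this generality — which it is, by Definition~\ref{def:corestriction} and the preceding discussion.
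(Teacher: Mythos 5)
Your proof is correct and is essentially the paper's own argument: the paper likewise observes that $\sigma_U^\complement=i_\Iota\circ\sob_{\Max A}^\complement$ and $\sigma^\complement=\kappa^{-1}\circ i_\Iota\circ\sob_{\Max A}^\complement=\kappa^{-1}\circ\sigma_U^\complement$, and then takes left adjoints of this identity to obtain $\gamma^\complement=\gamma_U^\complement\circ\kappa_!^\complement$. Your additional bookkeeping remarks (uniqueness of left adjoints, reverse-order composition of adjoints) only make explicit what the paper leaves implicit.
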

\begin{proof}
  From Definition~\ref{def:corestriction}, the corestriction map $\gamma_U^\complement$ is left adjoint to 
  $\sigma_U^\complement=\ident^{-1}\circ i_\Iota\circ \sob_{\Max A}^\complement=i_\Iota\circ\sob_{\Max A}^\complement$
  and $\gamma^\complement$ is left adjoint to  $\sigma^\complement=\kappa^{-1}\circ i_\Iota\circ \sob_{\Max A}^\complement=\kappa^{-1}\circ\sigma_U^\complement$.
  Taking left adjoints of the identiy $\sigma^\complement=\kappa^{-1}\circ\sigma_U^\complement$ we get $\gamma^\complement=\gamma_U^\complement\circ k_!^\complement$.\qedhere
\end{proof}

\begin{lemma}\label{kfrakkU=sob-1}
  The restriction of $\gamma_U^\complement$ to $I\clsets\Max A$ equals $(\sob_{\Max A}^\complement)^{-1}$.
\end{lemma}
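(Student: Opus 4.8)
The plan is to deduce this as the specialization of Lemma~\ref{kfrakcircsob=kappa2} to the universal bundle $\mathcal A_U=(\Max A,A,\ident)$. First I would observe that $\mathcal A_U$ is a quotient vector bundle with Hausdorff fibers: its kernel map $\ident\colon\Max A\to\Max A$ is continuous for the lower Vietoris topology and trivially factors through $\Max A$, so the hypotheses of Lemma~\ref{kfrakcircsob=kappa2} are satisfied with $X=\Max A$ and $\kappa=\ident$.

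With those substitutions, the commutative upper triangle of the diagram in Lemma~\ref{kfrakcircsob=kappa2} reads $\kfrak^\complement=\bigl(\sob_{\Max A}^\complement\bigr)^{-1}\circ\ident_!^\complement$, where $\kfrak^\complement$ is the restriction of $\gamma_U^\complement$ to $\Iota\clsets\Max A$. The only point left to check is that $\ident_!^\complement$ is the identity on irreducible closed sets, and this is immediate from equation~\eqref{eq:f!c}: $\ident_!^\complement(C)=\overline{\ident(C)}=\overline C=C$ for every closed set $C$. Substituting yields $\kfrak^\complement=\bigl(\sob_{\Max A}^\complement\bigr)^{-1}$, which is exactly the assertion.

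Should one prefer a self-contained argument, I would instead compute directly. The sets $\mathcal U_{0,V}$ with $V\in\Max A$ are open in the lower Vietoris topology, so $\mathcal A_U\in\PsiQVBun^{1*}_{\mathcal T}$ and Lemma~\ref{lemmagamma=bigvee} gives $\gamma_U^\complement(C)=\bigvee_{V\in C}V$ for every closed $C$. Since $\Max A$ is sober with $\sob_{\Max A}^\complement(V)=\mathcal F_{0,V}$ a homeomorphism, any irreducible closed set has the form $C=\mathcal F_{0,V_0}=\{W\in\Max A:W\subset V_0\}$ for the unique $V_0=\bigl(\sob_{\Max A}^\complement\bigr)^{-1}(C)$; then $\bigvee_{W\subset V_0}W=V_0$, so $\gamma_U^\complement(C)=V_0$. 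I do not anticipate any real obstacle: the statement is Lemma~\ref{kfrakcircsob=kappa2} evaluated at the universal example, and the sole delicate point is the elementary verification that $\ident_!^\complement$ fixes every irreducible closed set.
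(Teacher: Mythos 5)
Your proposal is correct and takes essentially the paper's route: the paper obtains the lemma by applying Lemma~\ref{kfrakcircsob=kappa} (the identity $\kappa=\kfrak^\complement\circ\sob_X^\complement$) to the universal bundle $(\Max A,A,\ident)$ and inverting $\sob_{\Max A}^\complement$ (which is a bijection since $\Max A$ is sober), while you apply the downstream Lemma~\ref{kfrakcircsob=kappa2}, whose upper triangle is that same identity already composed with $\bigl(\sob_{\Max A}^\complement\bigr)^{-1}$, so that your only extra check is $\ident_!^\complement(C)=\overline C=C$. Your backup computation via Lemma~\ref{lemmagamma=bigvee}, namely $\gamma_U^\complement(\mathcal F_{0,V_0})=\bigvee_{W\subset V_0}W=V_0$ on irreducible closed sets, is also sound, and correctly justifies cospectrality of $\mathcal A_U$ from the openness of the sets $\mathcal U_{0,V}$ in the lower Vietoris topology.
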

\begin{proof}
  It follows immediately from Lemma~\ref{kfrakcircsob=kappa} applied to the quotient vector bundle $(\Max A,A,\ident)$.\qedhere
\end{proof}

Given a pseudo linearized colocale $\mathfrak A^\complement=(L^\complement,A,\gamma^\complement)$, 
we say a map of colocales $\underline\kappa\colon L^\complement\to\clsets\Max A$ 
is a \emph{classifying map} for $\mathfrak A^\complement$
if $\gamma^\complement=\gamma_U^\complement\circ\underline\kappa_*$.

\begin{lemma}\label{lemma:uniqueclassifmap}
  Let $\mathcal A=(X,A,\kappa)$ be a quotient vector bundle. 
  Then the linearized colocale $\boldsymbol\clsets\mathcal A$ has a unique classifying map, namely
  $\kappa_!^\complement\colon\clsets X\to\clsets\Max A$.
\end{lemma}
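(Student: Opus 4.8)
The plan is to prove existence and uniqueness separately, reusing the machinery already set up for the universal bundle. For existence I would first confirm that $\kappa_!^\complement\colon\clsets X\to\clsets\Max A$ really is a map of colocales: its candidate inverse image homomorphism is the preimage map $\kappa^{-1}\colon\clsets\Max A\to\clsets X$, which preserves arbitrary intersections and finite unions and is therefore an inf-lattice homomorphism preserving finite joins, while its left adjoint is $C\mapsto\overline{\kappa(C)}$ by equation~\eqref{eq:f!c}, i.e.\ exactly $\kappa_!^\complement$. Thus the direct image homomorphism of this map of colocales is $\kappa_!^\complement$ itself, and the classifying identity $\gamma^\complement=\gamma_U^\complement\circ\kappa_!^\complement$ is precisely Lemma~\ref{gamma=gammaUcirck!}. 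Hence $\kappa_!^\complement$ is a classifying map for $\boldsymbol\clsets\mathcal A$.

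For uniqueness, let $\underline\lambda\colon\clsets X\to\clsets\Max A$ be any classifying map, so that $\gamma^\complement=\gamma_U^\complement\circ\underline\lambda_*$; since a map of colocales is determined by its direct image homomorphism (the inverse image is recovered as its right adjoint), it suffices to show $\underline\lambda_*=\kappa_!^\complement$. I would first pin $\underline\lambda_*$ down on irreducible closed sets. Reading $\underline\lambda$ as a map of locales between the opposite lattices and invoking the inclusion $f_*(\Sigma L)\subset\Sigma M$ from the Locales subsection, together with the identification $\Iota L^\complement=\Sigma(L^\complement)^\opp$, gives $\underline\lambda_*(\Iota\clsets X)\subset\Iota\clsets\Max A$. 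Restricting the classifying identity to $\Iota\clsets X$, using that the restriction of $\gamma^\complement$ to $\Iota\clsets X$ is $\kfrak^\complement$ and that $\gamma_U^\complement$ restricted to $\Iota\clsets\Max A$ equals $(\sob_{\Max A}^\complement)^{-1}$ (Lemma~\ref{kfrakkU=sob-1}), I obtain $\underline\lambda_*(C)=\sob_{\Max A}^\complement\bigl(\kfrak^\complement(C)\bigr)$ for every $C\in\Iota\clsets X$. By the upper triangle of Lemma~\ref{kfrakcircsob=kappa2} the right-hand side is exactly $\kappa_!^\complement(C)$, so $\underline\lambda_*$ and $\kappa_!^\complement$ agree on all irreducible closed sets, independently of $\underline\lambda$.

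The remaining step is to propagate this agreement to all closed sets. Here I would use that $\underline\lambda_*$, being a left adjoint, preserves arbitrary joins, and that every closed set is the join in $\clsets X$ of the point closures it contains, $C=\bigvee_{x\in C}\overline{\{x\}}$, each $\overline{\{x\}}=\sob_X^\complement(x)$ being irreducible. Applying $\underline\lambda_*$ and $\kappa_!^\complement$ (both join-preserving) and using their agreement on the sets $\overline{\{x\}}$ yields $\underline\lambda_*(C)=\bigvee_{x\in C}\kappa_!^\complement(\overline{\{x\}})=\kappa_!^\complement(C)$, hence $\underline\lambda_*=\kappa_!^\complement$ and $\underline\lambda=\kappa_!^\complement$.

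The step I expect to be the main obstacle is the uniqueness argument, and within it the bookkeeping that licenses restricting to irreducible closed sets: one must verify that a map of colocales sends irreducible closed sets to irreducible closed sets (the colocale form of $f_*(\Sigma L)\subset\Sigma M$, where the passage through the opposite locales has to be handled carefully) and then combine this with join-preservation of direct images, so that the generation $C=\bigvee_{x\in C}\overline{\{x\}}$ carries the equality from points to the whole of $\clsets X$.
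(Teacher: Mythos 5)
Your proof is correct and follows essentially the same route as the paper's: existence via Lemma~\ref{gamma=gammaUcirck!}, and uniqueness by restricting the classifying identity to $\Iota\clsets X$, applying Lemma~\ref{kfrakkU=sob-1}, and then propagating to all of $\clsets X$ through the decomposition $C=\bigvee_{x\in C}\overline{\{x\}}$ and join-preservation of direct image homomorphisms. The only difference is that you spell out two points the paper leaves implicit, namely that $\kappa_!^\complement$ is indeed the direct image of a map of colocales, and that $\underline\lambda_*$ sends irreducible closed sets to irreducible closed sets, which is precisely what licenses invoking Lemma~\ref{kfrakkU=sob-1}.
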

\begin{proof}
  From Lemma~\ref{gamma=gammaUcirck!} it follows immediately
  that $\kappa_!^\complement$ is a classifying map.
  To prove uniqueness
  let $\underline\kappa$ be a classifying map for $\boldsymbol\clsets\mathcal A$. Restricting the identity $\gamma^\complement=\gamma_U^\complement\circ\underline\kappa_*$
  to $\Iota\clsets X$ and applying Lemma~\ref{kfrakkU=sob-1} we get $\underline\kappa_*=\sob_{\Max A}^\complement\circ\kfrak^\complement$,
  where $\kfrak^\complement$ is the restriction of $\gamma^\complement$ to $\Iota\clsets X$.
  Now for any $C\in\clsets X$ we have
  \begin{align*}
    \underline\kappa_*(C)&=\underline\kappa_*\Bigl(\bigvee_{x\in C}\sob_{X}^\complement(x)\Bigr)\\
    &=\bigvee_{x\in C}\underline\kappa_*\circ\sob_{X}^\complement(x)\\
    &=\bigvee_{x\in C}\sob_{\Max A}^\complement\circ\kfrak^\complement\circ\sob_{X}^\complement(x)
  \end{align*}
  which shows uniqueness.\qedhere
\end{proof}

\begin{corollary}
Let $\mathfrak A^\complement=(L^\complement,A,\gamma^\complement)$ be a spatial linearized colocale. 
Then the classifying map $\underline\kappa$ exists and is unique.
\end{corollary}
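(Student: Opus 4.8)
The plan is to reduce the statement to Lemma~\ref{lemma:uniqueclassifmap} by transporting its unique classifying map along the spatialization isomorphism. Since $L^\complement$ is spatial, the spatialization $s=\spat_{L^\complement}^\complement\colon\clsets\Iota L^\complement\to L^\complement$ is an isomorphism of colocales; by equation~\eqref{eq:spatc*=Ca} its inverse image homomorphism is $s^*(a)=C_a$, and since $s$ is invertible its direct image homomorphism satisfies $s_*=(s^*)^{-1}$. Writing $X=\Iota L^\complement$, the triple $\mathcal A=\boldsymbol\Iota\mathfrak A^\complement=(X,A,\kfrak^\complement)$ is an honest quotient vector bundle, because $\mathfrak A^\complement$ is a linearized colocale; let $\boldsymbol\clsets\mathcal A=(\clsets X,A,\gamma_{\mathcal A}^\complement)$ be its associated linearized colocale, which by Lemma~\ref{lemma:uniqueclassifmap} has a unique classifying map.

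The key step will be to show that $s$ intertwines the two corestriction maps, i.e. $\gamma_{\mathcal A}^\complement=\gamma^\complement\circ s_*$. Both sides are sup-lattice homomorphisms $\clsets X\to\Max A$, so it suffices to check the identity on the closures $\sob_X^\complement(x)=\overline{\{x\}}$, as these join-generate $\clsets X$. On the left, applying Lemma~\ref{kfrakcircsob=kappa} to $\mathcal A$ (whose kernel map is the restriction of $\gamma^\complement$ to $X$) gives $\gamma_{\mathcal A}^\complement(\overline{\{x\}})=\gamma^\complement(x)$, where $x$ is viewed as an element of $\Iota L^\complement\subset L^\complement$. On the right, the closed sets of $X$ are exactly the sets $C_a$, and $\overline{\{x\}}=C_x$ is the smallest one containing $x$, so $s^*(x)=C_x=\overline{\{x\}}$ and hence $s_*(\overline{\{x\}})=x$; thus $\gamma^\complement(s_*(\overline{\{x\}}))=\gamma^\complement(x)$, which matches. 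I expect this computation of $s_*$ on closures of points to be the only genuine obstacle, the remaining verifications being formal.

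Granting $\gamma_{\mathcal A}^\complement=\gamma^\complement\circ s_*$, existence and uniqueness follow simultaneously. Because $s$ is an isomorphism and direct images compose covariantly, the assignment $\underline\kappa\mapsto\underline\kappa\circ s$ is a bijection from the classifying maps of $\mathfrak A^\complement$ onto those of $\boldsymbol\clsets\mathcal A$, with inverse $\underline\lambda\mapsto\underline\lambda\circ s^{-1}$: indeed, if $\gamma_U^\complement\circ\underline\kappa_*=\gamma^\complement$ then $\gamma_U^\complement\circ(\underline\kappa\circ s)_*=\gamma^\complement\circ s_*=\gamma_{\mathcal A}^\complement$, and conversely $\gamma_U^\complement\circ\underline\lambda_*=\gamma_{\mathcal A}^\complement$ gives $\gamma_U^\complement\circ(\underline\lambda\circ s^{-1})_*=\gamma_{\mathcal A}^\complement\circ(s_*)^{-1}=\gamma^\complement$. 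Since $\boldsymbol\clsets\mathcal A$ has exactly one classifying map by Lemma~\ref{lemma:uniqueclassifmap}, so does $\mathfrak A^\complement$. Conceptually, this records that $\boldsymbol\clsets$ and $\boldsymbol\Iota$ restrict to an equivalence between spatial linearized colocales and sober quotient vector bundles, under which the universal classifying property of Lemma~\ref{lemma:uniqueclassifmap} is transported.
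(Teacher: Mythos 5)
Your proof is correct and follows essentially the same route as the paper: the paper's proof simply says "apply Lemma~\ref{lemma:uniqueclassifmap} to the quotient vector bundle $\boldsymbol\Iota\mathfrak A^\complement$," leaving implicit exactly the steps you spell out (spatiality makes $\spat^\complement_{L^\complement}$ an isomorphism intertwining $\gamma^\complement$ with the corestriction of $\boldsymbol\clsets\boldsymbol\Iota\mathfrak A^\complement$, so classifying maps transport bijectively). Your explicit verification of the intertwining identity on point closures is sound and matches the identity the paper itself establishes later, in the spatialization lemma of section~\ref{sec:5}.
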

\begin{proof}
  It follows from Lemma~\ref{lemma:uniqueclassifmap} applied to the quotient vector bundle $\mathcal A=\Iota\mathfrak A^\complement$.\qedhere
\end{proof}

\section{Adjunctions}\label{sec:5}

\subsection{Covariant and contravariant morphisms}

\subsubsection{Linearized colocales}

Given pseudo linearized colocales 
$\mathfrak A^\complement=( L^\complement_A,A,\gamma_A^\complement)$ and
$\mathfrak B^\complement=( L^\complement_B,B,\gamma_B^\complement)$, a contravariant
morphism $\mathfrak f\colon\mathfrak A^\complement\to\mathfrak B^\complement$ is a pair
$f=(\underline f,\overline f)$ where $\overline f\colon B\to A$ is a linear map such that $\overline f{}^{-1}(V)\in \Max B$ for any $V\in \Max A$
and $\underline f\colon L^\complement_A\to L^\complement_B$ is a map of colocales, satisfying the relation %
\begin{equation}\label{eq:morofcolocales}
\Max\overline f\circ\gamma_B^\complement\circ \underline f_*\leq \gamma_A^\complement\,,
\end{equation}
where $\underline f_*$ is the direct image homomorphism of the map $\underline f$.
Composition is given by
$(\underline f,\overline f)\circ(\underline g,\overline g)=(\underline f\circ\underline g,\overline g\circ\overline f)$.
It is straightforward to check that composition preserves relation~\eqref{eq:morofcolocales}.
We represent by $\PsiLinCoLoc^{1*}$\ the category whose objects are the
pseudo linearized colocales and whose morphisms are the contravariant morphisms. 
We represent by $\LinCoLoc^{1*}$ the subcategory whose objects are the linearized colocales
and whose morphisms are the pairs $(\underline f,\overline f)$ where $\overline f$ is continuous.

Given linearized colocales $\mathfrak A^\complement=(L^\complement_A,A,\gamma_A^\complement)$
and $\mathfrak B^\complement=(L^\complement_B,B,\gamma_B^\complement)$,
a covariant morphism $\mathfrak A^\complement\to\mathfrak B^\complement$ is a pair $(\underline f,\overline f)$
where $\overline f\colon A\to B$ is a linear map such that $\overline f{}^{-1}(V)\in \Max A$ for any $V\in \Max B$ and
$\underline f\colon L^\complement_A\to L^\complement_B$ is a map of colocales satisfying the relation
\[
\Max\overline f\circ\gamma_A^\complement\leq\gamma_B^\complement\circ\underline f_*\,.
\]
Composition is given by
$(\underline f,\overline f)\circ(\underline g,\overline g)=(\underline f\circ\underline g,\overline f\circ\overline g)$.
We represent by $\PsiLinCoLoc_*$\ the category of pseudo linearized colocales with covariant morphisms
and we represent by  $\LinCoLoc_*$ the subcategory whose objects are the linearized colocales
and whose morphisms are the pairs $(\underline f,\overline f)$ where $\overline f$ is continuous.

\subsubsection{Quotient vector bundles}

A contravariant morphism of pseudo quotient vector bundles is a 
morphisms in $\PsiQVBun^{1*}$.
Given pseudo quotient vector bundles $\mathcal A=(X,A,\kappa_A)$ and $\mathcal B=(Y,B,\kappa_B)$,
a covariant morphism $\mathcal A\to\mathcal B$ is a pair $(f_\flat,f^\sharp)$
where $f_\flat\colon X\to Y$ is continuous, $f^\sharp\colon A\to B$ is a linear map
satisfying $(f^\sharp)^{-1}(V)\in\Max A$ for all $V\in\Max B$ and such that
for all $x\in X$ we have
\[
f^\sharp\bigl(\kappa_A(x)\bigr)\subset\kappa_B\bigl(f_\flat(x)\bigr)\,.
\]

Note that a covariant morphism induces, for each $x\in X$, linear maps $A/\kappa_A(x)\to B/\kappa_B\bigl(f_\flat(x)\bigr)$
and hence it defines acovariant  morphism between the associated linear bundles $E_A\to E_B$.
We represent by $\PsiQVBun^1_*$ the category whose objects are the pseudo quotient vector bundles 
with Hausdorff fibers and whose morphisms
are the covariant morphisms with composition given by 
$(f_\flat,f^\sharp)\circ(g_\flat,g^\sharp)=(f_\flat\circ g_\flat,f^\sharp\circ g^\sharp)$.
We represent by $\PsiQVBun^1_{\Sigma*}$ and $\PsiQVBun^1{\Tt*}$ the full subcategories
whose objects are respectively the spectral and the cospectral pseudo quotient vector bundles.
We represent by $\QVBun^1_*$ the subcategory whose objects are
the quotient vector bundles and whose morphisms are the pairs
$(f_\flat,f^\sharp)$ where $f^\sharp$ is continuous. 

\subsubsection{Linearized locales}

A contravariant morphism of pseudo linearized locales is a morphism in $\PsiLLoc^{1*}$.
Given pseudo linearized locales $\mathfrak A=(L_A,A,\gamma_A)$
and $\mathfrak B=(L_B,B,\gamma_B)$,
a covariant morphism $\mathfrak A\to\mathfrak B$ is a pair $(\underline f,\overline f)$
where $\overline f\colon A\to B$ is a linear map such that $\overline f{}^{-1}(V)\in \Max A$ for any $V\in \Max B$
and $\underline f\colon L_A\to L_B$ is a map of locales, satisfying the relation
\[
\gamma_A\leq\overline f^{-1}\circ\gamma_B\circ \underline f_*\,.
\]
We represent by $\PsiLLoc^1_*$ the category whose objects are the linearized
locales with covariant morphisms and we represent by $\LLoc_*$ the subcategory whose objects are the linearized locales
and whose morphisms are the pairs $(\underline f,\overline f)$ where $\overline f$ is continuous.

The following table summarizes the conditions satisfied by the morphisms of quotient vector bundles and linearized locales and colocales:
\[
\begin{array}{|l|l|l|}\hline
  \text{Category}    & \text{Morphisms} & \text{Condition} \\ \hline
              & (X,A,\kappa_A)\to(X,B,\kappa_B) & \\
  \raisebox{1.5ex}[0pt]{$\QVBun^{1*}$}      & f_\flat\colon X\to Y,\quad f^\sharp\colon B\to A &
  \raisebox{1.5ex}[0pt]{$b\in\kappa_B\bigl(f_\flat(x)\bigr)\Rightarrow f^\sharp(b)\in\kappa_A(x)$} \\ \hline
              & (X,A,\kappa_A)\to(X,B,\kappa_B) & \\
  \raisebox{1.5ex}[0pt]{$\QVBun^1_*$}      & f_\flat\colon X\to Y,\quad f^\sharp\colon A\to B &
  \raisebox{1.5ex}[0pt]{$a\in\kappa_A(x)\Rightarrow f^\sharp(a)\in\kappa_B\bigl(f_\flat(x)\bigr)$} \\ \hline
              & (L_A,A,\gamma_A)\to(L_B,B,\gamma_B) & \\
  \raisebox{1.5ex}[0pt]{$\LLoc^{1*}$}       & \underline f\colon L_A\to L_B,\quad \overline f\colon B\to A &  %
  \raisebox{1.5ex}[0pt]{$\gamma_B\circ\underline f{}_*\leq\overline f{}^{-1}\circ\gamma_A$} \\ \hline
              & (L_A,A,\gamma_A)\to(L_B,B,\gamma_B) & \\
  \raisebox{1.5ex}[0pt]{$\LLoc^1_*$}       & \underline f\colon L_A\to L_B,\quad \overline f\colon A\to B &
  \raisebox{1.5ex}[0pt]{$\gamma_A\leq\overline f{}^{-1}\circ\gamma_B\circ\underline f{}_*$} \\ \hline
              & (L_A^\complement,A,\gamma_A^\complement)\to(L_B^\complement,B,\gamma_B^\complement) & \\
  \raisebox{1.5ex}[0pt]{$\LinCoLoc^{1*}$}   & \underline f\colon L_A^\complement\to L_B^\complement,\quad \overline f\colon B\to A& 
  \raisebox{1.5ex}[0pt]{$\Max\overline f\circ\gamma_B^\complement\circ\underline f{}_*\leq\gamma_A^\complement$} \\ \hline
              & (L_A^\complement,A,\gamma_A^\complement)\to(L_B^\complement,B,\gamma_B^\complement) & \\
  \raisebox{1.5ex}[0pt]{$\LinCoLoc^1_*$}   & \underline f\colon L_A^\complement\to L_B^\complement,\quad \overline f\colon A\to B &
  \raisebox{1.5ex}[0pt]{$\Max\overline f\circ\gamma_A^\complement\leq\gamma_B^\complement\circ\underline f{}_*$} \\ \hline
\end{array}
\]

\subsubsection{\boldmath $\ident$-morphisms}

An $\ident$-morphism of pseudo quotient vector bundles $(X_1,A,\kappa_1)\to(X_2,A,\kappa_2)$ is a 
continuous map $f\colon X_1\to X_2$ such that for all $x\in X_1$ we have
\[
\kappa_1(x)=\kappa_2\bigl(f(x)\bigr)\,.
\]
This condition is equivalent to the pair $(f,\ident)$ being both a contravariant and a covariant morphism of pseudo quotient vector bundles.
We say $f$ is an $\ident$-isomorphism if $f$ is a homeomorphism.

Given pseudo linearized locales $\mathfrak A_1=(L_1,A,\gamma_1)$ and $\mathfrak A_2=(L_2,A,\gamma_2)$
and $\ident$-morphism $\mathfrak A_1\to\mathfrak A_2$ is a map of locales $\underline f\colon L_1\to L_2$ such that
\[
\gamma_1=\gamma_2\circ{\underline f}_*
\]
This condition is equivalent to the pair $(f,\ident)$ being both a contravariant and a covariant morphism of pseudo linearized locales.
We say $\underline f$ is an $\ident$-isomorphism if $\underline f$ is an isomorphism of locales.

Given pseudo linearized colocales 
$\mathfrak A_1^\complement=(L_1^\complement,A,\gamma_1^\complement)$ and 
$\mathfrak A_2^\complement=(L_2^\complement,A,\gamma_2^\complement)$
and $\ident$-morphism $\mathfrak A_1^\complement\to\mathfrak A_2^\complement$ is a map of colocales 
$\underline f\colon L_1^\complement\to L_2^\complement$ such that
\[
\gamma_1^\complement=\gamma_2^\complement\circ{\underline f}_*
\]
This condition is equivalent to the pair $(f,\ident)$ being both a contravariant and a covariant morphism of pseudo linearized colocales.
We say $\underline f$ is an $\ident$-isomorphism if $\underline f$ is an isomorphism of colocales.

\begin{lemma}\label{morlincolocisostrict}\hfill
	\begin{enumerate}
		\item An $\ident$-morphism of pseudo quotient vector bundles
		$f\colon(X_1,A,\kappa_1)\to(X_2,A,\kappa_2)$ is an $\ident$-isomorphism if and only if the pair $(f,\ident)$
		is both a covariant and a contravariant isomorphism of pseudo quotient vector bundles.
		\item An $\ident$-morphism of pseudo linearized locales
		$\underline f\colon( L_1,A,\gamma_1)\to( L_2,A,\gamma_2)$ is
		and $\ident$-isomorphism if and only if the pair $(\underline f,\ident)$ is both a covariant and a contravariant isomorphism of
		pseudo linearized locales.
	\item An $\ident$-morphism of pseudo linearized colocales
	$\underline f\colon( L^\complement_1,A,\gamma_1^\complement)\to( L^\complement_2,A,\gamma_2^\complement)$ is
	and $\ident$-isomorphism if and only if the pair $(\underline f,\ident)$ is both a covariant and a contravariant isomorphism of
	pseudo linearized colocales.
	\end{enumerate}
\end{lemma}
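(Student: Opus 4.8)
The three statements have the same shape, so the plan is to treat them in parallel and isolate the single point where the three categories differ. In each case the defining relation of an $\ident$-morphism is, by the paragraphs immediately preceding the lemma, exactly what makes the pair $(f,\ident)$ (respectively $(\underline f,\ident)$) simultaneously a covariant and a contravariant morphism. Thus the only real content is the passage between ``$\ident$-isomorphism'' and ``isomorphism in both categories,'' and I would prove the two implications separately.

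For the forward implication I would assume $f$ (resp. $\underline f$) is an $\ident$-isomorphism and exhibit $(f^{-1},\ident)$ (resp. $(\underline f^{-1},\ident)$) as a two-sided inverse in both categories. The first step is to verify that the inverse arrow is again an $\ident$-morphism: for quotient vector bundles this is immediate from $\kappa_1(x)=\kappa_2(f(x))$ by substituting $x=f^{-1}(y)$, while for (co)locales one uses that the direct-image assignment $(-)_*$ is functorial, so that $(\underline f^{-1})_*=(\underline f_*)^{-1}$, and hence the relation $\gamma_1=\gamma_2\circ\underline f_*$ transports to $\gamma_2=\gamma_1\circ(\underline f^{-1})_*$ (and likewise for $\gamma^\complement$). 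Once $(f^{-1},\ident)$ is known to be an $\ident$-morphism it is in particular both covariant and contravariant, and the composition formulas for these categories show at once that it inverts $(f,\ident)$ in each.

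For the reverse implication I would start from the assumption that $(f,\ident)$ is an isomorphism — it suffices to use either one of the two categories — and read off that $f$ is an $\ident$-isomorphism. For part~(1) this is cleanest via Lemma~\ref{QVBunfisostrict}: since the algebraic component $\ident$ is a vector-space isomorphism and strictness of $(f,\ident)$ is precisely the relation $\kappa_1(x)=\kappa_2(f(x))$, that lemma forces $f$ to be a homeomorphism. For parts~(2) and~(3), lacking a cited analogue, I would argue directly from the composition formulas: writing the hypothetical inverse as $(\underline g,\overline g)$ and composing with $(\underline f,\ident)$ on both sides, the algebraic component of each composite is $\overline g$, forcing $\overline g=\ident$, while the locale (resp. colocale) component yields $\underline g\circ\underline f=\ident$ and $\underline f\circ\underline g=\ident$; hence $\underline f$ is an isomorphism of locales (resp. colocales), i.e. an $\ident$-isomorphism.

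The routine parts are the composition bookkeeping and the check that an inverted arrow still satisfies the $\ident$-relation. The one genuinely delicate point is this last check in the (co)locale cases, which rests on the functoriality of $(-)_*$ and the identity $(\underline f^{-1})_*=(\underline f_*)^{-1}$; once this is in hand, transporting the relations $\gamma_1=\gamma_2\circ\underline f_*$ and $\gamma_1^\complement=\gamma_2^\complement\circ\underline f_*$ across the isomorphism is immediate.
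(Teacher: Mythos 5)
Your proof is correct and follows essentially the same route as the paper's: both arguments reduce to the observation that any inverse of $(f,\ident)$ must be of the form $(f^{-1},\ident)$ (by the componentwise composition formulas), that the morphism conditions for the pair and its inverse together amount exactly to the $\ident$-morphism equality $\gamma_1=\gamma_2\circ\underline f_*$ (via $(\underline f^{-1})_*=(\underline f_*)^{-1}$), and both lean on Lemma~\ref{QVBunfisostrict} for the quotient-vector-bundle case. The paper merely states this more tersely, proving only case (3) and declaring the others analogous, whereas you spell out all three cases and both implications.
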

\begin{proof}
	The proofs are similar in the three cases so we will prove only (3).
	The proof is analogous to the proof of Lemma~\ref{QVBunfisostrict}. A morphism of pseudo linearized colocales $(\underline f,\ident)$ 
	is an isomorphism if and only if
	$\underline f$ is an isomorphism of colocales and the pair
	$(\underline f^{-1},\ident)$ is a morphism of pseudo linearized colocales. The conditions that 
	both $(\underline f,\ident)$ and $(\underline f^{-1},\ident)$ are morphisms is equivalent to
	$\gamma_2^\complement\circ \underline f_*=\gamma_1^\complement$ in both the covariant and the contravariant cases.\qedhere
\end{proof}

\subsection{The adjunction \boldmath $\clsets\dashv\Iota$}

\subsubsection{From quotient vector bundles to linearized colocales}

Given a continuous map $f\colon X\to Y$  let
\begin{equation}\label{eq:clsetsonmaps}
\clsets f\colon\clsets X\to\clsets Y
\end{equation}
be the map of colocales with inverse image homomorphism $(\clsets f)^{*}=f^{-1}$. Its direct image
homomorphism is then the map $(\clsets f)_*(C)=f^{\complement}_!(C)=\overline{f(C)}$.

\begin{lemma}\label{morQVBuniffmorLinCoLoc}
	Let $(X,A,\kappa_A),(Y,B,\kappa_B)\in\PsiQVBun^{1*}_\Tt$, let $f_\flat\colon X\to Y$ be continuous and let $f^\sharp\colon B\to A$ be linear.
	The following are equivalent:
	\begin{enumerate}
		\item The pair $(f_\flat,f^\sharp )$ satisfies:
		\[
		b\in \kappa_B\bigl(f_\flat(x)\bigr)\ \Rightarrow\ f^\sharp (b)\in\kappa_A(x)\,;
		\]
		\item The pair $(\clsets f_\flat,f^\sharp )$ satisfies:
		\[
		\Max f^\sharp \circ\gamma_B^\complement\circ (f_\flat)_!^\complement\leq \gamma_A^\complement\,.
		\]
	\end{enumerate}
\end{lemma}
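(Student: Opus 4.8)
The plan is to prove the equivalence of (1) and (2) by translating the pointwise implication in (1) into a statement about the cosupport maps, and then transporting it across the adjunction $\sigma^\complement\dashv\gamma^\complement$ to obtain the inequality in (2). The key objects are the corestriction maps $\gamma_A^\complement,\gamma_B^\complement$, which by Lemma~\ref{lemmagamma=bigvee} are given explicitly by $\gamma^\complement(C)=\bigvee_{x\in C}\kappa(x)$, together with the formula $(\clsets f_\flat)_*=(f_\flat)_!^\complement$, so that $\gamma_B^\complement\circ(f_\flat)_!^\complement(C)=\bigvee_{x\in C}\kappa_B(f_\flat(x))$. Because $\Max f^\sharp$ is a sup-lattice homomorphism and $(f_\flat)_!^\complement$ preserves joins (being a direct image of a map of colocales), both sides of the inequality in (2) are sup-lattice homomorphisms $\clsets X\to\Max A$, hence determined by their values on the principal closed sets $\overline{\{x\}}$.

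\begin{proof}
	By Lemma~\ref{lemmagamma=bigvee}, for any $C\in\clsets X$ we have $\gamma_A^\complement(C)=\bigvee_{x\in C}\kappa_A(x)$, and since
	$(\clsets f_\flat)_*(C)=(f_\flat)_!^\complement(C)=\overline{f_\flat(C)}$, Lemma~\ref{lemmagamma=bigvee} applied to $\mathcal B$ gives
	\[
	\gamma_B^\complement\circ(f_\flat)_!^\complement(C)=\bigvee_{y\in\overline{f_\flat(C)}}\kappa_B(y)=\bigvee_{x\in C}\kappa_B\bigl(f_\flat(x)\bigr)\,,
	\]
	where the last equality uses Lemma~\ref{lemma:k(y)subsetk(x)} to discard the points of $\overline{f_\flat(C)}$ not in $f_\flat(C)$, since each such point $y$ lies in the closure of some $f_\flat(x)$ and so contributes a kernel $\subset\kappa_B(f_\flat(x))$. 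Both sides of the inequality in (2) are therefore sup-lattice homomorphisms of $C$, so the inequality holds for all $C$ if and only if it holds on the principal closed sets $C=\overline{\{x\}}$; and on $\overline{\{x\}}$ the join again collapses by Lemma~\ref{lemma:k(y)subsetk(x)} to the single term at $x$. Thus (2) is equivalent to
	\[
	\overline{f^\sharp\bigl(\kappa_B(f_\flat(x))\bigr)}\subset\kappa_A(x)\qquad(x\in X)\,,
	\]
	recalling that $\Max f^\sharp(V)=\overline{f^\sharp(V)}$. Since $\kappa_A(x)\in\Max A$ is closed, this containment is equivalent to $f^\sharp\bigl(\kappa_B(f_\flat(x))\bigr)\subset\kappa_A(x)$, which is precisely the pointwise implication in (1). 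This proves $(1)\Leftrightarrow(2)$.\qedhere
\end{proof}

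\noindent\textbf{Expected obstacle.} The delicate point is the passage from the join over $\overline{f_\flat(C)}$ to the join over $f_\flat(C)$: one must verify that enlarging the index set to the closure adds no new elements to the supremum. This is exactly where the hypothesis $(Y,B,\kappa_B)\in\PsiQVBun^{1*}_\Tt$ (cospectrality) enters, via Lemma~\ref{lemma:k(y)subsetk(x)}, guaranteeing $\kappa_B(y)\subset\kappa_B(f_\flat(x))$ for $y\in\overline{\{f_\flat(x)\}}$. A secondary check is that $(f_\flat)_!^\complement$ and $\Max f^\sharp$ genuinely preserve arbitrary joins so that the reduction to principal closed sets is valid; both are standard (the former as a direct image homomorphism of a map of colocales, the latter by item~(2) of the examples in Section~\ref{sec:2}), but they should be noted.
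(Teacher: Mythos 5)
Your overall architecture is the same as the paper's: reduce the inequality in (2) to the principal closed sets $\overline{\{x\}}$ using the fact that both sides preserve joins, then collapse the joins there to a single term, and finally translate $\overline{f^\sharp\bigl(\kappa_B(f_\flat(x))\bigr)}\subset\kappa_A(x)$ into statement (1) using that $\kappa_A(x)$ is closed. However, your computation of $\gamma_B^\complement\circ(f_\flat)_!^\complement(C)$ for a general closed set $C$ rests on a false topological claim: that every point of $\overline{f_\flat(C)}\setminus f_\flat(C)$ lies in the closure of a \emph{single} point $f_\flat(x)$. This fails in any $T_1$ space, where closures of singletons are singletons but $f_\flat(C)$ need not be closed even when $C$ is closed and $f_\flat$ is continuous: take $X=(0,1]$, $C=\{1/n:n\in\NN\}$ (closed in $X$), and $f_\flat$ the inclusion into $Y=\RR$; then $0\in\overline{f_\flat(C)}$ but $0\notin\overline{\{f_\flat(x)\}}$ for every $x\in C$. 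Lemma~\ref{lemma:k(y)subsetk(x)} controls closures of points, not closures of sets, so the step you yourself single out as ``the delicate point'' is not justified by the argument you give for it.

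The gap is repairable, and the repair is exactly the paper's maneuver: never apply Lemma~\ref{lemma:k(y)subsetk(x)} to $\overline{f_\flat(C)}$ directly, but first decompose in the colocale, writing $C=\bigvee_{x\in C}\overline{\{x\}}$ so that $(f_\flat)_!^\complement(C)=\bigvee_{x\in C}\overline{\{f_\flat(x)\}}$, and only then use that $\gamma_B^\complement$ preserves joins together with $\gamma_B^\complement\bigl(\,\overline{\{y\}}\,\bigr)=\kappa_B(y)$ (Lemma~\ref{kfrakcircsob=kappa}); this is where cospectrality legitimately enters, since each index is now the closure of a point. In fact your general-$C$ formula is not needed at all: your own reduction to principal closed sets already brings everything down to $C=\overline{\{x\}}$, where $(f_\flat)_!^\complement\bigl(\,\overline{\{x\}}\,\bigr)=\overline{\{f_\flat(x)\}}$ is a point closure and the singleton argument is valid, which is precisely how the paper proceeds. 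One secondary correction: the join-preservation of $\Max f^\sharp$ does not follow from item (2) of the examples in section~\ref{sec:2}, which assumes $f$ continuous; here $f^\sharp$ is merely linear, and the correct source is the morphism condition of $\PsiQVBun^{1*}$, namely that $(f^\sharp)^{-1}$ carries $\Max A$ into $\Max B$, so that $(f^\sharp)^{-1}\colon\Max A\to\Max B$ is an inf-lattice homomorphism whose left adjoint is $\Max f^\sharp$. With these substitutions your proof is correct and coincides in substance with the paper's.
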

\begin{proof}
	From Lemma~\ref{kfrakcircsob=kappa} we have $\kappa_A=\gamma_A^\complement\circ\sob_X^\complement$
	and, using the identity 
	$\sob_Y^\complement\circ f_\flat=(f_\flat)_!^\complement\circ\sob_X^\complement$ we also get:
	\[
	\kappa_B\circ f_\flat
	=\gamma_B^\complement\circ\sob_Y^\complement\circ f_\flat
	=\gamma_B^\complement\circ (f_\flat)_!^\complement\circ\sob_X^\complement
	\]
    It follows that statement 1:
	\[
	\Max f^\sharp \circ\kappa_B\circ f_\flat(x)\subset\kappa_A(x)
	\]
	can be rewritten as
	\begin{equation}\label{eq:sharpgammaflatsubsetgamma}
	\Max f^\sharp \circ\gamma_B^\complement\circ(f_\flat)_!^\complement\bigl(\,\overline{\{x\}}\,\bigr)\subset\gamma_A^\complement\bigl(\,\overline{\{x\}}\,\bigr)\,.
	\end{equation}
	It is now clear that $2\Rightarrow1$. Conversely, if
	$C\in\clsets X$ then:
	\begin{align*}
	\Max f^\sharp \circ\gamma_B^\complement\circ(f_\flat)_!^\complement(C)
	&=\Max f^\sharp \circ\gamma_B^\complement\circ(f_\flat)_!^\complement\Bigl(\bigvee_{x\in C}\overline{\{x\}}\Bigr)\\
	&=\bigvee_{x\in C}\Max f^\sharp \circ\gamma_B^\complement\circ(f_\flat)_!^\complement\bigl(\,\overline{\{x\}}\,\bigr)\\
	&\subset\bigvee_{x\in C}\gamma_A^\complement\bigl(\,\overline{\{x\}}\,\bigr)\quad\text{(using equation~\eqref{eq:sharpgammaflatsubsetgamma})}\\
	&=\gamma_A^\complement\Bigl(\bigvee_{x\in C}\overline{\{x\}}\Bigr)\\
	&=\gamma_A^\complement(C)\,.\qedhere
	\end{align*}
\end{proof}

As a corollary of Lemma~\ref{morQVBuniffmorLinCoLoc} we can now prove:

\begin{corollary}
	The assignements $\mathcal A\to\boldsymbol\clsets\mathcal A$ and
	$(f_\flat,f^\sharp)\to (\clsets f_\flat,f^\sharp)$
	define a functor $\boldsymbol\clsets^{*}$ between the categories $\PsiQVBun^{1*}_\Tt$\ and $\PsiLinCoLoc^{1*}$,
	which restricts to a functor $\QVBun^{1*}\to\LinCoLoc^{1*}$.
\end{corollary}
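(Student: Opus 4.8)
The plan is to check the three things a functor must do: send objects to objects, send morphisms to morphisms, and preserve identities and composition. The one nontrivial point — that the morphism condition transports correctly — has in effect already been isolated in Lemma~\ref{morQVBuniffmorLinCoLoc}, so most of what remains is bookkeeping.

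First I would confirm that $\boldsymbol\clsets\mathcal A$ is a genuine object of $\PsiLinCoLoc^{1*}$ for every $\mathcal A=(X,A,\kappa)\in\PsiQVBun^{1*}_\Tt$. Its colocale component is $\clsets X$ and its structure map is the corestriction map $\gamma^\complement\colon\clsets X\to\Max A$ of Definition~\ref{def:corestriction}. Since $\gamma^\complement$ is by construction left adjoint to $\sigma^\complement$ (which is an inf-lattice homomorphism by Lemma~\ref{sigmameetsjoins}), it preserves all joins and is therefore a sup-lattice homomorphism into $\Max A$; hence the triple is a pseudo linearized colocale.

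The crux is the morphism assignment, and it is exactly the step I would expect to be the real obstacle. Given a contravariant morphism $(f_\flat,f^\sharp)\colon\mathcal A\to\mathcal B$ in $\PsiQVBun^{1*}_\Tt$, the map $f^\sharp\colon B\to A$ is linear with $(f^\sharp)^{-1}(V)\in\Max B$ for all $V\in\Max A$, and $\clsets f_\flat\colon\clsets X\to\clsets Y$ is a map of colocales with direct image $(\clsets f_\flat)_*=(f_\flat)_!^\complement$. The only substantive requirement for $(\clsets f_\flat,f^\sharp)$ to be a contravariant morphism of linearized colocales is relation~\eqref{eq:morofcolocales}, which upon substituting the direct image reads $\Max f^\sharp\circ\gamma_B^\complement\circ(f_\flat)_!^\complement\le\gamma_A^\complement$. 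This is precisely statement~(2) of Lemma~\ref{morQVBuniffmorLinCoLoc}, whose statement~(1) is the defining condition of the morphism $(f_\flat,f^\sharp)$; the lemma therefore delivers this step at once, so the expected difficulty is already settled.

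Finally I would verify the categorical axioms. Functoriality of $\clsets$ on $\Top$, immediate from $(\clsets f)^*=f^{-1}$ together with $(g\circ f)^{-1}=f^{-1}\circ g^{-1}$, gives $\clsets\ident_X=\ident_{\clsets X}$ and $\clsets(g_\flat\circ f_\flat)=\clsets g_\flat\circ\clsets f_\flat$. Combining this with the contravariant composition law $f^\sharp\circ g^\sharp$ in $\PsiQVBun^{1*}$ and the composition rule $(\underline f,\overline f)\circ(\underline g,\overline g)=(\underline f\circ\underline g,\overline g\circ\overline f)$ for contravariant morphisms of colocales shows that $\boldsymbol\clsets^*$ respects identities and composition. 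For the last clause, the restriction to $\QVBun^{1*}\to\LinCoLoc^{1*}$, I would invoke Corollary~\ref{kfrakiscontinuous} to see that $\boldsymbol\clsets\mathcal A$ is a genuine linearized colocale when $\mathcal A$ is a quotient vector bundle, and note that morphisms in $\QVBun^{1*}$ carry $f^\sharp$ continuous, so $(\clsets f_\flat,f^\sharp)$ indeed lands in $\LinCoLoc^{1*}$.
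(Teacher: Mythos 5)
Your proof is correct and follows essentially the same route as the paper: the morphism condition is settled by Lemma~\ref{morQVBuniffmorLinCoLoc} (the paper states the corollary precisely as a consequence of that lemma) and the restriction to $\QVBun^{1*}\to\LinCoLoc^{1*}$ by Corollary~\ref{kfrakiscontinuous}, which is exactly the paper's one-line argument. The extra bookkeeping you supply (well-definedness on objects via the adjunction $\gamma^\complement\dashv\sigma^\complement$, and preservation of identities and composition) is left implicit in the paper but is accurate.
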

\begin{proof}
It remains only to show that $\boldsymbol\clsets^{*}$ restricts to a functor $\QVBun^{1*}\to\LinCoLoc^{1*}$.
This immediately follows from Corollary~\ref{kfrakiscontinuous}.\qedhere
\end{proof}

We now look at the covariant case.

\begin{lemma}
	The assignements $\mathcal A\mapsto\clsets\mathcal A$ and $(f_\flat,f^\sharp)\mapsto(\clsets f_\flat,f^\sharp)$
	define a functor $\boldsymbol\clsets_*\colon\PsiQVBun^1_{\Tt*}\to\PsiLinCoLoc_*$ which restricts to a functor
	$\QVBun^{1}_*\to\LinCoLoc_*$.
\end{lemma}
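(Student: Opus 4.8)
The plan is to reduce the whole statement to a covariant analogue of Lemma~\ref{morQVBuniffmorLinCoLoc}; once that is available, functoriality and the restriction claim are formal. On objects there is nothing to add: for $\mathcal A=(X,A,\kappa)\in\PsiQVBun^1_{\Tt*}$ the pseudo linearized colocale $\boldsymbol\clsets\mathcal A$ with corestriction map $\gamma^\complement\colon\clsets X\to\Max A$ has already been constructed (Definition~\ref{def:corestriction}), and when $\mathcal A$ is a genuine quotient vector bundle Corollary~\ref{kfrakiscontinuous} shows $\boldsymbol\clsets\mathcal A$ is a linearized colocale. So only the behaviour on morphisms requires proof.

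First I would establish the covariant counterpart of Lemma~\ref{morQVBuniffmorLinCoLoc}: given $f_\flat\colon X\to Y$ continuous and $f^\sharp\colon A\to B$ linear with $(f^\sharp)^{-1}(V)\in\Max A$ for all $V\in\Max B$, the pointwise inclusion $f^\sharp\bigl(\kappa_A(x)\bigr)\subset\kappa_B\bigl(f_\flat(x)\bigr)$ for every $x\in X$ is equivalent to $\Max f^\sharp\circ\gamma_A^\complement\leq\gamma_B^\complement\circ(\clsets f_\flat)_*$, where $(\clsets f_\flat)_*=(f_\flat)_!^\complement$. Here the hypothesis $(f^\sharp)^{-1}(V)\in\Max A$ is exactly what makes $(f^\sharp)^{-1}\colon\Max B\to\Max A$ an inf-lattice homomorphism, so that its left adjoint $\Max f^\sharp\colon\Max A\to\Max B$, $V\mapsto\overline{f^\sharp(V)}$, is a well-defined sup-lattice homomorphism.

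The argument parallels the contravariant one. Using $\kappa_A=\gamma_A^\complement\circ\sob_X^\complement$ (Lemma~\ref{kfrakcircsob=kappa}) together with $\sob_Y^\complement\circ f_\flat=(f_\flat)_!^\complement\circ\sob_X^\complement$, so that $\kappa_B\circ f_\flat=\gamma_B^\complement\circ(f_\flat)_!^\complement\circ\sob_X^\complement$, and noting that $f^\sharp\bigl(\kappa_A(x)\bigr)\subset\kappa_B\bigl(f_\flat(x)\bigr)$ is equivalent to $\Max f^\sharp\bigl(\kappa_A(x)\bigr)\subset\kappa_B\bigl(f_\flat(x)\bigr)$ because the right-hand side is closed, the pointwise condition is precisely the desired inequality evaluated on the irreducible closed sets $\overline{\{x\}}=\sob_X^\complement(x)$. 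The implication $\Leftarrow$ is then immediate, and for $\Rightarrow$ I would write $C=\bigvee_{x\in C}\overline{\{x\}}$ and use that both $\Max f^\sharp\circ\gamma_A^\complement$ and $\gamma_B^\complement\circ(f_\flat)_!^\complement$ preserve joins (being composites of left adjoints) to propagate the inequality from these generators to all of $\clsets X$, exactly as in Lemma~\ref{morQVBuniffmorLinCoLoc}.

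Granting this equivalence, a covariant morphism $(f_\flat,f^\sharp)$ in $\PsiQVBun^1_{\Tt*}$ yields precisely a covariant morphism $(\clsets f_\flat,f^\sharp)$ in $\PsiLinCoLoc_*$, the remaining data transferring unchanged. Functoriality is formal, since $\clsets$ preserves identities and composites of continuous maps and covariant composition on both sides composes the linear parts in the same order, $(\clsets f_\flat,f^\sharp)\circ(\clsets g_\flat,g^\sharp)=(\clsets(f_\flat\circ g_\flat),f^\sharp\circ g^\sharp)$. The restriction to $\QVBun^1_*\to\LinCoLoc_*$ then follows because Corollary~\ref{kfrakiscontinuous} turns $\boldsymbol\clsets\mathcal A$ into a linearized colocale and continuity of $f^\sharp$ is exactly the condition cutting out $\LinCoLoc_*$ inside $\PsiLinCoLoc_*$. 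I expect the one genuine point to be the covariant equivalence above — and within it the fact that $\Max f^\sharp$ is join-preserving — after which the join-extension step is identical to the contravariant case already proved.
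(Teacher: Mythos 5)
Your proposal is correct and follows essentially the same route as the paper: the paper likewise handles objects via Definition~\ref{def:corestriction} and Corollary~\ref{kfrakiscontinuous}, and on morphisms it performs exactly your computation — using closedness of $\kappa_B\bigl(f_\flat(x)\bigr)$ to get $\Max f^\sharp\bigl(\kappa_A(x)\bigr)\subset\kappa_B\bigl(f_\flat(x)\bigr)$, then propagating over joins $\gamma_A^\complement(C)=\bigvee_{x\in C}\kappa_A(x)$ using that $\Max f^\sharp$ is a sup-lattice homomorphism. The only cosmetic difference is that you package this as a full covariant analogue of Lemma~\ref{morQVBuniffmorLinCoLoc} (an equivalence, with generators $\overline{\{x\}}$ and Lemma~\ref{kfrakcircsob=kappa}), whereas the paper proves only the implication it needs, directly from Lemma~\ref{lemmagamma=bigvee}.
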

\begin{proof}
	Corollary~\ref{kfrakiscontinuous} shows that $\boldsymbol\clsets_*$ restricts to a functor $\QVBun^{1}_*\to\LinCoLoc^{1}_*$
	so it only remains to show functoriality on morphisms.
	Let $(f_\flat,f^\sharp)$ be a morphism in $\PsiQVBun_{\Tt*}^1$.
	Since $\kappa_B$ has image inside $\Max B$, for any $x\in X$ we have
	\[
	\Max f^\sharp\bigl(\kappa_A(x)\bigr)=\overline{f^\sharp\bigl(\kappa_A(x)\bigr)}
	\subset\overline{\kappa_B\bigl(f_\flat(x)\bigr)}=\kappa_B\bigl(f_\flat(x)\bigr)
	\]
	so, given $C\in\clsets X$:
	\begin{align*}
	\Max f^\sharp\circ\gamma_A^\complement(C)
	&=\Max f^\sharp\Bigl(\bigvee_{x\in C}\kappa_A(x)\Bigr) \\
	&=\bigvee_{x\in C}\Max f^\sharp\bigl(\kappa_A(x)\bigr) \\
	&\subset\bigvee_{x\in C}\kappa_B\bigl(f_\flat(x)\bigr) \\
	&=\bigvee_{y\in f_\flat(C)}\kappa_B(y) \\
	&\subset\bigvee_{y\in \overline{f_\flat(C)}}\kappa_B(y)
	=\gamma_B\bigl(f_{\flat!}^\complement(C)\bigr)
	\end{align*}
	which shows that $(\clsets f_\flat,f^\sharp)$ is a morphism in $\PsiLinCoLoc_*$.\qedhere
\end{proof}

\subsubsection{From linearized colocales to quotient vector bundles}

We first consider the contravariant case.

\begin{lemma}
	The assignements $\mathfrak A^\complement=(L^\complement,A,\gamma^\complement)\mapsto 
	\boldsymbol\Iota\mathfrak A^\complement=(\Iota L^\complement,A,\kfrak^\complement)$
	$\mathfrak A^\complement\to\boldsymbol\Iota\mathfrak A^\complement$ and
	$(\underline f,\overline f)\to (\underline f_*,\overline f)$
	define a functor $\boldsymbol\Iota^{*}\colon\PsiLinCoLoc^{1*}\to\PsiQVBun^{1*}_\Tt$ which restricts to a functor 
	$\LinCoLoc^{1*}\to\QVBun^{1*}$.
\end{lemma}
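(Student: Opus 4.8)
The plan is to verify the two things not yet addressed: that the assignment is well-defined and functorial on morphisms, and that it restricts to $\LinCoLoc^{1*}\to\QVBun^{1*}$. Well-definedness on objects is already supplied by Lemma~\ref{lemma:kfrak-1U0,Vopen}, which guarantees $\boldsymbol\Iota\mathfrak A^\complement=(\Iota L^\complement,A,\kfrak^\complement)\in\PsiQVBun^{1*}_\Tt$, so nothing further is needed there.

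First I would unwind the identification $\Iota L^\complement=\Sigma(L^\complement)^\opp$. A map of colocales $\underline f\colon L^\complement_A\to L^\complement_B$ is by definition a map of locales $(L^\complement_A)^\opp\to(L^\complement_B)^\opp$, and its colocale direct image homomorphism $\underline f_*$ is exactly the locale direct image homomorphism of this map. By the general property of $\Sigma$ recalled in Section~\ref{sec:2} ($f_*(\Sigma L)\subset\Sigma M$ with $\Sigma f$ the restriction of $f_*$), we get $\underline f_*(\Iota L^\complement_A)\subset\Iota L^\complement_B$ and a continuous restriction $\underline f_*\colon\Iota L^\complement_A\to\Iota L^\complement_B$. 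This is the continuous base map of the prospective bundle morphism, and $\overline f\colon B\to A$ is its linear part.

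The heart of the argument is checking that $(\underline f_*,\overline f)$ satisfies the defining implication of a contravariant $\PsiQVBun^{1*}_\Tt$-morphism. Fix $c\in\Iota L^\complement_A$ and $b\in B$. Since $\kfrak^\complement$ is by definition the restriction of $\gamma^\complement$ to the irreducibles and $\underline f_*(c)\in\Iota L^\complement_B$, the condition to prove is $b\in\gamma_B^\complement(\underline f_*(c))\Rightarrow\overline f(b)\in\gamma_A^\complement(c)$. The key observation is that $\overline f(V)\subset\overline{\overline f(V)}=\Max\overline f(V)$ for any $V\in\Max B$, so evaluating the morphism inequality $\Max\overline f\circ\gamma_B^\complement\circ\underline f_*\leq\gamma_A^\complement$ at $c$ gives $\overline f(b)\in\overline f\bigl(\gamma_B^\complement(\underline f_*(c))\bigr)\subset\Max\overline f\bigl(\gamma_B^\complement(\underline f_*(c))\bigr)\subset\gamma_A^\complement(c)$. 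The requirement $\overline f{}^{-1}(V)\in\Max B$ for $V\in\Max A$ is part of the colocale morphism data and is precisely what is demanded of $f^\sharp$, so $(\underline f_*,\overline f)$ is a contravariant morphism in $\PsiQVBun^{1*}_\Tt$.

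Finally, functoriality is routine: the identity colocale morphism is sent to the identity bundle morphism, and because direct images of colocale maps compose covariantly (functoriality of $\Sigma$) while the linear parts compose contravariantly in both categories, the assignment respects composition. For the restriction, when $\mathfrak A^\complement$ is a linearized colocale $\kfrak^\complement$ is continuous for the lower Vietoris topology, so $\boldsymbol\Iota\mathfrak A^\complement$ is a genuine quotient vector bundle; and a $\LinCoLoc^{1*}$-morphism has $\overline f$ continuous, which is exactly the continuity required of $f^\sharp$ in $\QVBun^{1*}$. Hence $\boldsymbol\Iota^*$ restricts to a functor $\LinCoLoc^{1*}\to\QVBun^{1*}$. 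I expect the only delicate point to be the bookkeeping around the identification $\Iota L^\complement=\Sigma(L^\complement)^\opp$ and the continuity of $\underline f_*$ it yields; the verification of the morphism inequality itself is immediate once one notes $\overline f(V)\subset\Max\overline f(V)$.
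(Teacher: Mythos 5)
Your proof is correct and follows essentially the same route as the paper's: well-definedness on objects via Lemma~\ref{lemma:kfrak-1U0,Vopen}, and the morphism condition verified by the chain $\overline f(b)\in\overline f\bigl(\gamma_B^\complement(\underline f_*(c))\bigr)\subset\Max\overline f\bigl(\gamma_B^\complement(\underline f_*(c))\bigr)\subset\gamma_A^\complement(c)$, which is exactly the paper's argument. You are in fact slightly more explicit than the paper on the peripheral points (continuity of $\underline f_*$ via the identification $\Iota L^\complement=\Sigma(L^\complement)^\opp$, the $\Max$-preservation condition on $\overline f$, and the restriction to $\LinCoLoc^{1*}\to\QVBun^{1*}$), all of which you handle correctly.
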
	
\begin{proof}
	Lemma~\ref{lemma:kfrak-1U0,Vopen} shows that $\boldsymbol\Iota\mathfrak A^\complement\in\PsiQVBun^{1*}_\Tt$
	so we only need to check functoriality on morphisms.
	Let $\mathfrak f=(\underline f,\overline f)\colon\mathfrak B^\complement\to\mathfrak A^\complement$ be a morphism in $\PsiLinCoLoc^{1*}$
	and write $\mathfrak A^\complement=( L_A^\complement,A,\sigma_A^\complement)$ and
	$\mathfrak B^\complement=( L_B^\complement,B,\sigma_B^\complement)$.
	For all $a\in A$ and $y\in\Iota L^\complement_B$, we have:
	\begin{align*}
	a\in\kfrak_A^\complement(\underline f_*(y))\
	&\Leftrightarrow\ a\in\gamma_A^\complement\circ \underline f_*(y)\\
	&\Rightarrow\
	\overline f(a)\in\overline f\bigl(\gamma_A^\complement\circ \underline f_*(y)\bigr)\\
	&\Rightarrow\
	\overline f(a)\in \Max\overline f\circ\gamma_A^\complement\circ \underline f_*(y) &
	&\text{(since $\overline f(V)\subset \Max\overline f(V)$)} \\
	&\Rightarrow\ \overline f(a)\in\gamma_B^\complement(y) &
	&\text{(equation~\eqref{eq:morofcolocales})}\\
	&\Leftrightarrow\ \overline f(a)\in\kfrak_B^\complement(y)
	\end{align*}
	so the pair $(\underline f_*,\overline f)\colon \boldsymbol\Iota\mathfrak B^\complement\to\boldsymbol\Iota\mathfrak A^\complement$
	is a morphism in $\PsiQVBun^{1*}_\Tt$.\qedhere
\end{proof}

\begin{lemma}
	The assignements $\mathfrak A\mapsto\Iota\mathfrak A$ and $(\underline f,\overline f)\mapsto(\underline f_*,\overline f)$
	define a functor $\boldsymbol\Iota_*\colon\PsiLinCoLoc_*\to\PsiQVBun_{\Tt*}^1$ which restricts to
	a functor $\LinCoLoc_*\to\QVBun_{*}^1$
\end{lemma}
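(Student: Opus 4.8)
The plan is to mirror the proof of the contravariant functor $\boldsymbol\Iota^*$ above, changing only the morphism bookkeeping. First I would note that the assignment on objects, $\mathfrak A^\complement=(L^\complement,A,\gamma^\complement)\mapsto\boldsymbol\Iota\mathfrak A^\complement=(\Iota L^\complement,A,\kfrak^\complement)$, is literally the same as the one used for $\boldsymbol\Iota^*$, so Lemma~\ref{lemma:kfrak-1U0,Vopen} again guarantees $\boldsymbol\Iota\mathfrak A^\complement\in\PsiQVBun^1_{\Tt*}$. Nothing about the objects changes; only the class of admissible morphisms does. Hence the whole content of the statement is functoriality on covariant morphisms, together with the observation that the functor restricts correctly.

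For the main step, take a covariant morphism $(\underline f,\overline f)\colon\mathfrak A^\complement\to\mathfrak B^\complement$ with $\mathfrak A^\complement=(L_A^\complement,A,\gamma_A^\complement)$ and $\mathfrak B^\complement=(L_B^\complement,B,\gamma_B^\complement)$, and I would verify that $(\underline f_*,\overline f)$ is a covariant morphism $\boldsymbol\Iota\mathfrak A^\complement\to\boldsymbol\Iota\mathfrak B^\complement$ in $\PsiQVBun^1_{\Tt*}$. The requirement $\overline f{}^{-1}(V)\in\Max A$ for all $V\in\Max B$ is already part of the data of a covariant morphism of colocales, and $\underline f_*$ carries $\Iota L_A^\complement$ into $\Iota L_B^\complement$ (the colocale form of $f_*(\Sigma L)\subset\Sigma M$), so it restricts to the flat map $\Iota L_A^\complement\to\Iota L_B^\complement$. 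The only substantive point is the fiberwise inclusion: for each $y\in\Iota L_A^\complement$ one needs $\overline f\bigl(\kfrak_A^\complement(y)\bigr)\subset\kfrak_B^\complement\bigl(\underline f_*(y)\bigr)$. Since $\kfrak_A^\complement$ and $\kfrak_B^\complement$ are the restrictions of $\gamma_A^\complement$ and $\gamma_B^\complement$ to the irreducibles, this reduces to $\overline f\bigl(\gamma_A^\complement(y)\bigr)\subset\gamma_B^\complement\bigl(\underline f_*(y)\bigr)$, which I would obtain by the chain
\[
\overline f\bigl(\gamma_A^\complement(y)\bigr)\subset\Max\overline f\bigl(\gamma_A^\complement(y)\bigr)\subset\gamma_B^\complement\bigl(\underline f_*(y)\bigr),
\]
using $\overline f(V)\subset\overline{\overline f(V)}=\Max\overline f(V)$ for the first inclusion and the defining relation $\Max\overline f\circ\gamma_A^\complement\leq\gamma_B^\complement\circ\underline f_*$ for the second.

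Finally, for the restriction to $\LinCoLoc_*\to\QVBun^1_*$, I would invoke the definition of linearized colocale, under which $\boldsymbol\Iota$ sends a linearized colocale to a genuine quotient vector bundle, and observe that a morphism of $\LinCoLoc_*$ has $\overline f$ continuous, so the resulting $f^\sharp=\overline f$ is continuous and $(\underline f_*,\overline f)$ lands in $\QVBun^1_*$.

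I expect no serious obstacle here: the argument is the covariant transcription of the preceding lemma. The two things to watch are that it is the plain image $\overline f(V)$, not the closed image $\Max\overline f(V)$, that appears in the quotient-bundle morphism condition --- which is exactly why $\Max\overline f$ must be inserted as an intermediate term --- and that the relevant inclusion is now the covariant one $\overline f\bigl(\kfrak_A^\complement(y)\bigr)\subset\kfrak_B^\complement\bigl(\underline f_*(y)\bigr)$ with $\overline f\colon A\to B$, rather than the contravariant inclusion of the previous lemma; keeping these directions straight is all the proof requires.
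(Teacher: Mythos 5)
Your proposal is correct and follows essentially the same route as the paper: the object-level claim is delegated to Lemma~\ref{lemma:kfrak-1U0,Vopen} and the definition of linearized colocale, and the morphism check uses precisely the chain $\overline f\bigl(\kfrak_A^\complement(c)\bigr)\subset\overline{\overline f\bigl(\kfrak_A^\complement(c)\bigr)}=\Max\overline f\circ\gamma_A^\complement(c)\subset\gamma_B^\complement\circ\underline f_*(c)=\kfrak_B^\complement\bigl(\underline f_*(c)\bigr)$ that appears in the paper's proof. Your additional remarks (that $\underline f_*$ preserves irreducibles and that continuity of $\overline f$ handles the restriction to $\LinCoLoc_*\to\QVBun^1_*$) are points the paper leaves implicit, and they are stated correctly.
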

\begin{proof}
	We only need to check functoriality on morphisms.
	Let $(\underline f,\overline f)$ be a morphism in $\PsiLinCoLoc_*$.
	Let $c\in\Iota L^\complement_A$. Then
	\begin{align*}
	\overline f\bigl(\kfrak_A^\complement(c)\bigr)
	&\subset\overline{\overline f\bigl(\kfrak_A^\complement(c)\bigr)} \\
	&=\Max\overline f\circ\gamma_A^\complement(c) \\
	&\subset\gamma_B^\complement\circ\underline f_*(c) \\
	&=\kfrak_B^\complement\bigl(\underline f_*(c)\bigr)
	\end{align*}
	which shows that $(\underline f_*,\overline f)$ is a morphism in $\PsiQVBun^1_{\Tt*}$.\qedhere
\end{proof}

\subsubsection{Spatialization}

Consider the map $\spat_{L^\complement}^\complement$ in equation~\eqref{eq:spatc}.

\begin{lemma}
	Given a pseudo linearized colocale $\mathfrak A^\complement$, the map $\spat_{L^\complement}^\complement$ is an $\ident$-morphism 
	$\boldsymbol\clsets\boldsymbol\Iota\mathfrak A^\complement\to \mathfrak A^\complement$.
\end{lemma}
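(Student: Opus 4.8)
The plan is to verify directly the defining equation of an $\ident$-morphism of pseudo linearized colocales. The colocale underlying $\boldsymbol\clsets\boldsymbol\Iota\mathfrak A^\complement$ is $\clsets\Iota L^\complement$ and the colocale underlying $\mathfrak A^\complement$ is $L^\complement$, so $\spat_{L^\complement}^\complement$, which is already a map of colocales by equation~\eqref{eq:spatc}, goes between the correct colocales, and both linearized colocales carry the same vector space $A$. Writing $\eta^\complement$ for the corestriction map of $\boldsymbol\Iota\mathfrak A^\complement=(\Iota L^\complement,A,\kfrak^\complement)$, so that $\boldsymbol\clsets\boldsymbol\Iota\mathfrak A^\complement=(\clsets\Iota L^\complement,A,\eta^\complement)$, everything reduces to establishing the identity $\eta^\complement=\gamma^\complement\circ(\spat_{L^\complement}^\complement)_*$.

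The first step is to identify the direct image homomorphism $(\spat_{L^\complement}^\complement)_*$. By equation~\eqref{eq:spatc*=Ca} the inverse image homomorphism sends $a$ to $C_a=\{c\in\Iota L^\complement:c\le a\}$, and in the colocale setting $(\spat_{L^\complement}^\complement)_*$ is its \emph{left} adjoint. Since $C\subset C_a$ holds exactly when every $c\in C$ satisfies $c\le a$, that is, when $\bigvee_{c\in C}c\le a$ (join taken in $L^\complement$), the adjunction forces $(\spat_{L^\complement}^\complement)_*(C)=\bigvee_{c\in C}c$ for every $C\in\clsets\Iota L^\complement$.

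The second step evaluates $\eta^\complement$. Because $\boldsymbol\Iota\mathfrak A^\complement\in\PsiQVBun^{1*}_{\mathcal T}$ (Lemma~\ref{lemma:kfrak-1U0,Vopen}), Lemma~\ref{lemmagamma=bigvee} applies and gives $\eta^\complement(C)=\bigvee_{c\in C}\kfrak^\complement(c)$; as $\kfrak^\complement$ is the restriction of $\gamma^\complement$ to $\Iota L^\complement$, this equals $\bigvee_{c\in C}\gamma^\complement(c)$. Combining the two steps with the fact that $\gamma^\complement\colon L^\complement\to\Max A$ is a sup-lattice homomorphism yields $\gamma^\complement\circ(\spat_{L^\complement}^\complement)_*(C)=\gamma^\complement\bigl(\bigvee_{c\in C}c\bigr)=\bigvee_{c\in C}\gamma^\complement(c)=\eta^\complement(C)$, which is the desired equality.

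The computation is essentially bookkeeping; the only genuine subtlety is to remember that for maps of colocales the direct image is the \emph{left} adjoint of the inverse image (opposite to the locale convention), so one must keep the orders in $\clsets\Iota L^\complement$ and in $L^\complement$ aligned correctly when passing from $C\subset C_a$ to $\bigvee_{c\in C}c\le a$. Once the direct image is correctly pinned down as $C\mapsto\bigvee_{c\in C}c$, the result is immediate from the join-preservation of $\gamma^\complement$ and from Lemma~\ref{lemmagamma=bigvee}.
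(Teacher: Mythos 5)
Your proof is correct, but it runs along the adjoint-dual route to the one the paper takes. The paper never computes the direct image homomorphism $(\spat_{L^\complement}^\complement)_*$ at all: it transposes the desired identity $\widetilde\gamma^\complement=\gamma^\complement\circ(\spat_{L^\complement}^\complement)_*$ into the equivalent identity on right adjoints, $\widetilde\sigma^\complement=(\spat_{L^\complement}^\complement)^*\circ\sigma^\complement$, and verifies that pointwise via the membership chain
$c\in\widetilde\sigma^\complement(V)\Leftrightarrow\gamma^\complement(c)\subset V\Leftrightarrow c\leq\sigma^\complement(V)\Leftrightarrow c\in(\spat_{L^\complement}^\complement)^*\bigl(\sigma^\complement(V)\bigr)$,
using the definition of the cosupport map (equation~\eqref{eq:defcosupport}) and equation~\eqref{eq:spatc*=Ca}. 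You instead stay entirely on the left-adjoint side: you pin down $(\spat_{L^\complement}^\complement)_*(C)=\bigvee_{c\in C}c$ (correctly noting the colocale convention that the direct image is the \emph{left} adjoint of $a\mapsto C_a$), you invoke Lemma~\ref{lemmagamma=bigvee} — legitimately, since Lemma~\ref{lemma:kfrak-1U0,Vopen} places $\boldsymbol\Iota\mathfrak A^\complement$ in $\PsiQVBun^{1*}_{\Tt}$ — to get $\widetilde\gamma^\complement(C)=\bigvee_{c\in C}\kfrak^\complement(c)$, and you finish with join-preservation of $\gamma^\complement$. The two arguments are mirror images under the adjunction, and both are sound. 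Yours is more explicit and makes the mechanism transparent (spatialization's direct image is the join map, and $\gamma^\complement$ preserves joins), at the cost of importing Lemma~\ref{lemmagamma=bigvee} and of relying on the convention that the empty join is $0$ when $C=\emptyset$ (the same convention the paper's own Lemma~\ref{lemmagamma=bigvee} uses, so this is not a gap). The paper's version buys a proof that needs no explicit join formulas and is self-contained from the definitions of the cosupport map and of $(\spat_{L^\complement}^\complement)^*$, with equality of left adjoints then following for free from uniqueness of adjoints.
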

\begin{proof}
	Write $\boldsymbol{\clsets}\boldsymbol\Iota\mathfrak A^\complement=(\clsets\Iota L^\complement,A,\widetilde\gamma^\complement)$
	and let $\sigma^\complement$ and $\widetilde\sigma^\complement$ be the right adjoints of
	$\gamma^\complement$ and $\widetilde\gamma^\complement$ respectively.
	We need to show that $\gamma^\complement\circ (\spat_{L^\complement}^\complement)_*=\widetilde\gamma^\complement$.
	Taking adjoints we get the equivalent identity
	$\widetilde\sigma^\complement=(\spat_{L^\complement}^\complement)^{*}\circ\sigma^\complement$ which we now prove.
	By definition $\widetilde\sigma^\complement(V)=(\kfrak^{\complement})^{-1}(\sob_{\Max A}^\complement V)$
	(equation~\eqref{eq:defcosupport}). Since $\sob_{\Max A}^\complement V=\mathcal F_{0,V}$
	(equation~\eqref{eq:soblVC(V)=F_0,V}) we have, for all $V\in\Max A$:
	\begin{equation}\label{eq:proofthatspatisstrict}
	\begin{aligned}
	c\in\widetilde\sigma^\complement(V)\ 
	&\Leftrightarrow\ c\in (\kfrak^{\complement})^{-1}(\mathcal F_{0,V})\\ 
	&\Leftrightarrow\ \gamma^\complement(c)\subset V\\ 
	&\Leftrightarrow\ c\leq\sigma^\complement(V)\\ 
	&\Leftrightarrow\ c\in (\spat_{L^\complement}^\complement)^{*}\bigl(\sigma^\complement(V)\bigr)
	\quad\text{(equation~\eqref{eq:spatc*=Ca})}
	\end{aligned}
	\end{equation}
	so $\widetilde\sigma^\complement=(\spat_{L^\complement}^\complement)^{*}\circ\sigma^\complement$.\qedhere
\end{proof}

\begin{corollary}\label{tildesigma=spatcircsigma}
	Let $\mathfrak A^\complement=( L^\complement,A,\gamma^\complement)$ be a pseudo linearized colocale. Then the pair
	\[
	\spat_{\mathfrak A^\complement}^\complement=(\spat_{L^\complement}^\complement,\ident)
	\colon\boldsymbol\clsets\boldsymbol\Iota\mathfrak A^\complement\to \mathfrak A^\complement
	\]
	is both a covariant and a contravariant morphism of pseudo linearized colocales which is an isomorphism
	if and only if $L^\complement$ is spatial.
\end{corollary}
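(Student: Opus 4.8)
The plan is to reduce the statement to the preceding lemma, to Lemma~\ref{morlincolocisostrict}(3), and to the definition of spatiality for colocales, since the only genuine computation---that $\spat_{L^\complement}^\complement$ is an $\ident$-morphism---has already been carried out.

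First I would dispose of the claim that $\spat_{\mathfrak A^\complement}^\complement$ is simultaneously a covariant and a contravariant morphism. The preceding lemma asserts that $\spat_{L^\complement}^\complement$ is an $\ident$-morphism $\boldsymbol\clsets\boldsymbol\Iota\mathfrak A^\complement\to\mathfrak A^\complement$, which means precisely that $\widetilde\gamma^\complement=\gamma^\complement\circ(\spat_{L^\complement}^\complement)_*$, where $\widetilde\gamma^\complement$ is the corestriction map of $\boldsymbol\clsets\boldsymbol\Iota\mathfrak A^\complement$. Unpacking the morphism conditions recorded in the summary table with $\overline f=\ident$ (so that $\Max\overline f=\ident$), the contravariant condition reads $\gamma^\complement\circ(\spat_{L^\complement}^\complement)_*\leq\widetilde\gamma^\complement$ and the covariant condition reads $\widetilde\gamma^\complement\leq\gamma^\complement\circ(\spat_{L^\complement}^\complement)_*$. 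The equality supplied by the preceding lemma yields both inequalities at once, exactly as noted in the discussion following the definition of $\ident$-morphisms; hence $(\spat_{L^\complement}^\complement,\ident)$ is both a covariant and a contravariant morphism.

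Next I would establish the isomorphism criterion. Since $\spat_{L^\complement}^\complement$ is an $\ident$-morphism, Lemma~\ref{morlincolocisostrict}(3) applies and shows that $(\spat_{L^\complement}^\complement,\ident)$ is both a covariant and a contravariant isomorphism if and only if $\spat_{L^\complement}^\complement$ is an $\ident$-isomorphism, that is, an isomorphism of colocales. It therefore remains only to identify when $\spat_{L^\complement}^\complement$ is an isomorphism of colocales.

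The final step is a bookkeeping point about the opposite-order dictionary. By its definition in equation~\eqref{eq:spatc}, the map $\spat_{L^\complement}^\complement$ is the colocale counterpart, under the order-reversing isomorphism $\complement$, of the spatialization $\spat_{(L^\complement)^\opp}$; consequently $\spat_{L^\complement}^\complement$ is an isomorphism of colocales exactly when $\spat_{(L^\complement)^\opp}$ is an isomorphism of locales, that is, exactly when $(L^\complement)^\opp$---equivalently $L^\complement$---is spatial. Combining this with the previous step gives the asserted equivalence. I expect no serious obstacle here; the only point requiring attention is checking that an isomorphism of colocales corresponds to spatiality through the $\complement$/opposite correspondence, which is purely formal.
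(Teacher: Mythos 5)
Your proposal is correct and follows essentially the same route as the paper: the paper's proof simply cites Lemma~\ref{morlincolocisostrict} (applied to the $\ident$-morphism produced by the preceding lemma), and your argument is exactly this reduction, spelled out. The extra details you supply --- unpacking the covariant/contravariant conditions from the equality $\widetilde\gamma^\complement=\gamma^\complement\circ(\spat_{L^\complement}^\complement)_*$ and matching ``isomorphism of colocales'' with spatiality of $(L^\complement)^\opp$ --- are precisely what the paper leaves implicit.
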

\begin{proof}
It immediately follows from Lemma~\ref{morlincolocisostrict}.\qedhere
\end{proof}

Given a pseudo linearized colocale $\mathfrak A^\complement=( L^\complement,A,\gamma^\complement)$ we call \emph{spatialization}
to the morphism
$\spat_{\mathfrak A^\complement}^\complement=(\spat_{L^\complement}^\complement,\ident)\colon\boldsymbol\clsets\boldsymbol\Iota\mathfrak A^\complement\to \mathfrak A^\complement$.
We say a pseudo linearized colocale  $\mathfrak A^\complement=( L^\complement,A,\gamma^\complement)$ is \emph{spatial} if $L^\complement$ is spatial.

\subsubsection{Adjunction and equivalence}

We now check that soberification (see Lemma~\ref{lemma:sobAisstrictmor}) is well defined in the covariant case.

\begin{lemma}
	For any pseudo quotient vector bundle $\mathcal A=(X,A,\kappa)$, the map 
	$\sob_{\mathcal A}^\complement=(\sob_X^\complement,\ident)\colon\mathcal A\to\boldsymbol\Iota\boldsymbol\clsets\mathcal A$
	is a morphism in $\PsiQVBun_*$ which is an isomorphism if $X$ is sober and $\mathcal A\in\PsiQVBun_{\Tt*}$.
\end{lemma}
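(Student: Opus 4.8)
The plan is to verify directly that the pair $\sob_{\mathcal A}^\complement=(\sob_X^\complement,\ident)$ meets the three defining conditions of a covariant morphism, and then to recognize it as an $\ident$-morphism so that its invertibility follows formally from Lemma~\ref{morlincolocisostrict}(1). Throughout I write $\boldsymbol\Iota\boldsymbol\clsets\mathcal A=(\Iota\clsets X,A,\kfrak^\complement)$, and I note that both bundles share the same vector space $A$, with $f^\sharp=\ident$.

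First I would check the covariant-morphism conditions. Continuity of $\sob_X^\complement=\complement\circ\sob_X\colon X\to\Iota\clsets X$ is immediate, since $\sob_X$ is continuous and $\complement$ is a homeomorphism onto $\Iota\clsets X$; the inverse-image condition $\ident^{-1}(V)=V\in\Max A$ for every $V\in\Max A$ is trivial; and the remaining covariant requirement $\kappa(x)\subset\kfrak^\complement(\sob_X^\complement(x))$ holds for all $x\in X$ because Lemma~\ref{kfrakcircsob=kappa} already gives the equality $\kappa=\kfrak^\complement\circ\sob_X^\complement$. Thus $\sob_{\mathcal A}^\complement$ is a morphism in $\PsiQVBun_*$. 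It is worth stressing that here, in contrast with the strict contravariant morphism of Lemma~\ref{lemma:sobAisstrictmor}, the covariant side only demands an inclusion, so no argument beyond $\kappa=\kfrak^\complement\circ\sob_X^\complement$ is needed.

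For the isomorphism claim I would use that $f^\sharp=\ident$. The equality $\kappa(x)=\kfrak^\complement(\sob_X^\complement(x))$ just recorded says precisely that $\sob_X^\complement$ is an $\ident$-morphism $\mathcal A\to\boldsymbol\Iota\boldsymbol\clsets\mathcal A$. When $X$ is sober, $\sob_X$ is a homeomorphism, and since $\complement$ is always a homeomorphism so is $\sob_X^\complement$; hence $\sob_X^\complement$ is an $\ident$-isomorphism. By Lemma~\ref{morlincolocisostrict}(1) an $\ident$-isomorphism is both a covariant and a contravariant isomorphism, so in particular $\sob_{\mathcal A}^\complement$ is an isomorphism in $\PsiQVBun_*$; it lands inside $\PsiQVBun_{\Tt*}$ because $\boldsymbol\Iota\boldsymbol\clsets\mathcal A$ lies there by the discussion following Lemma~\ref{lemma:kfrak-1U0,Vopen}. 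I do not anticipate a genuine obstacle: the statement is the covariant mirror of Lemma~\ref{lemma:sobAisstrictmor}, and the only point requiring care is confirming that we are truly in the $\ident$-morphism setting of Lemma~\ref{morlincolocisostrict}(1) — i.e.\ that the shared space $A$ and the identity $\kappa=\kfrak^\complement\circ\sob_X^\complement$ apply — after which everything reduces to the elementary fact that $\sob_X^\complement$ is a homeomorphism exactly when $X$ is sober.
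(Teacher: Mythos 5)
Your verification of the morphism condition has a genuine gap: to establish $\kappa(x)\subset\kfrak^\complement\bigl(\sob_X^\complement(x)\bigr)$ for an \emph{arbitrary} pseudo quotient vector bundle you invoke Lemma~\ref{kfrakcircsob=kappa}, but that lemma assumes $\mathcal A\in\PsiQVBun^{1*}_{\Tt}$ (cospectrality), a hypothesis which the statement only imposes in the isomorphism clause at the end. Without cospectrality the equality $\kappa=\kfrak^\complement\circ\sob_X^\complement$ is simply false: since $\kfrak^\complement\bigl(\sob_X^\complement(x)\bigr)=\bigvee_{y\in\overline{\{x\}}}\kappa(y)$ by Lemma~\ref{lemmagamma=bigvee}, it fails as soon as some $y\in\overline{\{x\}}$ has $\kappa(y)\not\subset\kappa(x)$, and it is precisely Lemma~\ref{lemma:k(y)subsetk(x)} (which needs $\Tt$-continuity of $\kappa$) that rules this out. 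Concretely, take $X$ the Sierpi\'nski space $\{x,y\}$ with $\{x\}$ open, $\kappa(x)=0$ and $\kappa(y)=V\neq 0$: a pseudo bundle imposes no continuity on $\kappa$, $X$ is even sober, yet $\kfrak^\complement\bigl(\sob_X^\complement(x)\bigr)=V\supsetneq\kappa(x)$, so $\sob_{\mathcal A}^\complement$ is not an $\ident$-morphism and not an isomorphism. This also shows the slip is not cosmetic: your argument, as written, would prove that $\sob_{\mathcal A}^\complement$ is an isomorphism for every sober $X$, a strictly stronger statement than the lemma's, and a false one.

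The repair is exactly the paper's argument, and it is shorter than what you wrote: the covariant condition only demands an inclusion, and Lemma~\ref{lemmagamma=bigvee} gives $\kfrak^\complement\bigl(\sob_X^\complement(x)\bigr)=\gamma^\complement\bigl(\,\overline{\{x\}}\,\bigr)=\bigvee_{y\in\overline{\{x\}}}\kappa(y)\supset\kappa(x)$ simply because $x\in\overline{\{x\}}$ --- no continuity hypothesis needed. Your second half is essentially sound: once $X$ is sober and $\mathcal A\in\PsiQVBun_{\Tt*}$, Lemma~\ref{kfrakcircsob=kappa} legitimately applies, so $\sob_X^\complement$ is an $\ident$-morphism and a homeomorphism, and deducing invertibility from Lemma~\ref{morlincolocisostrict}(1) is a valid (and slightly tidier) alternative to the paper's route, which instead exhibits $\bigl(\sob_X^{-1},\ident\bigr)$ directly as the inverse morphism.
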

\begin{proof}
	To show that $\sob_{\mathcal A}^\complement$ is a morphism in $\PsiQVBun_*$ 
	we have to show that $\kappa(x)\subset\kfrak^\complement\bigl(\sob_X^\complement(x)\bigr)$. This immediately follows from Lemma~\ref{lemmagamma=bigvee}
	since $x\in\sob_X^\complement(x)$. Now suppose $X$ is sober and $\mathcal A\in\PsiQVBun_{\Tt*}$.
	Then $\sob_X$ is a homeomorphism and by Lemma~\ref{kfrakcircsob=kappa} we have 
	$\kappa(x)=\kfrak^\complement\bigl(\sob_X^\complement(x)\bigr)$ so the pair $\bigl(\sob_X^{-1},\ident\bigr)$
	defines a morphism in $\PsiQVBun_*$ which is the inverse of $\sob_{\mathcal A}^\complement$.\qedhere
\end{proof}

\begin{theorem}\label{T5.10}
	The functors $\boldsymbol\clsets^{*}\colon\PsiQVBun^{1*}_\Tt\to\PsiLinCoLoc^{1*}$ and
	$\boldsymbol\clsets_{*}\colon\PsiQVBun^{1}_{\Tt*}\to\PsiLinCoLoc^{1}_*$
	are left adjoint to, respectively, the functors
	$\boldsymbol\Iota^{*}\colon\PsiLinCoLoc^{1*}\to\PsiQVBun^{1*}_{\Tt}$ and
	$\boldsymbol\Iota_{*}\colon\PsiLinCoLoc^{1}_*\to\PsiQVBun^{1}_{\Tt*}$.
	These adjunctions restrict, in both the covariant and the contravariant cases, to isomorphisms
	between the categories of pseudo quotient vector bundles with Hausdorff fibers over sober base spaces and the categories of linearized colocales
	over a spatial colocale.
\end{theorem}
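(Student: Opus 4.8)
The plan is to exhibit the unit and counit explicitly and then reduce the triangle identities to the classical adjunction $\Omega\dashv\Sigma$ between spaces and locales, read through its colocale avatar. For the unit I would take the soberification $\sob_{\mathcal A}^\complement=(\sob_X^\complement,\ident)\colon\mathcal A\to\boldsymbol\Iota\boldsymbol\clsets\mathcal A$, already shown to be a morphism in $\PsiQVBun^{1*}_\Tt$ (Lemma~\ref{lemma:sobAisstrictmor}) and in $\PsiQVBun_*$ (the covariant soberification lemma preceding the theorem); for the counit I would take the spatialization $\spat_{\mathfrak A^\complement}^\complement=(\spat_{L^\complement}^\complement,\ident)\colon\boldsymbol\clsets\boldsymbol\Iota\mathfrak A^\complement\to\mathfrak A^\complement$ of Corollary~\ref{tildesigma=spatcircsigma}. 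The decisive structural point is that in both the linear component is $\ident$, so by the discussion of $\ident$-morphisms each is simultaneously a covariant and a contravariant morphism; this lets me treat the covariant and contravariant halves of the theorem in one stroke.

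Naturality of $\sob^\complement$ and $\spat^\complement$ splits into a base component and a linear component. The linear component is $\ident$ on both sides of every naturality square, hence commutes trivially, while the base component is precisely the naturality of the ordinary soberification $X\mapsto\sob_X^\complement$ and spatialization $L^\complement\mapsto\spat_{L^\complement}^\complement$, which is the colocale form of the space--locale adjunction recalled in Section~\ref{sec:2}. The two triangle identities reduce in the same way: fibrewise they are $\ident$ on the vector-space part, and on the base they become $\spat_{\clsets X}^\complement\circ\clsets(\sob_X^\complement)=\ident$ and $(\spat_{L^\complement}^\complement)_*\circ\sob_{\Iota L^\complement}^\complement=\ident$, the two triangle identities of the space--colocale adjunction. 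Combined with the functoriality already proved for $\boldsymbol\clsets$ and $\boldsymbol\Iota$ in both variances, this yields the adjunctions $\boldsymbol\clsets^{*}\dashv\boldsymbol\Iota^{*}$ and $\boldsymbol\clsets_{*}\dashv\boldsymbol\Iota_{*}$.

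For the restriction I would first record two facts about the base functors: $\Iota L^\complement=\Sigma(L^\complement)^\opp$ is always sober, so $\boldsymbol\Iota$ lands in bundles over a sober base, and $\clsets X=(\Omega X)^\opp$ is always a spatial colocale (every $\Omega X$ is a spatial locale), so $\boldsymbol\clsets$ lands in colocales over a spatial colocale. Hence both functors restrict to the subcategories named in the statement. On a bundle with sober base $\sob_X^\complement$ is a homeomorphism, so the unit is an isomorphism (Lemma~\ref{lemma:sobAisstrictmor}); on a spatial colocale $\spat_{L^\complement}^\complement$ is an isomorphism, so the counit is an isomorphism (Corollary~\ref{tildesigma=spatcircsigma}); via Lemma~\ref{morlincolocisostrict} these isomorphisms hold simultaneously in the covariant and contravariant pictures. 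An adjunction whose unit and counit are natural isomorphisms restricts to mutually inverse equivalences on these subcategories, which is the isomorphism of categories claimed, in all four cases.

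The step I expect to be the main obstacle is bookkeeping rather than conceptual: checking the triangle identities and naturality squares coherently for both variances at once, since the covariant and contravariant morphism conditions differ by the direction of $\overline f$ and by whether one reads $\gamma^\complement$ against $\Max\overline f\circ\gamma^\complement$. The $\ident$-morphism reduction (Lemma~\ref{morlincolocisostrict}) is what defuses this, collapsing both conditions to the single on-the-nose identity $\gamma_2^\complement\circ\underline f_*=\gamma_1^\complement$ and letting the space--colocale adjunction carry everything else.
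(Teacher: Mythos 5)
Your proposal is correct and follows essentially the same route as the paper: the paper's proof likewise takes soberification $\sob^\complement_{\mathcal A}$ as unit and spatialization $\spat^\complement_{\mathfrak A^\complement}$ as counit, verifies the triangle identities $\spat_{\boldsymbol\clsets\mathcal A}^\complement\circ\boldsymbol\clsets\sob_{\mathcal A}^\complement=\ident$ and $\boldsymbol\Iota\spat_{\mathfrak A}^\complement\circ\sob_{\boldsymbol\Iota\mathfrak A}^\complement=\ident$, and obtains the restricted isomorphisms from Lemma~\ref{lemma:sobAisstrictmor} and Corollary~\ref{tildesigma=spatcircsigma}. Your write-up merely makes explicit details the paper leaves implicit (the $\ident$-morphism reduction handling both variances at once, and the sobriety of $\Iota L^\complement$ and spatiality of $\clsets X$ guaranteeing the functors restrict), which is consistent with, not divergent from, the paper's argument.
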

\begin{proof}
	It is enough to observe that 
	soberification and spatialization are both natural transformations between respectively the identity and 
	$\boldsymbol\Iota\circ\boldsymbol\clsets$,
	and $\boldsymbol\clsets\circ\boldsymbol\Iota$ and the identity,
	satisfying for each cospectral pseudo quotient vector bundle $\mathcal A$ 
	and each pseudo linearized colocale $\mathfrak A^\complement$ the relations
	\[
	\spat_{\boldsymbol\clsets\mathcal A}^\complement\circ\boldsymbol\clsets\sob_{\mathcal A}^\complement=\ident\quad\text{and}\quad
	\boldsymbol\Iota\spat_{\mathfrak A}^\complement\circ\sob_{\boldsymbol\Iota\mathfrak A}^\complement=\ident\,.\qedhere
	\]
\end{proof}

\subsection{The adjunction \boldmath $\Omega\dashv\Sigma$ in the covariant case}

Given a continuous map $f\colon X\to Y$ denote by $\Omega f\colon\clsets X\to\clsets Y$ the map of colocales
with inverse image homomorphism $(\Omega f)^{*}=f^{-1}$.

\begin{lemma}
  The assignements $\mathcal A\mapsto\Omega\mathcal A$ and $(f_\flat,f^\sharp)\mapsto(\Omega f_\flat,f^\sharp)$
  define a functor $\boldsymbol\Omega_*\colon\PsiQVBun_*\to\PsiLLoc_*$ which restricts to a functor $\QVBun_{\Sigma*}\to\LLoc_*$.
\end{lemma}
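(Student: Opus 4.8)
The plan is to check, in order, that the object assignment lands in $\PsiLLoc_*$, that the morphism assignment does too, that identities and composites are preserved, and finally that everything restricts to the spectral/continuous subcategories. The object part requires nothing new: as in the contravariant case, $\boldsymbol\Omega\mathcal A=(\Omega X,A,\gamma)$ with $\gamma$ the restriction map, i.e.\ the right adjoint of the support map $\sigma$, hence an inf-lattice homomorphism, so $\boldsymbol\Omega\mathcal A$ is a pseudo linearized locale. Preservation of identities and composites follows from functoriality of $\Omega\colon\Top\to\Loc$ together with the covariant composition rule (both coordinates compose in the same direction). The side condition ``$\overline f^{-1}(V)\in\Max A$ for $V\in\Max B$'' is literally the condition $(f^\sharp)^{-1}(V)\in\Max A$ already imposed on covariant morphisms of bundles, so it transfers verbatim with $\overline f=f^\sharp$.

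The real content is the morphism relation: given a covariant morphism $(f_\flat,f^\sharp)\colon(X,A,\kappa_A)\to(Y,B,\kappa_B)$, I must show that $(\Omega f_\flat,f^\sharp)$ satisfies
\[
\gamma_A\le(f^\sharp)^{-1}\circ\gamma_B\circ(\Omega f_\flat)_*\,.
\]
My plan is to pass to left adjoints. Both sides are inf-lattice homomorphisms: $\gamma_A$ is right adjoint to $\sigma_A$, and the right-hand composite, being built from the right adjoints $(\Omega f_\flat)_*=(f_\flat)_!$, $\gamma_B$ and $(f^\sharp)^{-1}$, is right adjoint to $f_\flat^{-1}\circ\sigma_B\circ\Sub f^\sharp$. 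Since for adjoint pairs $\ell_i\dashv r_i$ one has $r_1\le r_2\Leftrightarrow\ell_2\le\ell_1$, the displayed relation is equivalent to
\[
f_\flat^{-1}\bigl(\sigma_B(f^\sharp(V))\bigr)\subset\sigma_A(V)\qquad(V\in\Sub A)\,.
\]

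This inequality I would prove pointwise, using $\sigma_A(V)=\bigcup_{a\in V}\supp^\circ\widehat a$ and $\sigma_B(f^\sharp(V))=\bigcup_{v\in V}\supp^\circ\widehat{f^\sharp(v)}$ to reduce it to $f_\flat^{-1}(\supp^\circ\widehat{f^\sharp(v)})\subset\supp^\circ\widehat v$ for each $v\in V$. Continuity of $f_\flat$ gives $f_\flat^{-1}(\operatorname{int}S)\subset\operatorname{int}(f_\flat^{-1}S)$, so the left-hand side sits inside $\operatorname{int}\{x:f^\sharp(v)\notin\kappa_B(f_\flat(x))\}$; and the covariant condition $f^\sharp(\kappa_A(x))\subset\kappa_B(f_\flat(x))$, in contrapositive form, yields $\{x:f^\sharp(v)\notin\kappa_B(f_\flat(x))\}\subset\{x:v\notin\kappa_A(x)\}$, whose interior is $\supp^\circ\widehat v$. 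For the last assertion, when $\mathcal A$ is spectral the spectral kernel $\kfrak$ (the restriction of $\gamma$ to $\Sigma\Omega X$) is continuous by definition, so $\boldsymbol\Omega\mathcal A$ is a genuine linearized locale, while a morphism of $\QVBun_{\Sigma*}$ has $f^\sharp$ continuous, which is exactly the extra hypothesis defining $\LLoc_*$. The main obstacle is the morphism relation, and within it the only genuine computation is the pointwise support inclusion; the passage to adjoints is the key device, since the restriction maps have no transparent closed form whereas their left adjoints, the support maps, do.
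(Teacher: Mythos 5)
Your proposal is correct and follows essentially the same route as the paper: pass to left adjoints to turn the relation $\gamma_A\le(f^\sharp)^{-1}\circ\gamma_B\circ(f_\flat)_!$ into $f_\flat^{-1}\circ\sigma_B\circ\Sub f^\sharp\le\sigma_A$, reduce via the formula $\sigma(V)=\bigcup_{a\in V}\supp^\circ\widehat a$ to the pointwise inclusion $f_\flat^{-1}\bigl(\supp^\circ\widehat{f^\sharp(a)}\bigr)\subset\supp^\circ\widehat a$, and conclude from the covariant condition together with $f_\flat^{-1}(\interior S)\subset\interior f_\flat^{-1}(S)$. The only difference is that you also spell out the object assignment, functoriality, and the restriction to $\QVBun_{\Sigma*}\to\LLoc_*$, which the paper dispatches with ``we only need to check functoriality on morphisms.''
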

\begin{proof}
	We only need to check functoriality on morphisms.
Let $(f_\flat,f^\sharp)$ be a morphism in $\PsiQVBun_{*}$.
We want to show that $(\Omega f_\flat,f^\sharp)$ is a morphism in $\LLoc_*$, that is:
\[
\gamma_A\leq(f^\sharp)^{-1}\circ\gamma_B\circ (f_{\flat})_!
\]
Taking adjoints we get the equivalent relation
\[
f_\flat^{-1}\circ \sigma_B\circ\Sub f^\sharp\leq\sigma_A
\]
Let $V\in\Sub A$. Then
\[
\sigma_A(V)=\bigcup_{a\in V}\supp^\circ\widehat a
\]
and
\begin{align*}
  f_\flat^{-1}\circ \sigma_B\circ\Sub f^\sharp(V)
  &=f_\flat^{-1}\Bigl(\bigcup_{b\in f^\sharp(V)}\supp^\circ\widehat b\Bigr) \\
  &=\bigcup_{a\in V}f_\flat^{-1}\bigl(\supp^\circ\widehat{f^\sharp(a)}\bigr)
\end{align*}
so we only need to show that $f_\flat^{-1}\bigl(\supp^\circ\widehat{f^\sharp(a)}\bigr)\subset\supp^\circ\widehat a$
for any $a\in V$. Since $(f_\flat,f^\sharp)$ is a morphism in $\PsiQVBun_{\Sigma*}$ we have,
for any $x\in X$ and any $a\in A$:
\[
a\in\kappa_A(x)\ \Rightarrow\ f^\sharp a\in\kappa_B(f_\flat(x))
\]
so
\begin{align*}
  f_\flat^{-1}\supp^\circ\widehat{f^\sharp(a)}&=f_\flat^{-1}\bigl(\interior\bigl\{y\in Y:f^\sharp(a)\notin\kappa_B(y)\bigr\}\bigr)\\
  &\subset\interior f_\flat^{-1}\bigl\{y\in Y:f^\sharp(a)\notin\kappa_B(y)\bigr\}\\
  &=\interior\bigl\{x\in X:f^\sharp(a)\notin\kappa_B\bigl(f_\flat(x)\bigr)\bigr\}\\
  &\subset\interior\{x\in X:a\notin\kappa_A(x)\}\\
  &=\supp^\circ\widehat a\qedhere
\end{align*}
\end{proof}

\begin{lemma}
  The assignements $\mathfrak A\mapsto\boldsymbol\Sigma\mathfrak A$ and $(\underline f,\overline f)\mapsto(\underline f_*,\overline f)$
  define a functor $\boldsymbol\Sigma_*\colon\PsiLLoc_*\to\PsiQVBun_{\Sigma*}$ which restricts to a functor
  $\LLoc_*\to\QVBun_{\Sigma*}$
\end{lemma}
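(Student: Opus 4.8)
The plan is to mirror the proof of the preceding lemma for $\boldsymbol\Omega_*$, with the two conditions of the summary table exchanged. On objects there is nothing new: the assignment $\mathfrak A=(L,A,\gamma)\mapsto\boldsymbol\Sigma\mathfrak A=(\Sigma L,A,\kfrak)$, where $\kfrak$ is the restriction of $\gamma$ to $\Sigma L$, is literally the one used in the contravariant setting, so $\boldsymbol\Sigma\mathfrak A$ is a spectral pseudo quotient vector bundle by the already-established extension of the $\boldsymbol\Omega\dashv\boldsymbol\Sigma$ adjunction to $\PsiLLoc^*$; and when $\mathfrak A$ is a genuine linearized locale the restriction $\kfrak$ is continuous for the lower Vietoris topology, placing $\boldsymbol\Sigma\mathfrak A$ in $\QVBun_{\Sigma*}$. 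Thus the only substantive point is functoriality on morphisms.

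First I would fix a covariant morphism $(\underline f,\overline f)\colon\mathfrak A\to\mathfrak B$ in $\PsiLLoc_*$, writing $\mathfrak A=(L_A,A,\gamma_A)$ and $\mathfrak B=(L_B,B,\gamma_B)$, so that $\overline f\colon A\to B$ satisfies $\overline f^{-1}(V)\in\Max A$ for all $V\in\Max B$ and
\[
\gamma_A\leq\overline f^{-1}\circ\gamma_B\circ\underline f_*.
\]
Recalling that $\underline f_*(\Sigma L_A)\subset\Sigma L_B$ and that $\Sigma\underline f$ is the restriction of $\underline f_*$ to $\Sigma L_A$, the pair produced by $\boldsymbol\Sigma_*$ is $(\Sigma\underline f,\overline f)$, whose base map is continuous because $\Sigma$ is a functor; the hypothesis on $\overline f$ is inherited verbatim, so it remains only to verify the covariant bundle relation $\overline f\bigl(\kfrak_A(p)\bigr)\subset\kfrak_B\bigl(\Sigma\underline f(p)\bigr)$ for every prime $p\in\Sigma L_A$.

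The key step is to evaluate the displayed inequality at $p$. Since $\kfrak_A(p)=\gamma_A(p)$ and, because $\underline f_*(p)\in\Sigma L_B$, also $\kfrak_B\bigl(\underline f_*(p)\bigr)=\gamma_B\bigl(\underline f_*(p)\bigr)$, the inequality gives $\gamma_A(p)\subset\overline f^{-1}\bigl(\gamma_B(\underline f_*(p))\bigr)$; applying the image map $\overline f$ and using the counit inclusion $\overline f\circ\overline f^{-1}\leq\ident$ yields exactly $\overline f\bigl(\kfrak_A(p)\bigr)\subset\kfrak_B\bigl(\Sigma\underline f(p)\bigr)$, which is the required relation for a covariant morphism in $\PsiQVBun_{\Sigma*}$. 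Finally I would record that composition is preserved, since $(\underline f\circ\underline g)_*=\underline f_*\circ\underline g_*$ restricts on primes to $\Sigma\underline f\circ\Sigma\underline g$ while the linear parts compose as $\overline f\circ\overline g$, matching the composition law of $\PsiQVBun_{\Sigma*}$, and that identities are sent to identities; the restriction to $\LLoc_*\to\QVBun_{\Sigma*}$ is then immediate, a continuous $\overline f$ giving a continuous $f^\sharp$.

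I do not expect a genuine obstacle here. The only thing to watch is that passing from the locale-level inequality to a fibrewise inclusion reverses nothing, which is guaranteed by the monotonicity of the image map together with the counit inclusion $\overline f\circ\overline f^{-1}\leq\ident$, exactly the mechanism already used in the contravariant computation of Lemma~\ref{morQVBuniffmorLinCoLoc}.
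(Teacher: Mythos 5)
Your proposal is correct and takes essentially the same approach as the paper: the key morphism verification — evaluating $\gamma_A\leq\overline f^{-1}\circ\gamma_B\circ\underline f_*$ at a prime $p$, identifying $\gamma$ with $\kfrak$ on primes (using $\underline f_*(\Sigma L_A)\subset\Sigma L_B$), and then applying $\overline f$ together with $\overline f\circ\overline f^{-1}\leq\ident$ — is exactly the computation in the paper's proof. The only immaterial difference is the justification that $\boldsymbol\Sigma\mathfrak A$ is spectral: you delegate it to the already-established extension of $\boldsymbol\Omega\dashv\boldsymbol\Sigma$ to pseudo objects, while the paper cites Lemma~5.14 of \cite{ReSa16} (or the alternative argument via the open support characterization); both are appeals to previously established facts and amount to the same thing.
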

\begin{proof}
  Let $\mathfrak A=(L,A,\gamma)$ be a pseudo linearized locale. We first show that $\boldsymbol\Sigma\mathfrak A$ has the open support property.
  This is Lemma~5.14 in \cite{ReSa16}, whose proof does not need the continuity of $\kfrak$. Alternatively,
  the statement follows from Lemma~\ref{lemmaOpenSupportiffpseudo} and the observation that 
  $\kfrak^{-1}(\mathcal U_{V,1})=\spat^{*}\sigma(V)$ is open.
  
Now let $(\underline f,\overline f)$ be a morphism in $\PsiLLoc_*$ and let $p\in\Sigma L_A$. Then
\begin{align*}
  \kfrak_A(p)
  &=\gamma_A(p)\\
  &\subset\overline f^{-1}\bigl(\gamma_B\bigl(\underline f_*(p)\bigr)\bigr)\\
  &=\overline f^{-1}\bigl(\kfrak_B\bigl(\underline f_*(p)\bigr)\bigr)
\end{align*}
so $\overline f\bigl(\kfrak_A(p)\bigr)\subset\kfrak_B\bigl(\underline f_*(p)\bigr)$ an it follows that
$(\underline f_*,\overline f)$ is a morphism in $\PsiQVBun_{\Sigma*}$.\qedhere
\end{proof}

\begin{lemma}
  For any pseudo quotient vector bundle
  $\mathcal A=(X,A,\kappa)$, the map $\sob_A=(\sob_X,\ident)\colon\mathcal A\to\boldsymbol\Sigma\boldsymbol\Omega\mathcal A$ is a morphism in $\PsiQVBun_*$.
\end{lemma}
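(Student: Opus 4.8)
The plan is to verify the three conditions that define a covariant morphism in $\PsiQVBun_*$ for the pair $(\sob_X,\ident)\colon(X,A,\kappa)\to(\Sigma\Omega X,A,\kfrak)$, writing $\boldsymbol\Sigma\boldsymbol\Omega\mathcal A=(\Sigma\Omega X,A,\kfrak)$ with $\kfrak$ the restriction to $\Sigma\Omega X$ of the restriction map $\gamma$ of equation~\eqref{eq:defnsupportrestriction}. Two of the conditions are immediate: $\sob_X\colon X\to\Sigma\Omega X$ is continuous, and for the linear component $\ident\colon A\to A$ we have $\ident^{-1}(V)=V\in\Max A$ for every $V\in\Max A$, so the preimage condition holds trivially. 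All the content therefore lies in the fiberwise inclusion $\kappa(x)\subset\kfrak\bigl(\sob_X(x)\bigr)$ for every $x\in X$. It is worth emphasizing that a covariant morphism requires only this inclusion, and not the equality $\kfrak\circ\sob_X=\kappa$ of equation~\eqref{eq:kfrakcircsob=kappa}, which holds only under the open support property; this is exactly why the statement can be made for an arbitrary pseudo quotient vector bundle.

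To prove the inclusion I would argue elementwise. Since $\sob_X(x)=X\setminus\overline{\{x\}}$ lies in $\Sigma\Omega X$, we have $\kfrak\bigl(\sob_X(x)\bigr)=\gamma\bigl(\sob_X(x)\bigr)$, and by the definition $\gamma(U)=\operatorname{span}\{a\in A:\supp^\circ\widehat a\subset U\}$ it suffices to show that each $a\in\kappa(x)$ satisfies $\supp^\circ\widehat a\subset X\setminus\overline{\{x\}}$. Fix such an $a$. From $a\in\kappa(x)$ we get $x\notin\{y\in X:a\notin\kappa(y)\}$, and a fortiori $x\notin\supp^\circ\widehat a=\interior\{y\in X:a\notin\kappa(y)\}$.

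The only point requiring care --- the main, if modest, obstacle --- is passing from $x\notin\supp^\circ\widehat a$ to disjointness from $\overline{\{x\}}$. Here I would invoke that $\supp^\circ\widehat a$ is open together with the elementary fact that an open set meeting $\overline{\{x\}}$ must contain $x$; hence $x\notin\supp^\circ\widehat a$ forces $\supp^\circ\widehat a\cap\overline{\{x\}}=\emptyset$, that is, $\supp^\circ\widehat a\subset X\setminus\overline{\{x\}}=\sob_X(x)$. Consequently $a\in\gamma\bigl(\sob_X(x)\bigr)=\kfrak\bigl(\sob_X(x)\bigr)$, and letting $a$ range over $\kappa(x)$ yields $\kappa(x)\subset\kfrak\bigl(\sob_X(x)\bigr)$, which is precisely the covariant morphism condition $\ident\bigl(\kappa(x)\bigr)\subset\kfrak\bigl(\sob_X(x)\bigr)$. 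Combined with the two trivial conditions, this shows $(\sob_X,\ident)$ is a morphism in $\PsiQVBun_*$. Equivalently, one could phrase the inclusion through the adjunction $\sigma\dashv\gamma$, reducing it to $x\notin\sigma\bigl(\kappa(x)\bigr)=\bigcup_{a\in\kappa(x)}\supp^\circ\widehat a$.
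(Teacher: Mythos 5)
Your proposal is correct. The paper's own proof consists of exactly the same reduction you make --- everything comes down to the inclusion $\kappa(x)\subset\kfrak\bigl(\sob_X(x)\bigr)$ for all $x\in X$, the continuity of $\sob_X$ and the condition $\ident^{-1}(V)\in\Max A$ being immediate --- but at that point the paper simply cites Lemma~5.6 of \cite{ReSa16} and stops. You instead prove the inclusion from scratch, and your argument is sound: for $a\in\kappa(x)$ one has $x\notin\supp^\circ\widehat a$, and since $\supp^\circ\widehat a$ is open, the standard point-set fact that an open set meeting $\overline{\{x\}}$ must contain $x$ gives $\supp^\circ\widehat a\subset X\setminus\overline{\{x\}}=\sob_X(x)$, whence $a$ lies in the generating set of $\gamma\bigl(\sob_X(x)\bigr)=\kfrak\bigl(\sob_X(x)\bigr)$. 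You are also right to flag that only the inclusion, not the equality $\kfrak\circ\sob_X=\kappa$ of equation~\eqref{eq:kfrakcircsob=kappa}, is needed --- that equality requires the open support property, which a general pseudo quotient vector bundle need not have, and this is precisely why the lemma holds in full generality. What your version buys is self-containedness: the reader need not consult \cite{ReSa16} to verify the one nontrivial step, and your closing remark that the inclusion is equivalent, via the adjunction $\sigma\dashv\gamma$, to $x\notin\sigma\bigl(\kappa(x)\bigr)$ is a correct and tidy reformulation of the same argument.
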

\begin{proof}
  We need to show that $\kappa(x)\subset\kfrak\bigl(\sob_X(x)\bigr)$, for any $x\in X$. This is Lemma~5.6 in \cite{ReSa16}.\qedhere
\end{proof}

\begin{lemma}
	Let $\mathfrak A=(L,A,\gamma)$ be a pseudo linearized locale. Then spatialization  $(\spat_L,\ident)$
	is an $\ident$-morphism $\spat_L\colon\boldsymbol{\Omega\Sigma}\mathfrak A\to\mathfrak A$.
\end{lemma}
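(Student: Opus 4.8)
The plan is to unwind the definitions and then reduce the required identity to a statement about the associated support maps, which is immediate from the adjunction defining $\gamma$.

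Write $\boldsymbol\Sigma\mathfrak A=(\Sigma L,A,\kfrak)$ with $\kfrak=\gamma|_{\Sigma L}$, and $\boldsymbol{\Omega\Sigma}\mathfrak A=(\Omega\Sigma L,A,\widetilde\gamma)$, where $\widetilde\gamma$ is the restriction map of $\boldsymbol\Sigma\mathfrak A$. By the definition of an $\ident$-morphism of pseudo linearized locales, it suffices to prove $\widetilde\gamma=\gamma\circ(\spat_L)_*$.

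First I would pass to left adjoints. Let $\sigma\colon\Sub A\to L$ be the left adjoint of the inf-lattice homomorphism $\gamma$, and let $\widetilde\sigma\colon\Sub A\to\Omega\Sigma L$ be the support map of $\boldsymbol\Sigma\mathfrak A$, i.e.\ the left adjoint of $\widetilde\gamma$. Since $(\spat_L)_*$ is right adjoint to $\spat_L^*$, the left adjoint of the inf-lattice homomorphism $\gamma\circ(\spat_L)_*$ is $\spat_L^*\circ\sigma$. Because two inf-lattice homomorphisms coincide exactly when their (unique) left adjoints coincide, the goal $\widetilde\gamma=\gamma\circ(\spat_L)_*$ is equivalent to
\[
\widetilde\sigma=\spat_L^*\circ\sigma.
\]

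To verify this last identity I would compute both sides pointwise on $V\in\Sub A$. The bundle $\boldsymbol\Sigma\mathfrak A$ has the open support property (this was already observed in the construction of the functor $\boldsymbol\Sigma$, and needs no continuity of $\kfrak$), so Lemma~\ref{lemmaOpenSupportiffpseudo} gives $\widetilde\sigma(V)=\kfrak^{-1}(\mathcal U_{V,1})$. Since $\mathcal U_{V,1}=\{W\in\Sub A:V\not\subset W\}$ and $\kfrak=\gamma|_{\Sigma L}$, this reads $\widetilde\sigma(V)=\{p\in\Sigma L:V\not\subset\gamma(p)\}$. On the other hand $\spat_L^*(\sigma(V))=U_{\sigma(V)}=\{p\in\Sigma L:\sigma(V)\not\leq p\}$. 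These coincide because the adjunction $\sigma\dashv\gamma$ yields $\sigma(V)\leq p\Leftrightarrow V\subset\gamma(p)$, hence $\sigma(V)\not\leq p\Leftrightarrow V\not\subset\gamma(p)$. This is exactly the identity $\kfrak^{-1}(\mathcal U_{V,1})=\spat_L^*\sigma(V)$ already noted, and it completes the argument.

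The only point requiring care is the bookkeeping of adjunction directions: one must make sure that taking left adjoints genuinely converts the $\gamma$-identity into the $\sigma$-identity, and that the open support property of $\boldsymbol\Sigma\mathfrak A$ is what licenses the explicit formula for $\widetilde\sigma$. Beyond this, the computation is a direct translation through the adjunction $\sigma\dashv\gamma$, with no analytic input.
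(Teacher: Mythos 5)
Your proof is correct. Note, though, that the paper itself gives no argument for this lemma: its ``proof'' is a citation to \cite[Theorem~5.23]{ReSa16} (applied, implicitly, to $\mathfrak A^{\mathrm{triv}}$, which is legitimate since the $\ident$-morphism condition $\widetilde\gamma=\gamma\circ(\spat_L)_*$ does not involve the topology of $A$). So your route is genuinely different in presentation: you give a self-contained verification by passing to left adjoints --- reducing $\widetilde\gamma=\gamma\circ(\spat_L)_*$ to $\widetilde\sigma=\spat_L^*\circ\sigma$ by uniqueness of adjoints --- and then checking the latter pointwise, using Lemma~\ref{lemmaOpenSupportiffpseudo} to write $\widetilde\sigma(V)=\kfrak^{-1}(\mathcal U_{V,1})$ and the adjunction $\sigma\dashv\gamma$ to identify this set with $U_{\sigma(V)}=\spat_L^*\bigl(\sigma(V)\bigr)$. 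This is precisely the strategy the paper uses for the colocale analogue, namely the spatialization lemma preceding Corollary~\ref{tildesigma=spatcircsigma}, so your argument is the exact mirror of a proof the paper does spell out. One point worth making explicit: the identity $\kfrak^{-1}(\mathcal U_{V,1})=\spat_L^*\sigma(V)$ is doing double duty in your argument --- it is what establishes the open support property of $\boldsymbol\Sigma\mathfrak A$ in the first place (the right-hand side is open by definition of $\spat_L^*$), and it is the identity you need at the end. There is no circularity, because the identity is proved by pure adjunction ($V\not\subset\gamma(p)\Leftrightarrow\sigma(V)\not\leq p$) without reference to the support map $\widetilde\sigma$ of the bundle; but stating the computation first and then invoking Lemma~\ref{lemmaOpenSupportiffpseudo} would make the logical order cleaner. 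What your approach buys is self-containedness and the visible fact that no continuity of $\kfrak$ is required, so the statement genuinely holds at the pseudo level; what the paper's citation buys is brevity.
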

\begin{proof}
	This is Theorem 5.23 in~\cite{ReSa16}.\qedhere
\end{proof}

From Lemma~\ref{morlincolocisostrict} it follows that 
the pair $(\spat_L,\ident)$ is both a covariant and a contravariant morphism which is an isomorphism 
if and only if the locale $L$ is spatial.
	
\begin{theorem}\label{T5.15}
	The functor $\boldsymbol\Omega_*\colon\PsiQVBun_{\Sigma*}\to\PsiLLoc_*$ is left adjoint to the functor
	$\boldsymbol\Sigma_*\colon\PsiLLoc_*\to\PsiQVBun_{\Sigma*}$. 
	This adjunction restricts to an isomorphism
	between the category of pseudo quotient vector bundles with Hausdorff fibers over sober base spaces and the category of linearized locales
	over a spatial locale.
\end{theorem}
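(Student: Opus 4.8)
The plan is to establish the adjunction $\boldsymbol\Omega_*\dashv\boldsymbol\Sigma_*$ by exhibiting the soberification and spatialization morphisms as the unit and counit, and then verifying the triangle identities. Since the three lemmas immediately preceding the theorem have already shown that $\sob_{\mathcal A}=(\sob_X,\ident)\colon\mathcal A\to\boldsymbol\Sigma\boldsymbol\Omega\mathcal A$ is a morphism in $\PsiQVBun_*$ and that $(\spat_L,\ident)\colon\boldsymbol\Omega\boldsymbol\Sigma\mathfrak A\to\mathfrak A$ is an $\ident$-morphism (hence a covariant morphism by the remark following Lemma~\ref{morlincolocisostrict}), the only thing left is naturality and the triangle identities. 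I would first note that $\sob$ and $\spat$ are built from the ordinary soberification and spatialization of spaces and locales, whose naturality is standard, so naturality of $\sob_{\mathcal A}$ and $\spat_L$ as families of covariant morphisms reduces to checking compatibility of the linear components, which are all the identity on the vector space part.

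Concretely, I would verify the two triangle identities
\[
\boldsymbol\Sigma_*(\spat_L)\circ\sob_{\boldsymbol\Sigma\mathfrak A}=\ident
\quad\text{and}\quad
\spat_{\Omega X}\circ\boldsymbol\Omega_*(\sob_{\mathcal A})=\ident\,.
\]
Because every morphism in sight has identity as its linear ($f^\sharp$ or $\overline f$) component, each triangle identity collapses to the corresponding triangle identity for the underlying $\Omega\dashv\Sigma$ adjunction between $\Top$ and $\Loc$, which holds on the nose. This is precisely the observation that makes the proof short: the content is entirely inherited from the classical adjunction once the linearized components are seen to be coherent, so I would phrase the proof as ``the triangle identities follow from those of the spatial adjunction, since the vector space components are identities,'' mirroring the proof of Theorem~\ref{T5.10}.

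For the second sentence of the statement, I would restrict to bundles with Hausdorff fibers over sober base spaces on one side and linearized locales over spatial locales on the other. Over these subcategories, the previous lemma (together with Lemma~\ref{morlincolocisostrict}) shows that $\spat_L$ is an isomorphism exactly when $L$ is spatial, and $\sob_{\mathcal A}$ is an isomorphism exactly when $X$ is sober; hence unit and counit become natural isomorphisms, upgrading the adjunction to an equivalence, indeed an isomorphism of categories since the functors are mutually inverse on objects via $\Sigma\Omega X\cong X$ and $\Omega\Sigma L\cong L$.

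The main obstacle I anticipate is not any single hard computation but rather bookkeeping: one must confirm that $\boldsymbol\Sigma_*$ and $\boldsymbol\Omega_*$ send the relevant morphisms into the correct subcategories (e.g. that $\boldsymbol\Sigma_*\mathfrak A$ genuinely has the open support property, already handled in the preceding lemma, and that the spectral/sober conditions are preserved), and that the covariant morphism inequalities $\gamma_A\le\overline f^{-1}\circ\gamma_B\circ\underline f_*$ are respected by unit and counit. I would present this as routine verification, emphasizing that everything reduces to the established adjunction $\Omega\dashv\Sigma$ on spaces and locales once the identity linear components are accounted for, and closing with the remark that the restriction to sober/spatial objects yields the claimed isomorphism of categories.
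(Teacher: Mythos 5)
Your proposal is correct and follows essentially the same route as the paper: the paper's proof likewise consists of observing that soberification and spatialization are natural transformations serving as unit and counit, satisfying the triangle identities $\spat_{\boldsymbol\Omega\mathcal A}\circ\boldsymbol\Omega\sob_{\mathcal A}=\ident$ and $\boldsymbol\Sigma\spat_{\mathfrak A}\circ\sob_{\boldsymbol\Sigma\mathfrak A}=\ident$, with all the morphism-level verifications delegated to the preceding lemmas. Your additional remarks (that the identities collapse to the classical $\Omega\dashv\Sigma$ triangle identities because the linear components are identities, and that the sober/spatial restriction upgrades unit and counit to isomorphisms) simply make explicit what the paper leaves implicit.
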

\begin{proof}
		It is enough to observe that 
		soberification and spatialization are both natural transformations between respectively the identity and 
		$\boldsymbol\Sigma\circ\boldsymbol\Omega$,
		and $\boldsymbol\Omega\circ\boldsymbol\Sigma$ and the identity,
		satisfying for each $\mathcal A\in\PsiQVBun^1_*$ and $\mathfrak A\in\PsiLinCoLoc^1_*$ the relations
		\[
		\spat_{\boldsymbol\Omega\mathcal A}\circ\boldsymbol\Omega\sob_{\mathcal A}=\ident\quad\text{and}\quad
		\boldsymbol\Sigma\spat_{\mathfrak A}\circ\sob_{\boldsymbol\Sigma\mathfrak A}=\ident\,.\qedhere
		\]
\end{proof}

\section{Duality}\label{sec:dualQVBun}\label{sec:6}

\subsection{Dual pairs}

A dual pair is a pair $(A,A^\vee)$ of vector spaces together with a non-degenerate bilinear form
$\langle,\rangle\colon A\times A^\vee\to\mathbb C$. 
If $A$ is a locally convex topological vector space and
$A^\vee$ is the dual space of continuous linear functionals on $A$ then, by the Hahn Banach theorem the induced bilinear form $A\times A^\vee\to\mathbb C$
is non-degenerate so $(A,A^\vee)$ is a dual pair.
A topology on $A$ is said to be a \emph{dual topology} if $A$ is locally convex
and $A^\vee$ is its space of continuous linear functionals.
A topology on $A^\vee$ is a dual topology if $A^\vee$ is locally convex and $A$ is the space
of continuous linear functionals $A^\vee\to\mathbb C$. The weak topology on $A^\vee$ determined by $A$ 
and the weak topology on $A$ determined by $A^\vee$ are the weakest dual topologies
(see for example \cite[chapter 8]{Jar81}).

\subsubsection{Galois correspondence between \boldmath $\Sub A$ and $\Sub A^\vee$}

Given a dual pair $(A,A^\vee)$,
for each $\phi\in A^\vee$ let $\ker\phi=\{a\in A:\langle a,\phi\rangle=0\}$ and
for each $a\in A$ let $\ker a=\{\phi\in A^\vee:a\in\ker\phi\}$.
Consider the annihilator lattice maps $F\colon(\Sub A)^\opp\to\Sub A^\vee$ and $G\colon\Sub A^\vee\to(\Sub A)^\opp$ defined by
\begin{equation}\label{eq:defi(V)r(V)}
\begin{aligned}
F(V)&=\{\phi\in A^\vee:V\subset\ker\phi\}=\bigcap_{a\in V}\ker a\,; \\
G(\mathcal V)&=\{a\in A:\mathcal V\subset\ker a\}=\bigcap_{\phi\in\mathcal V}\ker\phi\,.
\end{aligned}
\end{equation}
Note that, for any $\phi\in A^\vee$, we have:
\begin{equation}\label{eq:F(kerphi)=<phi>}
\begin{aligned}
F(\ker\phi)&=\{\psi\in A^\vee:\ker\phi\subset\ker\psi\}=\langle\phi\rangle\\
G(\langle\phi\rangle)&=\bigcap_{\psi\in\langle\phi\rangle}\ker\psi=\ker\phi
\end{aligned}
\end{equation}
For any $V\in\Sub A$ and any $\mathcal W\in\Sub A^\vee$ we have $V\subset G(\mathcal W)\Leftrightarrow \mathcal W\subset F(V)$
so we have an adjunction $F\dashv G$ which gives a Galois correspondence between $\Sub A$ and $\Sub A^\vee$.
The image of $G$ is the inf-sublattice of $\Sub A$ generated by $\{\ker\phi\}_{\phi\in A^\vee}$. When $A$ has a dual topology, 
the image of $G$ equals $\Max A$. Similarly, if $A^\vee$ has a dual topology then the image of $F$ equals $\Max A^\vee$.
By analogy with this case, we will always write
\[
\image G=\Max A\,,\qquad \image F=\Max A^\vee
\]
and we represent the so called closure operators by
\[
G\circ F(V)=\overline V\quad(V\in\Sub A),\qquad
F\circ G(\mathcal V)=\overline{\mathcal V}\quad(\mathcal V\in\Sub A^\vee)
\]

\begin{lemma}\label{le:iV=ibarV}
	For any $V\in\Sub A$ and any $\mathcal W\in\Sub A^\vee$ we have $F(\overline V)=F(V)$ and $G(\overline{\mathcal W})=G(\mathcal W)$.
\end{lemma}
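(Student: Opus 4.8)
The plan is to treat the pair $F\dashv G$ purely as a Galois connection and to read off the conclusion from the two standard triangle identities $F\circ G\circ F=F$ and $G\circ F\circ G=G$. Indeed, by the very definition of the closure operators we have $\overline V=G\circ F(V)$ and $\overline{\mathcal W}=F\circ G(\mathcal W)$, so the two claimed equalities are literally
\[
F(\overline V)=F\bigl(G(F(V))\bigr)=F(V)\,,\qquad G(\overline{\mathcal W})=G\bigl(F(G(\mathcal W))\bigr)=G(\mathcal W)\,,
\]
and both collapse as soon as the triangle identities are available. So the whole task reduces to verifying those identities from the adjunction relation $V\subset G(\mathcal W)\Leftrightarrow\mathcal W\subset F(V)$.

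To do that concretely I would first record the two unit inequalities. Taking $\mathcal W=F(V)$ in the adjunction relation turns $F(V)\subset F(V)$ into $V\subset G(F(V))=\overline V$, and taking $V=G(\mathcal W)$ turns $G(\mathcal W)\subset G(\mathcal W)$ into $\mathcal W\subset F(G(\mathcal W))=\overline{\mathcal W}$. Second, I would note that both $F$ and $G$ are order-reversing, which is immediate from~\eqref{eq:defi(V)r(V)} since enlarging a subspace shrinks the intersection of kernels that defines its annihilator. These two facts are all that is needed.

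For $F(\overline V)=F(V)$ I would combine them as two inclusions: from $V\subset\overline V$ and the order-reversal of $F$ we get $F(\overline V)\subset F(V)$; conversely, applying the unit inequality $\mathcal W\subset F(G(\mathcal W))$ with $\mathcal W=F(V)$ gives $F(V)\subset F(G(F(V)))=F(\overline V)$, and equality follows. The identity $G(\overline{\mathcal W})=G(\mathcal W)$ is obtained by the symmetric argument with the roles of $F$ and $G$ exchanged. I do not expect any genuine obstacle here: the statement is a formal consequence of the Galois correspondence and uses no topology on $A$ or $A^\vee$ at all. The only point requiring care is the bookkeeping of the order conventions---keeping track that $F$ and $G$ reverse inclusions and that the closures are defined as $G\circ F$ and $F\circ G$ respectively---so that the inclusions are chained in the correct direction.
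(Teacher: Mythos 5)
Your proposal is correct and follows essentially the same route as the paper, whose proof is exactly the reduction to the triangle identities $F\circ G\circ F=F$ and $G\circ F\circ G=G$; you merely add the (standard, correct) verification of those identities from the unit inequalities and the anti-monotonicity of $F$ and $G$, which the paper takes for granted.
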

\begin{proof}
	It immediately follows from the identities $F\circ G\circ F=F$ and $G\circ F\circ G=G$.\qedhere
\end{proof}

\subsubsection{Morphisms of dual pairs}

A morphism of dual pairs $(A,A^\vee)\to(B,B^\vee)$ is a pair of linear maps
$f\colon A\to B$ and $f^\vee\colon B^\vee\to A^\vee$ such that, for every $a\in A$ and every $\psi\in B^\vee$ we have $\langle f(a),\psi\rangle=\langle a,f^\vee(\psi)\rangle$.

\begin{lemma}\label{le:if=f-1i}
	Let $(f,f^\vee)\colon (A,A^\vee)\to (B,B^\vee)$ be a morphism of dual pairs. Then:
	\begin{enumerate}
		\item For any $V\in\Sub A$ and any $W\in\Sub A^\vee$ we have
		\[
		F\bigl(f(V)\bigr)=(f^\vee)^{-1}\bigl(F(V)\bigr)\quad\text{and}\quad G\bigl(f^\vee(\mathcal W)\bigr)=f^{-1}(G(\mathcal W)\bigr)
		\]
		\item For any $W\in\Sub B$ we have
		\[
		F\bigl(f^{-1}(W)\bigr)\supset f^\vee\bigl(F(W)\bigr)
		\]
	\end{enumerate}
\end{lemma}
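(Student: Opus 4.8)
The plan is to reduce every assertion to the single defining relation $\langle f(a),\psi\rangle=\langle a,f^\vee(\psi)\rangle$ of a morphism of dual pairs, unwinding the definitions of the annihilator maps $F$ and $G$ from~\eqref{eq:defi(V)r(V)}. Throughout I read $F$ and $G$ in whichever dual pair their argument lives in; thus in $F(f(V))$ the symbol $F$ denotes the annihilator $(\Sub B)^\opp\to\Sub B^\vee$ on the $B$-side, whereas in $F(V)$ it is the annihilator on the $A$-side, and similarly for $G$.

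For the first identity in~(1) I would establish both containments at once by a chain of biconditionals. For $\psi\in B^\vee$ one has $\psi\in F(f(V))$ exactly when $\langle f(a),\psi\rangle=0$ for every $a\in V$; by the morphism relation this is the same as $\langle a,f^\vee(\psi)\rangle=0$ for every $a\in V$, i.e.\ $f^\vee(\psi)\in F(V)$, which is precisely $\psi\in(f^\vee)^{-1}(F(V))$. Since every step is an equivalence this yields equality, not merely inclusion. The second identity in~(1) is the mirror image: for $a\in A$ the condition $a\in G(f^\vee(\mathcal W))$ reads $\langle a,f^\vee(\psi)\rangle=0$ for all $\psi\in\mathcal W$, which by the morphism relation is $\langle f(a),\psi\rangle=0$ for all $\psi\in\mathcal W$, i.e.\ $f(a)\in G(\mathcal W)$, that is $a\in f^{-1}(G(\mathcal W))$. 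For the types to match here $\mathcal W$ must be read as an element of $\Sub B^\vee$.

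For~(2) I would prove the inclusion $f^\vee(F(W))\subset F(f^{-1}(W))$ by taking a generic element of the left-hand side. Pick $\psi\in F(W)$ and put $\phi=f^\vee(\psi)$; I must check that $\phi\in F(f^{-1}(W))$, i.e.\ that $\langle a,\phi\rangle=0$ for every $a$ with $f(a)\in W$. For such $a$ the morphism relation gives $\langle a,f^\vee(\psi)\rangle=\langle f(a),\psi\rangle$, and the right-hand side vanishes because $f(a)\in W\subset\ker\psi$. This establishes the containment.

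The calculations are entirely routine, so there is no genuine obstacle; the one point worth flagging is conceptual rather than technical, namely why~(2) gives only an inclusion while the analogous statement in~(1) is an equality. The asymmetry is exactly that in~(1) the annihilator is pulled back along $f^\vee$, so the biconditional survives, whereas in~(2) we push forward along $f^\vee$: a functional in $F(f^{-1}(W))$ need not lie in the image of $f^\vee$, so there is no reason to expect the reverse containment, and I would not attempt to upgrade~(2) to an equality.
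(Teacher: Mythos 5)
Your proof is correct. Part (1) is essentially the paper's own argument: the same chain of equivalences unwinding $F$ and $G$ via the defining relation $\langle f(a),\psi\rangle=\langle a,f^\vee(\psi)\rangle$ (the paper writes out only the first identity and says the second is ``proved similarly''; you do both, and you correctly note that $\mathcal W$ must be read as an element of $\Sub B^\vee$, which fixes a typo in the statement). The one divergence is in part (2): the paper deduces it formally from (1), applying the order-reversing map $F$ to the inclusion $f\bigl(f^{-1}(W)\bigr)\subset W$ to get $F(W)\subset F\bigl(f\bigl(f^{-1}(W)\bigr)\bigr)=(f^\vee)^{-1}\bigl(F\bigl(f^{-1}(W)\bigr)\bigr)$, which is equivalent to (2); you instead verify (2) directly by an element chase. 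Both are one-line arguments: the paper's route exhibits (2) as a purely formal consequence of (1) plus antitonicity, in the Galois-correspondence spirit of the section, while yours is self-contained and makes transparent exactly where the asymmetry comes from --- your closing remark that (1) pulls the annihilator back along $f^\vee$ while (2) pushes forward, so that functionals in $F\bigl(f^{-1}(W)\bigr)$ need not lie in the image of $f^\vee$, is the right explanation of why (2) cannot be upgraded to an equality. Incidentally, your direct argument sidesteps a small slip in the paper's proof of (2), which states the inclusion as $W\subset f\bigl(f^{-1}(W)\bigr)$ when what is needed (and true) is $f\bigl(f^{-1}(W)\bigr)\subset W$.
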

\begin{proof}
	We first prove (1). For the first identity we have
	\begin{align*}
	F\bigl(f(V)\bigr)&=\bigl\{\phi\in B^\vee:f(V)\subset\ker\phi\bigr\}\\
	&=\bigl\{\phi\in B^\vee:\phi\bigl(f(a)\bigr)=f^\vee(\phi)(a)=0\text{ for all $a\in V$}\bigr\}\\
	&=\bigl\{\phi\in B^\vee:V\subset\ker f^\vee(\phi)\bigr\}\\
	&=\bigl\{\phi\in B^\vee:f^\vee(\phi)\in F(V)\bigr\}\\
	&=(f^\vee)^{-1}\bigl(F(V)\bigr)\,,
	\end{align*}
	and the second identity is proved similarly.
	We now prove (2). From $W\subset f\bigl(f^{-1}(W)\bigr)$ and from (1) we get
	\[
	F(W)\subset F\bigl(f\bigl(f^{-1}(W)\bigr)\bigr)=(f^\vee)^{-1}\bigl(F\bigl(f^{-1}(W)\bigr)\bigr)
	\]
	which is equivalent to identity (2).\qedhere
\end{proof}

Note that, since the pairing is non-degenerate, the map $f^\vee$ is completely determined by $f$.
Also observe that
\begin{equation}\label{f-1kerphi=kerfveephi}
f^{-1}\bigl(\ker\phi)=\{b\in B:\langle f(b),\phi\rangle=\langle b,f^\vee(\phi)\rangle=0\}=\ker f^\vee(\phi)
\end{equation}
The following lemma provides an alternative characterization of morphisms of dual pairs:

\begin{lemma}\label{le:morinpsialternate}
	Let $(A,A^\vee)$ and $(B,B^\vee)$ be dual pairs. Given a linear map $f\colon A\to B$ such that $f^{-1}(V)\in\Max A$ for every $B\in\Max B$,
	there is a unique linear map $f^\vee\colon B^\vee\to A^\vee$ such that $(f,f^\vee)\colon (A,A^\vee)\to(B,B^\vee)$ is a morphism of dual pairs.
	Conversely, given a morphism of dual pairs $(f,f^\vee)$, the linear map $f$ satisfies $f^{-1}(V)\in\Max A$ for every $B\in\Max B$.
\end{lemma}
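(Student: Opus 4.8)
The plan is to dispatch the formal parts first and concentrate on the single substantive point. Uniqueness of $f^\vee$ is immediate from non-degeneracy: if $f^\vee$ and $g^\vee$ both satisfy the defining relation, then $\langle a, f^\vee(\psi)-g^\vee(\psi)\rangle=\langle f(a),\psi\rangle-\langle f(a),\psi\rangle=0$ for every $a\in A$, which forces $f^\vee(\psi)=g^\vee(\psi)$; the same non-degeneracy shows that an $f^\vee$ defined pointwise is automatically linear. For the converse direction I would only invoke Lemma~\ref{le:if=f-1i}(1): every $V\in\Max B$ has the form $V=G(\mathcal W)$ with $\mathcal W\in\Sub B^\vee$ because $\Max B=\image G$, and then $f^{-1}(V)=f^{-1}\bigl(G(\mathcal W)\bigr)=G\bigl(f^\vee(\mathcal W)\bigr)\in\image G=\Max A$.

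The substance is the existence of $f^\vee$. Fixing $\psi\in B^\vee$, I would consider the linear functional $\chi_\psi=\psi\circ f$ on $A$ and aim to show it is represented by some element of $A^\vee$; setting $f^\vee(\psi)$ equal to that element then makes the defining relation $\langle a,f^\vee(\psi)\rangle=\langle f(a),\psi\rangle$ hold by construction. The mechanism that lets the hypothesis on $f$ enter is the identity $\ker\chi_\psi=f^{-1}(\ker\psi)$: by equation~\eqref{eq:F(kerphi)=<phi>} we have $\ker\psi=G(\langle\psi\rangle)\in\Max B$, so the assumption that $f^{-1}(V)\in\Max A$ for $V\in\Max B$ yields $\ker\chi_\psi\in\Max A$.

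The hard part, and the only step needing more than bookkeeping, is the passage from ``$\chi_\psi$ has kernel in $\Max A$'' to ``$\chi_\psi\in A^\vee$''. I would argue this by elementary linear algebra. If $\chi_\psi=0$ we take $f^\vee(\psi)=0$. Otherwise $\ker\chi_\psi$ is a hyperplane, and since $\Max A=\image G$ we may write $\ker\chi_\psi=\bigcap_{\phi\in\mathcal W}\ker\phi$ for some $\mathcal W\in\Sub A^\vee$; as this intersection is a proper subspace, some $\phi_0\in\mathcal W$ is nonzero. Then $\ker\phi_0$ is a hyperplane containing the hyperplane $\ker\chi_\psi$, hence equal to it, so $\chi_\psi$ and $\phi_0$ are proportional and $\chi_\psi$ is represented by a scalar multiple of $\phi_0\in A^\vee$. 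This is precisely the classical fact that a functional on a dual pair is weakly continuous exactly when its kernel is weakly closed, recast in the annihilator language of the lemma; the linearity and uniqueness established above then complete the construction.
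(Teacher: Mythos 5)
Your proof is correct and follows essentially the same route as the paper's: uniqueness from non-degeneracy, the converse directly from Lemma~\ref{le:if=f-1i}, and existence by observing that $\ker(\psi\circ f)=f^{-1}(\ker\psi)\in\Max A=\image G$, so that $\psi\circ f$ must be (a multiple of) an element of $A^\vee$. The only difference is cosmetic and lies in the last step: where you pick a nonzero $\phi_0\in\mathcal W$ and argue by hyperplane containment that $\psi\circ f$ is proportional to $\phi_0$, the paper expresses the same fact as $\langle f^\vee(\psi)\rangle=F_{\mathrm{alg}}\bigl(G_{\mathrm{alg}}(\mathcal W)\bigr)\supset\mathcal W$ via the annihilator maps of the algebraic dual pair (and your explicit treatment of the case $\psi\circ f=0$ is, if anything, slightly more careful than the paper's terse final inference).
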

\begin{proof}
	If $(f,f^\vee)$ is a morphism of dual pairs then, by Lemma~\ref{le:if=f-1i}, $f^{-1}(V)\in\Max A$ for every $B\in\Max B$.
	Conversely, suppose $f^{-1}(V)\in\Max A$ for every $B\in\Max B$.
	Let $A_{\mathrm{alg}}^\vee$ and $B_{\mathrm{alg}}^\vee$ be the algebraic dual spaces of $A$ and $B$ respectively 
	and let $f^\vee\colon B_{\mathrm{alg}}^\vee\to A_{\mathrm{alg}}^\vee$ be the dual of $f$. Non-degeneracy implies we have inclusions
	$A^\vee\to A_{\mathrm{alg}}^\vee$ and $B^\vee\to B_{\mathrm{alg}}^\vee$ so we only need to show that $f^\vee(\phi)\in A^\vee$ for any $\phi\in B^\vee$.
	Let $\phi\in B^\vee$. By equation~\eqref{eq:F(kerphi)=<phi>} we have $\ker\phi\in\Max B$
	so by assumption $f^{-1}(\ker\phi)\in\Max A$ so
	there is some $\mathcal W\in\Sub A^\vee$ such that $f^{-1}(\ker\phi)=G(\mathcal W)$ and from
	equation~\eqref{f-1kerphi=kerfveephi} we get $\ker f^\vee(\phi)=G(\mathcal W)$.
	Consider the annihilator maps
	$F_{\mathrm{alg}}\colon\Sub A\to\Sub A_{\mathrm{alg}}^\vee$ and $G_{\mathrm{alg}}\colon\Sub A_{\mathrm{alg}}^\vee\to\Sub A$.
	From equation~\eqref{eq:F(kerphi)=<phi>} we get 
	\[
	\langle f^\vee(\phi)\rangle=F_{\mathrm{alg}}\bigl(\ker f^\vee(\phi)\bigr)=F_{\mathrm{alg}}\bigl(G_{\mathrm{alg}}(\mathcal W)\bigr)\supset\mathcal W
	\]
	which shows that $f^\vee(\phi)\in A^\vee$.\qedhere
\end{proof}

In particular, a morphism of dual pairs $(f,f^\vee)\colon(A,A^\vee)\to(B,B^\vee)$ induces an inf-lattice homomorphism $f^{-1}\colon \Max B\to \Max A$ 
whose left adjoint is
the sup-lattice homomorphism $\Max f\colon \Max A\to \Max B$ defined by
\begin{equation}\label{eq:def:cf}
\Max f(V)=\overline{f(V)}
\end{equation}

\subsubsection{Continuity of \boldmath $F$ and $G$}

\begin{lemma}\label{i-1(U)andr-1(U)}
	For any $V\in\Sub A$ and any $\mathcal W\in\Sub A^\vee$ we have
	\[
	F^{-1}(\mathcal U_{\mathcal W,1})=\mathcal U_{0,G(\mathcal W)}\quad\text{and}\quad
	G^{-1}(\mathcal U_{V,1})=\mathcal U_{0,F(V)}
	\]
\end{lemma}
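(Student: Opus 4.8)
The plan is to reduce both identities to the Galois correspondence $F\dashv G$ established above, namely the biconditional $\mathcal W\subset F(V)\Leftrightarrow V\subset G(\mathcal W)$ valid for all $V\in\Sub A$ and all $\mathcal W\in\Sub A^\vee$. The whole argument is a matter of unwinding the definition of $\mathcal U_{V_0,V_1}$ in equation~\eqref{eq:mathcalU} for the special boundary values appearing in the statement.

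First I would simplify the two sets in the first identity. Since $1$ is the top element of $\Sub A^\vee$, the condition ``$\mathcal X\not\subset 1$'' never holds, so the disjunction defining $\mathcal U$ collapses to $\mathcal U_{\mathcal W,1}=\{\mathcal X\in\Sub A^\vee:\mathcal W\not\subset\mathcal X\}$. Dually, since $0$ is the bottom element of $\Sub A$, the condition ``$0\not\subset V$'' never holds, so $\mathcal U_{0,G(\mathcal W)}=\{V\in\Sub A:V\not\subset G(\mathcal W)\}$. Unwinding the inverse image then gives $V\in F^{-1}(\mathcal U_{\mathcal W,1})\Leftrightarrow \mathcal W\not\subset F(V)$.

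The key step is then to negate the Galois correspondence: from $\mathcal W\subset F(V)\Leftrightarrow V\subset G(\mathcal W)$ we get $\mathcal W\not\subset F(V)\Leftrightarrow V\not\subset G(\mathcal W)$, and combining this with the two simplifications above yields $F^{-1}(\mathcal U_{\mathcal W,1})=\mathcal U_{0,G(\mathcal W)}$. The second identity is proved in exactly the same way, interchanging the roles of $A$ and $A^\vee$, of $F$ and $G$, and of $V$ and $\mathcal W$: one has $\mathcal W\in G^{-1}(\mathcal U_{V,1})\Leftrightarrow V\not\subset G(\mathcal W)$, which by the same negated correspondence equals $\{\mathcal W\in\Sub A^\vee:\mathcal W\not\subset F(V)\}=\mathcal U_{0,F(V)}$. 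There is no real obstacle here beyond the observation that the boundary values $1$ and $0$ make the disjunction defining $\mathcal U$ collapse to a single inclusion; once that is noted, everything is a formal consequence of the adjunction $F\dashv G$.
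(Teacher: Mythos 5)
Your proof is correct and follows essentially the same route as the paper's: both reduce the identities to the biconditional $\mathcal W\subset F(V)\Leftrightarrow V\subset G(\mathcal W)$ of the Galois correspondence and chain the equivalences $V\in F^{-1}(\mathcal U_{\mathcal W,1})\Leftrightarrow\mathcal W\not\subset F(V)\Leftrightarrow V\not\subset G(\mathcal W)\Leftrightarrow V\in\mathcal U_{0,G(\mathcal W)}$, treating the second identity by symmetry. Your explicit remark that the boundary values $0$ and $1$ collapse the disjunction defining $\mathcal U_{V_0,V_1}$ is a detail the paper leaves implicit, but it is the same argument.
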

\begin{proof}
	Since $\mathcal W\subset F(V)\Leftrightarrow V\subset G(\mathcal W)$ we get
	\[
	V\in F^{-1}(\mathcal U_{\mathcal W,1})\Leftrightarrow \mathcal W\not\subset F(V)
	\Leftrightarrow V\not\subset G(\mathcal W) \Leftrightarrow V\in\mathcal U_{0,G(\mathcal W)}
	\]
	and similarly for the identity $G^{-1}(\mathcal U_{V,1})=\mathcal U_{0,F(V)}$.
\end{proof}

\begin{corollary}
	For any $V\in\Max A$ and any $\mathcal W\in\Max A^\vee$ we have
	\[
	V\in\mathcal U_{G(\mathcal W),1}\Leftrightarrow V\in F^{-1}\bigl(\mathcal U_{0,\mathcal W}\bigr)
	\quad\text{and}\quad
	\mathcal W\in\mathcal U_{F(V),1}\Leftrightarrow \mathcal W\in G^{-1}\bigl(\mathcal U_{0,V}\bigr)
	\]
\end{corollary}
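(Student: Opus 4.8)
The plan is to reduce both displayed equivalences to a single inclusion statement on closed subspaces, and then prove that statement using the fact that the Galois pair $F\dashv G$ restricts to mutually inverse order-reversing bijections between $\Max A$ and $\Max A^\vee$. First I would unwind the memberships using the computations $\mathcal U_{V_0,1}=\{W\in\Sub A:V_0\not\subset W\}$ and $\mathcal U_{0,\mathcal V}=\{\mathcal X\in\Sub A^\vee:\mathcal X\not\subset\mathcal V\}$ (both follow at once from equation~\eqref{eq:mathcalU}, since $W\subset 1$ and $0\subset\mathcal X$ always hold). These give $V\in\mathcal U_{G(\mathcal W),1}\Leftrightarrow G(\mathcal W)\not\subset V$ and $V\in F^{-1}(\mathcal U_{0,\mathcal W})\Leftrightarrow F(V)\not\subset\mathcal W$ for the first equivalence, and symmetrically $\mathcal W\in\mathcal U_{F(V),1}\Leftrightarrow F(V)\not\subset\mathcal W$ while $\mathcal W\in G^{-1}(\mathcal U_{0,V})\Leftrightarrow G(\mathcal W)\not\subset V$ for the second. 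Taking contrapositives, both displayed equivalences are therefore the single claim
\[
G(\mathcal W)\subset V\ \Longleftrightarrow\ F(V)\subset\mathcal W\qquad(V\in\Max A,\ \mathcal W\in\Max A^\vee).
\]

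Next I would prove this claim. The main subtlety is that the adjunction $F\dashv G$ by itself only yields the \emph{reversed} inclusions $V\subset G(\mathcal W)\Leftrightarrow\mathcal W\subset F(V)$, which is not the statement needed; the claim concerns the opposite direction of inclusion and genuinely requires restricting to closed subspaces. Here I would use that $F$ and $G$ are order-reversing (immediate from $F(V)=\bigcap_{a\in V}\ker a$ and $G(\mathcal V)=\bigcap_{\phi\in\mathcal V}\ker\phi$, where enlarging the subspace shrinks the intersection), together with the closure identities $G\circ F(V)=\overline V$ and $F\circ G(\mathcal W)=\overline{\mathcal W}$ and the fact that $\overline V=V$ for $V\in\Max A=\image G$ and $\overline{\mathcal W}=\mathcal W$ for $\mathcal W\in\Max A^\vee=\image F$. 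Concretely, if $G(\mathcal W)\subset V$, applying the order-reversing $F$ gives $F(V)\subset F\bigl(G(\mathcal W)\bigr)=\overline{\mathcal W}=\mathcal W$; conversely, if $F(V)\subset\mathcal W$, applying the order-reversing $G$ gives $G(\mathcal W)\subset G\bigl(F(V)\bigr)=\overline V=V$. This establishes both implications and hence the claim.

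Finally, since both displayed equivalences were identified above with this single claim, the corollary follows. I expect no serious obstacle beyond bookkeeping: the only real content is the observation that $F$ and $G$ restrict to mutually inverse order-reversing bijections between $\Max A$ and $\Max A^\vee$, which is exactly what converts a containment into the reversed containment and thereby supplies the direction of inclusion not given by the bare adjunction.
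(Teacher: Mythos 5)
Your proof is correct. The reduction of both displayed equivalences, via contraposition, to the single claim $G(\mathcal W)\subset V\Leftrightarrow F(V)\subset\mathcal W$ (for $V\in\Max A$, $\mathcal W\in\Max A^\vee$) is valid, and your proof of that claim — antitonicity of $F$ and $G$ together with $F\circ G=\ident$ on $\Max A^\vee$ and $G\circ F=\ident$ on $\Max A$ — is exactly the content needed; you are also right that the bare adjunction $V\subset G(\mathcal W)\Leftrightarrow\mathcal W\subset F(V)$ does not suffice and that closedness is genuinely used. The paper argues differently in form, though with the same underlying ingredients: it substitutes $V=G(\mathcal W)$ into the second identity of the preceding lemma, $G^{-1}(\mathcal U_{V,1})=\mathcal U_{0,F(V)}$, to get $F^{-1}(\mathcal U_{0,\mathcal W})=F^{-1}\bigl(G^{-1}(\mathcal U_{G(\mathcal W),1})\bigr)$, and then invokes $G\circ F=\ident$ on $\Max A$, declaring the second identity ``completely analogous.'' Your route buys two things: it makes explicit that the two displayed equivalences are, after unwinding, literally one statement (a containment \emph{reversal}, valid only on $\Max$ and not on all of $\Sub$), so nothing is done twice; and it is self-contained, re-deriving what it needs from the Galois correspondence rather than citing the lemma. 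The paper's route buys brevity, since the lemma is already in hand, and presents the corollary as a formal computation with preimages under the mutually inverse bijections $F$ and $G$.
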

\begin{proof}
	If $\mathcal W\in\Max A^\vee=\image F$ then $\mathcal W=F\bigl(G(\mathcal W)\bigr)$ so, from 
	the second identity in Lemma~\ref{i-1(U)andr-1(U)} (with $V=G(\mathcal W)$) we get
	\[
	F^{-1}(\mathcal U_{0,\mathcal W})=F^{-1}\bigl(\mathcal U_{0,F(G(\mathcal W))}\bigr)=F^{-1}\bigl(G^{-1}(\mathcal U_{G(\mathcal W),1})\bigr)
	\]
	and the first identity follows since $G\circ F=\ident$ on $\Max A$. The proof of the second identity is completely analogous.\qedhere
\end{proof}

\begin{corollary}\label{cor:FGcontinuous}
	Let $(\Sub A)_\Sigma$ and $(\Sub A)_{\mathcal T}$ represent $\Sub A$ with respectively 
	the COS topology and with the dual COS topology.
	The following maps are continuous:
	\begin{align*}
	F&\colon(\Sub A)_{\mathcal T}\to(\Sub A^\vee)_\Sigma &
	G&\colon(\Sub A^\vee)_{\mathcal T}\to(\Sub A)_\Sigma \\
	F&\colon(\Max A)_\Sigma\to(\Max A^\vee)_{\mathcal T} &
	G&\colon(\Max A^\vee)_\Sigma\to(\Max A)_{\mathcal T}
	\end{align*}
	In particular the maps $F$ and $G$ define homeomorphisms $(\Max A)_\Sigma\cong(\Max A^\vee)_{\mathcal T}$ and
	$(\Max A)_{\mathcal T}\cong(\Max A^\vee)_\Sigma$.
\end{corollary}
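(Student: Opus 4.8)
The plan is to verify each of the four continuity assertions by checking preimages of subbasic open sets, and then to assemble the two homeomorphisms from these. Recall that the COS topology on $\Sub A$ (and on $\Sub A^\vee$) has as a subbasis the sets $\CHECK a=\mathcal U_{\linspan a,1}$, whereas the dual COS topology has as a subbasis the sets $\mathcal U_{0,V}$ with $V$ closed. The whole argument rests on Lemma~\ref{i-1(U)andr-1(U)} together with the elementary identities $G(\linspan\phi)=\ker\phi$ and $F(\linspan a)=\ker a$ from equation~\eqref{eq:F(kerphi)=<phi>}, which convert the ``open-support'' subbasic sets into ``dual'' ones and conversely.

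For the first two maps I would argue directly on subbases. To see that $F\colon(\Sub A)_{\mathcal T}\to(\Sub A^\vee)_\Sigma$ is continuous, take a subbasic COS-open set $\CHECK\phi=\mathcal U_{\linspan\phi,1}$ in $\Sub A^\vee$; Lemma~\ref{i-1(U)andr-1(U)} gives $F^{-1}(\mathcal U_{\linspan\phi,1})=\mathcal U_{0,G(\linspan\phi)}=\mathcal U_{0,\ker\phi}$, and since $\ker\phi\in\Max A$ this is a subbasic set of the dual COS topology, hence open. The continuity of $G\colon(\Sub A^\vee)_{\mathcal T}\to(\Sub A)_\Sigma$ is symmetric, using $G^{-1}(\CHECK a)=\mathcal U_{0,F(\linspan a)}=\mathcal U_{0,\ker a}$ with $\ker a\in\Max A^\vee$.

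For the two maps on the $\Max$ spaces I would instead feed the subbasic dual-COS sets through the corollary immediately preceding the statement. Given $\mathcal W\in\Max A^\vee$, that corollary yields $F^{-1}(\mathcal U_{0,\mathcal W})\cap\Max A=\mathcal U_{G(\mathcal W),1}\cap\Max A$, and since $\mathcal U_{G(\mathcal W),1}=\bigcup_{a\in G(\mathcal W)}\CHECK a$ is COS-open (Lemma~\ref{lemma:bigcupCHECKa}), this shows $F\colon(\Max A)_\Sigma\to(\Max A^\vee)_{\mathcal T}$ is continuous; the dual computation gives the continuity of $G\colon(\Max A^\vee)_\Sigma\to(\Max A)_{\mathcal T}$.

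Finally, since $\image G=\Max A$ and $\image F=\Max A^\vee$ and the Galois connection restricts to mutually inverse bijections there ($G\circ F=\ident$ on $\Max A$ and $F\circ G=\ident$ on $\Max A^\vee$), I would obtain the homeomorphisms by pairing opposite directions: the continuity of $F\colon(\Max A)_\Sigma\to(\Max A^\vee)_{\mathcal T}$ together with that of its inverse $G\colon(\Max A^\vee)_{\mathcal T}\to(\Max A)_\Sigma$ (the restriction to $\Max$ of the second map) gives $(\Max A)_\Sigma\cong(\Max A^\vee)_{\mathcal T}$, and likewise the fourth map paired with the restriction of the first gives $(\Max A)_{\mathcal T}\cong(\Max A^\vee)_\Sigma$. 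The one point requiring care---and the main obstacle---is the bookkeeping of the restriction step: one must confirm that the subspace topologies induced on $\Max A$ and $\Max A^\vee$ by the COS and dual COS topologies on $\Sub A$ and $\Sub A^\vee$ coincide with $(\Max A)_\Sigma$, $(\Max A)_{\mathcal T}$ and their $A^\vee$-analogues, so that the $\Sub$-level continuity of the first two maps delivers exactly the reverse continuities needed to close up the homeomorphisms.
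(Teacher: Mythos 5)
Your proposal is correct and takes essentially the same approach as the paper: the paper states Corollary~\ref{cor:FGcontinuous} without an explicit proof, presenting it as an immediate consequence of Lemma~\ref{i-1(U)andr-1(U)} (which handles your first two maps via $G(\linspan\phi)=\ker\phi$ and $F(\linspan a)=\ker a$) and of the corollary immediately preceding it (which handles the two maps on the $\Max$ spaces), exactly as you do. Your assembly of the two homeomorphisms from the mutually inverse bijections $G\circ F=\ident$ on $\Max A$, $F\circ G=\ident$ on $\Max A^\vee$, together with the observation that the subbasic sets of the COS and dual COS topologies restrict to the corresponding subbasic sets on $\Max A$ and $\Max A^\vee$, is the intended and correct bookkeeping.
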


Let $S_a=\{\phi\in A^\vee:\phi(a)\neq 0\}$ and let $\widetilde S_a=\{\mathcal V\in\Sub A^\vee:\mathcal V\cap S_a\neq\emptyset\}$. Also recall
(equation~\eqref{eq:defnofchecka}) that $\CHECK a=\{V\in\Sub A:a\notin V\}$.

\begin{corollary}\label{inverseimageofcheckequalsU0,ker}
	For any $a\in A$ and any $\phi\in A^\vee$ we have
	\[
	F^{-1}(\CHECK\phi)=\mathcal U_{0,\ker\phi}\quad\text{and}\quad G^{-1}(\CHECK a)=\mathcal U_{0,\ker a}=\widetilde S_a
	\]
\end{corollary}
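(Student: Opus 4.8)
The plan is to reduce both displayed identities to Lemma~\ref{i-1(U)andr-1(U)} by taking the defining subspaces to be the one-dimensional spans $\linspan\phi$ and $\linspan a$. First I would recall, from the analogue for $A^\vee$ of equation~\eqref{eq:defnofchecka}, that $\CHECK\phi=\mathcal U_{\linspan\phi,1}$ as a subset of $\Sub A^\vee$, and that $\CHECK a=\mathcal U_{\linspan a,1}$ as a subset of $\Sub A$. This puts each $\CHECK$ set into exactly the form to which Lemma~\ref{i-1(U)andr-1(U)} applies.

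For the first identity, applying the formula $F^{-1}(\mathcal U_{\mathcal W,1})=\mathcal U_{0,G(\mathcal W)}$ with $\mathcal W=\linspan\phi$ gives $F^{-1}(\CHECK\phi)=\mathcal U_{0,G(\linspan\phi)}$; I then invoke equation~\eqref{eq:F(kerphi)=<phi>}, which records $G(\linspan\phi)=\ker\phi$, to conclude $F^{-1}(\CHECK\phi)=\mathcal U_{0,\ker\phi}$. For the second identity, applying $G^{-1}(\mathcal U_{V,1})=\mathcal U_{0,F(V)}$ with $V=\linspan a$ gives $G^{-1}(\CHECK a)=\mathcal U_{0,F(\linspan a)}$, and here I would compute directly from the definition of $F$ that $F(\linspan a)=\{\phi\in A^\vee:\linspan a\subset\ker\phi\}=\{\phi\in A^\vee:a\in\ker\phi\}=\ker a$, yielding $G^{-1}(\CHECK a)=\mathcal U_{0,\ker a}$.

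It remains to verify the final equality $\mathcal U_{0,\ker a}=\widetilde S_a$, which is the one step not immediate from the preceding lemma. Here I would simply unwind definition~\eqref{eq:mathcalU}: since $0\subset\mathcal V$ for every $\mathcal V\in\Sub A^\vee$, the condition $\mathcal V\in\mathcal U_{0,\ker a}$ collapses to $\mathcal V\not\subset\ker a$, i.e.\ to the existence of some $\phi\in\mathcal V$ with $\langle a,\phi\rangle\neq0$, which is precisely the condition $\mathcal V\cap S_a\neq\emptyset$ defining $\widetilde S_a$.

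I do not anticipate any genuine obstacle: both displayed identities are direct specializations of Lemma~\ref{i-1(U)andr-1(U)} combined with equation~\eqref{eq:F(kerphi)=<phi>}, and the identification with $\widetilde S_a$ is a one-line unpacking of definitions. The only point requiring a moment's care is choosing the correct span on each side, namely $\linspan\phi$ in $\Sub A^\vee$ for $F^{-1}(\CHECK\phi)$ and $\linspan a$ in $\Sub A$ for $G^{-1}(\CHECK a)$, so that the directions of the annihilator maps $F$ and $G$ match the sides on which $\CHECK\phi$ and $\CHECK a$ live.
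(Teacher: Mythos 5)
Your proof is correct and follows essentially the same route as the paper's: both reduce the displayed identities to Lemma~\ref{i-1(U)andr-1(U)} via $\CHECK a=\mathcal U_{\linspan a,1}$ (resp.\ $\CHECK\phi=\mathcal U_{\linspan\phi,1}$), identify $F(\linspan a)=\ker a$ and $G(\linspan\phi)=\ker\phi$, and verify $\mathcal U_{0,\ker a}=\widetilde S_a$ by the same one-line unwinding of definitions. The only cosmetic difference is that the paper works out the $G^{-1}(\CHECK a)$ identity and dismisses the other as ``similar,'' whereas you spell out both.
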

\begin{proof}
	First observe that
	\[
	\mathcal V\in\widetilde S_a\Leftrightarrow\exists_{\phi\in\mathcal V}:\phi(a)\neq 0\Leftrightarrow\mathcal V\not\subset\ker a
	\]
	so $U_{0,\ker a}=\widetilde S_a$. Now
	$\CHECK a=\mathcal U_{\linspan a,1}$ so, applying Lemma~\ref{i-1(U)andr-1(U)} we get
	\[
	G^{-1}(\CHECK a)=\mathcal U_{0,i(\linspan a)}=\mathcal U_{0,\ker a}
	\]
	and similarly for the first identity.\qedhere
\end{proof}

\subsection{Algebraic quotient vector bundles}

An \emph{algebraic quotient vector bundle} is a triple $\mathcal A=(X,(A,A^\vee),\kappa)$ where $X$ is a topological space,
$(A,A^\vee)$ is a dual pair and $\kappa\colon X\to \Max A$ is any map.
From an algebraic quotient vector bundle $\mathcal A=(X,(A,A^\vee),\kappa)$ we get the pseudo quotient vector bundle 
$\mathcal A^{\mathrm{weak}}=(X,A^{\text{weak}},\kappa)$
where $A^{\text{weak}}$ has the weak topology determined by $A^\vee$. 
A covariant (respectively contravariant) morphism $\mathcal A\to\mathcal B$ of algebraic quotient vector bundles
is defined to be a covariant (respectively contravariant)
morphism $(f_\flat,f^\sharp)\colon\mathcal A^{\mathrm{weak}}\to\mathcal B^{\mathrm{weak}}$
of pseudo quotient vector bundles. We denote by $\AQVBun_*^1$ and
$\AQVBun^{1*}$ the categories of algebraic quotient vector bundles with respectively covariant and contravariant morphisms.

From a pseudo quotient vector bundle
$\mathcal B=(X,A,\kappa)$ we get the algebraic quotient vector bundle $\mathcal B^{\text{alg}}=(X,(A,A^\vee),\kappa)$.
The assignements $\mathcal A\mapsto\mathcal A^{\mathrm{weak}}$ and $\mathcal B\mapsto \mathcal B^{\text{alg}}$
define equivalences of categories $\AQVBun_*^1\cong\PsiQVBun_*^1$ and
$\AQVBun^{1*}\cong\PsiQVBun^{1*}$.

An algebraic quotient vector bundle $\mathcal A$ is said to be spectral, respectively cospectral, if the pseudo quotient vector bundle
$\mathcal A^{\text{weak}}$ is spectral, respectively cospectral. We represent
by $\AQVBun_{\Sigma*}^1$,  $\AQVBun^{1*}_\Sigma$ and $\AQVBun_{\Tt*}^1$, $\AQVBun^{1*}_\Tt$ the full subcategories of respectively spectral
and cospectral algebraic quotient vector bundles.

\subsubsection{The codual algebraic quotient vector bundle}

Given an algebraic quotient vector bundle $\mathcal A=(X,(A,A^\vee),\kappa)$,
the \emph{codual algebraic quotient vector bundle} is the triple
\begin{equation}\label{eq:codualquotvectorbundle}
\mathcal A^\vee=(X,(A^\vee,A),\kappa^\vee)\,,\quad\text{where}\quad\kappa^\vee=F\circ\kappa\,.
\end{equation}

\begin{lemma}\label{dualQVBunhasopensupport}
  Let $\mathcal A$ be a quotient vector bundle with Hausdorff fibers.
  If $\mathcal A$ is spectral then $\mathcal A^\vee$ is cospectral. If $\mathcal A$ is cospectral then $\mathcal A^\vee$ is spectral.
\end{lemma}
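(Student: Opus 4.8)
The plan is to reduce both implications to the continuity of the annihilator map $F$ and then invoke Corollary~\ref{cor:FGcontinuous}. Recall that $\mathcal A^\vee=(X,(A^\vee,A),\kappa^\vee)$ with $\kappa^\vee=F\circ\kappa$, and that spectrality (resp.\ cospectrality) of an algebraic quotient vector bundle is, by definition, spectrality (resp.\ cospectrality) of the associated pseudo quotient vector bundle $\mathcal A^{\mathrm{weak}}$. Since the kernel map takes values in $\Max A$, everything reduces to continuity of the kernel maps $\kappa\colon X\to\Max A$ and $\kappa^\vee\colon X\to\Max A^\vee$ with respect to the appropriate topologies.

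First I would set up the dictionary between the two notions and continuity of the kernel map. By the definition of cospectral, $\mathcal A^\vee$ is cospectral precisely when $\kappa^\vee\colon X\to(\Max A^\vee)_{\mathcal T}$ is continuous, where the subscript denotes the dual COS topology. By Lemma~\ref{lemmaOpenSupportiffpseudo} together with Lemma~\ref{opensupporttopology}, spectrality of $\mathcal A$ (i.e.\ the open support property of $\mathcal A^{\mathrm{weak}}$) is equivalent to continuity of $\kappa\colon X\to(\Max A)_\Sigma$, where the subscript denotes the COS topology. The same dictionary applied to $\mathcal A^\vee$ shows that $\mathcal A^\vee$ is spectral iff $\kappa^\vee\colon X\to(\Max A^\vee)_\Sigma$ is continuous, and that $\mathcal A$ is cospectral iff $\kappa\colon X\to(\Max A)_{\mathcal T}$ is continuous.

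With this dictionary in place, each implication is a one-line composition of continuous maps. If $\mathcal A$ is spectral then $\kappa\colon X\to(\Max A)_\Sigma$ is continuous; composing with the homeomorphism $F\colon(\Max A)_\Sigma\to(\Max A^\vee)_{\mathcal T}$ of Corollary~\ref{cor:FGcontinuous} shows that $\kappa^\vee=F\circ\kappa\colon X\to(\Max A^\vee)_{\mathcal T}$ is continuous, so $\mathcal A^\vee$ is cospectral. Symmetrically, if $\mathcal A$ is cospectral then $\kappa\colon X\to(\Max A)_{\mathcal T}$ is continuous, and composing with the homeomorphism $F\colon(\Max A)_{\mathcal T}\to(\Max A^\vee)_\Sigma$ of Corollary~\ref{cor:FGcontinuous} gives continuity of $\kappa^\vee\colon X\to(\Max A^\vee)_\Sigma$, so $\mathcal A^\vee$ is spectral.

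The substantive point — and the step I expect to require the most care — is the dictionary of the second paragraph: one must verify that the definitions of spectral and cospectral, phrased via $\mathcal A^{\mathrm{weak}}$ and the open support property, really translate into continuity of the kernel map into $\Max A$ for the COS and dual COS topologies respectively, and that the two homeomorphisms furnished by Corollary~\ref{cor:FGcontinuous} are exactly the restrictions of $F$ needed here. Once these identifications are made, no further computation is required.
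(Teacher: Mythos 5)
Your proof is correct and is essentially the paper's own argument: the paper likewise proves the first implication by writing $\kappa^\vee$ as the composition $X\xrightarrow{\kappa}(\Max A)_\Sigma\xrightarrow{F}(\Max A^\vee)_{\mathcal T}$ and citing Corollary~\ref{cor:FGcontinuous}, and its second implication, which computes $(\kappa^\vee)^{-1}(\mathcal U_{\mathcal W,1})=\kappa^{-1}(\mathcal U_{0,G(\mathcal W)})$ via Lemma~\ref{i-1(U)andr-1(U)}, is exactly the continuity of $F\colon(\Max A)_{\mathcal T}\to(\Max A^\vee)_\Sigma$ that you invoke in packaged form. Your explicit ``dictionary'' (spectral $\Leftrightarrow$ $\kappa$ continuous for the COS topology, cospectral $\Leftrightarrow$ continuous for the dual COS topology) is exactly the identification the paper uses implicitly, so there is no gap.
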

\begin{proof}
	Suppose $\mathcal A$ is spectral. Then $\kappa^\vee$ is the composition
	\[
	X\xrightarrow{\kappa}(\Max A)_\Sigma\xrightarrow{F}(\Max A^\vee)_\Tt
	\]
	which is continuous by Corollary~\ref{cor:FGcontinuous} so $\mathcal A^\vee$ is cospectral.

	Suppose now that $\mathcal A$ is cospectral. Then for any $\mathcal W\in\Sub A^\vee$ we have
	\[
	(\kappa^\vee)^{-1}(\mathcal U_{\mathcal W,1})=\kappa^{-1}(F^{-1}(\mathcal U_{\mathcal W,1})=\kappa^{-1}(\mathcal U_{0,G(\mathcal W)})
	\]
        so  $\mathcal A^\vee$ is spectral.\qedhere
\end{proof}

\subsubsection{Algebraic linearized locales and colocales}

An \emph{algebraic linearized locale} is a triple $\mathfrak A=(L,(A,A^\vee),\gamma)$ where $L$ is a locale,
$(A,A^\vee)$ is a dual pair and $\gamma\colon L\to \Max A$ is an inf-lattice homomorphism.
An \emph{algebraic linearized colocale} is a triple $\mathfrak A^\complement=(L^\complement,(A,A^\vee),\gamma^\complement)$ 
where $L^\complement$ is a colocale,
$(A,A^\vee)$ is a dual pair and $\gamma^\complement\colon L^\complement\to \Max A$ is a sup-lattice homomorphism.
As for algebraic quotient vector bundles we write 
$\mathfrak A^{\mathrm{weak}}=(L,A^{\text{weak}},\gamma)$ and 
$\mathfrak A^{\complement,\mathrm{weak}}=(L^\complement,A^{\text{weak}},\gamma^\complement)$.
A covariant (respectively contravariant) morphism $\mathfrak A\to\mathfrak B$ of algebraic linearized locales
is defined to be a covariant (respectively contravariant)
morphism $\mathfrak A^{\mathrm{weak}}\to\mathfrak B^{\mathrm{weak}}$ and similarly for algebraic linearized colocales.
We denote by $\ALLoc_*^1$, $\ALinCoLoc_*^1$ and
$\ALLoc^{1*}$,  $\ALinCoLoc^{1*}$
the categories of algebraic linearized locales and colocales with respectively covariant and contravariant morphisms.

If $\mathfrak B=(L,A,\gamma)$, $\mathfrak B^\complement=(L^\complement,A,\gamma^\complement)$ are
respectively a pseudo linearized locale and a pseudo linearized colocale, we write 
$\mathfrak B^{\text{alg}}=(L,(A,A^\vee),\gamma)$ and $\mathfrak B^{\complement,\text{alg}}=(L^\complement,(A,A^\vee),\gamma^\complement)$.
The assignements $\mathcal A\mapsto\mathcal A^{\mathrm{weak}}$ and $\mathcal B\mapsto \mathcal B^{\text{alg}}$
define equivalences of categories $\ALLoc_*^1\cong\PsiLLoc_*^1$ and
$\ALLoc^{1*}\cong\PsiLLoc^{1*}$
and the assignements $\mathcal A^\complement\mapsto\mathcal A^{\complement,\mathrm{weak}}$ and 
$\mathcal B^\complement\mapsto \mathcal B^{\complement,\text{alg}}$
define equivalences of categories $\ALinCoLoc_*^1\cong\PsiLinCoLoc_*^1$ and
$\ALinCoLoc^{1*}\cong\PsiLinCoLoc^{1*}$

The functors {\boldmath $\Omega$, $\Sigma$, $\clsets$ and $\Iota$} induce functors between the algebraic categories
in the obvious way. For example, if $\mathcal A$ is an algebraic quotient vector bundle we let
$\boldsymbol\Omega\mathcal A=\bigl(\boldsymbol\Omega(\mathcal A^{\text{weak}})\bigr)^{\text{alg}}$.

Given an algebraic linearized colocale $\mathfrak A^\complement=(L^\complement,(A,A^\vee),\gamma^\complement)$ we define
the codual algebraic linearized locale by $(\mathfrak A^\complement)^\vee=((L^\complement)^\opp,(A^\vee,A),F\circ\gamma^\complement)$ and
given an algebraic linearized locale $\mathfrak A=(L,(A,A^\vee),\gamma)$ we define the codual algebraic linearized colocale by
$\mathfrak A^\vee=(L^\opp,(A^\vee,A),F\circ\gamma)$.

\subsubsection{Isomorphisms}

Consider the lattice isomorphism 
$\complement\colon(\Omega X)^\opp\to\clsets X$.

\begin{lemma}\label{thm:codualisisoobjects}\label{L6.9}\hfill
	\begin{enumerate}
	\item For any cospectral algebraic quotient vector bundle $\mathcal A$ the map $\complement\colon\boldsymbol\Omega(\mathcal A^\vee)\xrightarrow{\cong}(\boldsymbol\clsets\mathcal A)^\vee$
          is an $\ident$-isomorphism;
	\item For any spectral algebraic quotient vector bundle $\mathcal A$ the map  $\complement\colon\bigl(\boldsymbol\Omega\mathcal A\bigr)^\vee\xrightarrow{\cong}\boldsymbol\clsets\bigl(\mathcal A^\vee\bigr)$
          is an $\ident$-isomorphism;
	\item For any algebraic linearized colocale $\mathfrak A^\complement$ we have 
	$\boldsymbol\Sigma\bigl((\mathfrak A^\complement)^\vee\bigr)\cong(\boldsymbol\Iota\mathfrak A^\complement)^\vee$;
	\item For any algebraic linearized locale $\mathfrak A$ we have
	$\bigl(\boldsymbol\Sigma\mathfrak A)^\vee\cong\boldsymbol\Iota\bigl(\mathfrak A^\vee\bigr)$.
	\end{enumerate}
\end{lemma}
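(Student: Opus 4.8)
The plan is to dispatch parts (3) and (4) as unwindings of the definitions, and to concentrate the work on (1) and (2), which are mutually dual.

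For (4), write $\mathfrak A=(L,(A,A^\vee),\gamma)$. By construction $\mathfrak A^\vee=(L^\opp,(A^\vee,A),F\circ\gamma)$, so $\boldsymbol\Iota(\mathfrak A^\vee)$ has underlying space $\Iota(L^\opp)$ and kernel map $(F\circ\gamma)|_{\Iota(L^\opp)}$, whereas $\boldsymbol\Sigma\mathfrak A=(\Sigma L,(A,A^\vee),\gamma|_{\Sigma L})$ gives $(\boldsymbol\Sigma\mathfrak A)^\vee=(\Sigma L,(A^\vee,A),F\circ(\gamma|_{\Sigma L}))$. Recall from section~\ref{sec:deflincoloc} that $\Iota L^\complement=\Sigma(L^\complement)^\opp$ as \emph{topological} spaces; applied to $L^\complement=L^\opp$ this yields $\Iota(L^\opp)=\Sigma L$. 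Since restriction to the spectrum commutes with post-composition by $F$, we have $(F\circ\gamma)|_{\Sigma L}=F\circ(\gamma|_{\Sigma L})$, so the two algebraic quotient vector bundles coincide and the required isomorphism is the identity. Part (3) is identical, using $\Iota L^\complement=\Sigma(L^\complement)^\opp$ directly.

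For (1), Lemma~\ref{dualQVBunhasopensupport} makes $\mathcal A^\vee$ spectral, so $\boldsymbol\Omega(\mathcal A^\vee)$ is defined, and $\boldsymbol\clsets\mathcal A$ is defined because $\mathcal A$ is cospectral. Both sides are linearized locales over $A^\vee$ whose underlying locales, $\Omega X$ and $(\clsets X)^\opp$, are identified by $\complement$; by Lemma~\ref{morlincolocisostrict} it suffices to verify the $\ident$-morphism condition
\[
\gamma_{\mathcal A^\vee}(U)=F\bigl(\gamma^\complement(X\setminus U)\bigr)\qquad(U\in\Omega X),
\]
where $\gamma_{\mathcal A^\vee}$ is the restriction map of $\mathcal A^\vee$, $\gamma^\complement$ the corestriction map of $\mathcal A$, and $\complement_*(U)=X\setminus U$. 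Because $\mathcal A^\vee$ is spectral its support map is $\sigma_{\mathcal A^\vee}(\mathcal W)=(\kappa^\vee)^{-1}(\mathcal U_{\mathcal W,1})$ (Lemma~\ref{lemmaOpenSupportiffpseudo}); taking its right adjoint gives $\gamma_{\mathcal A^\vee}(U)=\bigcap_{x\notin U}\kappa^\vee(x)$. On the other hand Lemma~\ref{lemmagamma=bigvee} gives $\gamma^\complement(X\setminus U)=\bigvee_{x\notin U}\kappa(x)$, and since $F$ and $G$ restrict to mutually inverse order-reversing bijections between $\Max A$ and $\Max A^\vee$, $F$ carries this join to the meet $\bigcap_{x\notin U}F(\kappa(x))=\bigcap_{x\notin U}\kappa^\vee(x)$, so the two sides agree. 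Conceptually this is the adjoint form of the support identity $\sigma_{\mathcal A^\vee}=\complement\circ\sigma^\complement\circ G$, which follows at once from $\kappa^\vee=F\circ\kappa$, the Galois correspondence $\mathcal W\subset F(V)\Leftrightarrow V\subset G(\mathcal W)$, and the definition $\sigma^\complement(V)=\kappa^{-1}(\mathcal F_{0,V})$ (equation~\eqref{eq:defcosupport}).

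Part (2) is dual: now $\mathcal A$ is spectral and $\mathcal A^\vee$ cospectral (Lemma~\ref{dualQVBunhasopensupport}), the two sides are colocales over $A^\vee$ with underlying colocales $(\Omega X)^\opp$ and $\clsets X$ identified by $\complement$, and the $\ident$-morphism condition to check is
\[
F\bigl(\gamma(U)\bigr)=\gamma^\complement_{\mathcal A^\vee}(X\setminus U)\qquad(U\in\Omega X),
\]
with $\gamma$ the restriction map of $\mathcal A$ and $\gamma^\complement_{\mathcal A^\vee}$ the corestriction map of $\mathcal A^\vee$. As in (1), $\gamma(U)=\bigcap_{x\notin U}\kappa(x)$, so $F(\gamma(U))=\bigvee_{x\notin U}F(\kappa(x))=\bigvee_{x\notin U}\kappa^\vee(x)$ (now $F$ carries the meet to a join), while $\gamma^\complement_{\mathcal A^\vee}(X\setminus U)=\bigvee_{x\notin U}\kappa^\vee(x)$ by Lemma~\ref{lemmagamma=bigvee} applied to $\mathcal A^\vee$; equivalently this is the cosupport identity $\sigma^\complement_{\mathcal A^\vee}=\complement\circ\sigma\circ G$. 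The one genuinely delicate point in (1) and (2)---and the place I expect to spend the most care---is the bookkeeping of the three order-reversals ($\complement$, the passage to the opposite lattice in the codual, and the annihilator $F$): they must be composed so as to cancel and produce an honest order isomorphism of the $\gamma$-data. Once the support/cosupport identity above is in hand, Lemma~\ref{morlincolocisostrict} upgrades the locale/colocale isomorphism $\complement$ to the asserted $\ident$-isomorphism.
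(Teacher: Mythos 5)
Your proposal is correct and takes essentially the same approach as the paper: parts (3) and (4) are dispatched as definitional identities via $\Iota L^\complement=\Sigma(L^\complement)^\opp$, and parts (1) and (2) are reduced, exactly as in the paper, to the $\ident$-morphism identities $\gamma_{F\circ\kappa}=F\circ\gamma^\complement_\kappa\circ\complement$ and $\gamma^\complement_{F\circ\kappa}=F\circ\gamma_\kappa\circ\complement$, with Lemma~\ref{dualQVBunhasopensupport} supplying the needed spectrality/cospectrality of $\mathcal A^\vee$. The only cosmetic difference is that you verify these identities directly on the restriction/corestriction maps, using Lemma~\ref{lemmagamma=bigvee} and the fact that $F$ is an anti-isomorphism $\Max A\to\Max A^\vee$, whereas the paper verifies the adjoint identities $\sigma_{F\circ\kappa}=\complement\circ\sigma^\complement_\kappa\circ G$ and $\sigma^\complement_{F\circ\kappa}=\complement\circ\sigma_\kappa\circ G$ on the support/cosupport side and then passes to adjoints---which is precisely the ``conceptual'' form you yourself note.
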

\begin{proof}
	Given an algebraic quotient vector bundle $\mathcal A=(X,(A,A^\vee),\kappa)$ we will represent its support and cosupport maps by
	$\sigma_\kappa$ and $\sigma_\kappa^\complement$ and we will represent its restriction and corestriction maps by
	$\gamma_\kappa$ and $\gamma_\kappa^\complement$.
	
	Let $\mathcal A=(X,(A,A^\vee),\kappa)$ be a cospectral algebraic quotient vector bundle. 
	Then $\boldsymbol\Omega(\mathcal A^\vee)=(\Omega X,(A^\vee,A),\gamma_{F\circ\kappa})$
	and $(\boldsymbol\clsets\mathcal A)^\vee=\bigl((\clsets X)^\opp,(A^\vee,A),F\circ\gamma_\kappa^\complement\bigr)$
	so the first statement will follow if we prove that
	$\gamma_{F\circ\kappa}=F\circ\gamma^\complement_\kappa\circ\complement$. 
	Since, by Lemma~\ref{dualQVBunhasopensupport}, 
	$(X,(A^\vee,A),\kappa^\vee)$ has the open support property, by Lemma~\ref{lemmaOpenSupportiff} we have
	\begin{align*}
	\sigma_{F\circ\kappa}(\mathcal V)
	&=\kappa^{-1}(F^{-1}\bigl(\mathcal U_{\mathcal V,1})\bigr)\\
	&=\kappa^{-1}(\mathcal U_{0,G(\mathcal V)})\quad\text{(Lemma~\ref{i-1(U)andr-1(U)})}\\
	&=\kappa^{-1}\circ\complement(\mathcal F_{0,G(\mathcal V)})\\
	&=\complement\circ\kappa^{-1}(\mathcal F_{0,G(\mathcal V)})
	\end{align*}
	so $\sigma_{F\circ\kappa}=\complement\circ\sigma^\complement_\kappa\circ G$.
	Taking left adjoint we get $\gamma_{F\circ\kappa}=F\circ\gamma^\complement_\kappa\circ\complement$ finishing the proof.
		
	Now let $\mathcal A=(X,(A,A^\vee),\kappa)$ be a spectral algebraic quotient vector bundle. Then
	$\bigl(\boldsymbol\Omega\mathcal A\bigr)^\vee=\bigl((\Omega X)^\opp,(A^\vee,A),F\circ\gamma_\kappa\bigr)$ and 
	$\boldsymbol\clsets\bigl(\mathcal A^\vee\bigr)=\bigl(\clsets X,(A^\vee,A),\gamma^\complement_{F\circ\kappa}\bigr)$ so
	we only need to show that $\gamma^\complement_{F\circ\kappa}=F\circ\gamma_\kappa\circ\complement$.
	\begin{align*}
	\sigma^\complement_{F\circ\kappa}
	&=\kappa^{-1}\bigl(F^{-1}(\mathcal F_{0,V})\bigr)\\
	&=\kappa^{-1}\bigl(\mathcal F_{G(V),1}\bigr)\\
	&=\complement\circ\sigma_\kappa\circ G(V)
	\end{align*}
	The result now follows by taking adjoints.
		
	To prove the third statement let $\mathfrak A^\complement=(L^\complement,(A,A^\vee),\gamma^\complement)$ 
	be an algebraic linearized colocale and
	let $\kfrak^\complement$ denote the restriction of $\gamma^\complement$ to 
	$\Sigma\bigl((L^\complement)^\opp\bigr)=\Iota L^\complement$. Then
	$\boldsymbol\Sigma\bigl((\mathfrak A^\complement)^\vee\bigr)=\bigl(\Sigma (L^\complement)^\opp,(A^\vee,A),F\circ\kfrak^\complement\bigr)$ and
	$(\boldsymbol\Iota\mathfrak A^\complement)^\vee=\bigl(\Iota L^\complement,(A^\vee,A),F\circ\kfrak^\complement\bigr)$ so
	$\boldsymbol\Sigma\bigl((\mathfrak A^\complement)^\vee\bigr)=(\boldsymbol\Iota\mathfrak A^\complement)^\vee$.
	
	Finally let $\mathfrak A=(L,(A,A^\vee),\gamma)$ be an algebraic linearized locale
	and let $\kfrak$ denote the restriction of $\gamma$ to $\Iota L^\opp=\Sigma L$. Then
	$\bigl(\boldsymbol\Sigma\mathfrak A)^\vee=\bigl(\Sigma L,(A^\vee,A),F\circ\kfrak\bigr)$ and
	$\boldsymbol\Iota\bigl(\mathfrak A^\vee\bigr)=\bigl(\Iota L^\opp,(A^\vee,A),F\circ\kfrak\bigr)$ so
	$\bigl(\boldsymbol\Sigma\mathfrak A)^\vee=\boldsymbol\Iota\bigl(\mathfrak A^\vee\bigr)$.\qedhere
\end{proof}

If $(f_\flat,f^\sharp)$ is a morphism of algebraic quotient vector bundles,
by Lemma~\ref{le:morinpsialternate}, the linear map $f^\sharp$ defines a morphism of dual pairs $(f^\sharp,f^{\sharp\vee})$.

\begin{lemma}\label{L6.10}
The assignements $\mathcal A\mapsto\mathcal A^\vee$
and $(f_\flat,f^\sharp)\mapsto(f_\flat,f^{\sharp\vee})$ define
isomorphisms of categories $\AQVBun^1_{\Sigma*}\cong\AQVBun_\Tt^{1*}$ and $\AQVBun^1_{\Tt*}\cong\AQVBun_\Sigma^{1*}$.
\end{lemma}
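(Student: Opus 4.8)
The plan is to exhibit $(\cdot)^\vee$ as a single strictly involutive assignment which swaps covariant with contravariant and spectral with cospectral, so that each of the two claimed isomorphisms is witnessed by $(\cdot)^\vee$ being its own inverse. At the level of objects the spectral/cospectral exchange is exactly Lemma~\ref{dualQVBunhasopensupport}, so $(\cdot)^\vee$ carries the objects of $\AQVBun^1_{\Sigma*}$ into those of $\AQVBun^{1*}_\Tt$ and the objects of $\AQVBun^1_{\Tt*}$ into those of $\AQVBun^{1*}_\Sigma$, and conversely. That $(\cdot)^\vee$ is involutive on objects rests on the observation that the annihilator map $F$ attached to the codual pair $(A^\vee,A)$ is precisely the map $G$ attached to $(A,A^\vee)$; hence the kernel map of $(\mathcal A^\vee)^\vee$ is $G\circ F\circ\kappa$, which equals $\kappa$ because $\kappa$ takes values in $\Max A$ and $G\circ F=\ident$ there. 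Thus $(\mathcal A^\vee)^\vee=\mathcal A$ on the nose.

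Next I would treat morphisms. Given a covariant morphism $(f_\flat,f^\sharp)$ with $f^\sharp\colon A\to B$, the dual $f^{\sharp\vee}\colon B^\vee\to A^\vee$ is the unique map making $(f^\sharp,f^{\sharp\vee})$ a morphism of dual pairs $(A,A^\vee)\to(B,B^\vee)$; reading the same pairing data in the opposite slot shows $(f^{\sharp\vee},f^\sharp)$ is a morphism of the codual pairs $(B^\vee,B)\to(A^\vee,A)$, whence Lemma~\ref{le:morinpsialternate} supplies $(f^{\sharp\vee})^{-1}(\mathcal W)\in\Max B^\vee$ for every $\mathcal W\in\Max A^\vee$, which is the $\Max$-condition demanded of a contravariant morphism $\mathcal A^\vee\to\mathcal B^\vee$. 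The substantive point is to convert the covariant compatibility $f^\sharp(\kappa_A(x))\subset\kappa_B(f_\flat(x))$ into the contravariant one $f^{\sharp\vee}\bigl(\kappa_B^\vee(f_\flat(x))\bigr)\subset\kappa_A^\vee(x)$. Applying the order-reversing map $F$ to the covariant inclusion and invoking the identity $F\bigl(f^\sharp(V)\bigr)=(f^{\sharp\vee})^{-1}\bigl(F(V)\bigr)$ from Lemma~\ref{le:if=f-1i}(1) yields $F(\kappa_B(f_\flat(x)))\subset(f^{\sharp\vee})^{-1}(F(\kappa_A(x)))$, which, since $\kappa^\vee=F\circ\kappa$, is exactly the desired inclusion. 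For the reverse passage, from contravariant bundles back to covariant ones, the same computation runs backwards; here one uses that $\kappa_B(f_\flat(x))\in\Max B$, so $G\circ F$ fixes it and the annihilator can be undone. I expect this closedness bookkeeping to be the only delicate point, but Lemma~\ref{le:iV=ibarV} makes it routine.

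Finally I would record functoriality. Identities are visibly preserved, and since the dual reverses order, $(f^\sharp\circ g^\sharp)^\vee=g^{\sharp\vee}\circ f^{\sharp\vee}$, so the covariant composition convention $(f_\flat,f^\sharp)\circ(g_\flat,g^\sharp)=(f_\flat\circ g_\flat,f^\sharp\circ g^\sharp)$ matches the contravariant one after dualizing. Because non-degeneracy of the pairing gives $f^{\sharp\vee\vee}=f^\sharp$, the assignment $(\cdot)^\vee$ is its own inverse on morphisms as well. Combining the object- and morphism-level involutions yields $(\cdot)^\vee\circ(\cdot)^\vee=\ident$ on each of the four categories, so $(\cdot)^\vee$ provides the stated isomorphisms $\AQVBun^1_{\Sigma*}\cong\AQVBun^{1*}_\Tt$ and $\AQVBun^1_{\Tt*}\cong\AQVBun^{1*}_\Sigma$, the second pair being obtained from the first by reading the same involution in the opposite direction.
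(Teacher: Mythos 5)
Your proposal is correct and takes essentially the same route as the paper: the substantive step in both is to apply the order-reversing annihilator $F$ to the kernel inclusion and invoke $F\bigl(f^\sharp(V)\bigr)=(f^{\sharp\vee})^{-1}\bigl(F(V)\bigr)$ from Lemma~\ref{le:if=f-1i}(1), which is exactly your computation, with the object-level exchange of spectral and cospectral supplied by Lemma~\ref{dualQVBunhasopensupport}. The only differences are organizational: where you handle the contravariant-to-covariant direction by ``undoing'' with $G\circ F=\ident$ on $\Max A$, the paper simply runs the identical forward computation on the map $f^\sharp\colon B\to A$ (Lemma~\ref{le:if=f-1i}(1) applies to a morphism of dual pairs in either direction, so no closedness bookkeeping is needed), while your explicit verification of the $\Max$-condition via Lemma~\ref{le:morinpsialternate}, of involutivity, and of functoriality spells out points the paper leaves implicit.
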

\begin{proof}
	From Lemma~\ref{thm:codualisisoobjects} it follows that we only need to check the morphisms.
	
Let $(f_\flat,f^\sharp)$ be a morphism in $\AQVBun^{1*}$. Then
\[
f^\sharp\bigl(\kappa_A\bigl(f_\flat(y)\bigr)\bigr)\subset\kappa_B(y)
\]
so
\[
F\bigl(\kappa_B(y)\bigr)\subset
F\bigl(f^\sharp\bigl(\kappa_A\bigl(f_\flat(y)\bigr)\bigr)\bigr)\,.
\]
Applying Lemma~\ref{le:if=f-1i} and noting that $\kappa_B^\vee=F\circ\kappa_B$ we get
\[
\kappa_B^\vee(y)\subset
(f^{\sharp\vee})^{-1}\bigl(\kappa_A^\vee\bigl(f_\flat(y)\bigr)\bigr)
\]
which is equivalent to
\[
f^{\sharp\vee}\bigl(\kappa_B^\vee(y)\bigr)\subset\kappa_A^\vee\bigl(f_\flat(y)\bigr)
\]
so $(f_\flat,f^{\sharp\vee})$ is a morphism in $\AQVBun_*^1$.

Now let
$(f_\flat,f^\sharp)$ be a morphism in $\AQVBun_*^1$. Then
\[
f^\sharp\bigl(\kappa_A(x)\bigr)\subset\kappa_B\bigl(f_\flat(x)\bigr)
\]
so
\[
\kappa_B^\vee\bigl(f_\flat(x)\bigr)=F\bigl(\kappa_B\bigl(f_\flat(x)\bigr)\bigr)\subset
F\bigl(f^\sharp\bigl(\kappa_A(x)\bigr)\bigr)
=(f^{\sharp\vee})^{-1}\bigl(\kappa_A^\vee(x)\bigr)
\]
from whence it follows that
\[
f^{\sharp\vee}\bigl(\kappa_B^\vee\bigl(f_\flat(x)\bigr)\bigr)\subset\kappa_A^\vee(x)
\]
so $(f_\flat,f^{\sharp\vee})$ is a morphism in $\AQVBun^{1*}$.\qedhere
\end{proof}

\begin{lemma}
  The assignements $\mathfrak A\mapsto\mathfrak A^\vee$
  and $(\underline f,\overline f)\mapsto(\underline f,{\overline f}^\vee$ define isomorphisms of categories
  $\ALLoc^{1*}\cong\ALinCoLoc_*^1$ and $\ALLoc^1_*\cong\ALinCoLoc^{1*}$.
\end{lemma}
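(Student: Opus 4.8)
The plan is to follow the pattern of Lemma~\ref{L6.10}, reducing the statement to two ingredients: that the object assignment $\mathfrak A\mapsto\mathfrak A^\vee$ is an involution exchanging the two categories, and that under a morphism of dual pairs the defining inequality of a morphism in one category transforms \emph{exactly} into the defining inequality of its image in the codual category. The algebraic engine for the second point is a single identity: for any morphism of dual pairs $(\phi,\phi^\vee)\colon(P,P^\vee)\to(Q,Q^\vee)$ one has
\[
F\circ\phi^{-1}=\Max\phi^\vee\circ F\qquad\text{on }\Max Q.
\]
I would prove this first. For $W\in\Max Q$ one has $W=G(F(W))$, so Lemma~\ref{le:if=f-1i}(1) gives $\phi^{-1}(W)=G\bigl(\phi^\vee(F(W))\bigr)$; applying $F$ turns the right-hand side into the closure $\overline{\phi^\vee(F(W))}=\Max\phi^\vee(F(W))$, using that $F\circ G$ is the closure operator and that $F,G$ restrict to mutually inverse, order-reversing bijections between the $\Max$ lattices that interchange meets and joins.

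With this identity in hand I would treat $\ALLoc^{1*}\cong\ALinCoLoc^1_*$ first. Let $(\underline f,\overline f)\colon\mathfrak A\to\mathfrak B$ be a contravariant morphism, so $\overline f\colon B\to A$ and $\gamma_B\circ\underline f_*\le\overline f{}^{-1}\circ\gamma_A$; note that the colocale direct image of $\underline f\colon L_A^\opp\to L_B^\opp$ coincides with the locale direct image $\underline f_*\colon L_A\to L_B$, so no ambiguity arises. Evaluated at $p\in L_A$ both sides lie in $\Max B$ (the right one because $\overline f{}^{-1}$ preserves $\Max$), and applying the order-reversing bijection $F$ on $\Max B$ reverses the inclusion; the identity above (with $\phi=\overline f$) rewrites $F\bigl(\overline f{}^{-1}(\gamma_A(p))\bigr)$ as $\Max\overline f{}^\vee\bigl(F(\gamma_A(p))\bigr)$, yielding precisely
\[
\Max\overline f{}^\vee\circ F\circ\gamma_A\le F\circ\gamma_B\circ\underline f_*,
\]
which is the defining inequality of a covariant morphism $\mathfrak A^\vee\to\mathfrak B^\vee$ of colocales. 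Since $F$ is a bijection on $\Max B$ the implication reverses, so $(\underline f,\overline f)$ is a morphism if and only if $(\underline f,\overline f{}^\vee)$ is. The isomorphism $\ALLoc^1_*\cong\ALinCoLoc^{1*}$ follows from the same computation applied to $\phi=\overline f\colon A\to B$ (with codual $\overline f{}^\vee\colon B^\vee\to A^\vee$), turning the covariant locale inequality $\gamma_A\le\overline f{}^{-1}\circ\gamma_B\circ\underline f_*$ into the contravariant colocale inequality $\Max\overline f{}^\vee\circ F\circ\gamma_B\circ\underline f_*\le F\circ\gamma_A$.

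Finally I would record well-definedness, functoriality, and bijectivity. On objects the assignment is an involution: $(L^\opp)^\opp=L$, the double codual of a dual pair is the original pair, and $G\circ F=\ident$ on $\Max A$ gives $(\mathfrak A^\vee)^\vee=\mathfrak A$ (and likewise for colocales); moreover $F\circ\gamma$ is a sup-lattice homomorphism $L^\opp\to\Max A^\vee$ precisely because $F$ restricts to an anti-isomorphism $\Max A\to\Max A^\vee$ interchanging meets and joins, so the codual object is a bona fide algebraic linearized colocale. On morphisms, the locale/colocale component $\underline f$ is unchanged and $(\phi\circ\psi)^\vee=\psi^\vee\circ\phi^\vee$ matches the opposite order of composition in the covariant versus contravariant conventions, so identities and composition are preserved; together with the hom-set bijection above this makes $\mathfrak A\mapsto\mathfrak A^\vee$ an isomorphism of categories. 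The main obstacle is bookkeeping rather than mathematics: one must consistently track the reversal of arrow directions (a contravariant $\overline f\colon B\to A$ dualizing to a covariant $\overline f{}^\vee\colon A^\vee\to B^\vee$), verify the coincidence of the colocale and locale notions of $\underline f_*$, and apply the key identity on the correct $\Max$ lattice in each of the two cases.
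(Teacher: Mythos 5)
Your proof is correct, and it follows the paper's overall strategy (the same pattern as Lemma~\ref{L6.10}): transport the defining inequality of a morphism through the annihilator correspondence, commuting $F$ past the linear map via Lemma~\ref{le:if=f-1i} and using that $F$ is order-reversing. The difference is in the key identity and in how much is verified. You work with the inverse-image form $F\circ\phi^{-1}=\Max\phi^\vee\circ F$ on $\Max$, correctly derived from the second identity of Lemma~\ref{le:if=f-1i}(1) together with the closure operators $G\circ F$ and $F\circ G$; since this is an \emph{equality} and $F$ restricts to an order anti-isomorphism between the $\Max$ lattices, each of your morphism checks is an equivalence, and you treat both stated isomorphisms ($\ALLoc^{1*}\cong\ALinCoLoc^1_*$ and $\ALLoc^1_*\cong\ALinCoLoc^{1*}$) explicitly, along with the object-level facts (that $F\circ\gamma$ is a sup-lattice homomorphism, involutivity, functoriality) that the paper dismisses with ``we only need to check the morphisms.'' The paper instead uses the direct-image identity $F\circ\Max\overline f=(\overline f{}^{\vee})^{-1}\circ F$ (from Lemma~\ref{le:if=f-1i}(1) and Lemma~\ref{le:iV=ibarV}) in one direction and the one-sided inclusion of Lemma~\ref{le:if=f-1i}(2) in the other, checking only the two one-way implications that handle the pair $\ALLoc^1_*\cong\ALinCoLoc^{1*}$ and leaving the other isomorphism to analogy. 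Your route buys uniformity (a single identity serving all four checks, each an iff) and completeness; the paper's is shorter at the cost of leaving roughly half of the verification implicit.
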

\begin{proof}
	We only need to check the morphisms.
  Let $(f_\flat,f^\sharp)$ be a morphism in $\ALinCoLoc^{1*}$.
From Lemmas~\ref{le:if=f-1i} and~\ref{le:iV=ibarV} we get
\[
F\circ\Max\overline f=(\overline f{}^{\vee})^{-1}\circ F
\]
In this way
\begin{align*}
  \Max\overline f\circ\gamma_B^\complement\circ\underline f_*\leq\gamma_A^\complement\
  &\Rightarrow\ F\circ\gamma_A^\complement\leq F\circ\Max\overline f\circ\gamma_B^\complement\circ\underline f_* \\
  &\Leftrightarrow\ \gamma_A^{\complement\vee}\leq(\overline f{}^{\vee})^{-1}\circ F\circ\gamma_B^\complement\circ\underline f_* 
\end{align*}
Now let $(\underline f,\overline f)$ be a morphism in $\LLoc_*$. Then
$\gamma_A\leq\overline f{}^{-1}\circ\gamma_B\circ\underline f_*$ so
\[
F\circ\overline f{}^{-1}\circ\gamma_B\circ\underline f_*\leq F\circ\gamma_A
\]
From Lemma~\ref{le:if=f-1i}(2) we get
\begin{align*}
  \Sub\overline f{}^\vee\circ \gamma_B^\vee\circ\underline f_*
  &=\Sub\overline f{}^\vee\circ F\circ \gamma_B\circ\underline f_* \\
  &\leq F\circ\overline f{}^{-1}\circ\gamma_B\circ\underline f_* \\
  &\leq F\circ\gamma_A=\gamma_A^\vee
\end{align*}
\end{proof}

\begin{theorem}\label{T6.12}
	There are natural isomorphisms 
	\begin{align*}
	&\boldsymbol\Omega_*\circ\vee\cong\vee\circ\boldsymbol\clsets^*\colon\AQVBun^{1*}_\Tt\to\ALLoc^{1}_*\\
	&\boldsymbol\Omega^*\circ\vee\cong\vee\circ\boldsymbol\clsets_*\colon\AQVBun^{1}_{\Tt*}\to\ALLoc^{1*}_*\\
	&\boldsymbol\clsets_*\circ\vee\cong\vee\circ\boldsymbol\Omega^*\colon\AQVBun^{1*}_\Sigma\to\ALinCoLoc^{1}_*\\
	&\boldsymbol\clsets^*\circ\vee\cong\vee\circ\boldsymbol\Omega_*\colon\AQVBun^{1}_{\Sigma*}\to\ALinCoLoc^{1*}_*\\
	&\boldsymbol\Sigma_*\circ\vee\cong\vee\circ\boldsymbol\Iota^*\colon\LinCoLoc^{1*}\to\AQVBun^{1}_{\Sigma*}\\
	&\boldsymbol\Sigma^*\circ\vee\cong\vee\circ\boldsymbol\Iota_*\colon\LinCoLoc^{1}_*\to\AQVBun^{1*}_{\Sigma}\\
	&\boldsymbol\Iota_*\circ\vee\cong\vee\circ\boldsymbol\Sigma^*\colon\LLoc^{1*}\to\AQVBun^{1}_{\Tt*}\\
	&\boldsymbol\Iota^*\circ\vee\cong\vee\circ\boldsymbol\Sigma_*\colon\LLoc^{1}_*\to\AQVBun^{1*}_{\Tt}
	\end{align*}
\end{theorem}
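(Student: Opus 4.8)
The plan is to reduce all eight assertions to the object-level isomorphisms already established in Lemma~\ref{L6.9}, and then to check that these isomorphisms are natural. First I would observe that each displayed natural isomorphism, evaluated on objects, is precisely one of the four parts of Lemma~\ref{L6.9}, and that the eight statements occur in four pairs sharing a common object map but differing only in the variance of the morphisms: the two isomorphisms relating $\boldsymbol\Omega$ and $\boldsymbol\clsets$ on bundles are the $\ident$-isomorphisms $\complement\colon\boldsymbol\Omega(\mathcal A^\vee)\cong(\boldsymbol\clsets\mathcal A)^\vee$ of part~(1); the next pair are the $\ident$-isomorphisms $\complement\colon(\boldsymbol\Omega\mathcal A)^\vee\cong\boldsymbol\clsets(\mathcal A^\vee)$ of part~(2); and the pairs involving $\boldsymbol\Sigma$ and $\boldsymbol\Iota$ are the identifications $\boldsymbol\Sigma((\mathfrak A^\complement)^\vee)=(\boldsymbol\Iota\mathfrak A^\complement)^\vee$ and $(\boldsymbol\Sigma\mathfrak A)^\vee=\boldsymbol\Iota(\mathfrak A^\vee)$ of parts~(3) and~(4). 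I would stress that on the underlying (co)locales the last two are \emph{literally} the identity, since $\Iota L^\complement$ is defined to be $\Sigma(L^\complement)^\opp$ together with its topology; and on the underlying vector spaces all four object isomorphisms are the identity, the codual functor $\vee$ acting only by dualizing the linear datum.

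It then remains to verify naturality. Here I would recall how the functors act on morphisms: the codual functor $\vee$ sends a bundle morphism $(f_\flat,f^\sharp)$ to $(f_\flat,f^{\sharp\vee})$ and a (co)locale morphism $(\underline f,\overline f)$ to $(\underline f,\overline f^\vee)$, leaving the spatial component untouched and dualizing only the linear one; while each of $\boldsymbol\Omega$, $\boldsymbol\Sigma$, $\boldsymbol\clsets$, $\boldsymbol\Iota$ acts on a morphism by keeping the linear component fixed and transforming the spatial component canonically (by $\Omega f_\flat$ or $\clsets f_\flat$, both determined by $f_\flat^{-1}$, and by the direct image homomorphism $\underline f_*$). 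Because the object isomorphisms of Lemma~\ref{L6.9} are the identity on linear components, the linear sides of the two composites around each naturality square coincide automatically; and the spatial sides agree because $\complement\colon(\Omega X)^\opp\to\clsets X$ is a natural isomorphism intertwining $\Omega f_\flat$ with $\clsets f_\flat$, while $\underline f_*$ is insensitive to the passage from $L^\complement$ to $(L^\complement)^\opp$. This gives commutativity of all eight squares.

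The one delicate point I would flag is that $\vee$ interchanges covariant with contravariant morphisms and reverses the order of the defining inequalities, so one must confirm that after applying $\vee$ the transformed pair still satisfies the correct relation in the target category before the component-matching above can be invoked. This is exactly what Lemma~\ref{L6.10} provides for quotient vector bundles, together with its analogue for linearized locales and colocales; once those identifications of morphisms are in hand, the matching of the linear and spatial components is forced and no further computation is needed. Thus the substantive work has already been done in Lemmas~\ref{L6.9} and~\ref{L6.10}, and the theorem follows by assembling those object isomorphisms into natural transformations.
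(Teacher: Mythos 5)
Your proposal is correct and follows essentially the same route as the paper, whose entire proof is the remark that the theorem ``follows easily from Lemma~\ref{L6.9}'': you reduce each of the eight statements to the object-level $\ident$-isomorphisms of that lemma and then verify naturality, invoking Lemma~\ref{L6.10} (and its (co)locale analogue) exactly where the paper implicitly relies on them to make the composite functors well defined on morphisms. The only difference is that you spell out the naturality check (identity on linear components, $\complement$ intertwining $\Omega f_\flat$ with $\clsets f_\flat$) that the paper leaves to the reader.
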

\begin{proof}
	It follows easily from Lemma~\ref{thm:codualisisoobjects}.\qedhere
\end{proof}

\section{The codual quotient vector bundle}\label{sec:7}

Given a quotient vector bundle $\mathcal A=(X,A,\kappa)$ 
consider the pseudo quotient vector bundle $\mathcal A^\vee=(X,A^\vee,F\circ\kappa)$.
We wish to investigate when is $\mathcal A^\vee$ a quotient vector bundle.

\subsection{The Fell topology}

We now assume $A^\vee$ has the topology of uniform convergence on compacts, with subbasis the open sets
$S(K,U)=\{\phi\in A^\vee:\phi(K)\subset U\}$ with $K\subset A$ compact and $U\subset\mathbb C$ open.
The objective of this section is to prove the following theorem:

\begin{theorem}\label{thm:Fell<=>Fell}
  Let $\mathcal A=(X,A,\kappa)$ be a quotient vector bundle with Hausdorff fibers and
  suppose $A^\vee$ has the topology of uniform convergence on compacts. Then:
	\begin{enumerate}
		\item $\mathcal A^\vee$ is a quotient vector bundle if
		and only if the kernel map $\kappa$ is Fell continuous;
		\item If $A$ is first countable and the kernel map
		$\kappa$ is Fell continuous then
		$\kappa^\vee$ is also Fell continuous;
		\item If $A$ is normed then
		$\kappa$ is Fell continuous if and only if
		$\kappa^\vee$ is Fell continuous.
	\end{enumerate}
\end{theorem}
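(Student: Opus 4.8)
The plan is to reduce all three parts to a single equivalence at the level of the kernel maps and then bootstrap. Since $\mathcal A$ has Hausdorff fibers, $\kappa$ factors through $\Max A$ and is already continuous for the lower Vietoris topology; moreover $\kappa^\vee=F\circ\kappa$ factors through $\image F=\Max A^\vee$, so $\mathcal A^\vee$ automatically has Hausdorff fibers and, by \cite[Theorem~5.3]{ReSa17}, $\mathcal A^\vee$ is a quotient vector bundle precisely when $\kappa^\vee$ is lower Vietoris continuous. Thus part~(1) is the assertion that $\kappa^\vee$ is lower Vietoris continuous if and only if $\kappa$ is Fell continuous. Conceptually this is a semicontinuity duality: lower Vietoris continuity of the annihilator $\kappa^\vee$ is a lower-semicontinuity (non-shrinking) condition, which is dual under $V\mapsto F(V)$ to the upper-semicontinuity (co-compact) condition that upgrades lower Vietoris continuity of $\kappa$ to Fell continuity.

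First I would prove ``$\kappa^\vee$ lower Vietoris continuous $\Rightarrow$ $\kappa$ Fell continuous'', i.e.\ that $\kappa^{-1}(\CHECK K)$ is open for every compact $K\subset A$. Fix $x_0$ with $\kappa(x_0)\cap K=\emptyset$. In the Hausdorff locally convex quotient $A/\kappa(x_0)$ the image of $K$ is compact and avoids $0$, so by Hahn--Banach each $a\in K$ admits $\phi\in\kappa^\vee(x_0)$ (a functional annihilating $\kappa(x_0)$) with $\phi(a)=1$; by compactness finitely many $\phi_1,\dots,\phi_n\in\kappa^\vee(x_0)$ suffice, with $|\phi_j|>\tfrac12$ on an open set $W_j$ and $K\subset\bigcup_j W_j$. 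Each uniform-on-$K$ ball $\mathcal U_j=\{\psi:\sup_{a\in K}|\psi(a)-\phi_j(a)|<\tfrac14\}$ is open in $A^\vee$ and meets $\kappa^\vee(x_0)$, so lower Vietoris continuity of $\kappa^\vee$ makes $\{x:\kappa^\vee(x)\cap\mathcal U_j\neq\emptyset\}$ open. On the intersection of these neighbourhoods, for each $j$ some $\psi_j\in\kappa^\vee(x)$ lies within $\tfrac14$ of $\phi_j$ on $K$; given $a\in K$, choosing $j$ with $a\in W_j$ yields $|\psi_j(a)|>\tfrac14$, and since $\kappa(x)\subset\ker\psi_j$ this forces $a\notin\kappa(x)$. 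Hence $\kappa(x)\cap K=\emptyset$. The topology of uniform convergence on compacts is used essentially here, through the uniform control of $\psi_j$ on $K$.

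The reverse implication ``$\kappa$ Fell continuous $\Rightarrow$ $\kappa^\vee$ lower Vietoris continuous'' is the \textbf{main obstacle}. For a basic neighbourhood $\mathcal U=\{\phi:\sup_{a\in K_0}|\phi(a)-\phi_0(a)|<\epsilon\}$ of some $\phi_0\in\kappa^\vee(x_0)$, I must produce for each $x$ near $x_0$ a functional $\phi_x\in\kappa^\vee(x)$ that stays within $\epsilon$ of $\phi_0$ uniformly on $K_0$ --- that is, I must perturb $\phi_0$ so as to kill the (possibly larger) subspace $\kappa(x)$ without moving it much on $K_0$. The strategy I would follow is to isolate the compact set $L=\{a\in K_0:|\phi_0(a)|\ge\epsilon\}$, which is disjoint from $\kappa(x_0)$ because $\phi_0$ vanishes on $\kappa(x_0)$; Fell (co-compact) continuity of $\kappa$ keeps $\kappa(x)\cap L=\emptyset$ for $x$ near $x_0$, and I would then construct $\phi_x$ by a Hahn--Banach extension/perturbation argument off $\kappa(x)$, exploiting that $\phi_0$ is already small on $K_0\setminus L$. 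Controlling this perturbation uniformly on $K_0$ in a general locally convex space is the delicate point, and is precisely where both the co-compact half of the Fell topology and the uniform-on-compacts topology of $A^\vee$ are needed.

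Finally I would bootstrap to parts~(2) and~(3). For~(2), part~(1) already gives lower Vietoris continuity of $\kappa^\vee$, so it remains to add the co-compact sets: $(\kappa^\vee)^{-1}(\CHECK{\mathcal K})$ open for compact $\mathcal K\subset A^\vee$. Writing this as $\{x:\forall\,\phi\in\mathcal K,\ \kappa(x)\not\subset\ker\phi\}$, I would choose for each $\phi$ a vector $a_\phi\in\kappa(x_0)$ with $\phi(a_\phi)\neq0$, cover $\mathcal K$ by finitely many neighbourhoods on which the evaluation $(\psi,a)\mapsto\psi(a)$ is controlled jointly --- this is exactly where first countability of $A$ enters, to secure joint continuity of evaluation on the relevant compacta --- and then invoke lower Vietoris continuity of $\kappa$ to keep $\kappa(x)$ meeting each chosen neighbourhood. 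For~(3), a normed $A$ is first countable, so~(2) gives the forward implication; for the converse I would apply~(2) to the codual bundle $\mathcal A^\vee=(X,A^\vee,\kappa^\vee)$, whose space $A^\vee$ is Banach and hence first countable, using that the isometric bidual embedding $A\hookrightarrow A^{\vee\vee}$ identifies $\kappa^{\vee\vee}$ with $\kappa$ and restricts Fell continuity on $\Max A^{\vee\vee}$ to Fell continuity on $\Max A$.
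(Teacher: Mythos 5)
Your reduction of part~(1) to the statement that $\kappa^\vee=F\circ\kappa$ is lower Vietoris continuous if and only if $\kappa$ is Fell continuous is correct, and your argument for the implication ``$\kappa^\vee$ lower Vietoris continuous $\Rightarrow\kappa$ Fell continuous'' (Hahn--Banach functionals in $F(\kappa(x_0))$, a finite subcover of $K$, uniform-on-$K$ balls) is sound; it is in substance the easy half of the paper's Lemma~\ref{fell2=4}, obtained there from Lemma~\ref{lema:VincheckK} and Corollary~\ref{checkK=i-1S}. The genuine gap is the reverse implication, which you yourself flag as ``the main obstacle'' and leave unproved: this is the core of the whole theorem. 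The paper settles it in Lemma~\ref{fell2=4} by first reducing to neighbourhoods $B_{\varepsilon,K}(\phi)$ with $K\in\mathcal K(A)$ convex, compact and balanced (Lemma~\ref{basiswithKinK(A)}), then choosing $a$ with $\phi(a)=1$, projecting $p(b)=b-\phi(b)a$, and introducing the shifted compact set $K_\varepsilon=p(K)-\varepsilon a$; one shows $V\in\CHECK K_\varepsilon$ and, crucially, $F(\CHECK K_\varepsilon)\subset\widetilde B_{\varepsilon,K}(\phi)$: if $W\cap K_\varepsilon=\emptyset$ then, $K_\varepsilon$ being convex, Lemma~\ref{lema:VincheckK} produces $\psi\in F(W)$ with $\ker\psi\cap K_\varepsilon=\emptyset$, one normalizes $\psi(a)=1$, and the balancedness $\lambda K\subset K$ for $|\lambda|\le1$ turns the assumption $\sup_K|\psi-\phi|\ge\varepsilon$ into a point of $K_\varepsilon\cap\ker\psi$, a contradiction. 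Your compact set $L=\{a\in K_0:|\phi_0(a)|\ge\epsilon\}$ is a different set, and no argument is supplied (nor is one apparent) that $\kappa(x)\cap L=\emptyset$ alone forces the existence of $\phi_x\in F(\kappa(x))$ with $\sup_{K_0}|\phi_x-\phi_0|<\epsilon$. Since parts~(2) and~(3) are bootstrapped from part~(1), this gap propagates through the whole proposal.

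Two further points. In part~(2) your appeal to first countability (``joint continuity of evaluation'') should be, concretely, the paper's Lemma~\ref{bigcupkerphiisclosed}: for compact $K'\subset A^\vee$ the set $\bigcup_{\phi\in K'}\ker\phi$ is closed in $A$ when $A$ is first countable; this is exactly what makes the sets $U_i=A\setminus\bigcup_{\phi\in K_i}\ker\phi$ open, and only then can lower Vietoris continuity of $\kappa$ be invoked (Lemma~\ref{Fell1isfinerthan3}). More seriously, your converse in part~(3) fails as an approach. Under the theorem's standing hypothesis $A^\vee$ carries the topology of uniform convergence on compacts, and for $A$ normed and infinite dimensional this topology is \emph{not} first countable: a countable neighbourhood base at $0$ would yield closed absolutely convex compact (hence empty-interior) sets $C_n$ such that every compact subset of $A$ lies in some $C_n$, whereas choosing $z_n\notin C_n$ with $\|z_n\|\le 1/n$ gives a compact set $\{z_n\}_n\cup\{0\}$ contained in no $C_n$. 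So part~(2) cannot be applied to $\mathcal A^\vee$. Giving $A^\vee$ its dual norm instead to gain first countability changes both generating families of the Fell topology on $\Max A^\vee$ (the lower Vietoris part becomes finer, the co-compact part coarser, since the norm topology has fewer compact sets), so the statement you would prove is not the one asserted; and for non-reflexive $A$ the identification $\kappa^{\vee\vee}=\kappa$ is false, because the double annihilator of $\kappa(x)$ in $A^{\vee\vee}$ is the weak-$*$ closure of its canonical image, not the image itself. The paper avoids the bidual altogether: for normed $A$ it proves directly on $\Max A$ that topology~1 coincides with topology~3 (Lemma~\ref{Kvarepsilonaiscompact} and the unnumbered lemma following it, resting on the distance formulas of \cite{HNO86}), and combines this with the equality of topologies~2 and~4 to conclude.
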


We say a quotient vector bundle is Hausdorff if its associated linear bundle is Hausdorff.

\begin{corollary}\label{coro:Fell<=>Fell}
	Let $\mathcal A=(X,A,\kappa)$ be a quotient vector bundle with Hausdorff fibers and suppose both $A$ and $X$ are Hausdorff and first countable.
	If $\mathcal A$ is Hausdorff then $\mathcal A^\vee$ is a Hausdorff quotient vector bundle, with equivalence if $A$
	is normed.
\end{corollary}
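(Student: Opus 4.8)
The plan is to reduce both Hausdorffness conditions to Fell continuity of the two kernel maps and then feed these through Theorem~\ref{thm:Fell<=>Fell}. Throughout, $\mathcal A=(X,A,\kappa)$ has Hausdorff fibers and $A^\vee$ carries the topology of uniform convergence on compacts, so all three parts of Theorem~\ref{thm:Fell<=>Fell} are available; moreover $X$ and $A$ are first countable and Hausdorff, so \cite[Theorem~5.7]{ReSa17} applies to $\mathcal A$ itself and yields the equivalence ``$\mathcal A$ is Hausdorff $\Leftrightarrow$ $\kappa$ is Fell continuous''. The whole argument is then essentially bookkeeping with this equivalence and with Theorem~\ref{thm:Fell<=>Fell}.

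For the forward implication I would argue as follows. Assuming $\mathcal A$ is Hausdorff, the equivalence above makes $\kappa$ Fell continuous. Theorem~\ref{thm:Fell<=>Fell}(1) then shows that $\mathcal A^\vee$ is a quotient vector bundle, and Theorem~\ref{thm:Fell<=>Fell}(2), which is where first countability of $A$ is used, shows that the codual kernel map $\kappa^\vee=F\circ\kappa$ is Fell continuous as well. What remains is to upgrade Fell continuity of $\kappa^\vee$ to Hausdorffness of the associated linear bundle $E^\vee$, that is, to the statement that $\mathcal A^\vee$ is a \emph{Hausdorff} quotient vector bundle.

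For the converse, assume $A$ is normed and that $\mathcal A^\vee$ is a Hausdorff quotient vector bundle; in particular it is a quotient vector bundle, so Theorem~\ref{thm:Fell<=>Fell}(1) forces $\kappa$ to be Fell continuous, whence $\mathcal A$ is Hausdorff by \cite[Theorem~5.7]{ReSa17}. Normedness enters through Theorem~\ref{thm:Fell<=>Fell}(3), which turns the one-way passage of part (2) into the equivalence ``$\kappa$ Fell continuous $\Leftrightarrow$ $\kappa^\vee$ Fell continuous''; this symmetrizes the roles of $\mathcal A$ and $\mathcal A^\vee$ and lets the two implications be assembled into a single equivalence between the Hausdorffness of $\mathcal A$ and that of $\mathcal A^\vee$.

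I expect the main obstacle to be the last step of the forward implication: deducing that $E^\vee$ is Hausdorff from Fell continuity of $\kappa^\vee$. The difficulty is that $A^\vee$ with the compact-open topology need not be first countable, so \cite[Theorem~5.7]{ReSa17} cannot be quoted verbatim for $\mathcal A^\vee$. The point to isolate is that the implication ``Fell continuous kernel map $\Rightarrow$ Hausdorff total space'' requires no countability hypothesis on the fibre space: it needs only that $X$ is Hausdorff and that $A^\vee$ is Hausdorff, the latter holding because the pairing $A\times A^\vee\to\mathbb C$ is non-degenerate. Once this first-countability-free half of the Fell criterion is available, the forward implication closes, and the converse is then immediate from Theorem~\ref{thm:Fell<=>Fell}(1) and~(3) together with \cite[Theorem~5.7]{ReSa17}.
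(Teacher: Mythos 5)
Your reduction to Fell continuity is the same skeleton as the paper's proof, which is a one-liner: it combines Theorem~\ref{thm:Fell<=>Fell} with \cite[Corollary~5.9]{ReSa17} (not Theorem~5.7). Your backward implication is sound: $\mathcal A^\vee$ a quotient vector bundle forces $\kappa$ Fell continuous by Theorem~\ref{thm:Fell<=>Fell}(1), and then \cite[Theorem~5.7]{ReSa17} applies to $\mathcal A$ itself, since $X$ and $A$ are first countable and Hausdorff. The problem is the last step of the forward implication, which you correctly identify as the obstacle but then dispose of by assertion rather than proof: you claim that the implication ``Fell continuous kernel map $\Rightarrow$ Hausdorff total space'' requires no countability hypothesis on the fibre model space, only Hausdorffness of $X$ and of $A^\vee$. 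That claim is exactly the crux of the corollary, it is not proved in your proposal, and it gets the asymmetry of \cite[Theorem~5.7]{ReSa17} backwards. The direction that holds without any countability is the \emph{converse} one: if $E$ is Hausdorff then the zero section is closed (it is the equalizer of $\ident_E$ and the composite of the projection with the zero section), and for a compact $K$ with $K\cap\kappa(x)=\emptyset$ the tube lemma applied to $q^{-1}(E\setminus Z)\supset K\times\{x\}$ produces the neighbourhood of $x$ witnessing Fell continuity. The direction you need, by contrast, is proved sequentially: one lifts a non-separated pair through the open quotient map to convergent \emph{sequences} $c_n\in\kappa(x_n)$, $c_n\to c\notin\kappa(x)$, and feeds the compact set $\{c\}\cup\{c_n:n\geq N\}$ into Fell continuity. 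Both steps -- reducing closedness of the graph $\{(c,w):c\in\kappa(w)\}$ to sequential closedness, and manufacturing a compact set from a convergent sequence -- use first countability of the model space, which $A^\vee$ with the compact-open topology need not have. Fell continuity only constrains compact sets, and in an infinite-dimensional non-locally-compact space such sets have empty interior, so it does not obviously yield the open tube $N\times W$ disjoint from the graph that Hausdorffness of $E^\vee$ demands.

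So there is a genuine gap: the statement you need is not a formal weakening of \cite[Theorem~5.7]{ReSa17} whose proof goes through unchanged; it is a separate result, which is precisely why the paper cites \cite[Corollary~5.9]{ReSa17} here instead of Theorem~5.7. To make your argument complete you would either have to quote that corollary or supply an actual proof that $\kappa^\vee$ Fell continuous implies $E^\vee$ Hausdorff under the standing hypotheses (first countable Hausdorff $X$, but no countability on $A^\vee$) -- and the latter cannot be done by the sequential argument you are implicitly relying on.
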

\begin{proof}
	It follows immediately from Theorem~\ref{thm:Fell<=>Fell} and \cite[Corollary 5.9]{ReSa17}.\qedhere
\end{proof}

For the proof of Theorem~\ref{thm:Fell<=>Fell} we will
consider the following four topologies on $\Max A$:

\begin{enumerate}
	\item The lower Vietoris topology induced by the topology on $A$.
	\item The topology generated by the sets $\CHECK K$, for $K\subset A$ compact.
	\item The topology generated by the sets $F^{-1}\bigl(\CHECK K{}'\bigr)$, with $K'\subset A^\vee$ compact.
	\item The topology consisting of the sets $F^{-1}(U)$ where $U\subset\Max A^\vee$ is open in the lower Vietoris topology
	induced by the topology on $A^\vee$.
\end{enumerate}

Notice that the Fell topology on $\Max A$ is generated by topologies 1 and 2. The topology generated by 3 and 4 consists of the sets
$F^{-1}(\mathcal U)$ where $\mathcal U\subset \Max A^\vee$ is open in the Fell topology.

\subsubsection{Topologies 2 and 4}

\begin{lemma}\label{lema:VincheckK}\hfill
	\begin{enumerate}
		\item If $K\subset A$ is compact and $V\in\CHECK K$ then
		there are $\phi_1,\dots,\phi_n\in F(V)$ such that $\bigcap_i\ker\phi_i\in\CHECK K$.
		\item If $K$ is compact and convex and  $V\in\CHECK K$ then there is $\phi\in F(V)$ such that $\ker\phi\in\CHECK K$.
		\item Topology 2 has as a subbasis the collection $\{\CHECK K\}$ where $K\subset A$ is a convex compact set.
	\end{enumerate}
\end{lemma}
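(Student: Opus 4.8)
The plan is to dispatch the three parts in order, using the annihilator duality $F\dashv G$ of equation~\eqref{eq:defi(V)r(V)}, the Hahn--Banach separation theorem, and finally a compactness-and-covering argument. First I would prove (1) by a compactness argument. Since $V\in\Max A$, the closure operator gives $V=G(F(V))=\bigcap_{\phi\in F(V)}\ker\phi$, so $V\in\CHECK K$ says exactly that this intersection of kernels misses the compact set $K$. For each $x\in K$ we have $x\notin V$, hence some $\phi_x\in F(V)$ with $\phi_x(x)\neq0$; the set $\{a\in A:\phi_x(a)\neq0\}$ is open (as $\phi_x$ is continuous) and contains $x$. These sets cover $K$, so compactness yields $\phi_{x_1},\dots,\phi_{x_n}\in F(V)$ for which every $y\in K$ satisfies $\phi_{x_i}(y)\neq0$ for some $i$, i.e. $y\notin\bigcap_i\ker\phi_{x_i}$. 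As a finite intersection of kernels, $\bigcap_i\ker\phi_{x_i}$ lies in $\Max A$ and is the required element of $\CHECK K$.

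For (2) I would separate $V$ from $K$ directly. Here $V$ is a closed (hence closed convex) subspace and $K$ is compact convex with $V\cap K=\emptyset$, so Hahn--Banach separation in the locally convex space $A$ produces a continuous real-linear functional $f$ and reals $\alpha<\beta$ with $f\le\alpha$ on $V$ and $f\ge\beta$ on $K$. Since $V$ is a subspace, $f$ bounded above on $V$ forces $f|_V\equiv0$ and $\alpha\ge0$, so $f>0$ on $K$. Setting $\phi(a)=f(a)-i\,f(ia)$ gives a continuous complex-linear functional with $\RE\phi=f$; because $V$ is a complex subspace ($iV=V$) we get $\phi|_V=0$, i.e. $\phi\in F(V)$, while $\RE\phi(k)=f(k)>0$ for $k\in K$ shows $k\notin\ker\phi$. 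Hence $\ker\phi\cap K=\emptyset$, that is $\ker\phi\in\CHECK K$. (Alternatively one may first pass to the finite-dimensional image furnished by (1) and separate the origin there.)

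For (3) one inclusion is immediate: convex compact sets are compact, so the topology they generate is contained in topology~2. For the reverse it suffices to show each $\CHECK K$ ($K$ compact) is open in the convex-compact topology, i.e. given $V\in\CHECK K$ to produce finitely many convex compact sets $K'_1,\dots,K'_m$, each disjoint from $V$, with $\bigcap_j\CHECK{K'_j}\subseteq\CHECK K$. Since a point is convex compact and $\CHECK{\{a\}}=\CHECK a$, it is enough to cover $K$ by finitely many convex compact sets disjoint from $V$: if $K\subseteq\bigcup_j K'_j$, then any subspace missing every $K'_j$ misses $K$. To build the cover I would apply (1) to get $\phi_1,\dots,\phi_n\in F(V)$ and set $\Phi=(\phi_1,\dots,\phi_n)\colon A\to\CC^n$, so that $\Phi(V)=0$ and $0\notin\Phi(K)$. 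As $\Phi(K)$ is compact and avoids the origin it is covered by finitely many closed balls $\overline B_j\subset\CC^n\setminus\{0\}$, and I would take $K'_j=\overline{\mathrm{conv}}\bigl(K\cap\Phi^{-1}(\overline B_j)\bigr)$. Each $K'_j$ is convex with $\Phi(K'_j)\subseteq\overline B_j\not\ni 0$, so $K'_j\cap V\subseteq K'_j\cap\ker\Phi=\emptyset$, and the $K'_j$ cover $K$.

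The main obstacle is precisely the compactness of the $K'_j$ in (3): they are closed convex hulls of compact subsets of $K$, and in a general locally convex $A$ such hulls need not be compact. I expect to resolve this either by invoking the completeness hypotheses in force in this section, under which closed convex hulls of compact sets are compact, or by manufacturing the convex compact pieces directly inside the finite-dimensional image $\Phi(K)\subset\CC^n$ and pulling them back; the delicate point is then to verify that the pulled-back sets are genuinely convex, compact, disjoint from $V$, and jointly meet every line through $K$. By contrast, parts (1) and (2) are routine once the duality of equation~\eqref{eq:defi(V)r(V)} and Hahn--Banach separation are in hand.
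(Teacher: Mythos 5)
Your parts (1) and (2) are correct. Part (1) is essentially the paper's own argument (the paper normalizes $\phi_a(a)=1$ and covers $K$ by the sets $\phi_a^{-1}(B)$, $B$ the disc of radius $\tfrac12$ about $1$, but the compactness-and-covering idea is identical). Part (2) takes a genuinely different route: you separate the closed subspace $V$ from the compact convex set $K$ directly by Hahn--Banach separation and then complexify, whereas the paper first uses part (1) to pass to the finite-dimensional Hausdorff quotient $A/W$, $W=\bigcap_i\ker\phi_i$, and produces the functional there by a nearest-point/inner-product argument. Your version is shorter and self-contained; the paper's buys the same statement from part (1) plus only finite-dimensional geometry. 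Either is acceptable.

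The substantive issue is part (3), and your self-diagnosis is exactly right --- in fact it cuts deeper than you suggest. Your construction (cover $K$ by the compact pieces $D_j=K\cap\Phi^{-1}(\overline B_j)$ on which $\Phi=(\phi_1,\dots,\phi_n)$ stays away from $0$, then take convex hulls) is precisely the paper's: the paper sets $K_i=\operatorname{conv}\bigl(\phi_{a_i}^{-1}(\overline B)\cap K\bigr)$ and uses these as subbasic convex \emph{compact} sets while saying nothing at all about their compactness. So the obstacle you flag is not a defect of your write-up relative to the paper; it is an unaddressed gap in the paper's own proof. Of your two proposed repairs, only the first can succeed. If $A$ is quasi-complete (closed bounded sets complete; e.g.\ any Banach or Fr\'echet space), then the closed convex hull of a compact set is compact, and replacing $\operatorname{conv}$ by $\overline{\operatorname{conv}}$ finishes the proof exactly as you outline: disjointness from $V$ survives the closure because $\Phi\bigl(\overline{\operatorname{conv}}\,D_j\bigr)\subset\overline{\Phi(\operatorname{conv} D_j)}\subset\overline B_j$, which avoids $0=\Phi(V)$. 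By contrast, no pullback of convex compact pieces from $\CC^n$ can work, because statement (3) is simply false for general locally convex $A$. Take $A$ to be the space of finitely supported sequences inside $\ell^2$, with the $\ell^2$ norm. A Baire-category argument shows every compact convex $C\subset A$ lies in some $\operatorname{span}(e_1,\dots,e_N)$: writing $C$ as the countable union of the closed sets $C\cap\operatorname{span}(e_1,\dots,e_N)$, one of them has nonempty interior in $C$, and convexity then pushes all of $C$ into that finite-dimensional subspace. Now let $K=\{e_1\}\cup\{e_1+e_n/n:n\geq 2\}$, which is compact, and $V=\{0\}\in\CHECK K$. Any convex compact sets $K_1,\dots,K_m$ with $V\in\bigcap_j\CHECK K_j$ avoid $0$ and lie in a single finite-dimensional subspace $F$; since the vectors $e_1+e_n/n$ are linearly independent, for all but finitely many $n$ the closed line $W_n=\CC(e_1+e_n/n)$ meets $F$ only at $0$, hence misses every $K_j$ while meeting $K$. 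So $\bigcap_j\CHECK K_j\not\subset\CHECK K$, and $\CHECK K$ is not open in the topology generated by convex compact sets.

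In summary: keep your (1) and (2); for (3), your argument (equivalently, the paper's) becomes a proof once $A$ is assumed quasi-complete and hulls are replaced by closed hulls, and some hypothesis of this kind is genuinely necessary, since the statement fails in incomplete normed spaces.
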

\begin{proof}
	Let $K\subset A$ be compact and let $V\in\CHECK K$, that is, $V\cap K=\emptyset$. By the Hahn-Banach Theorem
	for each $a\in K$ there is $\phi_a\in F(V)$ such that $\phi_a(a)=1$.
	Let $B=\{z\in\CC:|z-1|<\frac12\}$ and let $U_a=\phi_a^{-1}(B)$.
	Then the collection $\{U_a\}_{a\in K}$ is an open cover of $K$. Pick a finite subcover
	$U_{a_1},\dots,U_{a_n}$. Since $\ker\phi_{a_i}\cap U_{a_i}=\emptyset$ for all $i$, we get $K\cap\bigl(\bigcap_i\ker\phi_{a_i}\bigr)=\emptyset$.
	This proves 1. To prove 3 let $K$, $V$ and $\phi_a$ be as above and
	let $K_i$ be the convex hull of $\phi_{a_i}^{-1}(\overline B)\cap K$. Then
	$K=\bigcup_i\phi_{a_i}^{-1}(\overline B)\cap K\subset\bigcup_iK_i$ so
	$\bigcap_i\CHECK K_i\subset\CHECK K$.
	Also, since $\overline B$ is convex, $\phi_{a_i}(K_i)\subset \overline B$ so $\ker\phi_{a_i}\cap K_i=\emptyset$, and since $V\subset\ker\phi_{a_i}$,
	we get $V\cap K_i=\emptyset$ for all $i$ so $V\in\bigcap_i\CHECK K_i$.
	We showed that $V\in\bigcap_i\CHECK K_i\subset\CHECK K$. This finishes the proof of 3.
	We now prove 2. First consider the special case where $A$ is Hausdorff and has finite 
	dimension, and $V=\{0\}$. Endow $A$ with an inner product and
	let $a\in K$ be the closest point
	to the origin. Then we can take $\phi$ to be inner product with $a$.
	Now consider the general case. By 1 there are $\phi_1,\dots,\phi_n\in F(V)$ with $\bigcap_i\ker\phi_i\in\CHECK K$.
	Let $W=\bigcap_i\ker\phi_i$
	and let $p\colon A\to A/W$ be the projection. Then $p(K)$ is convex and compact, $p(V)=\{0\}$ and $A/W$ is Hausdorff and finite dimensional, so there is
	$\phi\in(A/W)^\vee$ such that $\ker\phi\cap p(K)=\emptyset$. Then $V\subset\ker(\phi\circ p)$ and $\ker(\phi\circ p)\cap K=\emptyset$,
	which finishes the proof of 3.\qedhere
\end{proof}

\begin{corollary}\label{checkK=i-1S}
	For any $K\subset A$ we have 
	$F^{-1}\bigl(\widetilde S(K,\CC\setminus\{0\})\bigr)\subset\CHECK K$
	with equality if $K$ is convex and compact.
\end{corollary}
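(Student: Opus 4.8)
The plan is to prove the stated containment by directly unwinding the definitions of $F$, of the lower Vietoris set $\widetilde S(K,\CC\setminus\{0\})$, and of $\CHECK K$, observing that the inclusion holds for an arbitrary $K\subset A$ and requires no hypotheses. The reverse inclusion, needed only when $K$ is convex and compact, will then be read off from Lemma~\ref{lema:VincheckK}(2), which is where all the analytic content (the Hahn--Banach separation) already resides.

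First I would expand the membership condition. By definition $V\in F^{-1}\bigl(\widetilde S(K,\CC\setminus\{0\})\bigr)$ means $F(V)\in\widetilde S(K,\CC\setminus\{0\})$, i.e.\ there is some $\phi\in F(V)$ with $\phi\in S(K,\CC\setminus\{0\})$, that is, $\phi(a)\neq 0$ for every $a\in K$. Since $\phi\in F(V)$ is exactly the statement $V\subset\ker\phi$, any point $a\in V\cap K$ would satisfy simultaneously $\phi(a)=0$ and $\phi(a)\neq 0$; hence $V\cap K=\emptyset$, which is the condition $V\in\CHECK K$. This establishes $F^{-1}\bigl(\widetilde S(K,\CC\setminus\{0\})\bigr)\subset\CHECK K$ for every $K$.

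For the reverse inclusion I would assume $K$ is convex and compact and take $V\in\CHECK K$, so $V\cap K=\emptyset$. Lemma~\ref{lema:VincheckK}(2) furnishes a functional $\phi\in F(V)$ with $\ker\phi\in\CHECK K$, i.e.\ $\phi(a)\neq 0$ for all $a\in K$; equivalently $\phi\in S(K,\CC\setminus\{0\})\cap F(V)$. Thus $F(V)$ meets $S(K,\CC\setminus\{0\})$, so $F(V)\in\widetilde S(K,\CC\setminus\{0\})$ and therefore $V\in F^{-1}\bigl(\widetilde S(K,\CC\setminus\{0\})\bigr)$, giving the desired equality.

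I do not expect a genuine obstacle: both inclusions are short once the definitions are in place, and the only nontrivial ingredient---separating the subspace $V$ from the convex compact set $K$ by a single functional vanishing on $V$---has been isolated in Lemma~\ref{lema:VincheckK}(2). The one point to keep in mind is purely notational: the containment is read inside $\Max A$ (consistent with the four topologies on $\Max A$ under discussion), and since $F(V)=F(\overline V)$ by Lemma~\ref{le:iV=ibarV} the set $F^{-1}\bigl(\widetilde S(K,\CC\setminus\{0\})\bigr)$ is insensitive to replacing $V$ by its closure, so no inconsistency arises.
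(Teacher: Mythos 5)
Your proof is correct and follows essentially the same route as the paper's: the forward inclusion by unwinding the definitions of $F$, $\widetilde S(K,\CC\setminus\{0\})$ and $\CHECK K$, and the reverse inclusion (for $K$ convex and compact) by invoking Lemma~\ref{lema:VincheckK}(2) to produce a single functional $\phi\in F(V)$ with $\ker\phi\cap K=\emptyset$. The closing remark about $F(V)=F(\overline V)$ is harmless but not needed, since $\CHECK K$ is already defined as a subset of $\Max A$.
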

\begin{proof}
	Suppose $V\in F^{-1}\bigl(\widetilde S(K,\CC\setminus\{0\})\bigr)$. Then there is $\phi\in F(V)$
	such that $\phi(K)\subset\CC\setminus\{0\}$ and hence $K\cap\ker\phi=\emptyset$. 
	Since $V\subset\ker\phi$ it follows that $V\in\CHECK K$. Conversely, assume $K$ is convex and let
	$V\in\CHECK K$. Then by Lemma~\ref{lema:VincheckK}
	there is $\phi\in F(V)$ such that $\ker\phi\cap K=\emptyset$.
	It follows that $\phi\in S(K,\CC\setminus\{0\})$ so $V\in F^{-1}\bigl(\widetilde S(K,\CC\setminus\{0\})\bigr)$.\qedhere
\end{proof}

Let $\mathcal K(A)$ denote the collection of the compact and convex subsets
$K\subset A$ with the property that, if $\lambda\in\CC$ satisfies
$|\lambda|\leq 1$, then $\lambda K\subset K$. 

\begin{lemma}\label{basiswithKinK(A)}
	The topology of compact convergence in $A^\vee$ has as a basis the collection
	\[
	B_{\varepsilon,K}(\phi)=\{\psi\in A^\vee:\max_{a\in K}|\psi(a)-\phi(a)|<\varepsilon\}
	\]
	where $K\in\mathcal K(A)$. 
\end{lemma}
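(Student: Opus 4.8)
The plan is to begin from the standard description of the topology of compact convergence on $A^\vee$ and then show that the family of test sets may be shrunk to $\mathcal K(A)$ without changing the topology. Recall that the topology of compact convergence is the locally convex topology defined by the seminorms $p_K(\phi)=\max_{a\in K}|\phi(a)|$, as $K$ ranges over the compact subsets of $A$. This family of seminorms is directed, since $p_{K_1\cup K_2}=\max(p_{K_1},p_{K_2})$ and $K_1\cup K_2$ is again compact, so a neighbourhood base at each $\phi\in A^\vee$ is given by the single sets $B_{\varepsilon,K}(\phi)$ with $K\subset A$ compact and $\varepsilon>0$. Thus $\{B_{\varepsilon,K}(\phi)\}$ over all compact $K$ is already a basis, and to prove the lemma it suffices to produce, for each compact $K$ and each $\varepsilon>0$, a set $K'\in\mathcal K(A)$ with $B_{\varepsilon,K'}(\phi)\subset B_{\varepsilon,K}(\phi)$; each such $B_{\varepsilon,K'}(\phi)$ is automatically open, since $K'$ is in particular compact.

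First I would take $K'$ to be the closed absolutely convex hull of $K$, that is, the closure of the set of finite sums $\sum_i\lambda_i a_i$ with $a_i\in K$ and $\sum_i|\lambda_i|\le 1$. By construction $K'$ is convex and balanced, and since $K\subset K'$ we get $\max_{a\in K}|\psi(a)-\phi(a)|\le\max_{a\in K'}|\psi(a)-\phi(a)|$, hence $B_{\varepsilon,K'}(\phi)\subset B_{\varepsilon,K}(\phi)$, as required. In fact, since $\psi-\phi$ is linear one checks that $\max_{K'}|\psi-\phi|=\max_K|\psi-\phi|$, so the two sets even coincide; but for the basis statement the inclusion is all that is needed.

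The remaining, and only substantial, point is that $K'\in\mathcal K(A)$, i.e.\ that $K'$ is compact. I would argue in two steps: the balanced hull $\{\lambda a:|\lambda|\le 1,\ a\in K\}$ is the image of the compact set $\{\lambda\in\CC:|\lambda|\le 1\}\times K$ under the continuous scalar multiplication $\CC\times A\to A$, hence is compact; and its closed convex hull is then compact by the standard fact that the closed convex hull of a compact subset of a quasi-complete locally convex space is compact. This completeness-type input is exactly where the main difficulty lies: without it the closed absolutely convex hull of a compact set can fail to be compact, so the reduction to $\mathcal K(A)$ relies essentially on $A$ being quasi-complete. Granting compactness of $K'$, the three conditions defining $\mathcal K(A)$ hold, $B_{\varepsilon,K'}(\phi)\subset B_{\varepsilon,K}(\phi)$, and the sets $B_{\varepsilon,K}(\phi)$ with $K\in\mathcal K(A)$ therefore form a basis, completing the proof.
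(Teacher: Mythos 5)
Your proof follows the same basic route as the paper's: enlarge an arbitrary compact $K$ to an absolutely convex compact set containing it, and observe that the corresponding basic neighbourhoods only shrink (in fact stay equal, by the linearity argument you give, which is correct). The difference is in the one step that carries all the content, namely compactness of the enlarged set, and here your version is the careful one while the paper's is not. The paper takes $K'$ to be the convex hull of the balanced hull of $K$, with \emph{no closure}, and simply asserts $K'\in\mathcal K(A)$; this assertion is false in general. Already in $\ell^2$ the convex hull of the compact set $\{0\}\cup\{e_n/n:n\geq 1\}$ is not closed (its closure contains $\sum_n 2^{-n}e_n/n$, which is not a finite convex combination), hence not compact, and taking the balanced hull first does not change this. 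So the paper's proof is silent exactly where you located the difficulty, and your repair --- pass to the \emph{closed} absolutely convex hull and invoke the fact that in a quasi-complete locally convex space the closed convex hull of a compact set is compact --- is the standard correct way to close that step.

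The caveat is that quasi-completeness is not among the hypotheses: the lemma assumes only that $A$ is locally convex and that $A^\vee$ carries the compact-open topology. So, strictly, neither your argument nor the paper's establishes the lemma as stated, and this is not a removable defect of the method. Indeed, if the sets $B_{\varepsilon,K'}(\phi)$ with $K'\in\mathcal K(A)$ form a basis, then for each compact $K$ there are $K'\in\mathcal K(A)$ and $\varepsilon'>0$ with $B_{\varepsilon',K'}(0)\subset B_{1,K}(0)$, which forces $\sup_{a\in K}|\psi(a)|\leq(1/\varepsilon')\sup_{a\in K'}|\psi(a)|$ for all $\psi\in A^\vee$; the bipolar theorem then places $K$ inside the compact absolutely convex set $(1/\varepsilon')K'$, so the closed absolutely convex hull of every compact subset of $A$ must be compact. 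In other words, the lemma is \emph{equivalent} to a completeness-type property of $A$: it holds for Banach, Fr\'echet and, more generally, quasi-complete spaces (which covers the paper's later applications such as Corollary~\ref{C7.16}), but it fails, for instance, for a dense incomplete subspace of a Banach space. Your proposal is therefore correct once this hypothesis is added, and isolating it is a genuine improvement on the paper's own proof, which should be amended either by adding the hypothesis or by restricting the class of spaces considered in this section.
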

\begin{proof}
	Given a compact set $K$ let $K'$ be the convex hull of the set $\{\lambda a:a\in K,\lambda\in\CC,|\lambda|\leq1\}$. 
	Then $K'\in\mathcal K(A)$ and
	$B_{\varepsilon,K}(\phi)=B_{\varepsilon,K'}(\phi)$.\qedhere
\end{proof}

\begin{lemma}\label{fell2=4}
	Topologies 2 and 4 coincide.
\end{lemma}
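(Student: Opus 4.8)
The plan is to prove the equality of topologies by comparing generating families and establishing the two inclusions separately, the inclusion ``topology 2 $\subseteq$ topology 4'' being easy and the reverse being where the real work lies.

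For topology 2 $\subseteq$ topology 4, I would use Lemma~\ref{lema:VincheckK}(3): topology 2 is generated by the sets $\CHECK K$ with $K$ convex and compact. For such $K$, Corollary~\ref{checkK=i-1S} gives $\CHECK K=F^{-1}\bigl(\widetilde S(K,\CC\setminus\{0\})\bigr)$. Since $S(K,\CC\setminus\{0\})$ is a subbasic open set of $A^\vee$, the set $\widetilde S(K,\CC\setminus\{0\})$ is open in the lower Vietoris topology on $\Max A^\vee$, so $\CHECK K$ is a subbasic open set of topology 4. As this holds for every generator of topology 2, the inclusion follows.

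For topology 4 $\subseteq$ topology 2, topology 4 is generated by the sets $F^{-1}(\widetilde W)$ with $W\subset A^\vee$ open. First I would reduce to basic $W$: writing $W=\bigcup_\beta W_\beta$ with $W_\beta$ basic, the identity $\widetilde{\bigcup_\beta W_\beta}=\bigcup_\beta\widetilde{W_\beta}$ together with the fact that $F^{-1}$ preserves unions gives $F^{-1}(\widetilde W)=\bigcup_\beta F^{-1}(\widetilde{W_\beta})$, so it suffices to treat one basic set. By Lemma~\ref{basiswithKinK(A)} I may take $W=B_{\varepsilon,K}(\phi)$ with $K\in\mathcal K(A)$. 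The central claim is then $F^{-1}\bigl(\widetilde{B_{\varepsilon,K}(\phi)}\bigr)=\CHECK L$, where $L=K\cap\{a\in A:|\phi(a)|\geq\varepsilon\}$ is a closed subset of the compact $K$, hence compact, and $\CHECK L$ is a generator of topology 2. To see this, note $V\in F^{-1}\bigl(\widetilde{B_{\varepsilon,K}(\phi)}\bigr)$ iff there is $\psi\in F(V)$ with $\max_{a\in K}|\psi(a)-\phi(a)|<\varepsilon$, i.e.\ iff $d_K\bigl(\phi,F(V)\bigr)<\varepsilon$, where $d_K$ denotes distance in the seminorm $\chi\mapsto\max_{a\in K}|\chi(a)|$. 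The claim thus rests on the distance formula $d_K\bigl(\phi,F(V)\bigr)=\max_{a\in V\cap K}|\phi(a)|$: once this is known, ``$<\varepsilon$'' translates into $V\cap L=\emptyset$, using that $V\cap K$ is compact so the maximum is attained and is $<\varepsilon$ exactly when no point of $V\cap K$ lies in $\{|\phi|\geq\varepsilon\}$.

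The easy half of the distance formula is immediate: for $\psi\in F(V)$ and $a\in V\cap K$ one has $(\phi-\psi)(a)=\phi(a)$ since $\psi$ vanishes on $V$, whence $\max_{a\in K}|\phi(a)-\psi(a)|\geq\max_{a\in V\cap K}|\phi(a)|$. The hard part, and the main obstacle, is the reverse inequality: given $s:=\max_{a\in V\cap K}|\phi(a)|<\varepsilon$, I must produce a \emph{continuous} functional $\psi\in F(V)\subset A^\vee$ with $\max_{a\in K}|\phi(a)-\psi(a)|<\varepsilon$. Equivalently, I would extend $\phi|_V$ to some $\eta\in A^\vee$ with $\max_{a\in K}|\eta(a)|\leq s+\delta<\varepsilon$ and set $\psi=\phi-\eta$, so that $\psi|_V=0$ and $\max_{a\in K}|\phi(a)-\psi(a)|=\max_{a\in K}|\eta(a)|<\varepsilon$. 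This is a Hahn--Banach extension with control on the convex balanced compact set $K$; the delicate point, exactly as in the proof of Lemma~\ref{lema:VincheckK}, is to guarantee that the extension is continuous, i.e.\ genuinely lands in $A^\vee$ rather than the algebraic dual. This is where the hypothesis $K\in\mathcal K(A)$ (convex, balanced, compact) is essential, and I expect to carry it out through the bipolar theorem and Hahn--Banach duality in the dual pair $(A,A^\vee)$. With the distance formula established, the two inclusions combine to give that topologies 2 and 4 coincide.
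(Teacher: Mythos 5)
Your reductions are fine as far as they go: the inclusion of topology 2 in topology 4 is exactly the paper's own argument (Lemma~\ref{lema:VincheckK}(3) together with Corollary~\ref{checkK=i-1S}), and your translation of the reverse inclusion into the claim that, for $K\in\mathcal K(A)$ and $s=\max_{a\in V\cap K}|\phi(a)|$, one has $V\in F^{-1}\bigl(\widetilde{B_{\varepsilon,K}(\phi)}\bigr)$ if and only if $s<\varepsilon$, is faithful. The problem is that the entire content of the lemma is now concentrated in the ``hard half'' of your distance formula --- given $s<\varepsilon$, produce a \emph{continuous} $\psi\in F(V)$ with $\max_{a\in K}|\phi(a)-\psi(a)|<\varepsilon$ --- and you do not prove it; you only state that you expect it to follow from the bipolar theorem and Hahn--Banach duality. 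That is a genuine gap, and the tools you name do not obviously close it. What you need is $\phi\in F(V)+s'K^{\circ}$ for some $s<s'<\varepsilon$, where $K^{\circ}=\bigl\{\chi\in A^\vee:\sup_{a\in K}|\chi(a)|\le 1\bigr\}$. Since $|\phi|\le s$ on $V\cap K$, the bipolar theorem gives $\phi\in s'(V\cap K)^{\circ}$, and $(V\cap K)^{\circ}$ is the weak-$*$ closed absolutely convex hull of $F(V)\cup K^{\circ}$; this places $\phi$ only in the weak-$*$ \emph{closure} of $F(V)+s'K^{\circ}$. Removing that closure is precisely the difficulty: $F(V)+s'K^{\circ}$ is a sum of two weak-$*$ closed convex sets, neither of which need be weak-$*$ compact, so it need not be weak-$*$ closed, and since $A$ is an arbitrary locally convex space no Krein--\v{S}mulian-type theorem is available to rescue the argument.

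For what it is worth, the set identity you conjecture, $F^{-1}\bigl(\widetilde{B_{\varepsilon,K}(\phi)}\bigr)=\CHECK L$ with $L=K\cap\{a:|\phi(a)|\ge\varepsilon\}$, is in fact true, but a proof seems to require heavier machinery than you invoke: for instance, restrict to $C(K)$, use the duality formula for the distance from $\phi|_K$ to the subspace $F(V)|_K$, Riesz representation, and the fact that a measure $\mu$ on the compact absolutely convex set $K$ with $\|\mu\|\le1$ annihilating $F(V)|_K$ has barycenter in $V\cap K$, so that $\bigl|\int_K\phi\,d\mu\bigr|\le s$. The paper avoids all of this by proving a strictly weaker, pointwise statement which still suffices to compare topologies: given $V\in F^{-1}(\widetilde U)$ it picks $\phi\in F(V)\cap U$ (so $V\subset\ker\phi$), a ball $B_{\varepsilon,K}(\phi)\subset U$ with $K\in\mathcal K(A)$, a vector $a$ with $\phi(a)=1$, and sets $p(b)=b-\phi(b)a$ and $K_\varepsilon=p(K)-\varepsilon a$; an elementary scaling argument using $\lambda K\subset K$ for $|\lambda|\le1$, together with Lemma~\ref{lema:VincheckK}(2) applied to the convex compact set $K_\varepsilon$, then shows $V\in\CHECK K_\varepsilon\subset F^{-1}\bigl(\widetilde{B_{\varepsilon,K}(\phi)}\bigr)$. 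Note that this only certifies membership in $\CHECK K_\varepsilon$ for subspaces contained in $\ker\phi$, which is exactly why ordinary Hahn--Banach separation suffices there. So either supply a real proof of your extension step (along barycenter lines), or weaken your claim to this pointwise form and prove it by an explicit construction of the paper's kind.
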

\begin{proof}
	Let $K$ be compact and convex. Then by Corollary~\ref{checkK=i-1S} we have
	$\CHECK K=F^{-1}\bigl(\widetilde S(K,\CC\setminus0)\bigr)$ so $\CHECK K$ is open in topology 4.
	By Lemma~\ref{lema:VincheckK} this implies
	topology 4 is finer than topology 2. Now let $U\subset A^\vee$ be open and suppose $V\in F^{-1}\bigl(\widetilde U\bigr)$. Then $F(V)\cap U\neq\emptyset$ so
	by Lemma~\ref{basiswithKinK(A)} there is
	$\phi\in F(V)$, $\varepsilon>0$ and $K\in\mathcal K(A)$ such that $B_{\varepsilon,K}(\phi)\subset U$. We may assume that $\phi\neq 0$ otherwise
	we would have $\widetilde U=\Max A^\vee$.
	To complete the proof we will construct a compact set $K_\varepsilon$ such that $V\in\CHECK K_\varepsilon$
	and $F(\CHECK K_\varepsilon)\subset\widetilde B_{\varepsilon,K}(\phi)\subset\widetilde U$. 
	
	Pick $a\in A$ such that $\phi(a)=1$ and write $A=\ker\phi\oplus\linspan a$. 
	Let $p\colon A\to\ker\phi$ be the projection: $p(b)=b-\phi(b)a$. Let $K_\varepsilon=p(K)-\varepsilon a$. Then 
	for any $b\in K_\varepsilon$ we have $\phi(b)=-\varepsilon$ so $\ker\phi\in \CHECK K_\varepsilon$, and since
	$V\subset\ker\phi$ we also get $V\in\CHECK K_\varepsilon$. It remains to be shown that 
	$F(\CHECK K_\varepsilon)\subset\widetilde B_{\varepsilon,K}(\phi)$. Let $W\in \CHECK K_\varepsilon$.
	Since $K\in\mathcal K(A)$ is convex,  $K_\varepsilon$ is also convex, so by Lemma~\ref{lema:VincheckK} there is $\psi\in F(W)$ such that $\ker\psi\cap K_\varepsilon=\emptyset$.
	Now $0\in K$ so $-\varepsilon a\in K_\varepsilon$ so $\psi(a)\neq 0$ and hence we may rescale $\psi$ so that $\psi(a)=1$.
	Assume by contradiction that $\psi\notin B_{\varepsilon,K}(\phi)$. Then there is $b\in K$ such that
	$|\psi(b)-\phi(b)|=|\psi(p(b))|\geq\varepsilon$ so $\psi(p(b))\neq 0$. Let
	$\lambda=\varepsilon/\psi(p(b))$. Then $|\lambda|\leq 1$ so $\lambda K\subset K$ and hence $\lambda p(b)\in p(K)$ so
	we easily check that $\lambda p(b)-\varepsilon a\in K_\varepsilon\cap\ker\psi$, a contradiction. 
	It follows that $\psi\in B_{\varepsilon,K}(\phi)$ so $F(W)\cap B_{\varepsilon,K}(\phi)\neq\emptyset$, that is,
	$F(W)\in\widetilde B_{\varepsilon,K}(\phi)$. This shows that $F(\CHECK K_\varepsilon)\subset\widetilde B_{\varepsilon,K}(\phi)$, which finishes the proof.\qedhere
	\end{proof} 
	
	We can now prove the first statement in Theorem~\ref{thm:Fell<=>Fell}.
	
	\begin{corollary}
		Let $(X,A,\kappa)$ be a quotient vector bundle with Hausdorff fibers.
		Suppose $A^\vee$ has the compact open topology.
		Then $\mathcal A^\vee$ is a quotient vector bundle if and only if the map $\kappa$
		is continuous relative to the Fell topology on $\Max A$.
	\end{corollary}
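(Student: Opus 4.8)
The plan is to unwind the definition of a quotient vector bundle, translate the continuity of $\kappa^\vee$ into a statement about one of the four topologies on $\Max A$ introduced above, and then invoke Lemma~\ref{fell2=4}. The real content has already been done in that lemma; what remains is bookkeeping identifying which generating families of opens correspond under the annihilator map $F$.

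First I would observe that $\mathcal A^\vee=(X,A^\vee,\kappa^\vee)$ with $\kappa^\vee=F\circ\kappa$ is, by definition, a quotient vector bundle precisely when $\kappa^\vee$ is continuous with respect to the lower Vietoris topology on $\Max A^\vee$; this makes sense because $\kappa^\vee$ takes values in $\Max A^\vee=\image F$. Since the lower Vietoris topology on $\Max A^\vee$ is generated by the sets $\widetilde U$ with $U\subset A^\vee$ open, $\kappa^\vee$ is continuous if and only if $\kappa^{-1}\bigl(F^{-1}(\widetilde U)\bigr)$ is open in $X$ for every open $U\subset A^\vee$.

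Next I would recognize the sets $F^{-1}(\widetilde U)$ as exactly the subbasic opens of topology 4 on $\Max A$, since topology 4 consists of the sets $F^{-1}(U)$ with $U\subset\Max A^\vee$ open in the lower Vietoris topology, and the $\widetilde U$ generate the latter. Hence the displayed condition is equivalent to continuity of $\kappa\colon X\to\Max A$ when $\Max A$ carries topology 4. By Lemma~\ref{fell2=4}, topology 4 coincides with topology 2, the topology generated by the sets $\CHECK K$ with $K\subset A$ compact. Therefore $\mathcal A^\vee$ is a quotient vector bundle if and only if $\kappa$ is continuous with respect to topology 2.

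Finally I would assemble this with the standing hypothesis that $\mathcal A$ is a quotient vector bundle with Hausdorff fibers: its kernel map $\kappa$ is by definition continuous with respect to the lower Vietoris topology on $\Max A$, i.e.\ topology 1. Because the Fell topology is by definition the coarsest topology containing both topology 1 and topology 2, a map into $\Max A$ is Fell continuous if and only if it is continuous with respect to both; as continuity with respect to topology 1 is automatic here, Fell continuity of $\kappa$ is equivalent to continuity with respect to topology 2. Combining with the previous step yields the claimed equivalence. The only point demanding care is the verification that preimages under $F$ of the lower Vietoris subbasic opens of $\Max A^\vee$ are precisely the generators of topology 4; everything substantive is already carried by Lemma~\ref{fell2=4}.
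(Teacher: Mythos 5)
Your proposal is correct and follows essentially the same route as the paper: both reduce the statement to the fact that continuity of $\kappa^\vee$ in the lower Vietoris topology on $\Max A^\vee$ is the same as continuity of $\kappa$ in topology 4, invoke Lemma~\ref{fell2=4} to replace topology 4 by topology 2, and then use that Fell continuity amounts to continuity in topologies 1 and 2 jointly, with topology 1 automatic for a quotient vector bundle. Your write-up merely makes explicit the bookkeeping (subbasic opens $F^{-1}(\widetilde U)$, and the fact that continuity in a generated topology is continuity in each generating topology) that the paper leaves implicit.
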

	\begin{proof}
		The triple $(X,A^\vee,F\circ\kappa)$ is a quotient vector bundle if and only if the map
		$F\circ\kappa$ is continuous when $\Max A^\vee$ is given the lower Vietoris topology.
		By Lemma~\ref{fell2=4} this is equivalent to $\kappa^{-1}(U)$ being open for any $U$ in topology 2.
		The result follows Since $\kappa$ is continuous with respect to topology 1 and the Fell topology
		on $\Max A$ is the topology generated by topologies 1 and 2.\qedhere
	\end{proof}

	\subsubsection{Topologies 1 and 3}

	\begin{lemma}\label{iwidetildeUsubsetcheckK}
		Let $K\subset A^\vee$ be compact and let $U=A\setminus\bigl(\bigcup_{\phi\in K}\ker\phi\bigr)$.
		\begin{enumerate}
			\item Let $a\in A$. Then $F\bigl(\linspan a\bigr)\in\CHECK K$ if and only if $a\in U$.
			\item Let $V\in\Max A$. Then $V\in\widetilde U$ if and only if there is $a\in V$ such that $F\bigl(\linspan a\bigr)\in\CHECK K$.
			\item We have $F\bigl(\widetilde U\bigr)\subset\CHECK K$.
		\end{enumerate}
	\end{lemma}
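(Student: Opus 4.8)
The plan is to prove the three statements in order, establishing (1) by a direct unwinding of the definitions and then deriving (2) and (3) formally from it. The only thing to keep straight throughout is that here $K\subset A^\vee$, so the set on the right is $\CHECK K=\{\mathcal V\in\Max A^\vee:\mathcal V\cap K=\emptyset\}$, a subset of $\Max A^\vee$, whereas $\widetilde U$ lives over $A$.

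First I would compute $F\bigl(\linspan a\bigr)$. By definition $F\bigl(\linspan a\bigr)=\{\phi\in A^\vee:\linspan a\subset\ker\phi\}=\ker a$, the annihilator of $a$ (see equation~\eqref{eq:F(kerphi)=<phi>} and the definition of $\ker a$). Then $F\bigl(\linspan a\bigr)\in\CHECK K$ means exactly $\ker a\cap K=\emptyset$, i.e.\ no functional $\phi\in K$ vanishes at $a$; equivalently $a\notin\ker\phi$ for every $\phi\in K$, which is precisely the statement $a\in U=A\setminus\bigl(\bigcup_{\phi\in K}\ker\phi\bigr)$. This proves (1). Statement (2) is then immediate: by definition of the subbasic lower Vietoris open set, $V\in\widetilde U$ means $V\cap U\neq\emptyset$, i.e.\ there is some $a\in V$ with $a\in U$, and by (1) this $a$ is exactly one for which $F\bigl(\linspan a\bigr)\in\CHECK K$.

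For (3) I would take $V\in\widetilde U$ and, using (2), pick $a\in V$ with $F\bigl(\linspan a\bigr)\in\CHECK K$, that is $\ker a\cap K=\emptyset$. Since $a\in V$ we have $\linspan a\subset V$, and because $F$ is order-reversing (it is a map $(\Sub A)^\opp\to\Sub A^\vee$) this gives $F(V)\subset F\bigl(\linspan a\bigr)=\ker a$. Hence $F(V)\cap K\subset\ker a\cap K=\emptyset$, so $F(V)\in\CHECK K$, which is the asserted inclusion $F\bigl(\widetilde U\bigr)\subset\CHECK K$.

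I do not expect any genuine obstacle: once $F\bigl(\linspan a\bigr)$ is identified with $\ker a$, part (1) is a tautology and parts (2) and (3) are formal consequences, with (3) using only the antitonicity of the annihilator correspondence. The single point requiring care is the bookkeeping of which $\CHECK{\,\cdot\,}$ and which lattice ($\Max A$ versus $\Max A^\vee$) each symbol refers to.
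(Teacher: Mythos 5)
Your proposal is correct and follows essentially the same route as the paper: identify $F\bigl(\linspan a\bigr)=\ker a$ and unwind definitions for (1), note (2) is the definition of $\widetilde U$ combined with (1), and deduce (3) from the antitonicity $F(V)\subset F\bigl(\linspan a\bigr)$ for $a\in V\cap U$. No gaps; the bookkeeping between $\Max A$ and $\Max A^\vee$ that you flag is handled correctly.
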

	\begin{proof}
		We first prove 1.
		Note that $F\bigl(\linspan a\bigr)=\ker a=\{\psi\in A^\vee:\psi(a)=0\}$. So $F\bigl(\linspan a\bigr)\in\CHECK K$
		if and only if $\ker a\cap K=\emptyset$
		if and only if for all $\psi\in K$ we have $\psi(a)\neq 0$. This is equivalent to $a\notin\bigcup_{\phi\in K}\ker\phi$.
		Statement 2 follows immediately from 1. To prove 3 let $V\in\widetilde U$ and choose $a\in V\cap U$. By 1 we have $F\bigl(\linspan a\bigr)\in\CHECK K$.
		Since $F(V)\subset F\bigl(\linspan a\bigr)$ it follows that $F(V)\in\CHECK K$.\qedhere
	\end{proof}
	
	\begin{lemma}\label{bigcupkerphiisclosed}
		If $A$ is first countable and $K\subset A^\vee$ is compact then $\bigcup_{\phi\in K}\ker\phi$ is closed in $A$.
	\end{lemma}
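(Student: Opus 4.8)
The plan is to prove the equivalent statement that the complement
\[
U=A\setminus\bigcup_{\phi\in K}\ker\phi=\bigl\{a\in A:\phi(a)\neq0\text{ for all }\phi\in K\bigr\}
\]
is open; this is exactly the set appearing in Lemma~\ref{iwidetildeUsubsetcheckK}. Fix $a\in U$. Since $\{a\}$ is compact, the seminorm $\phi\mapsto|\phi(a)|$ is one of the seminorms defining the topology of uniform convergence on compacts, hence continuous on $A^\vee$, so it attains a minimum $\delta$ on the compact set $K$; as $\phi(a)\neq0$ for every $\phi\in K$ we have $\delta>0$. If I can produce a neighbourhood $W$ of $0$ in $A$ with $|\phi(w)|<\delta$ for all $w\in W$ and all $\phi\in K$, then for $b=a+w\in a+W$ and any $\phi\in K$ I get $|\phi(b)|\geq|\phi(a)|-|\phi(w)|>0$, so $a+W\subset U$ and $U$ is open. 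Thus the statement reduces to the \emph{equicontinuity of $K$ at $0$}.

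I would prove this equicontinuity by contradiction, and this is where first countability enters. If $K$ were not equicontinuous at $0$, there would be $\varepsilon>0$ such that every neighbourhood of $0$ contains a point $w$ with $|\phi(w)|\geq\varepsilon$ for some $\phi\in K$. Choosing a decreasing countable neighbourhood base $\{W_n\}$ of $0$, I obtain $\phi_n\in K$ and $w_n\in W_n$ with $|\phi_n(w_n)|\geq\varepsilon$ and $w_n\to0$. The key auxiliary fact I would invoke is the standard property of first countable topological vector spaces that a null sequence can be rescaled to a null sequence by scalars tending to infinity: there exist $\lambda_n\in\CC$ with $|\lambda_n|\to\infty$ and $v_n:=\lambda_n w_n\to0$. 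One proves this using a decreasing balanced neighbourhood base $\{U_k\}$, picking indices $N_k$ with $w_n\in k^{-1}U_k$ for $n\geq N_k$ and setting $\lambda_n=k$ on the block $N_k\leq n<N_{k+1}$.

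With such a rescaling in hand the contradiction is immediate: the set $C=\{v_n:n\in\NN\}\cup\{0\}$ is compact in $A$ (a convergent sequence together with its limit), so $p_C(\phi)=\sup_{c\in C}|\phi(c)|$ is a continuous seminorm defining the topology of $A^\vee$, and hence $p_C$ is bounded on the compact set $K$. But $p_C(\phi_n)\geq|\phi_n(v_n)|=|\lambda_n|\,|\phi_n(w_n)|\geq|\lambda_n|\varepsilon\to\infty$ with $\phi_n\in K$, contradicting boundedness. This forces $K$ to be equicontinuous at $0$ and completes the argument.

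The step I expect to be the main obstacle is exactly the equicontinuity of $K$: compactness in the compact-open topology does not by itself yield equicontinuity (this is an Arzel\`a--Ascoli-type phenomenon), so the real content is to convert a hypothetical failure of equicontinuity into an unbounded continuous seminorm on the compact set $K$. It is the rescaling trick --- the only place the first countability hypothesis is used --- that makes this conversion possible.
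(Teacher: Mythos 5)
Your proposal is correct, but it takes a genuinely different route from the paper. The paper argues directly with sequences: by first countability of $A$ it suffices to show sequential closedness, so it takes $a_n\to a$ with $\phi_n(a_n)=0$ for some $\phi_n\in K$, uses compactness of $K$ to extract a cluster point $\phi$ of $(\phi_n)$, and then exploits uniform convergence on the single compact set $K'=\{a_n\}_{n\in\NN}\cup\{a\}$ to transfer the vanishing condition to $\phi$: infinitely many $n$ satisfy $|\phi(a_n)|=|\phi(a_n)-\phi_n(a_n)|<\varepsilon$, and since $\phi(a_n)\to\phi(a)$ this forces $\phi(a)=0$. You instead prove the complement is open by first establishing the strictly stronger intermediate statement that $K$ is \emph{equicontinuous} at $0$, obtained by contradiction via the rescaling trick for null sequences (where your use of first countability is concentrated) and the observation that the continuous seminorm $p_C$ attached to the compact set $C=\{\lambda_nw_n\}\cup\{0\}$ must be bounded on $K$; all the individual steps (existence of the decreasing balanced base, the block construction of the $\lambda_n$, upper semicontinuity of $p_C$ via the subbasic sets $S(C,B_r(0))$, and the final estimate $p_C(\phi_n)\geq\lambda_n\varepsilon\to\infty$) check out. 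The paper's argument is shorter and needs nothing beyond the cluster-point property of compact spaces; yours is longer but buys a reusable Ascoli-type fact (compact in the topology of uniform convergence on compacts over a first countable space implies equicontinuous) and yields openness of the complement with an explicit uniform neighbourhood $a+W$, rather than just sequential closedness of the union. Your approach is also the natural generalization of the equicontinuity reasoning the paper itself uses later in the normed setting (Lemma~\ref{Kvarepsilonaiscompact}).
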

	\begin{proof}
		Pick a sequence $(a_n)$ in $\bigcup_{\phi\in K}\ker\phi$ and suppose $a=\lim a_n$. We want to show that $a\in\bigcup_{\phi\in K}\ker\phi$.
		For each $n$ there is a $\phi_n\in K$ such that $\phi_n(a_n)=0$. Let $\phi$ be a limit point of the sequence $(\phi_n)$
		and consider the compact set $K'=\{a_n\}_{n\in\mathbb N}\cup\{a\}$. Then %
		for any $\varepsilon>0$ and any $p\in\mathbb N$,
		there is a $n>p$ such that $\phi_n\in B_{\varepsilon,K'}(\phi)$. In particular this means that $|\phi(a_n)|=|\phi(a_n)-\phi_n(a_n)|<\varepsilon$.
		Since $\phi$ is continuous, $\lim\phi(a_n)=\phi(a)$ so we must have $\phi(a)=0$, which concludes the proof.\qedhere
	\end{proof}
	
	\begin{lemma}\label{Fell1isfinerthan3}
		If $A$ is first countable then topology 1 is finer than topology 3.
	\end{lemma}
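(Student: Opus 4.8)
The plan is to show that every subbasic open set of topology~3, namely every set of the form $F^{-1}(\CHECK{K'})$ with $K'\subset A^\vee$ compact, is open in the lower Vietoris topology (topology~1); since such sets generate topology~3, this yields the claim. I would establish openness pointwise: given $V\in F^{-1}(\CHECK{K'})$, i.e.\ $F(V)\cap K'=\emptyset$, I would exhibit a basic lower Vietoris neighbourhood $\mathcal N$ of $V$ with $\mathcal N\subset F^{-1}(\CHECK{K'})$. The obvious candidate supplied by Lemma~\ref{iwidetildeUsubsetcheckK} is $\widetilde U$ with $U=A\setminus\bigcup_{\phi\in K'}\ker\phi$, which is open by Lemma~\ref{bigcupkerphiisclosed} (this is the only place first countability is used). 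Here lies the main obstacle: that lemma only gives the inclusion $\widetilde U\subset F^{-1}(\CHECK{K'})$, and it is strict in general. Indeed $V\in\widetilde U$ demands a \emph{single} $a\in V$ with $\phi(a)\neq0$ for \emph{all} $\phi\in K'$, whereas $V\in F^{-1}(\CHECK{K'})$ only requires, for each $\phi\in K'$, \emph{some} $a\in V$ with $\phi(a)\neq0$. Thus $\widetilde U$ need not contain the given $V$, and the quantifier mismatch has to be resolved by splitting $K'$ into finitely many compact pieces.

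Concretely, for each $\phi\in K'$ I would pick $a_\phi\in V$ with $\phi(a_\phi)\neq0$; since point evaluation $\psi\mapsto\psi(a_\phi)$ is continuous on $A^\vee$, the sets $S_{a_\phi}=\{\psi:\psi(a_\phi)\neq0\}$ are open and cover the compact set $K'$. Extracting a finite subcover $K'\subset S_{a_1}\cup\dots\cup S_{a_n}$ with $a_1,\dots,a_n\in V$, I would set $K_i=\{\psi\in K':|\psi(a_i)|\geq|\psi(a_j)|\text{ for all }j\}$. Each $K_i$ is a closed, hence compact, subset of $K'$; the $K_i$ cover $K'$; and $\psi\in K_i$ forces $\psi(a_i)\neq0$, since $\max_j|\psi(a_j)|>0$ because $\psi\in K'\subset\bigcup_j S_{a_j}$. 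Then $U_i=A\setminus\bigcup_{\psi\in K_i}\ker\psi$ is open by Lemma~\ref{bigcupkerphiisclosed}, and $a_i\in V\cap U_i$ shows $V\in\widetilde{U_i}$. Hence $\mathcal N=\widetilde{U_1}\cap\dots\cap\widetilde{U_n}$ is a basic lower Vietoris open set containing $V$.

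Finally I would apply Lemma~\ref{iwidetildeUsubsetcheckK}(3) to each compact piece $K_i$ (with associated open set $U_i$) to obtain $\widetilde{U_i}\subset F^{-1}(\CHECK{K_i})$, whence $\mathcal N\subset\bigcap_i F^{-1}(\CHECK{K_i})=F^{-1}\bigl(\bigcap_i\CHECK{K_i}\bigr)=F^{-1}(\CHECK{K'})$, the last equality because $\bigcap_i\CHECK{K_i}=\CHECK{\bigcup_i K_i}=\CHECK{K'}$. This presents $F^{-1}(\CHECK{K'})$ as a neighbourhood of each of its points, so it is open in topology~1, completing the proof. The essential content is thus the finite decomposition of $K'$, which converts the single existential-in-$\phi$ condition defining $F^{-1}(\CHECK{K'})$ into finitely many conditions, each of the exact shape already handled by Lemma~\ref{iwidetildeUsubsetcheckK}; first countability serves only to keep the sets $U_i$ open.
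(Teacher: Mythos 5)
Your proof is correct and follows essentially the same route as the paper: both resolve the quantifier mismatch by using compactness of $K'$ and continuity of point evaluations to split $K'$ into finitely many compact pieces $K_i$ on which evaluation at some $a_i\in V$ never vanishes, and then both apply Lemma~\ref{bigcupkerphiisclosed} (the sole use of first countability) and Lemma~\ref{iwidetildeUsubsetcheckK} to the pieces $K_i$, $U_i=A\setminus\bigcup_{\psi\in K_i}\ker\psi$. The only difference is cosmetic: the paper normalizes $\phi(a_\phi)=1$ and takes $K_i=a_i^{-1}(\overline B)\cap K'$ for the closed disk $\overline B$ about $1$, whereas you take the argmax sets $K_i=\{\psi\in K':|\psi(a_i)|\geq|\psi(a_j)|\text{ for all }j\}$; both decompositions serve the identical purpose.
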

	\begin{proof}
		Let $K\subset A^\vee$ be compact and let $V\in F^{-1}\bigl(\CHECK K\bigr)$.
		Then $F(V)\cap K=\emptyset$ so $\phi\in K\Rightarrow\phi\notin F(V)\Leftrightarrow V\not\subset\ker\phi$
		and hence for each $\phi\in K$ there is $a_\phi\in V\setminus\ker\phi$ and we may choose $a_\phi$ so that $\phi(a_\phi)=1$.
		Let $B=\{\lambda\in\CC:|\lambda-1|<\frac12\}$. Regarding $a_\phi$ as elements of $A^{\vee\vee}$, we can cover $K$ with the open sets $a_\phi^{-1}(B)$. 
		Since $K$ is compact, there are $a_1,\dots,a_n\in V$ such that
		$K\subset\bigcup_i a_F^{-1}(B)$. Let $K_i=a_F^{-1}(\overline B)\cap K$. Then $K=\bigcup_iK_i$ and $F\bigl(\linspan{a_i}\bigr)\cap K_i=\emptyset$    %
		so $F\bigl(\linspan{a_i}\bigr)\in\CHECK K_i$. 
		Let $U_i=A\setminus\bigl(\bigcup_{\phi\in K_i}\ker\phi\bigr)$ which is open by Lemma~\ref{bigcupkerphiisclosed}.
		Then by Lemma~\ref{iwidetildeUsubsetcheckK} we get $a_i\in U_i$ so $V\in\bigcap_i\widetilde U_i$. To finish the proof we have to show that
		$\bigcap_i\widetilde U_i\subset F^{-1}(\CHECK K)$. This immediately follows from Lemma~\ref{iwidetildeUsubsetcheckK} since
		\[
		F\Bigl(\bigcap_i\widetilde U_i\Bigr)\subset\bigcap_i F\bigl(\widetilde U_i\bigr)\subset\bigcap_i\CHECK K_i=\CHECK K\,.\qedhere
		\]
	\end{proof}

	\begin{lemma}
		Let $U\subset A$ be open and let $C=\{\phi\in A^\vee:\ker\phi\cap U=\emptyset\}$. 
		Then $\linspan a\in\widetilde U$ implies $F\bigl(\linspan a\bigr)\cap C=\emptyset$ with equivalence if $U$ is convex.
	\end{lemma}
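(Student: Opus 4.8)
The plan is to treat the implication and, under convexity, its converse separately, using throughout the identity $F(\linspan a)=\ker a=\{\phi\in A^\vee:\phi(a)=0\}$ already used in the proof of Lemma~\ref{iwidetildeUsubsetcheckK}.

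For the forward implication, which needs no convexity, suppose $\linspan a\in\widetilde U$, so that $\lambda a\in U$ for some $\lambda\in\CC$. I would take an arbitrary $\phi\in F(\linspan a)$, that is $\phi(a)=0$, and note that $\phi(\lambda a)=\lambda\phi(a)=0$; hence $\lambda a\in\ker\phi\cap U$, so $\ker\phi\cap U\neq\emptyset$ and $\phi\notin C$. Since $\phi$ was arbitrary, $F(\linspan a)\cap C=\emptyset$.

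For the converse I would argue the contrapositive: assuming $U$ convex and $\linspan a\notin\widetilde U$, i.e.\ $\linspan a\cap U=\emptyset$, I would produce an element of $F(\linspan a)\cap C$. If $U=\emptyset$ this is trivial, since then $C=A^\vee$ and $0\in\ker a$. Otherwise $U$ is a nonempty open convex set disjoint from the linear subspace $\linspan a$, and the geometric Hahn--Banach separation theorem furnishes a nonzero $\phi\in A^\vee$ and $\gamma\in\RR$ with $\RE\phi(x)<\gamma\le\RE\phi(y)$ for all $x\in U$ and $y\in\linspan a$. The crux is that $\RE\phi$ is bounded below on the subspace $\linspan a$: putting $y=\lambda a$ and letting $\lambda$ range over $\CC$ forces $\phi(a)=0$ (otherwise $\RE(\lambda\phi(a))$ would be unbounded below), so $\phi\in F(\linspan a)$, and taking $y=0$ gives $\gamma\le0$. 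Finally, any $x\in\ker\phi$ satisfies $\RE\phi(x)=0$, so $x\in U$ would give $0<\gamma\le0$, a contradiction; hence $\ker\phi\cap U=\emptyset$, i.e.\ $\phi\in C$, and $F(\linspan a)\cap C\neq\emptyset$.

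The main obstacle is this converse, specifically arranging for the separating hyperplane to contain the line $\linspan a$. This is handled automatically, since boundedness of $\RE\phi$ from below on a subspace forces the functional to annihilate $a$; this is the only place where convexity and the nonemptiness of $U$ enter.
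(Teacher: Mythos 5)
Your proof is correct and takes essentially the same route as the paper: the forward direction is the same kernel-containment observation (any $\phi\in F(\linspan a)$ has $\ker\phi\supset\linspan a$, so its kernel meets $U$ whenever $\linspan a$ does), and the converse is the same Hahn--Banach separation of the open convex set $U$ from the subspace $\linspan a$. You merely spell out two points the paper leaves implicit, namely that the separating functional automatically annihilates $a$ (because $\RE\phi$ is bounded below on a subspace only if $\phi$ vanishes on it) and the trivial case $U=\emptyset$.
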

	\begin{proof}
		If $F\bigl(\linspan a\bigr)\cap C\neq\emptyset$ there is
		$\phi\in C$ such that $\linspan a\subset\ker\phi$. Since $\ker\phi\cap U=\emptyset$ we get $\linspan a\notin \widetilde U$.
		Conversely, if $U$ is convex and $\linspan a\notin \widetilde U$, then $\linspan a\cap U=\emptyset$ so
		by the Hahn-Banach separation theorem there is $\phi\in A^\vee$
		such that $a\in \ker\phi$ and $\phi\in C$ so $F\bigl(\linspan a\bigr)\cap C=\emptyset$.\qedhere
	\end{proof}
	
	\begin{lemma}\label{Kvarepsilonaiscompact}
		Suppose $A$ is normed and let $S(A^\vee)$ be the unit sphere in $A^\vee$. 
		For any $\varepsilon>0$ and $a\in A$
		the set $K_{\varepsilon,a}=\{\phi\in S(A^\vee):\ker\phi\cap B_\varepsilon(a)=\emptyset\}$
		is compact in both the product topology and the compact open topology.
	\end{lemma}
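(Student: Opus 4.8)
The plan is to exhibit $K_{\varepsilon,a}$ as a closed subset of the unit ball of $A^\vee$, which is compact for the product topology, and then to transfer compactness to the compact-open topology by an Ascoli argument. First I would rewrite the defining condition analytically. For $\phi\in S(A^\vee)$ one has $\operatorname{dist}(a,\ker\phi)=|\phi(a)|/\|\phi\|=|\phi(a)|$, computed directly from $\|\phi\|=\sup_{\|x\|\le1}|\phi(x)|$, so the requirement that $\ker\phi$ miss the open ball $B_\varepsilon(a)$ is exactly $\operatorname{dist}(a,\ker\phi)\ge\varepsilon$, that is, $|\phi(a)|\ge\varepsilon$. Hence
\[
K_{\varepsilon,a}=\bigl\{\phi\in A^\vee:\|\phi\|=1,\ |\phi(a)|\ge\varepsilon\bigr\}.
\]

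Next I would establish the ambient compactness. Regarding $A^\vee$ as a subset of $\prod_{x\in A}\CC$, the closed unit ball $\{\phi:\|\phi\|\le1\}$ lies inside the compact product $\prod_{x\in A}\{z\in\CC:|z|\le\|x\|\}$ and is carved out there by the pointwise (hence closed) conditions expressing linearity together with $|\phi(x)|\le\|x\|$; by Tychonoff's theorem it is therefore compact in the product topology. Since this ball is equicontinuous and pointwise bounded, the classical Ascoli theorem shows that on it the product topology and the compact-open topology coincide. Consequently it suffices to prove that $K_{\varepsilon,a}$ is compact for the product topology, and the same compactness will then hold for the compact-open topology.

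It remains to show that $K_{\varepsilon,a}$ is closed in this compact ball, and this is the heart of the matter. The constraint $|\phi(a)|\ge\varepsilon$ is closed because the evaluation $\phi\mapsto\phi(a)$ is continuous for the product topology. The delicate point---which I expect to be the main obstacle---is the norm-one condition: as $\phi\mapsto\|\phi\|$ is only lower semicontinuous for the product topology, a net in $K_{\varepsilon,a}$ converges a priori to some $\phi$ with $|\phi(a)|\ge\varepsilon$ but only $\|\phi\|\le1$, so one must exclude a drop of norm in the limit. My approach would be to recover $\|\phi\|=1$ from the constraint $|\phi(a)|\ge\varepsilon$ together with the position of $\ker\phi$ relative to $B_\varepsilon(a)$; controlling the norm under product limits is the crux on which the whole argument hinges.
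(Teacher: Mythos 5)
Your reduction of $K_{\varepsilon,a}$ to $\{\phi\in A^\vee:\|\phi\|=1,\ |\phi(a)|\geq\varepsilon\}$, the Alaoglu argument for the closed unit ball, and the Ascoli identification of the product and compact-open topologies on equicontinuous sets are all correct, and up to that point you follow the same route as the paper. But the step you defer --- recovering $\|\phi\|=1$ in the limit --- is not a loose end you can postpone: it is exactly where the argument breaks, and it cannot be carried out, because the statement is false for infinite-dimensional $A$. Take $A=\ell^2$, $a=e_1$, $\varepsilon=\tfrac12$, and $\phi_n=\tfrac12 e_1^*+\tfrac{\sqrt3}{2}e_n^*$ for $n\geq2$. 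Each $\phi_n$ has $\|\phi_n\|=1$ and $\phi_n(a)=\tfrac12$, so $d(a,\ker\phi_n)=|\phi_n(a)|/\|\phi_n\|=\tfrac12\geq\varepsilon$ and hence $\phi_n\in K_{\varepsilon,a}$. But $\phi_n\to\tfrac12 e_1^*$ pointwise, hence (by equicontinuity of the norm-bounded family) also uniformly on compacts, and the limit has norm $\tfrac12$, so it lies outside $K_{\varepsilon,a}$. Both topologies are Hausdorff, so a compact set would be closed; $K_{\varepsilon,a}$ is not closed, hence not compact in either topology. This is a general phenomenon: for infinite-dimensional $A$ the closure of the dual unit sphere in the product topology is the whole unit ball, so the sphere constraint is never preserved under such limits (outside degenerate cases like $\varepsilon\geq\|a\|$).

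You should also know that the paper's own proof conceals the same hole: it asserts ``Since $S(A^\vee)$ is also compact,'' which is false in infinite dimensions, and then only verifies closedness of the condition $|\phi(a)|\geq\varepsilon$. So your instinct that the norm-one condition is the crux is exactly right; what you have located is an error in the lemma, not a missing trick. The statement and its later use can be repaired by replacing $K_{\varepsilon,a}$ with $K'_{\varepsilon,a}=\{\phi\in A^\vee:\|\phi\|\leq1,\ |\phi(a)|\geq\varepsilon\}$: this set is closed in the product topology and sits inside the Alaoglu-compact ball, hence is compact there, and is compact in the compact-open topology by your Ascoli argument. Moreover every $\phi\in K'_{\varepsilon,a}$ is nonzero and satisfies $d(a,\ker\phi)=|\phi(a)|/\|\phi\|\geq\varepsilon$, i.e.\ $\ker\phi\cap B_\varepsilon(a)=\emptyset$, and the norming functional produced by Hahn--Banach in the subsequent lemma (the proof that topologies 1 and 3 coincide for normed $A$) lies in $K'_{\varepsilon,a}$; that lemma therefore goes through verbatim with $K'_{\varepsilon,a}$ in place of $K_{\varepsilon,a}$.
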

	\begin{proof}
		Since $S(A^\vee)$ is equicontinuous the pointwise convergence topology and the compact open topology coincide on $S(A^\vee)$.
		Since $S(A^\vee)$ is also compact,
		we only need to check that $K_{\varepsilon,a}$ is closed. But
		$\ker\phi\cap B_\varepsilon(a)=\emptyset$ if and only if $d(a,\ker\phi)\geq\varepsilon$ and
		$d(a,\ker\phi)=|\phi(a)|/\|\phi\|=|\phi(a)|$ \cite[Lemma~1.1]{HNO86}
		and the condition $|\phi(a)|\geq\varepsilon$ defines a closed subset of $A^\vee$.\qedhere
	\end{proof}
	
	\begin{lemma}
		Suppose $A$ is a normed vector space. Then topologies 1 and 3 coincide.
	\end{lemma}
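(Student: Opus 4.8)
The plan is to prove the two topologies coincide by establishing each inclusion separately. Since a normed space is first countable, Lemma~\ref{Fell1isfinerthan3} already gives that topology~1 is finer than topology~3, so it remains to show the reverse: that every set open in topology~1 is open in topology~3. The open balls $B_\varepsilon(a)$ form a basis for the norm topology of $A$, and since $\widetilde{\bigcup_\alpha U_\alpha}=\bigcup_\alpha\widetilde{U_\alpha}$, every subbasic set $\widetilde U$ of the lower Vietoris topology is a union of sets $\widetilde{B_\varepsilon(a)}$; hence these form a subbasis of topology~1, and it suffices to show each $\widetilde{B_\varepsilon(a)}$ is open in topology~3.

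I would fix $V\in\widetilde{B_\varepsilon(a)}$ and construct a topology-3 neighbourhood of $V$ inside $\widetilde{B_\varepsilon(a)}$. Choosing $b\in V\cap B_\varepsilon(a)$ and setting $\delta=\varepsilon-\|b-a\|>0$, we have $B_\delta(b)\subset B_\varepsilon(a)$, so $\widetilde{B_\delta(b)}\subset\widetilde{B_\varepsilon(a)}$, and it is enough to work with $\widetilde{B_\delta(b)}$. The candidate neighbourhood is $F^{-1}(\CHECK{K_{\delta,b}})$, where $K_{\delta,b}=\{\phi\in S(A^\vee):\ker\phi\cap B_\delta(b)=\emptyset\}$ is compact in the compact open topology by Lemma~\ref{Kvarepsilonaiscompact}, so $F^{-1}(\CHECK{K_{\delta,b}})$ is open in topology~3 by definition. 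The proof then reduces to verifying the chain $V\in F^{-1}(\CHECK{K_{\delta,b}})\subset\widetilde{B_\delta(b)}$.

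For the membership $V\in F^{-1}(\CHECK{K_{\delta,b}})$, I would observe that $b\in V$ forces $b\in\ker\phi$ for every $\phi\in F(V)$, whence $b\in\ker\phi\cap B_\delta(b)$ and no such $\phi$ can lie in $K_{\delta,b}$; thus $F(V)\cap K_{\delta,b}=\emptyset$. For the inclusion $F^{-1}(\CHECK{K_{\delta,b}})\subset\widetilde{B_\delta(b)}$ I would argue by contraposition: if $W\in\Max A$ satisfies $W\cap B_\delta(b)=\emptyset$, then $W$ is a closed subspace with $d(b,W)\geq\delta$, so the Hahn--Banach separation theorem yields a norm-one functional $\phi$ vanishing on $W$ with $\phi(b)=d(b,W)$; using the identity $d(b,\ker\phi)=|\phi(b)|/\|\phi\|=|\phi(b)|$ (as in Lemma~\ref{Kvarepsilonaiscompact}, following \cite[Lemma~1.1]{HNO86}) we get $d(b,\ker\phi)\geq\delta$, i.e.\ $\ker\phi\cap B_\delta(b)=\emptyset$, so $\phi\in F(W)\cap K_{\delta,b}$ and hence $W\notin F^{-1}(\CHECK{K_{\delta,b}})$. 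Together these show $F^{-1}(\CHECK{K_{\delta,b}})$ is a topology-3 open neighbourhood of $V$ contained in $\widetilde{B_\delta(b)}\subset\widetilde{B_\varepsilon(a)}$, completing the reverse inclusion.

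The main obstacle is the second inclusion $F^{-1}(\CHECK{K_{\delta,b}})\subset\widetilde{B_\delta(b)}$, which is precisely where normedness is indispensable: it rests on the distance formula $d(b,\ker\phi)=|\phi(b)|$ for norm-one $\phi$ and on Hahn--Banach separation of the point $b$ from the closed subspace $W$ by a norm-one functional, together with the compactness of $K_{\delta,b}$ supplied by Lemma~\ref{Kvarepsilonaiscompact}. In a merely first-countable locally convex $A$ none of these is available in this clean form, which explains why topologies~1 and~3 need not agree outside the normed setting and why the earlier results could only assert one-sided comparisons.
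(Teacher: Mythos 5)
Your proof is correct and follows essentially the same route as the paper: one inclusion via Lemma~\ref{Fell1isfinerthan3} (first countability of a normed space), and the reverse inclusion by surrounding $V$ with the topology-3 neighbourhood $F^{-1}(\CHECK K)$, where $K$ is the compact set (Lemma~\ref{Kvarepsilonaiscompact}) of norm-one functionals whose kernels miss a small ball centred at a point of $V$, with the containment $F^{-1}(\CHECK K)\subset\widetilde{B_\delta(b)}$ proved by contraposition using \cite[Lemma~1.2]{HNO86}. The only cosmetic difference is that you first reduce to subbasic sets $\widetilde{B_\varepsilon(a)}$ and re-centre the ball at $b\in V$, whereas the paper picks $a\in V\cap U$ directly and works with $B_\varepsilon(a)\subset U$.
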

	\begin{proof}
		Let $U\subset A$ be an open set. By Lemma~\ref{Fell1isfinerthan3} we only need to show that $\widetilde U$ is open in topology 3.
		Suppose $V\in\widetilde U$. Then there is $a\in V$ and $\varepsilon>0$ such that $B_\varepsilon(a)\subset U$. Let
		\[
		K=\bigl\{\phi\in S(A^\vee):\ker\phi\cap B_\varepsilon(a)=\emptyset\bigr\}
		\]
		which is compact by Lemma~\ref{Kvarepsilonaiscompact}. First we show that $V\in F^{-1}(\CHECK K)$.
		If $\phi\in F(V)$ then $\phi(a)=0$ so $\ker\phi\cap B_\varepsilon(a)\neq\emptyset$ so $\phi\notin K$ and thus $F(V)\cap K=\emptyset$ as claimed.
		To finish the proof we just need to show that $F^{-1}(\CHECK K)\subset \widetilde{B_\varepsilon(a)}$.
		Let $\in\Max A$ and suppose $W\notin \widetilde{B_\varepsilon(a)}$. This is equivalent to $d(a,W)\geq\varepsilon$ so
		\cite[Lemma~1.2]{HNO86} there is $\phi\in A^\vee$
		such that $\|\phi\|=1$, $W\subset\ker\phi$ and $d(a,\ker\phi)=d(a,W)\geq\varepsilon$. But then $\ker\phi\cap B_\varepsilon(a)=\emptyset$
		so $\phi\in K\cap F(W)$ so $F(W)\notin\CHECK K$, %
		which completes the proof.\qedhere
	\end{proof}
	
	We can now finish the proof of Theorem~\ref{thm:Fell<=>Fell}.
	
	\begin{corollary}
	  Suppose $A$ is first countable. If $\mathcal A=(X,A,\kappa)$ is a quotient vector bundle with $\kappa$ Fell Continuous
          then $\mathcal A^\vee=(X,A^\vee,\kappa^\vee)$
		is also a quotient vector bundle and $\kappa^\vee$ is also Fell continuous. The converse holds if $A$ is
		a normed vector space.
	\end{corollary}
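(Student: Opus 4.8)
The plan is to read this corollary as the assembly of the three parts of Theorem~\ref{thm:Fell<=>Fell}, all of which have by now been reduced to comparisons among the four topologies on $\Max A$. Write $\tau_{12}$ for the topology generated by topologies $1$ and $2$, which is the Fell topology on $\Max A$, and $\tau_{34}$ for the topology generated by $3$ and $4$; as noted right after the list of topologies, $\tau_{34}$ is exactly the topology of the sets $F^{-1}(\mathcal U)$ with $\mathcal U$ open in the Fell topology on $\Max A^\vee$. Since $\kappa^\vee=F\circ\kappa$, the single observation driving everything is that $\kappa^\vee$ is Fell continuous if and only if $\kappa$ is continuous for $\tau_{34}$. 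Thus every statement about $\kappa^\vee$ translates into a statement about $\kappa$ together with a comparison of $\tau_{12}$ and $\tau_{34}$.

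For the forward implication I would assume $A$ first countable and $\kappa$ Fell continuous. That $\mathcal A^\vee$ is a quotient vector bundle is exactly the first statement of the theorem, proved in the earlier corollary, so nothing new is required there. For Fell continuity of $\kappa^\vee$ I would compare generators: Lemma~\ref{fell2=4} identifies topology $2$ with topology $4$, and Lemma~\ref{Fell1isfinerthan3} (this is where first countability enters) shows that topology $1$ refines topology $3$. Hence each subbasic $\tau_{34}$-open set is already $\tau_{12}$-open, so $\tau_{12}$ refines $\tau_{34}$; continuity of $\kappa$ for the finer topology $\tau_{12}$ then yields continuity for $\tau_{34}$, i.e.\ Fell continuity of $\kappa^\vee$.

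For the converse I would invoke the preceding lemma, which strengthens Lemma~\ref{Fell1isfinerthan3} to an equality of topologies $1$ and $3$ when $A$ is normed. Combined with the coincidence of topologies $2$ and $4$, this gives $\tau_{12}=\tau_{34}$, so $\kappa$ is Fell continuous if and only if $\kappa^\vee$ is. Together with the first statement of the theorem this makes the three conditions ``$\mathcal A^\vee$ is a quotient vector bundle'', ``$\kappa$ is Fell continuous'', and ``$\kappa^\vee$ is Fell continuous'' mutually equivalent for normed $A$, which is the asserted converse.

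I expect no genuine obstacle inside the corollary, since the real content lives in the three topology-comparison lemmas it quotes. The only point to handle carefully is the direction of the continuity implication, namely that continuity with respect to the finer topology $\tau_{12}$ forces continuity with respect to the coarser $\tau_{34}$ (with the equality in the normed case upgrading this to an equivalence); getting this bookkeeping right for the two composite topologies is essentially the whole of the argument.
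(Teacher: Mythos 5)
Your proposal is correct and follows exactly the route the paper intends: the paper states this corollary without a written proof, as an immediate assembly of Lemma~\ref{fell2=4} (topologies 2 and 4 coincide), Lemma~\ref{Fell1isfinerthan3} (topology 1 refines topology 3 under first countability), the lemma that topologies 1 and 3 coincide for normed $A$, and the observation that the topology generated by topologies 3 and 4 is the pullback under $F$ of the Fell topology on $\Max A^\vee$. Your bookkeeping of the finer/coarser continuity directions, and of where first countability versus normedness enters, matches the paper's intended argument precisely.
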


	\subsection{Normed quotient vector bundles}
	
	We now assume that $A$ is a normed space and $A^\vee$ has the dual norm.
	The objective of this section is to prove the theorem:
	
	\begin{theorem}\label{thm:continuousnormedcodual}
		Let $A$ be a normed vector space, give $A^\vee$ the dual norm,
		consider a map $\kappa\colon X\to\Max A$ and let $\kappa^\vee=F\circ\kappa$.
		\begin{enumerate}
			\item If the triples $(X,A,\kappa)$ and $(X,A^\vee,\kappa^\vee)$ are
			both quotient vector bundles then they are both continuous normed
			quotient vector bundles.
			\item If $A$ is locally uniformly convex and $(X,A^\vee,\kappa^\vee)$
			is a continuous normed quotient vector bundle then $(X,A,\kappa)$ is also
			a continuous normed quotient vector bundle.
			\item If $A$ is reflexive and Fr\'echet smooth and
			$(X,A,\kappa)$ is a continuous normed quotient vector bundle then
			$(X,A^\vee,\kappa^\vee)$ is also a continuous normed quotient vector bundle.
		\end{enumerate}
	\end{theorem}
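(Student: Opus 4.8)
The plan is to reformulate continuity of a normed quotient vector bundle in terms of fiber--distance functions and then to trade primal and dual distances through the annihilator $F$. Recall from \cite[Theorem~5.12]{ReSa17} that a normed quotient vector bundle $(X,A,\kappa)$ is continuous exactly when $\kappa$ is continuous for the closed balls topology, which is the lower Vietoris topology together with the sets $\mathbf{U}_r(a)=\{V:d(a,V)>r\}$. Unwinding the two families of subbasic sets, $\kappa$ is closed balls continuous if and only if $x\mapsto d(a,\kappa(x))$ is continuous for every $a\in A$; lower Vietoris continuity of $\kappa$ accounts for its \emph{upper} semicontinuity (the sets $\{x:d(a,\kappa(x))<r\}=\kappa^{-1}(\widetilde{B_r(a)})$) while the sets $\mathbf{U}_r(a)$ account for its \emph{lower} semicontinuity (the sets $\{x:d(a,\kappa(x))>r\}$). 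The same holds verbatim for $(X,A^\vee,\kappa^\vee)$ with $\kappa^\vee=F\circ\kappa$, the relevant distance now being $d(\phi,F(\kappa(x)))=\|\phi|_{\kappa(x)}\|$.

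The engine is a pair of Hahn--Banach dualities: for $V\in\Max A$, $a\in A$ and $\phi\in A^\vee$ one has $d(a,V)=\max\{|\phi(a)|:\phi\in F(V),\ \|\phi\|\le1\}$ and $d(\phi,F(V))=\|\phi|_V\|=\sup\{|\phi(v)|:v\in V,\ \|v\|\le1\}$, the latter from the isometry $A^\vee/F(V)\cong V^\vee$ (\cf\ \cite[Lemmas~1.1 and~1.2]{HNO86}). Setting $W_{a,r}=\{\phi\in A^\vee:|\phi(a)|>r\|\phi\|\}$ and $W'_{\phi,r}=\{v\in A:|\phi(v)|>r\|v\|\}$, both open and avoiding the origin, these identities give $\{x:d(a,\kappa(x))>r\}=(\kappa^\vee)^{-1}(\widetilde{W_{a,r}})$ and $\{x:d(\phi,F(\kappa(x)))>r\}=\kappa^{-1}(\widetilde{W'_{\phi,r}})$. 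For statement~(1), if both triples are quotient vector bundles then $\kappa$ and $\kappa^\vee$ are lower Vietoris continuous; by the first paragraph each distance function is automatically upper semicontinuous, and the two displayed identities show each is also lower semicontinuous (the primal distance by continuity of $\kappa^\vee$, the dual distance by continuity of $\kappa$). Hence both distance functions are continuous and \cite[Theorem~5.12]{ReSa17} finishes statement~(1). Conceptually this is the normed analogue of Corollary~\ref{cor:FGcontinuous}, now relating the closed balls topology on one lattice to the lower Vietoris topology on the other.

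For statements~(2) and~(3) only one bundle is assumed continuous, so exactly one semicontinuity is free and the other must be extracted from the geometry of the norm; in both cases the missing half is an upper semicontinuity, equivalently the lower Vietoris continuity of the target kernel map. For~(3), with $\kappa$ closed balls continuous, lower Vietoris continuity of $\kappa$ already gives the lower semicontinuity of $x\mapsto\|\phi|_{\kappa(x)}\|$ via $\kappa^{-1}(\widetilde{W'_{\phi,r}})$, so I must prove $x\mapsto\|\phi|_{\kappa(x)}\|$ is upper semicontinuous; for~(2), continuity of the dual bundle supplies the lower semicontinuity of $x\mapsto d(a,\kappa(x))$ via $(\kappa^\vee)^{-1}(\widetilde{W_{a,r}})$, so I must prove $x\mapsto d(a,\kappa(x))$ is upper semicontinuous. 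Once the single missing semicontinuity is in hand, the corresponding fiber--distance function is continuous, and \cite[Theorem~5.12]{ReSa17} delivers the desired continuous normed quotient vector bundle. For~(3) reflexivity of $A$ also guarantees that the supremum defining $\|\phi|_{\kappa(x)}\|$ is attained on the weakly compact unit ball of $\kappa(x)$, turning it into a maximum.

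The main obstacle is precisely this upper semicontinuity, and it is where the rotundity and smoothness hypotheses are spent. The plan is to witness the relevant distance by a norming functional (resp.\ a nearest point) and to argue that the witness may be chosen to vary continuously. For~(3), Fr\'echet smoothness of $A$ makes the duality map $S(A)\to S(A^\vee)$ norm-to-norm continuous (\v{S}mulian), so functionals nearly realizing the restriction norm on $\kappa(x)$ stay norm-close; reflexivity then lets me transfer the primal control $\{x:d(a,\kappa(x))>r\}$ open, coming from closed balls continuity of $\kappa$, into the required upper bound on $\|\phi|_{\kappa(x)}\|$. For~(2), local uniform convexity of $A$ forces the nearest point of $a$ in each $\kappa(x)$ to be unique and the best--approximation selection to be norm continuous, so a point of $\kappa(x_0)$ within $r$ of $a$ can be transported to nearby fibers; together with the lower Vietoris movement of the annihilators supplied by the continuous dual bundle, this makes $\{x:d(a,\kappa(x))<r\}$ open. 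I expect the delicate point to be making these selections continuous without a countability assumption on $X$, which will require phrasing the arguments with nets and with the weak and weak-$*$ topologies rather than sequences.
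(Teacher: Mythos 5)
Your reduction of closed-balls continuity to continuity of the fibre distance functions $x\mapsto d(a,\kappa(x))$ and $x\mapsto d\bigl(\phi,F(\kappa(x))\bigr)$, split into upper semicontinuity (lower Vietoris) and lower semicontinuity (the sets $\mathbf U_r$), is exactly the paper's comparison of its four topologies on $\Max A$, and your part~(1) is correct and complete. Your second identity, $\bigl\{x:d\bigl(\phi,F(\kappa(x))\bigr)>r\bigr\}=\kappa^{-1}(\widetilde{W'_{\phi,r}})$, is Corollary~\ref{normed1finerthan3} (topology 1 is finer than topology 3), and your first identity, $\mathbf U_r(a)=F^{-1}(\widetilde{W_{a,r}})$ with $W_{a,r}=\{\phi:|\phi(a)|>r\|\phi\|\}$, is in fact a sharper and cleaner form of the paper's lemma that topology 4 is finer than topology 2: the paper only proves the containment, by a Hahn--Banach separation argument with a delicate choice of $\varepsilon$, whereas your exact equality follows in two lines from Lemmas~\ref{HNO-1.1} and~\ref{HNO-1.2}. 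That is a genuine simplification.

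The gap is in part~(2). Your mechanism --- local uniform convexity gives a unique nearest point and a norm-continuous best-approximation selection, so a point of $\kappa(x_0)$ within $r$ of $a$ ``can be transported to nearby fibers'' --- fails on two counts. First, existence: statement~(2) assumes neither completeness nor reflexivity of $A$, and a merely locally uniformly convex normed space need not admit nearest points in closed subspaces at all; local uniform convexity gives uniqueness of nearest points, never existence. Second, circularity: transporting points of $\kappa(x_0)$ to nearby fibres \emph{is} the lower Vietoris continuity of $\kappa$ you are trying to prove, and continuity of a selection $x\mapsto P_{\kappa(x)}(a)$ presupposes control on how $\kappa(x)$ varies, while the hypotheses only control the annihilators $F(\kappa(x))$. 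The paper avoids both problems by spending local uniform convexity on \emph{slices} rather than nearest points: after the reduction of Lemma~\ref{factsaboutlV} one may assume $a\in\kappa(x_0)$ with $\|a\|=1$; choosing $\phi$ with $\phi(a)=\|\phi\|=1$, local uniform convexity yields a $\delta>0$ with $\{b\in S(A):\RE\phi(b)>1-\delta\}\subset B_\varepsilon(a)$, and by Lemma~\ref{VinUrphiiffphia>ra} the condition $d\bigl(\phi,F(\kappa(x))\bigr)>1-\delta$ --- open in $X$ by the assumed continuity of the codual bundle --- forces $\kappa(x)$ to contain a unit vector in that slice, hence to meet $B_\varepsilon(a)$. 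This supplies the missing upper semicontinuity without any nearest point.

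For part~(3) your \v{S}mulian route is viable and is essentially the paper's computation in disguise: reflexivity produces a norm-attaining vector $a$ for $\phi$ (this, not attainment of $\|\phi|_{\kappa(x)}\|$, is where reflexivity is actually spent), and Fr\'echet smoothness forces the functionals $\psi\in F(\kappa(x))$ supplied by Lemma~\ref{HNO-1.2} from a primal condition $d(\lambda a,\kappa(x))>r$ to be norm-close to $\phi$. But as written it is only a plan: the entire difficulty is the construction of the primal open neighbourhood, i.e.\ the choice of the scaling $\lambda$ and radius $r$ with $\mathbf U_r(\lambda a)\subset F^{-1}\bigl(\widetilde{B_\varepsilon(\phi)}\bigr)$, which is where the paper's proof does all its work (and where Lemma~\ref{convexitilambda>1} enters). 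Finally, your worry about nets and countability of $X$ is moot: both (2) and (3) are purely statements about topologies on $\Max A$, in which $X$ never appears, and the sequences in the definitions of local uniform convexity and Fr\'echet smoothness live inside the fixed space $A$.
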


	\begin{corollary}\label{C7.16}
		Let $\mathcal A=(X,A,\kappa)$ be as above and suppose $A$ is a Banach space and $X$ is first countable and Hausdorff.
		\begin{enumerate}
			\item If $\mathcal A^\vee$ is a quotient vector bundle then both $\mathcal A$ and $\mathcal A^\vee$ are Banach bundles.
			\item If $A$ is reflexive and Fr\'echet smooth then $\mathcal A$ is a Banach bundle if and only if $\mathcal A^\vee$ is a Banach bundle.
		\end{enumerate}
	\end{corollary}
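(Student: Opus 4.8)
The plan is to translate the entire statement into the language of continuity and then feed it into Theorem~\ref{thm:continuousnormedcodual}. The key device is the criterion recalled in the preliminaries from \cite[Theorem~5.12]{ReSa17}: when the fiber space is Banach and the base is first countable, a normed quotient vector bundle has a Banach bundle as its associated bundle exactly when it is a continuous normed quotient vector bundle. Since $A$ is Banach its dual $A^\vee$ with the dual norm is Banach as well, and $X$ is first countable, so this criterion applies verbatim to both $\mathcal A$ and $\mathcal A^\vee$. Throughout I use that $\mathcal A$ is a quotient vector bundle, which is a standing assumption in this section.

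For part (1) I would argue as follows. By hypothesis $\mathcal A^\vee$ is a quotient vector bundle, and $\mathcal A$ is one as well, so Theorem~\ref{thm:continuousnormedcodual}(1) applies and shows both $\mathcal A$ and $\mathcal A^\vee$ are continuous normed quotient vector bundles. Applying the criterion above once with $A$ and once with $A^\vee$ then promotes each of them to a Banach bundle.

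For part (2) I would separate the two implications. The implication ``$\mathcal A^\vee$ a Banach bundle $\Rightarrow$ $\mathcal A$ a Banach bundle'' comes for free from part (1): a Banach bundle is in particular a quotient vector bundle, so if $\mathcal A^\vee$ is a Banach bundle then it is a quotient vector bundle and part (1) yields that both $\mathcal A$ and $\mathcal A^\vee$ are Banach bundles. For the converse, assume $\mathcal A$ is a Banach bundle, hence continuous; this is the only place the hypotheses on $A$ are used. Because $A$ is reflexive and Fr\'echet smooth, Theorem~\ref{thm:continuousnormedcodual}(3) shows $\mathcal A^\vee$ is again a continuous normed quotient vector bundle, and the criterion applied to $A^\vee$ finishes the proof.

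The one point to be careful about is not to attempt the converse implication of part (2) through Theorem~\ref{thm:continuousnormedcodual}(2): that statement requires $A$ to be locally uniformly convex, a convexity condition which is not implied by reflexivity together with Fr\'echet smoothness (the latter is a smoothness condition and controls the geometry of $A^\vee$, not of $A$). Routing the converse instead through part (1) sidesteps this entirely, and reveals that reflexivity and Fr\'echet smoothness are needed only to guarantee, in the forward direction, that the codual $\mathcal A^\vee$ is a quotient vector bundle at all.
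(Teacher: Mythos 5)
Your proof is correct and is precisely the intended argument: the paper states Corollary~\ref{C7.16} without an explicit proof, and the natural derivation is the one you give, namely combining Theorem~\ref{thm:continuousnormedcodual} with the criterion of \cite[Theorem~5.12]{ReSa17} recalled in the preliminaries (for a Banach fiber space and a first countable base, the associated bundle is a Banach bundle if and only if the kernel map is continuous for the closed balls topology), applied once to $A$ and once to the Banach space $A^\vee$. Your remark that the converse in part~(2) must be routed through part~(1) rather than through Theorem~\ref{thm:continuousnormedcodual}(2) --- since local uniform convexity of $A$ is not among the hypotheses and does not follow from reflexivity plus Fr\'echet smoothness --- is also sound, and correctly isolates the forward implication as the only place where those hypotheses are used.
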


	Given $V\in\Max A$ we denote by $S(V)$ the unit sphere in $V$.
	We will need the following two Lemmas (see \cite[Lemmas~1.1, 1.2]{HNO86}):
	
	\begin{lemma}\label{HNO-1.1}
		Let $a\in A$ and $\phi\in A^\vee$. Then $|\phi(a)|=d(a,\ker\phi)\|\phi\|$.
	\end{lemma}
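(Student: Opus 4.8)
The identity is the classical formula for the distance from a point to the hyperplane $\ker\phi$, so the plan is to establish the two inequalities $|\phi(a)|\le\|\phi\|\,d(a,\ker\phi)$ and $|\phi(a)|\ge\|\phi\|\,d(a,\ker\phi)$ separately. First I would dispose of the degenerate case $\phi=0$, where $\ker\phi=A$ forces $d(a,\ker\phi)=0$ and both sides vanish; from then on I would assume $\phi\neq0$, so that $\ker\phi$ has codimension one.

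For the upper bound I would use that $\phi$ annihilates $\ker\phi$: for every $k\in\ker\phi$ we have $|\phi(a)|=|\phi(a-k)|\le\|\phi\|\,\|a-k\|$, and taking the infimum over $k\in\ker\phi$ gives $|\phi(a)|\le\|\phi\|\,d(a,\ker\phi)$ at once.

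For the reverse inequality I would exploit the definition of the operator norm as a supremum. Given $\varepsilon\in(0,\|\phi\|)$, choose $x\in A$ with $\|x\|=1$ and $|\phi(x)|>\|\phi\|-\varepsilon$; in particular $\phi(x)\neq0$. Then $k=a-\tfrac{\phi(a)}{\phi(x)}\,x$ lies in $\ker\phi$, since $\phi(k)=\phi(a)-\tfrac{\phi(a)}{\phi(x)}\phi(x)=0$, and $\|a-k\|=|\phi(a)|/|\phi(x)|<|\phi(a)|/(\|\phi\|-\varepsilon)$. Hence $d(a,\ker\phi)<|\phi(a)|/(\|\phi\|-\varepsilon)$, and letting $\varepsilon\to0$ yields $\|\phi\|\,d(a,\ker\phi)\le|\phi(a)|$. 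Combining the two bounds gives the asserted equality.

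There is no genuine obstacle here; the only point requiring care is that the distance $d(a,\ker\phi)$ need not be attained in general (the infimum may fail to be a minimum), so the lower bound cannot be obtained by exhibiting a single closest point. This is precisely why I would route the argument through an $\varepsilon$-approximation of the supremum defining $\|\phi\|$: the construction of $k$ above produces points of $\ker\phi$ arbitrarily close to the optimal distance without assuming reflexivity or any attainment hypothesis, so the proof goes through for an arbitrary $\phi\in A^\vee$.
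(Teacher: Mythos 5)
Your proof is correct. Note, however, that the paper itself does not prove this lemma at all: it is imported verbatim from the literature (the paper cites Lemma~1.1 of the reference \cite{HNO86} and uses it as a black box), so there is no internal argument to compare against. What you have written is the standard self-contained proof, and it is complete: the upper bound $|\phi(a)|\le\|\phi\|\,d(a,\ker\phi)$ by annihilation of $\ker\phi$, and the lower bound via an $\varepsilon$-approximate norming vector $x$, which correctly sidesteps the fact that neither the distance nor the norm of $\phi$ need be attained (so no reflexivity or attainment hypothesis is smuggled in). Two trivial remarks: the degenerate case $\phi(a)=0$ makes your strict inequality $d(a,\ker\phi)<|\phi(a)|/(\|\phi\|-\varepsilon)$ read $0<0$, but then $a\in\ker\phi$ and both sides of the identity vanish, so nothing is lost; and the codimension-one observation is never actually used. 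Your argument would serve as a legitimate replacement for the external citation.
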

	
	\begin{lemma}\label{HNO-1.2}
		Let $V\in\Max A$, $V\neq A$ and let $a\in A$.  Then there is $\phi\in A^\vee$ such that $\|\phi\|=1$, $V\subset\ker\phi$ and
		$d(a,V)=d(a,\ker\phi)=\phi(a)$.
	\end{lemma}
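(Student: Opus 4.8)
The plan is to reduce the statement to the standard Hahn--Banach construction of a norming functional by passing to the quotient normed space $A/V$. First I would dispose of the degenerate case: if $a\in V$ then $d(a,V)=0$, and since $V\neq A$ the Hahn--Banach theorem provides some $\phi\in A^\vee$ with $\|\phi\|=1$ and $V\subset\ker\phi$; then $\phi(a)=0=d(a,V)$ and, as $a\in V\subset\ker\phi$, also $d(a,\ker\phi)=0$, so all three quantities agree. Hence I may assume $a\notin V$, and because $V$ is closed this gives $d:=d(a,V)>0$.

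For the main case I would work in the quotient $A/V$, equipped with the quotient norm $\|b+V\|=d(b,V)$, which is a genuine norm precisely because $V$ is closed. The coset $\bar a=a+V$ then has $\|\bar a\|=d>0$, so the Hahn--Banach theorem yields a functional $\bar\phi\in(A/V)^\vee$ with $\|\bar\phi\|=1$ and $\bar\phi(\bar a)=\|\bar a\|=d$ (in the complex case this is the usual norming functional, whose value at $\bar a$ is the real number $\|\bar a\|$). Writing $q\colon A\to A/V$ for the quotient map, I would set $\phi=\bar\phi\circ q\in A^\vee$.

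It remains to verify the three asserted properties. Since $q$ vanishes on $V$ we have $V\subset\ker\phi$, and $\phi(a)=\bar\phi(\bar a)=d=d(a,V)$. Composition with $q$ is norm-preserving on the annihilator of $V$---equivalently, $q$ carries the open unit ball of $A$ onto that of $A/V$---so $\|\phi\|=\|\bar\phi\|=1$. Finally, from $V\subset\ker\phi$ we get $d(a,\ker\phi)\le d(a,V)=d$, while Lemma~\ref{HNO-1.1} gives $d(a,\ker\phi)=|\phi(a)|/\|\phi\|=d$; hence $d(a,V)=d(a,\ker\phi)=\phi(a)$, as required. The argument is entirely routine; the only two points demanding a little care are ensuring that $\phi(a)$ comes out as the nonnegative real number $d(a,V)$ rather than an arbitrary complex scalar (handled by the norming-functional form of Hahn--Banach) and that the pulled-back functional has norm exactly $1$ rather than merely $\le 1$ (handled by the isometric identification of $(A/V)^\vee$ with the annihilator of $V$ in $A^\vee$).
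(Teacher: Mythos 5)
Your proof is correct and complete. Note that the paper itself does not prove this lemma at all: it is quoted from the literature (cited as \cite[Lemma~1.2]{HNO86}), so there is no in-paper argument to compare against. Your quotient-space argument is the standard one and fills the gap cleanly: passing to $A/V$ with the quotient norm (a genuine norm since $V$ is closed), taking a norming functional $\bar\phi$ at $\bar a=a+V$, and pulling back along the quotient map $q$ handles all three requirements, with the two delicate points --- that $\phi(a)$ equals the real number $d(a,V)$ and that $\|\bar\phi\circ q\|=\|\bar\phi\|$ via the isometry $(A/V)^\vee\cong V^\perp$ --- correctly identified and resolved. The final step, getting $d(a,\ker\phi)=d(a,V)$ by combining the inclusion $V\subset\ker\phi$ with Lemma~\ref{HNO-1.1}, is also valid (and consistent with how the paper itself pairs these two quoted lemmas).
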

	
	We also gather some trivial results about the lower Vietoris topology:
	
	\begin{lemma}\label{factsaboutlV}
		Suppose $\Max A$ has the lower Vietoris topology.
		\begin{enumerate}
			\item For any $\lambda\in\CC$ we have $\widetilde{B_{|\lambda|\varepsilon}(\lambda a)}=\widetilde{B_\varepsilon(a)}$.
			\item Let $U\subset A$ be open and let $V\in\widetilde U$. Then there is $\varepsilon>0$ and $a\in V$ such that
			$\widetilde{B_\varepsilon(a)}\subset\widetilde U$.
			\item Let $V\in\Max A$. The collection $\bigl\{\widetilde {B_\varepsilon(a)}\bigr\}$ with $a\in S(V)$ is a local subbasis of
			neighbourhoods at $V$.
		\end{enumerate}
	\end{lemma}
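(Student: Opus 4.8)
The plan is to dispatch parts (1) and (2) by elementary manipulations and build part (3) on top of them. For part (1) the key observation is that, since every $V\in\Max A$ is a linear subspace, multiplication by a nonzero scalar $\lambda$ is a homeomorphism of $A$ that maps $V$ onto itself and carries the open ball $B_\varepsilon(a)$ bijectively onto $B_{|\lambda|\varepsilon}(\lambda a)$, because $\|b-a\|<\varepsilon$ holds exactly when $\|\lambda b-\lambda a\|<|\lambda|\varepsilon$. Hence $V\cap B_\varepsilon(a)\neq\emptyset$ precisely when $V\cap B_{|\lambda|\varepsilon}(\lambda a)\neq\emptyset$, which is the claimed equality of subbasic sets; the statement is understood for $\lambda\neq 0$, the case $\lambda=0$ being degenerate.

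Part (2) is immediate: if $V\in\widetilde U$, pick $a\in V\cap U$; openness of $U$ gives $\varepsilon>0$ with $B_\varepsilon(a)\subset U$, and then $\widetilde{B_\varepsilon(a)}\subset\widetilde U$ since any $W$ meeting $B_\varepsilon(a)$ also meets $U$.

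For part (3), the substantive statement, I would first recall that a local subbasis of neighbourhoods at $V$ is a family of neighbourhoods of $V$ whose finite intersections form a neighbourhood base. Each $\widetilde{B_\varepsilon(a)}$ with $a\in S(V)$ is an open neighbourhood of $V$, since its centre $a$ lies in $V\cap B_\varepsilon(a)$. For the converse direction, since the lower Vietoris topology is generated by the sets $\widetilde U$, it suffices to show that every subbasic neighbourhood $\widetilde U$ of $V$ contains some $\widetilde{B_\varepsilon(a)}$ with $a\in S(V)$ that still contains $V$; intersecting finitely many of these then handles an arbitrary basic neighbourhood $\bigcap_i\widetilde{U_i}$. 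Given such a $\widetilde U$, part (2) produces $a'\in V$ and $\varepsilon>0$ with $\widetilde{B_\varepsilon(a')}\subset\widetilde U$.

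The main obstacle, and the only place any care is needed, is normalizing $a'$ to a unit vector of $V$. If $a'\neq 0$, set $a=a'/\|a'\|\in S(V)$; part (1) with $\lambda=\|a'\|$ gives $\widetilde{B_{\varepsilon/\|a'\|}(a)}=\widetilde{B_\varepsilon(a')}\subset\widetilde U$, and $V$ lies in this set. The degenerate possibilities I would treat separately: if $V=\{0\}$ then $S(V)=\emptyset$, and one checks that the only lower Vietoris neighbourhood of $\{0\}$ is all of $\Max A$, because every subspace contains $0$, so $\widetilde U=\Max A$ whenever $0\in U$; this is the empty intersection, so the empty subbasis suffices. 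Likewise, if the only available centre from part (2) is $a'=0$ with $B_\varepsilon(0)\subset U$, then again $\widetilde U=\Max A$ and any $\widetilde{B_{\varepsilon'}(a)}$ with $a\in S(V)$ already lies inside it. This completes the verification that $\{\widetilde{B_\varepsilon(a)}\}_{a\in S(V)}$ is a local subbasis at $V$.
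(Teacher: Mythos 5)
Your proof is correct. Note that the paper offers no proof of this lemma at all --- it is stated as a list of ``trivial results'' about the lower Vietoris topology --- so there is nothing to compare against, and your argument is the natural one: each part is elementary, and part (3) is handled exactly as one would expect, by reducing to subbasic neighbourhoods $\widetilde U$, invoking (2) to get a centre $a'\in V\cap U$, and rescaling via (1). Your attention to the degenerate cases ($\lambda=0$ in (1); $V=\{0\}$, where $S(V)=\emptyset$ and every neighbourhood of $V$ is all of $\Max A$; and centres $a'=0$, where $\widetilde U=\Max A$) addresses precisely the points that the paper's statement silently glosses over, and is handled correctly.
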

	
	For the proof of Theorem~\ref{thm:continuousnormedcodual}
	we consider the following four topologies on $\Max A$:
	\begin{enumerate}
		\item The lower Vietoris topology induced by the norm on $A$.
		\item The topology generated by the sets $\mathbf U_r(a)=\{V:d(a,V)>r\}$, with $a\in A$ and $r>0$.
		\item The topology generated by the sets $F^{-1}\bigl(\mathbf U_r(\phi)\bigr)=\{V:d\bigl(\phi,F(V)\bigr)>r\}$, with $\phi\in A^\vee$ and $r>0$.
		\item The topology consisting of the sets $F^{-1}(\mathcal U)$ where $\mathcal U\subset\Max A^\vee$ is open in the lower Vietoris topology
		induced by the dual norm on $A^\vee$.
	\end{enumerate}

	\subsubsection{Topologies 1 and 3}

	\begin{lemma}\label{VinUrphiiffphia>ra}
		Let $V\in\Max A$ and let $\phi\in A^\vee$. The following are equivalent:
		\begin{enumerate}
			\item $d\bigl(\phi,F(V)\bigr)>r$.
			\item $\|\phi|_V\|>r$.
			\item There is $a\in V$ such that $|\phi(a)|-r\|a\|>0$.
		\end{enumerate}
	\end{lemma}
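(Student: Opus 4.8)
The plan is to establish the chain of equivalences $(3)\Leftrightarrow(2)\Leftrightarrow(1)$, the crux being the distance-to-annihilator identity $d\bigl(\phi,F(V)\bigr)=\|\phi|_V\|$, which identifies the distance from $\phi$ to the annihilator $F(V)$ with the norm of the restriction of $\phi$ to $V$.

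First I would dispose of $(2)\Leftrightarrow(3)$, which is immediate from the definition $\|\phi|_V\|=\sup\{|\phi(a)|:a\in V,\ \|a\|\le1\}$. If $\|\phi|_V\|>r$ then, since the supremum exceeds $r$, some $a\in V$ with $\|a\|\le1$ satisfies $|\phi(a)|>r\ge r\|a\|$, giving (3). Conversely, if $a\in V$ satisfies $|\phi(a)|>r\|a\|$ then $a\neq0$, and replacing $a$ by $a/\|a\|$ shows $\|\phi|_V\|>r$.

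Next I would prove $(1)\Leftrightarrow(2)$ through the two inequalities making up the identity $d\bigl(\phi,F(V)\bigr)=\|\phi|_V\|$. For $\|\phi|_V\|\le d\bigl(\phi,F(V)\bigr)$: every $\psi\in F(V)$ vanishes on $V$, so for $a\in V$ with $\|a\|\le1$ we have $|\phi(a)|=|(\phi-\psi)(a)|\le\|\phi-\psi\|$; taking the supremum over such $a$ and then the infimum over $\psi\in F(V)$ yields the bound. For the reverse inequality I invoke the (complex) Hahn--Banach theorem: the functional $\phi|_V\in V^\vee$ extends to some $\widetilde\phi\in A^\vee$ with $\|\widetilde\phi\|=\|\phi|_V\|$; since $\phi-\widetilde\phi$ vanishes on $V$ it lies in $F(V)$, whence $d\bigl(\phi,F(V)\bigr)\le\|\phi-(\phi-\widetilde\phi)\|=\|\widetilde\phi\|=\|\phi|_V\|$. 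Combining the two inequalities gives the identity, and then $d\bigl(\phi,F(V)\bigr)>r$ if and only if $\|\phi|_V\|>r$.

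The only substantive ingredient is the norm-preserving Hahn--Banach extension used in the reverse inequality, so I do not anticipate any real obstacle. One could alternatively deduce $(1)\Leftrightarrow(2)$ from Lemma~\ref{HNO-1.2}, which supplies a norming functional in $F(V)$ realizing the relevant distance, but the direct argument above is cleaner and sidesteps the edge cases $V=A$ and $\phi=0$.
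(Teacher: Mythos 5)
Your proof is correct and follows essentially the same route as the paper's: the equivalence $(2)\Leftrightarrow(3)$ by rescaling, the inequality $\|\phi|_V\|\le d\bigl(\phi,F(V)\bigr)$ by evaluating $\phi-\psi$ on the unit ball of $V$, and the reverse direction via a norm-controlled Hahn--Banach extension whose difference with $\phi$ lies in $F(V)$. The only (cosmetic) difference is that you package the Hahn--Banach step as the exact identity $d\bigl(\phi,F(V)\bigr)=\|\phi|_V\|$, whereas the paper proves the contrapositive implication using an extension of norm at most $r$.
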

	\begin{proof}
		It is clear that $2\Leftrightarrow3$. We will show that $1\Leftrightarrow2$.
		If $\|\phi|_V\|>r$ then for every $\psi\in F(V)$ we have 
		\[
		\|\psi-\phi\|\geq\sup_{a\in S(V)}|\psi(a)-\phi(a)|=\sup_{a\in S(V)}|\phi(a)|=\|\phi|_V\|
		\]
		so $d\bigl(\phi,F(V)\bigr)\geq\|\phi|_V\|>r$. Conversely, suppose $\|\phi|_V\|\leq r$. 
		By the Hahn-Banach Theorem there is $\hat\phi\in A^\vee$ such that $\hat\phi|_V=\phi|_V$
		and $\|\hat\phi\|\leq r$. Let $\psi=\phi-\hat\phi$. Then $\psi\in F(V)$ and $\|\psi-\phi\|\leq r$
		so $d\bigl(\phi,F(V)\bigr)\leq r$.\qedhere
	\end{proof}
	
	\begin{corollary}\label{normed1finerthan3}
		Topology 1 is finer than topology 3.
	\end{corollary}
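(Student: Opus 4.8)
The plan is to show that each subbasic open set of topology~3 is open in the lower Vietoris topology (topology~1). Since topology~3 is generated by the sets $F^{-1}\bigl(\mathbf U_r(\phi)\bigr)=\{V\in\Max A:d(\phi,F(V))>r\}$ for $\phi\in A^\vee$ and $r>0$, it suffices to exhibit each such set as a subbasic lower Vietoris open set $\widetilde U$ for a suitable open $U\subset A$.

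First I would invoke Lemma~\ref{VinUrphiiffphia>ra}, whose equivalence $(1)\Leftrightarrow(3)$ rewrites the membership condition $d(\phi,F(V))>r$ as the existence of some $a\in V$ with $|\phi(a)|-r\|a\|>0$. This pointwise reformulation immediately suggests the candidate
\[
U=\{a\in A:|\phi(a)|>r\|a\|\}\,.
\]
The set $U$ is open in $A$ because the map $a\mapsto|\phi(a)|-r\|a\|$ is continuous (both $\phi$ and the norm are continuous), so $U$ is the preimage of the open half-line $(0,\infty)$.

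With this choice I would verify the identity $F^{-1}\bigl(\mathbf U_r(\phi)\bigr)=\widetilde U$ directly from the lemma: a subspace $V$ lies in $\widetilde U$ precisely when $V\cap U\neq\emptyset$, that is, when some $a\in V$ satisfies $|\phi(a)|>r\|a\|$, which by Lemma~\ref{VinUrphiiffphia>ra} is exactly the condition $d(\phi,F(V))>r$. Hence $F^{-1}\bigl(\mathbf U_r(\phi)\bigr)$ is a subbasic open set of the lower Vietoris topology, and since the generating sets of topology~3 are lower Vietoris open, topology~1 is finer than topology~3.

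There is no genuine obstacle here: the entire content is carried by Lemma~\ref{VinUrphiiffphia>ra}, which converts the dual-distance condition on $F(V)$ into a pointwise witness $a\in V$. The only points needing care are the continuity argument establishing that $U$ is open and the routine check of both inclusions in $F^{-1}\bigl(\mathbf U_r(\phi)\bigr)=\widetilde U$, neither of which presents any real difficulty.
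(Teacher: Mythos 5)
Your proposal is correct and follows exactly the paper's own argument: both define $U=\{a\in A:|\phi(a)|-r\|a\|>0\}$, note that it is open, and use the equivalence $(1)\Leftrightarrow(3)$ of Lemma~\ref{VinUrphiiffphia>ra} to establish $F^{-1}\bigl(\mathbf U_r(\phi)\bigr)=\widetilde U$, so each generating set of topology~3 is lower Vietoris open. No differences worth noting.
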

	\begin{proof}
		Given $\phi\in A^\vee$ and $r>0$ let $U=\{a\in A:|\phi(a)|-r\|a\|>0\}$. Then $U$ is open and
		$\widetilde U=F^{-1}\bigl(\mathbf U_r(\phi)\bigr)$ since:
		\[
		d\bigl(\phi,F(V)\bigr)>r\Leftrightarrow \exists_{a\in V}|\phi(a)|>r\|a\|
		\Leftrightarrow V\cap U\neq\emptyset\Leftrightarrow V\in\widetilde U\,.\qedhere
		\]
	\end{proof}
	
	\begin{corollary}
		Let $A$ be a normed vector space and let $(X,A,\kappa)$ be a quotient vector bundle. If $(X,A^\vee,\kappa^\vee)$
		is a quotient vecor bundle, then it is a continuous normed quotient vector bundle.
	\end{corollary}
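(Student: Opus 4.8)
The plan is to invoke the characterization of continuity from \cite[Theorem~5.12]{ReSa17}, according to which a normed quotient vector bundle is continuous precisely when its kernel map is continuous for the closed balls topology. Since $(X,A^\vee,\kappa^\vee)$ is assumed to be a quotient vector bundle and $A^\vee$ carries the dual norm, it is already a normed quotient vector bundle, so the whole task reduces to showing that $\kappa^\vee=F\circ\kappa$ is continuous when $\Max A^\vee$ is given the closed balls topology.

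The closed balls topology on $\Max A^\vee$ is generated by the lower Vietoris topology together with the sets $\mathbf U_r(\phi)=\{\mathcal V\in\Max A^\vee:d(\phi,\mathcal V)>r\}$ for $\phi\in A^\vee$ and $r>0$, so I would check continuity of $\kappa^\vee$ against these two families of generators in turn. Continuity against the lower Vietoris topology on $\Max A^\vee$ is nothing but the hypothesis that $(X,A^\vee,\kappa^\vee)$ is a quotient vector bundle, so this case is immediate.

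For the sets $\mathbf U_r(\phi)$ I would rewrite the preimage as $(\kappa^\vee)^{-1}(\mathbf U_r(\phi))=\kappa^{-1}\bigl(F^{-1}(\mathbf U_r(\phi))\bigr)$, transferring the question to the subbasic open set $F^{-1}(\mathbf U_r(\phi))$ of topology~3 on $\Max A$. By Corollary~\ref{normed1finerthan3}, topology~1 (the lower Vietoris topology on $\Max A$) is finer than topology~3, so $F^{-1}(\mathbf U_r(\phi))$ is open in the lower Vietoris topology on $\Max A$; since $(X,A,\kappa)$ is a quotient vector bundle, $\kappa$ is continuous for that topology, and hence $\kappa^{-1}\bigl(F^{-1}(\mathbf U_r(\phi))\bigr)$ is open in $X$.

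Putting the two cases together shows $\kappa^\vee$ is continuous for the closed balls topology on $\Max A^\vee$, and \cite[Theorem~5.12]{ReSa17} then delivers the conclusion. I expect no genuine obstacle here: all the analytic content has already been absorbed into Corollary~\ref{normed1finerthan3}, and what remains is the bookkeeping observation that the two generating families of the closed balls topology on $\Max A^\vee$ pull back along $F$ to topology~4 (handled by the second hypothesis) and topology~3 (handled by the first hypothesis through Corollary~\ref{normed1finerthan3}).
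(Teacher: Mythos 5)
Your proof is correct and is exactly the argument the paper intends: the corollary is stated right after Corollary~\ref{normed1finerthan3} with no separate proof, precisely because it follows by splitting the subbasis of the closed balls topology on $\Max A^\vee$ into the lower Vietoris part (the hypothesis that $(X,A^\vee,\kappa^\vee)$ is a quotient vector bundle) and the sets $\mathbf U_r(\phi)$ (pulled back along $F$ to topology~3, which Corollary~\ref{normed1finerthan3} shows is coarser than topology~1), then invoking \cite[Theorem~5.12]{ReSa17}. No gaps.
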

	
	We say $A$ is locally uniformly convex \cite{Lov55} if for any sequence $(a_n)$ in $A$ we have
	\[
	\lim\bigl(2\|a\|^2+2\|a_n\|^2-\|a+a_n\|^2\bigr)=0\ \Rightarrow\ \lim\|a-a_n\|=0\,.
	\]
	
	\begin{lemma}
		If $A$ is locally uniformly convex then topologies 1 and 3 coincide.
	\end{lemma}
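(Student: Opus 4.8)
The plan is to establish the one inclusion that remains: by Corollary~\ref{normed1finerthan3} topology~1 is already finer than topology~3, so it suffices to show that every set open in the lower Vietoris topology is open in topology~3. The generators of topology~1 are the sets $\widetilde U$ with $U\subset A$ open, so I would fix $V\in\widetilde U$ and, using Lemma~\ref{factsaboutlV}(2), replace $\widetilde U$ by a subbasic neighbourhood $\widetilde{B_\varepsilon(a)}$ with $a\in V$ satisfying $V\in\widetilde{B_\varepsilon(a)}\subset\widetilde U$; rescaling by Lemma~\ref{factsaboutlV}(1) I may assume $\|a\|=1$, so that $a\in S(V)$ (the case $a=0$ being trivial, as then $\widetilde{B_\varepsilon(0)}=\Max A$). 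The problem thus reduces to the local claim: given $V\in\Max A$, a unit vector $a\in V$ and $\varepsilon>0$, produce a topology~3 neighbourhood of $V$ contained in $\widetilde{B_\varepsilon(a)}$.

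First I would pick, by the Hahn--Banach theorem, a functional $\phi\in A^\vee$ with $\|\phi\|=1$ and $\phi(a)=1$, and consider the topology~3 open sets $N_r=F^{-1}\bigl(\mathbf U_r(\phi)\bigr)$. By Lemma~\ref{VinUrphiiffphia>ra} we have $N_r=\{W\in\Max A:\|\phi|_W\|>r\}$, and since $a\in V$ is a unit vector with $\phi(a)=1$ we get $\|\phi|_V\|\ge 1$, whence $V\in N_r$ for every $r<1$. The key assertion is then that $N_r\subset\widetilde{B_\varepsilon(a)}$ once $r$ is close enough to~$1$; granting this, $N_r$ is the required neighbourhood and the theorem follows.

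The hard part is this last assertion, and it is precisely where local uniform convexity is used. I would argue by contradiction: if no such $r$ works, then choosing $r_n\to1$ I obtain subspaces $W_n\in N_{r_n}$ with $d(a,W_n)\ge\varepsilon$. From $\|\phi|_{W_n}\|>r_n$ (Lemma~\ref{VinUrphiiffphia>ra}) I select unit vectors $b_n\in W_n$, and after multiplying by a unimodular scalar may assume $\phi(b_n)=|\phi(b_n)|>r_n$. Since $\|\phi\|=1$,
\[
1+r_n<1+\phi(b_n)=\phi(a+b_n)\le\|a+b_n\|\le\|a\|+\|b_n\|=2\,,
\]
so $\|a+b_n\|\to2$ as $r_n\to1$ and hence $2\|a\|^2+2\|b_n\|^2-\|a+b_n\|^2=4-\|a+b_n\|^2\to0$. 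Local uniform convexity then yields $\|a-b_n\|\to0$, so $d(a,W_n)\le\|a-b_n\|\to0$, contradicting $d(a,W_n)\ge\varepsilon$. The main obstacle to watch is arranging this geometric squeeze so that the convexity hypothesis applies verbatim, namely keeping the norms equal, $\|a\|=\|b_n\|=1$, and driving the rotundity modulus to~$0$; the rest is the routine bookkeeping of reducing an arbitrary lower Vietoris neighbourhood to the basic sets $\widetilde{B_\varepsilon(a)}$.
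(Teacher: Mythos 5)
Your proof is correct and follows essentially the same route as the paper: both reduce to the basic neighbourhoods $\widetilde{B_\varepsilon(a)}$ with $a\in S(V)$, pick a norming functional $\phi$ at $a$ by Hahn--Banach, use Lemma~\ref{VinUrphiiffphia>ra} plus a phase rotation to extract unit vectors $b$ in the competing subspaces with $\phi(b)$ real and close to $1$, and apply local uniform convexity to the squeeze $1+r<\|a+b\|\leq 2$. The only difference is organizational: the paper first fixes $\delta$ by a contradiction argument on unit vectors in $U_\delta=\{b\in S(A):\RE\phi(b)>1-\delta\}$ and then verifies the containment $F^{-1}\bigl(\mathbf U_{1-\delta}(\phi)\bigr)\subset\widetilde{B_\varepsilon(a)}$, whereas you run a single sequential contradiction directly on the subspaces $W_n$.
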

	\begin{proof}
		By Corollary~\ref{normed1finerthan3} and Lemma~\ref{factsaboutlV}-3 we only need to show that, for any $a\in A$ and $\varepsilon>0$
		the set $\widetilde B_\varepsilon(a)$ is open in topology 3.
		Let $V\in\widetilde{B_\varepsilon(a)}$. By Lemma~\ref{factsaboutlV}-3 we may assume that $a\in V$ and $\|a\|=1$. 
		By the Hahn-Banach theorem there is
		$\phi\in A^\vee$ such that $\phi(a)=\|a\|=1$ and $\|\phi\|=1$. 
		Then by Lemma~\ref{VinUrphiiffphia>ra} we have $V\in F^{-1}\bigl(\mathbf U_{1-\delta}(\phi)\bigr)$
		whenever $0<\delta<1$. Consider the sets
		$U_\delta=\{b\in S(A):\RE\phi(b)>1-\delta\}$. We claim that there is $\delta>0$ such that $U_\delta\subset B_\varepsilon(a)$.
		We prove by contradiction. For each $n$ pick $a_n$ such that $\|a_n\|=1$, $\RE\phi(a_n)>1-\tfrac1n$ and $\|a_n-a\|\geq\varepsilon$.
		Then $\RE\phi(a+a_n)>2-\frac1n$ so 
		\[
		2-\tfrac1n<|\phi(a+a_n)|\leq\|a+a_n\|\leq 2
		\]
		from which it follows that $\lim\|a+a_n\|=2$.
		Since $A$ is locally uniformly convex we have $\lim\|a_n-a\|=0$, a contradiction. Pick $\delta$ so that $0<\delta<1$ and $U_\delta\subset B_\varepsilon(a)$.
		To finish the proof we only have to show that $F^{-1}\bigl(\mathbf U_{1-\delta}(\phi)\bigr)\subset \widetilde{B_\varepsilon(a)}$. Suppose
		$F(W)\in \mathbf U_{1-\delta}(\phi)$. Then by~\ref{VinUrphiiffphia>ra} there is $b\in S(W)$ such that $|\phi(b)|>1-\delta$. Multiplying by a phase %
		we may assume that $\RE\phi(b)>1-\delta$ so $b\in U_\delta\subset B_\varepsilon(a)$ so $W\cap B_\varepsilon(a)\neq\emptyset$
		which finishes the proof.\qedhere
	\end{proof}
	
	\subsubsection{Topologies 2 and 4}
	
	\begin{lemma}
		Topology 4 is finer than topology 2.
	\end{lemma}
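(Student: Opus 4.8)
The plan is to show that every subbasic open set of topology 2 already lies in topology 4. Since $F^{-1}$ preserves arbitrary unions and finite intersections, the collection $\{F^{-1}(\mathcal U):\mathcal U\text{ is lower Vietoris open in }\Max A^\vee\}$ is itself closed under unions and finite intersections, so it will suffice to produce, for each $a\in A$ and each $r>0$, a lower Vietoris open set $\mathcal U\subset\Max A^\vee$ with $\mathbf U_r(a)=F^{-1}(\mathcal U)$. This mirrors the proof of Corollary~\ref{normed1finerthan3}, with the roles of $A$ and $A^\vee$ interchanged.

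The candidate is $\mathcal U=\widetilde U$, where
\[
U=\{\phi\in A^\vee:|\phi(a)|>r\|\phi\|\}\subset A^\vee.
\]
First I would check that $U$ is open: the map $\phi\mapsto|\phi(a)|-r\|\phi\|$ is continuous on $A^\vee$, since the dual norm is continuous and $\phi\mapsto\phi(a)$ is the bounded functional on $A^\vee$ given by the image of $a$ in the bidual. Then $\widetilde U$ is a subbasic lower Vietoris open subset of $\Max A^\vee$. The key step is the distance--duality identity
\[
d(a,V)=\max\bigl\{|\phi(a)|:\phi\in F(V),\ \|\phi\|\le 1\bigr\},
\]
whose ``$\le$'' direction is immediate (for $v\in V$ and $\|\phi\|\le 1$ one has $|\phi(a)|=|\phi(a-v)|\le\|a-v\|$, so $|\phi(a)|\le d(a,V)$), and whose ``$\ge$'' direction, namely that the maximum is attained, is exactly the content of Lemma~\ref{HNO-1.2}. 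Note that this is applicable because $r>0$ forces $d(a,V)>r$ to occur only when $V\neq A$.

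Granting this identity, for $r>0$ one has $d(a,V)>r$ if and only if some $\phi\in F(V)$ with $\|\phi\|\le 1$ satisfies $|\phi(a)|>r$; since $F(V)$ is a linear subspace, rescaling shows this is equivalent to the existence of $\phi\in F(V)$ with $|\phi(a)|>r\|\phi\|$, that is, to $F(V)\cap U\neq\emptyset$, that is, to $F(V)\in\widetilde U$. Hence $\mathbf U_r(a)=F^{-1}(\widetilde U)$, which completes the argument. I do not anticipate a genuine obstacle: the statement is the exact dual of Corollary~\ref{normed1finerthan3}, and the only points needing care are that the maximum in the distance formula is attained (supplied by Hahn--Banach via Lemma~\ref{HNO-1.2}) and that the rescaling in the last step is harmless, since the strict inequality $|\phi(a)|>r\|\phi\|$ automatically excludes $\phi=0$.
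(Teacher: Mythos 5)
Your proof is correct, but it takes a genuinely different route from the paper's. The paper argues locally: given $V\in\mathbf U_r(a)$ with $d=d(a,V)>r$, it fixes $r<r'<d$, invokes the Hahn--Banach \emph{separation} theorem to produce $\phi\in F(V)$ with $\RE\phi>0$ on $B_d(a)$, and then, via an explicit choice of $\varepsilon$, shows that $F^{-1}\bigl(\widetilde{B_\varepsilon(\phi)}\bigr)$ is a topology-4 neighbourhood of $V$ contained in $\mathbf U_r(a)$; it never identifies $\mathbf U_r(a)$ exactly. You instead prove the exact equality $\mathbf U_r(a)=F^{-1}(\widetilde U)$ with $U=\{\phi\in A^\vee:|\phi(a)|>r\|\phi\|\}$, resting on the distance-duality formula $d(a,V)=\max\bigl\{|\phi(a)|:\phi\in F(V),\ \|\phi\|\le 1\bigr\}$, whose nontrivial half (attainment) is precisely Lemma~\ref{HNO-1.2}, which the paper has already recorded in this section. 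As you observe, this makes your argument the exact dual of the paper's own proof of Corollary~\ref{normed1finerthan3}, restoring the symmetry between the pairs of topologies (1,3) and (2,4): where that corollary uses the Hahn--Banach extension step of Lemma~\ref{VinUrphiiffphia>ra}, you use the norm-attaining functional of Lemma~\ref{HNO-1.2}. What each approach buys: yours is shorter and yields the stronger statement that every subbasic set of topology 2 is the $F$-preimage of a single subbasic lower Vietoris open subset of $\Max A^\vee$; the paper's neighbourhood-by-neighbourhood estimate avoids appealing to the attainment lemma, at the cost of an ad hoc $\varepsilon$ computation. Your treatment of the edge cases is also sound: $d(a,V)>r>0$ rules out $V=A$ so Lemma~\ref{HNO-1.2} applies, the backward implication needs only the elementary inequality $|\phi(a)|\le\|\phi\|\,d(a,V)$ for $\phi\in F(V)$, and the strict inequality in the definition of $U$ excludes $\phi=0$ from the rescaling step.
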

	\begin{proof}
		Let $a\in A$ and let $r>0$. We will show that $\mathbf U_r(a)$ is open in topology 4. Let $V\in\mathbf U_r(a)$, let
		$d=d(a,V)>r$ and choose $r'$ such that $r<r'<d$.
		By the Hahn-Banach separation theorem there is $\phi\in A^\vee$ such that $\phi|_V=0$ and $\RE\phi>0$ on $B_d(a)$.
		\[
		\varepsilon=\frac{\bigl(1-\frac {r'}d\bigr)\RE\phi(a)}{r'+\|a\|}\,.
		\]
		Clearly $V\in F^{-1}\bigl(\widetilde{B_\varepsilon(\phi)}\bigr)$, because $\phi\in F(V)$.
		We claim that $F^{-1}\bigl(\widetilde{B_\varepsilon(\phi)}\bigr)\subset\mathbf U_{r}(a)$. Suppose
		$F(W)\in \widetilde{B_\varepsilon(\phi)}$. Then there is $\psi\in A^\vee$ such that $W\subset\ker\psi$
		and $\|\psi-\phi\|<\varepsilon$.
		Let $b\in B_{r'}(a)$. Then $\bigl|\RE\psi(b)-\RE\phi(b)\bigr|\leq\bigl|\psi(b)-\phi(b)\bigr|<\varepsilon\|b\|$
		and $\|b\|<r'+\|a\|$ so
		\[
		\RE\psi(b)>\RE\phi(b)-\varepsilon\|b\|
		>\RE\phi(b)-\varepsilon(r'+\|a\|)
		=\RE\phi(b)-\bigl(1-\frac {r'}d\bigr)\RE\phi(a)
		\]
		We claim that $\RE\psi(b)>0$. Since $b\in B_{r'}(a)$ we have
		$\tfrac{d}{r'}(b-a)+a\in B_d(a)$ so $\RE\phi\bigl(\frac d{r'}(b-a)+a\bigr)>0$. It follows that
		$\RE\phi(b)>\bigl(1-\frac {r'}d\bigr)\RE\phi(a)$ and it follows that $\RE\psi(b)>0$.
		But then $\psi(b)\neq 0$ for all $b\in B_{r'}(a)$ so $W\cap B_{r'}(a)=0$ so $d(a,W)\geq r'>r$ which finishes the proof.\qedhere
	\end{proof}
	
	\begin{corollary}
		Let $A$ be a normed vector space and let $(X,A,\kappa)$ be a quotient vector bundle. If the map $\kappa^\vee$ is continuous
		then $(X,A,\kappa)$ is a continuous normed quotient vector bundle.
	\end{corollary}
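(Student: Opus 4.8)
The plan is to translate all the hypotheses into statements about continuity of $\kappa$ with respect to the four topologies on $\Max A$ listed at the start of this subsection, and then appeal to the closed balls characterization of continuity. Recall from \cite[Theorem~5.12]{ReSa17} that $(X,A,\kappa)$ is a continuous normed quotient vector bundle if and only if $\kappa\colon X\to\Max A$ is continuous when $\Max A$ carries the closed balls topology, that is, the coarsest topology containing both topology~1 (the lower Vietoris topology) and topology~2 (the topology generated by the sets $\mathbf U_r(a)$). Hence it suffices to prove that $\kappa$ is continuous with respect to topologies~1 and~2 simultaneously.

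First I would record that, since $(X,A,\kappa)$ is a quotient vector bundle, $\kappa$ is by definition continuous with respect to topology~1. Next I would unwind the hypothesis on $\kappa^\vee$: giving $\Max A^\vee$ the lower Vietoris topology induced by the dual norm, continuity of $\kappa^\vee=F\circ\kappa$ means that $(\kappa^\vee)^{-1}(\mathcal U)=\kappa^{-1}\bigl(F^{-1}(\mathcal U)\bigr)$ is open for every lower Vietoris open $\mathcal U\subset\Max A^\vee$. Since the sets $F^{-1}(\mathcal U)$ are precisely the open sets of topology~4, this says exactly that $\kappa$ is continuous with respect to topology~4.

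The key step is then to invoke the Lemma just proved, that topology~4 is finer than topology~2. Because every topology~2 open set is therefore open in topology~4, and $\kappa$ is continuous for topology~4, the preimage under $\kappa$ of every topology~2 open set is open in $X$; thus $\kappa$ is continuous for topology~2 as well. Combining this with continuity for topology~1, $\kappa$ is continuous for the topology they jointly generate, which is the closed balls topology, and \cite[Theorem~5.12]{ReSa17} delivers the conclusion.

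I do not anticipate a genuine obstacle, since all the analytic content is already absorbed into the comparison between topologies~4 and~2; the corollary itself is a short matching of hypotheses to the appropriate topologies. The only point deserving care is the identification, in the second paragraph, of continuity of $\kappa^\vee$ for the lower Vietoris topology on $\Max A^\vee$ with continuity of $\kappa$ for topology~4 on $\Max A$, which rests on the fact that the preimage operation $F^{-1}$ exhibits topology~4 as the initial topology along $F$.
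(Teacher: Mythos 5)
Your proof is correct and matches the paper's intended argument exactly: the paper states this corollary without further proof precisely because it follows, as you argue, from the preceding lemma (topology~4 is finer than topology~2) together with continuity of $\kappa$ for topology~1 and the closed balls characterization of continuous normed quotient vector bundles from \cite[Theorem~5.12]{ReSa17}. Your careful identification of continuity of $\kappa^\vee$ with continuity of $\kappa$ for topology~4 via $(\kappa^\vee)^{-1}(\mathcal U)=\kappa^{-1}\bigl(F^{-1}(\mathcal U)\bigr)$ is exactly the right way to make the implicit step explicit.
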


	Before we turn to the next result we need an easy convexity result:
	
	\begin{lemma}\label{convexitilambda>1}
		Let $V\in\Max A$, let $a\in A$ be such that $\|a\|=d(a,V)$, let $b\in V$ and let $\lambda$ be a positive real number.
		If $\|a-\lambda b\|>\|a-b\|$ then $\lambda>1$.
	\end{lemma}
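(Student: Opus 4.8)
The plan is to reduce the statement to a one–variable convexity argument along the line through $a$ in the direction $b$. Define the real function $f\colon\RR\to\RR$ by $f(t)=\|a-tb\|$. Since the norm is convex and the map $t\mapsto a-tb$ is affine, $f$ is convex. The whole lemma will fall out of the fact that $f$ is non-decreasing on $[0,\infty)$.

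The key observation is that the hypothesis $\|a\|=d(a,V)$ says precisely that $t=0$ is a global minimizer of $f$. Indeed, for every $t\in\RR$ we have $tb\in V$ because $V$ is a subspace and $b\in V$, so $\|a-tb\|\ge d(a,V)=\|a\|=f(0)$; hence $f(t)\ge f(0)$ for all $t$. I would then invoke the standard consequence that a convex function attaining its minimum at $0$ is non-decreasing on $[0,\infty)$: for $0\le s\le t$ with $t>0$, writing $s=\frac{t-s}{t}\cdot 0+\frac{s}{t}\cdot t$ as a convex combination and using $f(0)\le f(t)$ gives $f(s)\le\frac{t-s}{t}f(t)+\frac{s}{t}f(t)=f(t)$.

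I would finish by contraposition. Suppose $\lambda\le 1$; since $\lambda$ is assumed positive we have $0<\lambda\le 1$, so the monotonicity above applied with $s=\lambda$ and $t=1$ yields $f(\lambda)\le f(1)$, that is $\|a-\lambda b\|\le\|a-b\|$. This contradicts the hypothesis $\|a-\lambda b\|>\|a-b\|$, so we must have $\lambda>1$, as claimed. (Note that the positivity of $\lambda$ is used exactly here, to guarantee $\lambda$ lies in the interval $[0,1]$ where $f$ is controlled; for $\lambda<0$ one can no longer compare $f(\lambda)$ with $f(1)$.)

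There is no serious obstacle in this argument; the only point that requires care is the translation of the metric condition $\|a\|=d(a,V)$ into the statement that $0$ minimizes $f$ along the line $\{a-tb\}$, which hinges on $V$ being a subspace so that $tb\in V$ for every scalar $t$. Once this is in place, convexity of the norm does all the remaining work.
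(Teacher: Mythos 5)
Your proof is correct and is essentially the paper's own argument in different clothing: writing $f(t)=\|a-tb\|$ and invoking monotonicity of a convex function minimized at $0$ amounts, when unwound at $s=\lambda$, $t=1$, to exactly the paper's computation $\|a-\lambda b\|=\|\lambda(a-b)+(1-\lambda)a\|\leq\lambda\|a-b\|+(1-\lambda)\|a\|\leq\|a-b\|$ under the assumption $0<\lambda\leq 1$, followed by the same contraposition. Both proofs hinge on the same two facts, namely convexity of the norm and $\|a\|=d(a,V)\leq\|a-b\|$ since $b\in V$.
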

	\begin{proof}
		Suppose $\lambda\leq1$. We have $\|a\|=d(a,V)\leq\|a-b\|$ so:
		\begin{align*}
		\|a-\lambda b\|&=\|\lambda(a-b)+(1-\lambda)a\|\\
		&\leq\lambda\|a-b\|+(1-\lambda)\|a\|\\
		&\leq\lambda\|a-b\|+(1-\lambda)\|a-b\|=\|a-b\|\,.\qedhere
		\end{align*}
	\end{proof}
	
	We say the norm is Fr\'echet differentiable at a point $a\in S(A)$ if  the limit
	\[
	\lim_{\delta\to 0}\frac{\|a-\delta b\|-\|a\|}\delta
	\]
	exists uniformly in $b\in S(A)$. We say $A$ is Fr\'echet smooth if its norm is Fr\'echet differentiable at every $a\in S(A)$.
	
	\begin{lemma}
		If $A$ is reflexive and Fr\'echet smooth then topologies 2 and 4 coincide.
	\end{lemma}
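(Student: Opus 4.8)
The plan is to exploit the symmetry of the situation under the annihilator maps $F$ and $G$, reducing the claim to the pattern already established for topologies 1 and 3. The lemma showing that topology 4 is finer than topology 2 gives one half, so it remains to prove that topology 2 is finer than topology 4. Recall that $\image F=\Max A^\vee$ and $G\circ F=\ident$ on $\Max A$, so $F$ is a bijection $\Max A\to\Max A^\vee$ with inverse $G$. Using $d(a,V)=d\bigl(a,G(F(V))\bigr)$ for $V\in\Max A$, the map $F$ sends the subbasic set $\mathbf U_r(a)$ of topology 2 to $\{\mathcal V\in\Max A^\vee:d(a,G(\mathcal V))>r\}$; because $A$ is reflexive we may regard $a$ as an arbitrary element of $A^{\vee\vee}=(A^\vee)^\vee$, and this set is then exactly a subbasic open set of topology 3 for the normed space $A^\vee$. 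Likewise $F$ carries topology 4 onto the lower Vietoris topology of $\Max A^\vee$, i.e.\ topology 1 for $A^\vee$. Hence topologies 2 and 4 on $\Max A$ coincide if and only if topologies 1 and 3 for the space $A^\vee$ coincide on $\Max A^\vee$.

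By Corollary~\ref{normed1finerthan3} applied to $A^\vee$, topology 1 for $A^\vee$ is finer than topology 3 for $A^\vee$, so I only need the reverse inclusion: every subbasic lower Vietoris set $\widetilde{B_\varepsilon(\phi)}\subset\Max A^\vee$ is open in topology 3 for $A^\vee$. I would follow the argument already used to prove that topologies 1 and 3 coincide when $A$ is locally uniformly convex, transposed to $A^\vee$. Given $\mathcal V\in\widetilde{B_\varepsilon(\phi)}$, Lemma~\ref{factsaboutlV}(3) for $A^\vee$ lets me assume $\phi\in\mathcal V$ and $\|\phi\|=1$. Reflexivity guarantees that $\phi$ attains its norm at some $a\in S(A)$, so $\langle\phi,a\rangle=1$; by Lemma~\ref{VinUrphiiffphia>ra} applied to $A^\vee$ this gives $\mathcal V\in G^{-1}\bigl(\mathbf U_{1-\delta}(a)\bigr)$ for every $0<\delta<1$. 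It then suffices to produce $\delta>0$ with $G^{-1}\bigl(\mathbf U_{1-\delta}(a)\bigr)\subset\widetilde{B_\varepsilon(\phi)}$: for $\mathcal W\in G^{-1}\bigl(\mathbf U_{1-\delta}(a)\bigr)$ there is, by Lemma~\ref{VinUrphiiffphia>ra}, a functional $\psi\in S(\mathcal W)$ with $|\langle\psi,a\rangle|>1-\delta$, and after multiplying by a phase $\RE\langle\psi,a\rangle>1-\delta$; what remains is to force $\psi\in B_\varepsilon(\phi)$.

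The whole argument thus rests on the geometric claim that for each $\varepsilon>0$ there is $\delta>0$ such that every $\psi\in S(A^\vee)$ with $\RE\langle\psi,a\rangle>1-\delta$ satisfies $\|\psi-\phi\|<\varepsilon$; that is, the weak$^*$ slices of the dual sphere in the direction $a$ shrink in norm to the unique norming functional $\phi$. This is precisely the point at which Fr\'echet smoothness enters, and it is the classical \v{S}mulian characterization of Fr\'echet differentiability; here it replaces the local uniform convexity used for topologies 1 and 3. I would prove the direction I need by contradiction: a sequence $\psi_n\in S(A^\vee)$ with $\RE\langle\psi_n,a\rangle\to1$ and $\|\psi_n-\phi\|\ge\varepsilon$ yields unit vectors $b_n$ with $\RE\langle\psi_n-\phi,b_n\rangle\ge\varepsilon/2$; combining the estimate $\|a+\delta b_n\|\ge\RE\langle\psi_n,a+\delta b_n\rangle$ with the uniform Fr\'echet limit $\lim_{\delta\to0}\bigl(\|a+\delta b\|-\|a\|\bigr)/\delta=\RE\langle\phi,b\rangle$ (uniform in $b\in S(A)$), choosing $\delta$ from the uniform convergence first and then $n$ large, produces a contradiction. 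This \v{S}mulian-type lemma is the main obstacle; once it is in hand, the remaining steps are the exact mirror image of the locally uniformly convex case already treated, and the reduction in the first paragraph then closes the proof.
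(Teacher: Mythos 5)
Your proposal is correct, but it takes a genuinely different route from the paper, so it is worth comparing the two. The paper argues directly on $\Max A$: given $F(V)\in\widetilde{B_\varepsilon(\phi)}$ with $\phi\in F(V)$ and $\|\phi\|=1$, it uses reflexivity to choose $a$ with $\phi(a)=\|a\|=d(a,V)=1$, uses Fr\'echet smoothness to obtain the uniform estimate $\|(\lambda-\tfrac\varepsilon3)a-b\|<r$ for all $b\in S(\ker\phi)$ (with $\lambda=\tfrac\varepsilon3+\tfrac1\delta$ and $r=\lambda-\tfrac\varepsilon6$), and then, for each $W\in\mathbf U_r(\lambda a)$, produces via Hahn--Banach a $\psi\in F(W)$ normalized by $\psi(a)=1$ and bounds $\|\psi|_{\ker\phi}\|\leq\tfrac\varepsilon3$ by the elementary convexity Lemma~\ref{convexitilambda>1}, giving $\|\psi-\phi\|\leq\tfrac23\varepsilon$. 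You replace this hands-on computation by two ideas the paper never makes explicit: first, the observation that under reflexivity the bijection $F\colon\Max A\to\Max A^\vee$ carries topology 2 to topology 3 for the normed space $A^\vee$ and topology 4 to topology 1 for $A^\vee$ (reflexivity is exactly what makes every element of $(A^\vee)^\vee$ come from $A$ and every closed subspace of $A^\vee$ lie in $\image F$, so your reduction is legitimate and you correctly flag this); second, the \v{S}mulian slice lemma, which plays precisely the role that local uniform convexity plays in the paper's 1-versus-3 lemma, and whose contradiction proof from the paper's uniform-limit definition of Fr\'echet smoothness you sketch correctly (the estimate $\RE\phi(b_n)+\tfrac\varepsilon4\geq(\RE\psi_n(a)-1)/\delta_0+\RE\phi(b_n)+\tfrac\varepsilon2$ closes the argument for large $n$). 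In substance the two proofs exhibit the same kind of neighborhood --- the paper's $\mathbf U_r(\lambda a)$ equals $\mathbf U_{r/\lambda}(a)$, a set of your form $\mathbf U_{1-\delta'}(a)$ --- and both consume reflexivity for norm attainment and smoothness for a uniform derivative estimate; what your route buys is modularity and transparency (the duality reduction explains why the hypotheses mirror the locally uniformly convex case, and the single geometric input is a classical lemma), while the paper's route stays self-contained on $\Max A$ and avoids transposing the supporting lemmas (Lemma~\ref{factsaboutlV}, Lemma~\ref{VinUrphiiffphia>ra}, Corollary~\ref{normed1finerthan3}) to the dual pair $(A^\vee,A)$.
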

	\begin{proof}
		Suppose $F(V)\in\widetilde{B_\varepsilon(\phi)}$. By Lemma~\ref{factsaboutlV}-3 we may assume that $\phi\in F(V)$ and $\|\phi\|=1$.
		Since $A$ is reflexive there is $a\in A$ such that $\|a\|=d(a,\ker\phi)=d(a,V)$. Then $|\phi(a)|=d(a,\ker\phi)\|\phi\|=\|a\|$
		so we may assume that $\phi(a)=\|a\|=1$.
		Since $\|a\|$ minimizes $\|a-b\|$ with $b\in\ker\phi$, and $A$ is Fr\'echet smooth, the Fr\'echet derivative of the norm at $a$ along any $b\in\ker\phi$ is zero:
		\[
		\lim_{\delta\to0}\frac{\|a-\delta b\|-\|a\|}\delta=0\quad\text{uniformly on $b\in S(\ker\phi)$.}
		\]
		Let $\delta>0$ be such that, for all $b\in S(\ker\phi)$ we have $\bigl(\|a-\delta b\|-\|a\|\bigr)/\delta<\frac\varepsilon6$.
		Let $\lambda=\frac\varepsilon3+\frac1\delta$. Then for any $b\in S(\ker\phi)$ we have:
		\[
		\frac{\|a-\delta b\|-\|a\|}\delta=\|(\lambda-\tfrac\varepsilon3)a-b\|-(\lambda-\tfrac\varepsilon3)<\tfrac\varepsilon6
		\]
		so if we let $r=\lambda-\frac\varepsilon6$ then 
		\begin{equation}\label{eq:lambda-eps>r}
		\|(\lambda-\tfrac\varepsilon3)a-b\|<r\,.
		\end{equation}
		Now $d(\lambda a,V)=\lambda>r$
		so $V\in\mathbf U_r(\lambda a)$. We want to show that $F\bigl(\mathbf U_r(\lambda a)\bigr)\subset\widetilde{B_\varepsilon(\phi)}$.
		Let $W\in\Max A$ be such that $d(\lambda a,W)>r$ and let $d=d(\lambda a,W)$.
		By the Hahn-Banach theorem there is $\psi\in A^\vee$ such that $W\subset\ker\psi$ (and hence $\psi\in F(W)$), $d(\lambda a,\ker\psi)=d(\lambda a,W)>r$ and $\psi(a)=1$.
		We claim that $\psi\in B_\varepsilon(\phi)$, and hence $F(W)\in \widetilde{B_\varepsilon(\phi)}$. To prove the claim
		consider the projection $p\colon A\to\ker\phi$ defined by $p(b)=b-\phi(b)a$. Then $\|p(b)\|\leq\|b\|+\|\phi\|\|b\|\|a\|=2\|b\|$ so $\|p\|\leq 2$.
		To finish the proof we just need to show that $\|\psi|_{\ker\phi}\|\leq\frac\varepsilon3$ because it will follow that, for any $b\in A$:
		\[
		|\psi(b)-\phi(b)|=|\psi(p(b))|\leq\tfrac\varepsilon3\|p(b)\|\leq\tfrac23\varepsilon\|b\| %
		\]
		and hence $\|\psi-\phi\|\leq\frac23\varepsilon<\varepsilon$.
		So let $b\in S(\ker\phi)$ and suppose $\psi(b)\neq 0$. By multiplying $b$ by a phase we may assume that $\psi(b)$ is real and negative. Let $x=-\psi(b)>0$.
		Then $\frac\varepsilon3 a+\frac\varepsilon{3x}b\in\ker\psi$ and since $d(\lambda a,\ker\psi)>r$, using equation~\eqref{eq:lambda-eps>r} we get
		\[
		\|(\lambda-\tfrac\varepsilon3)a-b\|<r<\|\lambda a-(\tfrac\varepsilon3 a+\tfrac\varepsilon{3x}b)\|=\|(\lambda-\tfrac\varepsilon3)a-\tfrac\varepsilon{3x}b\|\,.
		\]
		We now apply Lemma~\ref{convexitilambda>1} to conclude that $\frac\varepsilon{3x}>1$, so $x=|\psi(b)|<\frac\varepsilon3$.
		This completes the proof.\qedhere
	\end{proof}
	
	\begin{corollary}
		Let $A$ be a reflexive, Fr\'echet smooth and locally uniformly convex normed vector space. Then a triple
		$(X,A,\kappa)$ is a continuous normed quotient vector bundle if and only if $(X,A^\vee,\kappa^\vee)$ is a continuous quotient vector bundle.
	\end{corollary}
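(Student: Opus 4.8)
The plan is to combine the two topology-coincidence results just established with the closed-balls characterization of continuity from \cite[Theorem~5.12]{ReSa17}. Since $A$ is assumed to be reflexive, Fr\'echet smooth \emph{and} locally uniformly convex, both coincidences hold at once: local uniform convexity gives that topologies~1 and~3 coincide, while reflexivity together with Fr\'echet smoothness gives that topologies~2 and~4 coincide. Consequently the topology on $\Max A$ generated by topologies~1 and~2 equals the topology generated by topologies~3 and~4.

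First I would recall that the topology generated by topologies~1 and~2 is precisely the closed balls topology on $\Max A$, so by \cite[Theorem~5.12]{ReSa17} the triple $(X,A,\kappa)$ is a continuous normed quotient vector bundle if and only if $\kappa$ is continuous for that topology. Next I would identify the dual-side condition: the closed balls topology on $\Max A^\vee$ is generated by the lower Vietoris topology (induced by the dual norm) together with the sets $\mathbf U_r(\phi)$, and by the very definitions of topologies~3 and~4 these pull back along $F$ to the sets $F^{-1}\bigl(\mathbf U_r(\phi)\bigr)$ and $F^{-1}(\mathcal U)$ with $\mathcal U$ lower Vietoris open. Hence the $F$-preimage of the closed balls topology on $\Max A^\vee$ is exactly the topology generated by topologies~3 and~4. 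Since $\kappa^\vee=F\circ\kappa$ and $A^\vee$ is normed (with the dual norm), applying \cite[Theorem~5.12]{ReSa17} on the dual side shows that $(X,A^\vee,\kappa^\vee)$ is a continuous normed quotient vector bundle if and only if $\kappa$ is continuous for the topology generated by topologies~3 and~4.

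Putting these together, the equivalence is immediate: the two continuity conditions now refer to one and the same topology on $\Max A$, so $(X,A,\kappa)$ is a continuous normed quotient vector bundle exactly when $(X,A^\vee,\kappa^\vee)$ is. The only care needed is the bookkeeping in the second step---correctly recognizing the dual closed balls topology as the $F$-pullback of the topology generated by~3 and~4, and translating the norm-continuity of $\kappa^\vee$ into a statement about $\kappa$ via $\kappa^\vee=F\circ\kappa$; all the genuine analytic difficulty (Hahn--Banach separation, reflexivity, and the uniform Fr\'echet-differentiability estimates) has already been absorbed into the two coincidence lemmas, so for the corollary itself there is no further obstacle. Alternatively, and equivalently, one may simply invoke parts~(2) and~(3) of Theorem~\ref{thm:continuousnormedcodual}: part~(3) (reflexive and Fr\'echet smooth) yields the forward implication, and part~(2) (locally uniformly convex) yields the converse.
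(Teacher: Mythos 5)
Your proposal is correct and is precisely the argument the paper intends: the corollary is stated without an explicit proof because it follows immediately from the two coincidence lemmas (topologies $1=3$ under local uniform convexity, $2=4$ under reflexivity and Fr\'echet smoothness) together with the closed-balls characterization of continuity from \cite[Theorem~5.12]{ReSa17}, exactly as you spell out. Your alternative phrasing via parts~(2) and~(3) of Theorem~\ref{thm:continuousnormedcodual} is the same content, since those parts are themselves proved by the same lemmas combined with the unconditional inclusions (topology~1 finer than~3, topology~4 finer than~2).
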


\bibliography{dual}
\bibliographystyle{amsplain}

\end{document}